\newtheorem{thm}{Theorem}[section]
\newtheorem{lem}[thm]{Lemma}
\newtheorem{claim}[thm]{Claim}
\definecolor{myblue}{cmyk}{1,0.9,0,0}
\definecolor{mylightblue}{cmyk}{0.7,0.2,0,0}
\definecolor{myred}{cmyk}{0,1,1,0}
\definecolor{mygreen}{cmyk}{.9,0.3,1,0.3}
\newcommand{\Euler}{\mathrm{Eul}}
\newcommand{\smallspace}{\hspace{.1em}}
\newcommand{\smallnegspace}{\hspace{-.1em}}
\newcommand{\card}{\mathrm{card}}
\newcommand{\TOP}{\mathrm{TOP}}
\newcommand{\Yet}{\mathrm{Yetter}}
\newcommand{\FTop}{\mathrm{FTop}}
\newcommand{\EB}{\mathrm{EB}}
\newcommand{\Ball}{\mathrm{Ball}}
\newcommand{\colr}{\alpha}
\newcommand{\coleb}{\beta}
\newcommand{\CM}{\chi}
\newcommand{\suc}{\mathrm{succ}}
\newcommand{\pred}{\mathrm{pred}}
\newcommand{\co}{\colon}
\newcommand{\id}{\mathrm{id}}
\newcommand{\opp}{\mathrm{op}}
\newcommand{\cc}{\mathcal{C}}
\newcommand{\tg}{\mathcal{G}}
\newcommand{\cch}{\mathcal{C}_{\mathrm{hom}}}
\newcommand{\dd}{\mathcal{D}}
\newcommand{\ee}{\mathcal{E}}
\newcommand{\rr}{\mathcal{R}}
\newcommand{\mm}{\mathcal{M}}
\newcommand{\sss}{\mathcal{S}}
\newcommand{\lhs}{\mathcal{L}}
\newcommand{\CC}{\mathbb{C}}
\newcommand{\ZZ}{\mathbb{Z}}
\newcommand{\RR}{\mathbb{R}}
\newcommand{\mti}{\,\mbox{-}\,}
\newcommand{\XMod}{\mathrm{CRMod}}
\newcommand{\Mod}{\mathrm{Mod}}
\newcommand{\Ker}{\mathrm{Ker}}
\newcommand{\Coker}{\mathrm{Coker}}
\newcommand{\lmm}[1]{\mathcal{M}_\kk^{#1}}
\newcommand{\col}{\mathrm{Col}}
\newcommand{\kk}{\Bbbk}
\newcommand{\lact}[2]{\leftidx{^{#1}}{{#2}}{}}
\newcommand{\elact}[2]{\leftidx{^{#1}}{{\hspace{-.08em} #2}}{}}
\newcommand{\kt}{$\Bbbk$\nobreakdash-\hspace{0pt}}
\newcommand{\ti}{\mbox{-}\,}
\newcommand{\Z}{\mathbb{Z}}
\newcommand{\Q}{\mathbb{Q}}
\newcommand{\R}{\mathbb{R}}
\newcommand{\un}{\mathbb{1}}
\newcommand{\Ob}{\mathrm{Ob}}
\newcommand{\Int}{\mathrm{Int}}
\newcommand{\Reg}{\mathrm{Reg}}
\newcommand{\Aut}{\mathrm{Aut}}
\newcommand{\End}{\mathrm{End}}
\newcommand{\Hom}{\mathrm{Hom}}
\newcommand{\Ima}{{\mathrm{Im}}}
\newcommand{\vect}{\mathrm{vect}}
\newcommand{\Tr}{\mathrm{Tr}}
\newcommand{\tr}{\mathrm{tr}}
\newcommand{\inv}{\mathbb{F}}
\newcommand{\lev}{\mathrm{ev}}
\newcommand{\rev}{\widetilde{\mathrm{ev}}}
\newcommand{\lcoev}{\mathrm{coev}}
\newcommand{\rcoev}{\widetilde{\mathrm{coev}}}
\newcommand{\ldual}[1]{#1^{*}}
\newcommand{\scaledraw}[1]{A}
\newcommand{\scaleraisedraw}[2]{A}
\newcommand{\rsdraw}[3]{\raisebox{-#1\height}{\scalebox{#2}{\includegraphics{#3.eps}}}}
\newcommand{\labela}{\renewcommand{\labelenumi}{{\rm (\alph{enumi})}}}
\newcommand{\labeli}{\renewcommand{\labelenumi}{{\rm (\roman{enumi})}}}
\begin{document}

\title[Monoidal categories graded by crossed modules and HQFTs]{Monoidal categories graded by crossed modules and 3-dimensional HQFTs}
\author{K\"{u}r\c{s}at S\"{o}zer}
 \address{K\"{u}r\c{s}at S\"{o}zer\newline
  \indent  \'Ecole Polytechnique F\'ed\'erale de Lausanne, 1015 Lausanne, Switzerland\\}
\email{kursat.sozer@epfl.ch}
\author{Alexis Virelizier}
\address{Alexis Virelizier\newline
\indent Univ. Lille, CNRS, UMR 8524 - Laboratoire Paul Painlev\'e, F-59000 Lille, France\\}
\email{alexis.virelizier@univ-lille.fr}

\subjclass[2020]{18M05, 57K31, 57K16}
\date{\today}

\begin{abstract}
Given a crossed module $\chi$, we introduce $\chi$-graded monoidal categories and $\chi$-fusion categories.
We use spherical $\chi$-fusion categories to construct (via the state sum method) 3-dimensional Homotopy Quantum Field Theories with target the classifying space $B\chi$ of the crossed module $\chi$ (which is a homotopy 2-type).
\end{abstract}

\maketitle

\setcounter{tocdepth}{1}
\tableofcontents

\section{Introduction}\label{sec-Intro}

Homotopy Quantum Field Theory (HQFT) is a branch of quantum topology concerned with  maps from manifolds
to a fixed target space $X$. The aim   is to define and to study
homotopy invariants of such maps using  methods of quantum topology.
The formal notion of a $d$-dimensional HQFT with target $X$ was introduced in
\cite{Tu1}. Roughly, it is a TQFT for closed oriented $(d-1)$-dimensional
manifolds and compact oriented $d$-dimensional cobordisms endowed with maps
to~$X$. Such an HQFT yields numerical homotopy invariants of maps from
closed oriented $d$-dimensional manifolds to $X$. Note that 1-dimensional HQFTs with target $X$ correspond bijectively to finite-dimensional representations of the fundamental group
of $X$ or, equivalently, to finite-dimensional flat vector bundles over $X$. This allows one to view HQFTs as high-dimensional generalizations of flat vector bundles.  The case of 2-dimensional HQFTs with target $X$ is quite well understood: they  are classified by certain classes of Frobenius algebras which are in particular graded by the fundamental group of $X$ (see \cite{BT,Tu1,PT,ST},  and \cite{So} for the extended setting).

The present paper focuses on 3-dimensional HQFTs. The case where the target space $X$  is a point (i.e., a connected homotopy 0-type)  corresponds to 3-dimensional TQFTs and has been intensively studied. In particular, there are two fundamental constructions of 3-dimensional TQFTs: the Turaev-Viro TQFT which consists of state sums on triangulations (or more generally on skeletons) of 3-manifolds and uses spherical fusion categories, and the Reshetikhin-Turaev TQFT which is based on a surgery approach and uses modular categories (see for example the monographs \cite{Tu1,TVi5} and the references therein). The case where the target space $X$ is a connected homotopy 1-type, that is, an  Eilenberg-MacLane space $K(G,1)$ where~$G$ is a group, has been studied by Turaev and the second author: both the state-sum and the surgery approaches can be extended using spherical and modular fusion $G$-graded categories (see \cite{TVi1,TVi3,TVi4}). Here, a $G$-graded category is a monoidal category whose objects are equipped with a degree in $G$ (which is multiplicative with respect to the monoidal product) and whose morphisms preserve the grading (i.e., there are no nonzero morphisms between objects with distinct degrees). In this paper, we extend the state sum approach to the case where the target space $X$  is a connected homotopy 2-type.

Following MacLane and Whitehead \cite{MLW}, we model pointed connected homotopy 2-types by crossed modules: if $X$ is a homotopy 2-type, then $X$ is homotopy equivalent to the classifying space of a crossed module. Recall that a crossed module is a group homomorphism $\CM\co E \to H$, with $H$ acting on the left on $E$, such that $\CM$ is $H$-equivariant (where $H$ acts on itself by conjugation) and satisfies the Peiffer identity. Such a crossed module has a classifying space $B\CM$ such that
$$
\pi_1(B\CM)=\Coker(\CM), \quad \pi_2(B\CM)=\Ker(\CM), \quad \pi_k(B\CM)=0 \quad \text{for $k \geq 3$.}
$$

We introduce the notion of a $\CM$-graded monoidal category in which not only objects have a degree but also morphisms. More precisely, each (homogeneous) object of such a category~$\cc$ is equipped with a degree $|X| \in H$ (which is multiplicative with respect to the monoidal product) and the Hom-sets of $\cc$ are $E$-graded $\kk$-modules:
$$
\Hom_\cc(X,Y)=\bigoplus_{e \in E} \Hom_\cc^e(X,Y).
$$
For homogeneous objects $X,Y$, we have $\Hom_\cc^e(X,Y)=0 $ whenever $|Y|\neq \CM(e)|X|$. In particular, there may be nonzero morphisms between homogeneous objects with distinct degrees.  The  composition of morphisms is multiplicative in degree:
$$
(\beta,\alpha) \in\Hom_\cc^f(Y,Z)\times\Hom_\cc^e(X,Y) \;\, \Rightarrow \;\, \beta\circ \alpha \in  \Hom_\cc^{fe}(X,Z).
$$
The monoidal product of morphisms is compatible with the gradings as follows:
$$
(\alpha,\beta) \in \Hom_\cc^e(X,Y)\times\Hom_\cc^f(Z,T) \;\, \Rightarrow \;\,  \alpha \otimes \beta \in  \Hom_\cc^{e\lact{|X|}{\!f}}(X\otimes Z,Y \otimes T).
$$
For example, the $\kk$-linearization $\kk\tg_\CM$ of the 2-group $\tg_\CM$ associated with $\CM$ is a $\CM$\ti graded monoidal category. Other instances of $\CM$-graded monoidal categories are given by the categories of representations of Hopf $\CM$-(co)algebras (see~\cite{SV}).
Note that if $G$ is a group, then the group homomorphism $1 \to G$ is a crossed module and $(1 \to G)$-graded monoidal categories correspond to $G$-graded monoidal categories. Any cohomology class in $H^3(B\CM,\kk^*)$ can always be represented by a normalized 3-cocycle $\omega\co H^3 \times E^3 \to \kk^*$ which allows to twist the composition, the monoidal product, and the associator of any $\CM$-graded monoidal category $\cc$ to produce another $\CM$-graded monoidal category $\cc^\omega$. Also, the push-forward~$\phi_*(\cc)$ of a $\CM$-graded monoidal category $\cc$ along a crossed module morphism $\phi\co \CM \to \CM'$ is a $\CM'$-graded monoidal category.

Next, using the state sum approach, we derive a 3-dimensional HQFT $|\cdot|_\cc$ with target $B\CM$ from any spherical $\CM$-fusion category $\cc$ whose neutral component has invertible dimension (see Theorems~\ref{thm-state-3man-Xi} and ~\ref{thm-state-sum-Xi-HQFT}). To this end, we represent 3-manifolds by their skeletons and the maps to $B\CM$ by certain $H$-labels on the faces and $E$-labels on the edges of the skeletons. In particular, this HQFT induces a scalar invariant $|M,g|_\cc$ of closed $\CM$-manifolds, that is, of pairs $(M,g)$ where $M$ is closed oriented 3-manifold and $g$ is a homotopy class of maps $M \to B\CM$. We prove by examples that this invariant is nontrivial. It may even distinguish homotopy classes of phantom maps $M \to B\CM$  (i.e., of maps inducing trivial homomorphisms on homotopy groups). For the spherical $\CM$-fusion category $\kk\tg_\CM^\omega$ obtained by twisting the category $\kk\tg_\CM$ with a 3-cocycle  $\omega$ for $\CM$, we conjecture that
$$
  |M,g|_{\kk\tg_\CM^\omega}=\langle g^*([\omega]), [M] \rangle,
$$
where $g^*([\omega])  \in H^3(M,\kk^*)$ is the pullback of  the cohomology class $[\omega] \in H^3(B\CM,\kk^*)$ induced by the cocycle  $\omega$,  $[M] \in H_3(M,\ZZ)$ is the fundamental class of $M$,  and $\langle \,,\, \rangle \co  H^3(M,\kk^*) \times H_3(M,\ZZ) \to \kk$ is the Kronecker pairing. Moreover, the push-forward $\phi_*(\cc)$ of a $\CM$-fusion category $\cc$ along a crossed module morphism $\phi\co \CM \to \CM'$ (satisfying some finiteness conditions) is a $\CM'$-fusion category, and we relate the invariant $\vert M,g'\vert_{\phi_*(\cc)}$ of a closed $\CM'$-manifold $(M,g')$ with the invariants $ \{\vert M,g\vert_{ \cc }\}_g$ and the fundamental groups $\{\pi_1(\TOP_*(M,B\CM),g)\}_g$, where $g$ runs over homotopy classes of maps $M \to B\CM$ such that $B\phi \circ g=g'$ (see Theorem~\ref{thm-pushforward-3man-Xi}). Here $\TOP_*(M,B\CM)$ is the mapping space of pointed maps and $B\phi \co B \CM \to B\CM'$ is the map induced by $\phi$.
In particular, if $\phi$ is an equivalence of crossed modules (so that~$B\phi$ is a homotopy equivalence), then $\vert M,g'\vert_{\phi_*(\cc)}=\vert M,(B\phi)^{-1} \circ g'\vert_{ \cc }$.

The paper is organized as follows. In Section~\ref{sect-crossed-modules}, we  review crossed modules and their classifying spaces.
Section~\ref{sect-E-enreiched-categories} is dedicated to categories whose Hom-sets are graded by a group.
In Section~\ref{sect-chi-graded-categories}, we introduce the notion of a monoidal category graded by a crossed module $\CM$ and
discuss various classes of such categories including pivotal, spherical, and  $\CM$-fusion ones. In Section~\ref{sec-multimodulesandgraphs}, we first introduce colored $\CM$\ti cyclic sets and their multiplicity modules, and then we define an isotopy invariant of colored $\CM$-graphs which is used in our state sums (as a replacement of $6j$-symbols).
In Section~\ref{sect-skeletons-maps}, we discuss skeletons of 3-manifolds and
presentations of maps to $B\CM$ by labelings of skeletons.  We use these presentations in Section~\ref{sect-state-sum-invariants-closed} to derive from any  spherical $\CM$-fusion category a numerical invariant  of closed $\CM$-manifolds. In Section~\ref{sect-Xi-HQFTs}, we recall the definition (adapted to our need) of a 3-dimensional HQFT with target $B\CM$. Finally, in Section~\ref{sect-statesum-HQFT}, we extend the state sum invariants of Section~\ref{sect-state-sum-invariants-closed} to an HQFT with target $B\CM$.

We fix throughout the paper a nonzero commutative ring~$\kk$. For topological spaces $X$ and $Y$, we denote by $[X,Y]$ the set of homotopy classes of maps $X \to Y$.

\section{Crossed modules and their classifying spaces}\label{sect-crossed-modules}
We recall some standard notions of the theory of crossed modules.

\subsection{Crossed modules}\label{sect-crossed-modules-def}
A \emph{crossed module} is a group homomorphism $\CM \co E \to H$ together with a left  action of $H$ on $E$ (by group automorphisms) denoted  $$(x,e) \in H \times E  \mapsto  \elact{x}{e} \in E$$ such that $\CM$ is equivariant with respect to the conjugation action of $H$ on itself:
\begin{equation}\label{eq-precrossed}
\CM(\elact{x}{e}) =x\CM(e)x^{-1}
\end{equation}
and satisfies the Peiffer identity:
\begin{equation}\label{eq-Peiffer}
\lact{{\CM(e)}}{\!f}=efe^{-1}
\end{equation}
for all $x \in H$ and $e,f \in E$.

These axioms imply that the image $\Ima(\CM)$ is normal in $H$ and that the kernel $\Ker(\CM)$ is central in $E$ and is acted on trivially by $\Ima(\CM)$. In particular,  $\Ker(\CM)$ inherits an action of $H/\Ima(\CM)=\Coker(\CM)$.

\subsection{Examples}\label{sect-crossed-modules-ex}
1. Given any  normal subgroup $E$ of a group $H$, the inclusion $E\hookrightarrow H$ is a crossed module with the conjugation action of $H$ on $E$.

2. For any group $E$, the homomorphism $E \to \Aut(E)$ sending any element of $E$ to the corresponding inner automorphism is a crossed module.

3. For any group $H$ and any left $H$-module $E$, the trivial map $E \to H$ is a crossed module.

4. Any group epimorphism $E\to H$ with kernel contained in the center of $E$ is a crossed module with trivial action of $H$ on $E$. In particular, if $A$ is an abelian group, then the trivial map $A \to 1$ is a crossed module.

5. To any map $\omega \co S \to H$, where $S$ is a set and $H$ is a group, one associates the free crossed module $\CM \co \mathrm{CM}(\omega) \to H$ as follows. The group $\mathrm{CM}(\omega)$ is generated by $S \times H$ with the relations
$$
(e,x)(f,y)=(f,x\omega(e)x^{-1}y)(e,x)
$$
for all $e,f \in S$ and $x,y \in H$. The action of $H$ on $CM(\omega)$ and the morphism $\CM$ are given for all $e \in S$ and $x,y \in H$ by
$$
\lact{x}{(e,y)}=(e,xy) \quad \text{and} \quad \partial(e,x)=x \omega(e) x^{-1}.
$$

6. A key geometric example of a crossed module is due to Whitehead. He showed that if $(X,A,x)$ is a pair of pointed topological spaces, then the homotopy boundary map $\partial \co \pi _{2}(X,A,x)\rightarrow \pi _{1}(A,x)$,
together with the standard action of $\pi _{1}(A,x)$ on $\pi _{2}(X,A,x)$,  is a crossed module.

\subsection{Crossed module morphisms}\label{sect-crossed-modules-maps}
A \emph{morphism} from a crossed module $\CM \co E \to H$ to a crossed module $\CM' \co E' \to H'$ is a pair $\phi=(\psi \co E \to E',\varphi \co H \to H')$
of group homomorphisms making the square
$$
\xymatrix@R=.7cm @C=.7cm{ E \ar[r]^-{\CM}  \ar[d]_-{\psi} & H\ar[d]^-{\varphi} \\
    E' \ar[r]_-{\CM'} & H'
    }
$$
commutative and  preserving the action in the sense that for all $x \in H$ and $e\in E$,
$$
\psi(\elact{x}{e})=\lact{\varphi(x)}{\psi(e)}.
$$

Crossed modules and crossed module morphisms form a category denoted $\XMod$. Whitehead's construction (see Example 6 of  Section~\ref{sect-crossed-modules-ex}) extends to a functor
$$
\Pi_2 \co (\text{pair of pointed topological spaces}) \to \XMod.
$$

\subsection{Classifying spaces of crossed modules}\label{sect-crossed-modules-classifying-spaces}
There is a classifying space functor~$B$ (see \cite{BH1}) which  assigns to a crossed module $\CM \co E \to H$
a connected,  reduced\footnote{A CW-complex is \emph{reduced} if it has a single 0-cell, serving then as basepoint.} CW-complex
$B\CM$ with the following properties:
\begin{enumerate}
\labeli
\item The homotopy groups of the classifying space $B\CM$ are given by
$$
\pi_1(B\CM)=\Coker(\CM), \quad \pi_2(B\CM)=\Ker(\CM), \quad \pi_k(B\CM)=0 \quad \text{for $k \geq 3$.}
$$

\item The classifying space $B\CM$ has a canonical (reduced) subcomplex $BH$ which is a classifying space of the group $H$, contains all 1-cells of $B\CM$, and satisfies
    $$
    \Pi_2(B\CM,BH)=\CM.
    $$
In particular, $BH$ is the classifying space $B(1 \to H)$ of the trivial crossed module $1 \to H$.

\item Let $X$ be a reduced CW-complex. Let  $X_1$ be the 1-skeleton of $X$ and consider the Whitehead crossed module $\partial \co \pi _{2}(X,X_1)\rightarrow \pi _{1}(X_1)$. Then there is a map
$X \to B\partial$
inducing isomorphisms on $\pi_1$ and $\pi_2$.

\end{enumerate}
It follows that crossed modules model all pointed connected homotopy 2-types (as originally proved by MacLane and Whitehead \cite{MLW}).

\subsection{2-groups and crossed modules}\label{sect-crossed-modules-two-groups}
Recall that a 2-group is a monoidal small category in which every morphism is invertible and every object has a weak inverse.
A strict 2-group is a strict monoidal small category in which every morphism is invertible and every object has a strict inverse.

Strict 2-groups are identified with crossed modules as follows. Any crossed module ~$\CM \co E \to H$ gives rise to a strict 2-group $\tg_\CM$ defined as follows. The objects of~$\tg_\CM$  are the elements of~$H$. For any $x,y \in H$,
$$
\Hom_{\tg_\CM}(x,y)=\{e \in E \, | \, y=\CM(e)x\}.
$$
The composition of morphisms is given by the product of $E$:
$$
\left(y \xrightarrow{f} z \right) \circ \left(x \xrightarrow{e} y \right) = \left(x \xrightarrow{fe} z \right) \quad \text{and} \quad \id_x=1 \in E.
$$
The monoidal product of objects is given  by the product of $H$:
$$
x \otimes y =xy \quad \text{and} \quad \un=1 \in H.
$$
The monoidal product of morphisms is induced by the left action of $H$ on $E$:
$$
\left(x \xrightarrow{e} y \right) \otimes \left(z \xrightarrow{f} t \right) = \left(xz \xrightarrow{e\lact{x}{\!f}} yt \right).
$$

Conversely, any strict 2-group $\tg$ gives rise to a crossed module $E \to H$, where~$H$ is the
set of objects of $\tg$, $E$ is the set of morphisms emanating from the unit object, and the map $E \to H$ is the target map.

\section{Categories with Hom-sets graded by a group}\label{sect-E-enreiched-categories}

In this  section, we study Hom-graded categories which are linear categories whose Hom-sets are graded by a group. Throughout this section, $E$ denotes a group.

\subsection{Linear categories}\label{sect-linear-categories}
A category~$\cc$ is \emph{\kt linear}  if for all objects $X,Y\in \cc$, the set $\Hom_\cc(X,Y)$ carries a structure of a left  \kt module so that  the composition of morphisms  is $\kk$-bilinear.

An object $X$ of a $\kk$-linear category $\cc$ is \emph{simple} if $\End_\cc(X)$ is a free \kt module of rank 1 (and
so has the basis $\{\id_X\}$).     It is clear that   an object isomorphic to a  simple object is itself  simple.

A $\kk$-linear category $\cc$ is  \emph{semisimple} if
\begin{enumerate}
  \labela
\item each object of $\cc$ is a finite direct sum of  simple objects;
\item for any non-isomorphic  simple objects $i,j  $ of $\cc$, we have $\Hom_\cc(i,j)=0$.
\end{enumerate}
Clearly, the Hom spaces in such a $\cc$ are   free $\kk$-modules of finite rank.

A set $I$ of simple objects  of a semisimple $\kk$-linear category $\cc$ is \emph{representative} if every simple object of $\cc$ is isomorphic to a unique element of~$I$. Each object of $\cc$ is then a direct sum of a finite family of elements of~$I$. Also,  $\Hom_\cc(i,j)=0$ for any distinct elements $i,j \in I $.

A monoidal category is  \emph{$\kk$-linear} if it is $\kk$-linear as a category and the monoidal product of morphisms
is \kt bilinear.

\subsection{Hom-graded categories}\label{sect-Hom-graded-categories}
By an $E$-\emph{Hom-graded category (over $\kk$)} we mean a category enriched over the monoidal category of
$E$-graded \kt modules  and  \kt linear  grading-preser\-ving homomorphisms. In other words, an \emph{$E$-Hom-graded category (over $\kk$)} is a \kt linear category $\cc$ such that:
\begin{enumerate}
\labela
\item The Hom-sets in $\cc$ are $E$-graded \kt modules: for all objects $X,Y \in \cc$,
$$
\Hom_\cc(X,Y)=\bigoplus_{e \in E} \Hom_\cc^e(X,Y).
$$
\item The composition in $\cc$ is grading-preser\-ving:
for all $X,Y, Z \in \cc$ and $e,f \in E$, it sends $\Hom_\cc^f(Y,Z) \times \Hom_\cc^e(X,Y)$ into $\Hom_\cc^{fe}(X,Z)$.
\item The identities have trivial degree: for all $X\in \cc$, $$\id_X \in \End_\cc^1(X)=\Hom_\cc^1(X,X),$$
where $1$ denotes the unit element of $E$.
\end{enumerate}
Note that for any object $X \in \cc$, the monoid $\End_\cc(X)=\Hom_\cc(X,X)$ is an $E$-graded \kt algebra.

A morphism $f \co X \to Y$ in an $E$-Hom-graded category $\cc$ is \emph{homogeneous} if $$f \in\coprod_{e \in E} \Hom_\cc^e(X,Y).$$ It is \emph{homogeneous of degree $e \in E$} if $f \in \Hom_\cc^e(X,Y)$. Note that if $f$ is nonzero, then such an $e \in E$ is unique, is called the \emph{degree} of $f$, and is denoted $e=|f|$. The objects of $\cc$ together with the homogenous morphisms of degree 1 form a \kt linear subcategory of $\cc$ called the \emph{1-subcategory} of $\cc$ and denoted $\cc^1$.

Given $e \in E$, by an \emph{$e$-isomorphism} we mean an isomorphism which is homogeneous of degree~$e$. We say that an object $X$ is \emph{$e$-isomorphic} to an object $Y$, and we write $X \cong_e Y$, if there is an $e$-isomorphism $X \to Y$. It is easy to see that for all $X,Y,Z \in \cc$ and $e,f \in E$,
\begin{equation}\label{eq-cong-E}
X \cong_1 X, \quad  X \cong_e Y \Longrightarrow Y \cong_{e^{-1}} X, \quad X \cong_e Y \text{ and } Y \cong_f Z \Longrightarrow
X \cong_{fe} Z.
\end{equation}
Also, if $ X \cong_e Y$, then for all $d \in E$, there are  \kt linear isomorphisms
\begin{equation}\label{eq-hom-graded}
\Hom_\cc^d(Y,Z) \simeq \Hom_\cc^{de}(X,Z) \quad \text{and} \quad  \Hom_\cc^d(Z,X) \simeq  \Hom_\cc^{ed}(Z,Y)
\end{equation}
which are induced by pre/post-composition with an $e$-isomorphism $X \to Y$.

\subsection{Example}\label{ex-graded-vector-all-maps}
A \kt module $M$ is \emph{$E$-graded} if it has a direct sum decomposition
$$
M= \bigoplus_{e \in E} M_e
$$
by submodules indexed by $e \in E$. We say that such a \kt module $M$ has \emph{finite support} if $M_e=0$ for all but a finite number of $e\in E$. For example, this is the case if $E$ is finite or if~$M$ is free of finite rank.
A \kt linear homomorphism $\phi \co M \to N$ between $E$-graded \kt modules is \emph{homogeneous of degree $d\in E$} if $\phi(M_e) \subset N_{de}$ for all $e \in E$. Such homomorphisms form a submodule $\Hom_\kk^d(M,N)$ of the \kt module $\Hom_\kk(M,N)$ of \kt linear homomorphisms $M \to N$. For example, the identity $\id_M \co M \to M$ is homogenous of degree 1. Clearly, the composition of homogeneous homomorphisms is multiplicative with respect to the degree. This induces an $E$-Hom-graded category $\lmm{E}$ whose objects are $E$-graded \kt modules and Hom-sets are
$$
\Hom_{\lmm{E}}(M,N)=\bigoplus_{d\in E} \Hom_\kk^d(M,N).
$$
The 1-subcategory of $\lmm{E}$ is the category of $E$-graded \kt modules and grading-preserving \kt linear homomorphisms.
Note that if  $M$ and $N$ have finite support, then
$\Hom_{\lmm{E}}(M,N)=\Hom_\kk(M,N)$.
Then $E$-graded \kt modules with finite support and \kt linear homomorphisms form an $E$-Hom-graded category (as a subcategory of $\lmm{E}$). In particular, $E$-graded free \kt modules of finite rank and \kt linear homomorphisms form an $E$-Hom-graded category.

\subsection{Direct sums in Hom-graded categories}\label{sect-Hom-graded-direct-sums}
Let $\cc$ be an $E$-Hom-graded category. Given $e \in E$, an object~$D$ of~$\cc$ is an \emph{$e$-direct sum} of a finite family $(X_\alpha)_{\alpha \in A}$ of objects of~$\cc$ if there is a family $(p_\alpha, q_\alpha)_{\alpha \in A}$ of morphisms in $\cc$ such that:
\begin{enumerate}
\labela
\item $p_\alpha\co D \to X_\alpha$ is homogeneous of degree $e^{-1}$ for all $\alpha \in A$;
\item $q_\alpha \co X_\alpha \to D$ is homogeneous of degree $e$ for all $\alpha \in A$;
\item $\id_D=\sum_{\alpha \in A} q_\alpha p_\alpha$;
\item $p_\alpha q_\beta=\delta_{\alpha,\beta} \, \id_{X_\alpha}$ for all $\alpha,\beta\in A$, where $\delta_{\alpha,\beta}$ is the Kronecker symbol  defined by $\delta_{\alpha,\beta}=1$ if $\alpha=\beta$ and $\delta_{\alpha,\beta}=0$ otherwise.
\end{enumerate}
Such an $e$-direct sum $D$, if it exists, is  unique up to a 1-isomorphism and is denoted
$$
D=  \bigoplus_{\alpha \in A}^e X_\alpha.
$$
It is easy to see that if a finite family $(X_\alpha)_{\alpha \in A}$ of objects of~$\cc$ has an $e$-direct sum
and an $f$-direct sum with $e,f \in E$, then
$$
\bigoplus_{\alpha \in A}^e X_\alpha \cong_{fe^{-1}}   \bigoplus_{\alpha \in A}^f X_\alpha.
$$
Also,  for any finite families $(X_\alpha)_{\alpha \in A}$  and $(Y_\beta)_{\beta \in B}$  of objects of~$\cc$ and for any $d,e,f \in E$, there are \kt linear isomorphisms
\begin{equation}\label{eq-Hom-direct-sum}
\Hom_\cc^d \left( \bigoplus_{\alpha \in A}^e X_\alpha,\bigoplus_{\beta \in B}^f Y_\beta \right )  \simeq
\bigoplus_{\substack{\alpha \in A \\ \beta \in B}} \Hom_\cc^{f^{-1}de}\bigl(X_\alpha,Y_\beta\bigr).
\end{equation}
By definition, a direct sum of an empty family of objects of a \kt linear category $\cc$ is a \emph{zero object}   of $\cc$, that is, an object $\mathbf{0}$ of $ \cc$ such that $\End_\cc(\mathbf{0})=0$.

An $E$-Hom-graded category~$\cc$ is \emph{$E$-additive} if any finite (possibly empty) family of objects of~$\cc$ has an $e$-direct sum in~$\cc$ for all $e \in E$.

\subsection{Semisimple Hom-graded categories}\label{sect-Hom-graded-semisimple}
We say that an $E$-Hom-graded category $\cc$ is \emph{$E$-semisimple} if:
\begin{enumerate}
\labela
\item its 1-subcategory $\cc^1$ is semisimple (see Section~\ref{sect-linear-categories});
\item for any $e \in E$, each object of $\cc$ is an $e$-direct sum of a finite family of simple objects of $\cc^1$.
\end{enumerate}
For $E=1$, we recover the usual notion of a semisimple category.
Note that a simple object of $\cc^1$ is nothing but an object of $\cc$ such that  $\End_\cc^1(X)$ is free of rank~1 (with basis $\id_X$). In particular, a simple object of $\cc^1$  may not be simple in $\cc$. Consequently, an $E$-semisimple $E$-Hom-graded category  is not necessarily  semisimple (in the usual sense of Section~\ref{sect-linear-categories}).

For example, the category of $E$-graded free \kt modules of finite rank and  all \kt linear homomorphisms (see Section~\ref{ex-graded-vector-all-maps}) is an $E$-semisimple $E$-Hom-graded category.

\begin{lem}\label{lem-E-semisimple}
Let $\cc$ be an $E$-semisimple $E$-Hom-graded category. Then:
\begin{enumerate}
\labeli
\item For any simple object $i$ of $\cc^1$ and $e \in E$, there is a simple  object $j$ of $\cc^1$ such that
$i \cong_e j$.
\item For any simple objects $i,j$ of $\cc^1$ and $e \in E$, the \kt module $\Hom_\cc^e(i,j)$ is free  of rank 1 if $i \cong_e j$ and $\Hom_\cc^e(i,j)=0$ otherwise.
\item Hom-sets in $\cc$ are free \kt modules of finite rank.
\item For any objects $X,Y\in \cc$ and $e,f \in E$, there is a   \kt linear isomorphism
$$
\Hom_\cc^{fe}(X,Y) \simeq \bigoplus_{i \in I} \Hom_\cc^f(i,Y)  \otimes_\kk \Hom_\cc^e(X,i),
$$
where  $I$ is any representative set of simple objects of $\cc^1$.
\end{enumerate}
\end{lem}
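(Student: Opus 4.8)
The plan is to prove the four assertions in the order (i), (ii), (iv), (iii), deducing the two analytic statements from the first two together with the direct-sum formula \eqref{eq-Hom-direct-sum}. The guiding idea is that everything is controlled by the simple objects of $\cc^1$, whose only structural input is that their degree-$1$ endomorphism modules are free of rank~$1$. For (i), fix a simple object $i$ of $\cc^1$ and $e\in E$. By axiom (b) of $E$-semisimplicity, $i$ is an $e^{-1}$-direct sum of a finite family $(X_\alpha)_{\alpha\in A}$ of simple objects of $\cc^1$. Applying \eqref{eq-Hom-direct-sum} with $d=1$ and both direct-sum degrees equal to $e^{-1}$ gives a \kt linear isomorphism
\[
\End_\cc^1(i)\;\simeq\;\bigoplus_{\alpha,\beta\in A}\Hom_\cc^{1}(X_\alpha,X_\beta).
\]
Since the $X_\alpha$ are simple in the semisimple category $\cc^1$, each summand on the right is free of rank $1$ when $X_\alpha\cong_1 X_\beta$ and is zero otherwise; in particular the diagonal summands already contribute rank $|A|$. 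As $i$ is simple, the left-hand side is free of rank $1$, so, $\kk$ being a nonzero commutative ring (whence free modules have a well-defined rank), we must have $|A|=1$. Writing $X_*$ for the unique member of the family, the defining data of the $e^{-1}$-direct sum reduce to a single pair $(p_*,q_*)$ with $q_*p_*=\id_i$ and $p_*q_*=\id_{X_*}$, so that $p_*\co i\to X_*$ is an isomorphism homogeneous of degree $e$. Thus $i\cong_e X_*$ and we may take $j=X_*$.

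For (ii), let $i,j$ be simple in $\cc^1$ and $e\in E$. By (i) choose a simple object $i'$ together with an $e$-isomorphism $i\iso i'$. Composition with it gives, via \eqref{eq-hom-graded}, a \kt linear isomorphism $\Hom_\cc^e(i,j)\simeq\Hom_\cc^1(i',j)$, and the target equals $\Hom_{\cc^1}(i',j)$, which is free of rank $1$ if $i'\cong_1 j$ and zero otherwise by semisimplicity of $\cc^1$. Finally, the rules \eqref{eq-cong-E} show that, given $i\cong_e i'$, one has $i'\cong_1 j\Leftrightarrow i\cong_e j$: composing $i'\cong_{e^{-1}} i$ with $i\cong_e j$ yields $i'\cong_1 j$, and composing $i\cong_e i'$ with $i'\cong_1 j$ yields $i\cong_e j$. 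This identifies exactly the case where $\Hom_\cc^e(i,j)$ is free of rank $1$, proving (ii).

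For (iv), the candidate isomorphism is the composition map
\[
\mu\co\bigoplus_{i\in I}\Hom_\cc^f(i,Y)\otimes_\kk\Hom_\cc^e(X,i)\longrightarrow\Hom_\cc^{fe}(X,Y),\qquad g\otimes h\mapsto g\circ h,
\]
which is well defined since $g\circ h$ has degree $fe$. To see $\mu$ is bijective, decompose $X$ and $Y$ as $1$-direct sums of finite families of simple objects of $\cc^1$ (axiom (b) with $e=1$); by \eqref{eq-Hom-direct-sum} both sides split as direct sums indexed by the simple constituents, and $\mu$ respects this splitting because composition is bilinear and compatible with the structural projections and injections. It therefore suffices to treat $X=i_0$, $Y=j_0$ simple. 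There, by (i) and (ii) there is a unique $i\in I$ with $\Hom_\cc^e(i_0,i)\neq0$, namely the representative of the simple that is $e$-isomorphic to $i_0$; for that $i$ this space is free of rank $1$, generated by an $e$-isomorphism $u$, while all other summands vanish. Then $g\otimes u\mapsto g\circ u$ is precisely the isomorphism $\Hom_\cc^f(i,j_0)\simeq\Hom_\cc^{fe}(i_0,j_0)$ of \eqref{eq-hom-graded} (precomposition with $u$), so $\mu$ is bijective. Assertion (iii) now follows: decomposing $X,Y$ into finitely many simples and applying (ii) shows each homogeneous component $\Hom_\cc^g(X,Y)$ is free of finite rank, and hence $\Hom_\cc(X,Y)$ is free of finite rank, only finitely many degrees $g$ contributing since $X$ and $Y$ are finite direct sums of simple objects.

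The main obstacle is (i): the sole hypothesis on a simple object is the rank-$1$ condition on its degree-$1$ endomorphisms, and the work lies in upgrading this into rigidity of an \emph{entire} $e$-direct-sum decomposition. The rank count through \eqref{eq-Hom-direct-sum}, together with the well-definedness of rank over $\kk$, is exactly what collapses the decomposition to a single summand and manufactures the required $e$-isomorphism. Once (i) is in hand, (ii) is a transport of structure along it, and (iii)–(iv) are bookkeeping with the direct-sum formula; the only point needing mild care in (iii) is confirming that finitely many homogeneous degrees actually occur.
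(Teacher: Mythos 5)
Your proofs of (i), (ii) and (iv) are correct and essentially reproduce the paper's arguments. Part (i) is the same rank count through \eqref{eq-Hom-direct-sum} applied to an $e^{-1}$-direct-sum decomposition of $i$ (your rank argument also disposes of the case $A=\emptyset$, which the paper handles by a separate remark), and part (ii) is the same transport of structure along an isomorphism furnished by (i) — you move the source $i$ forward by an $e$-isomorphism where the paper moves the target $j$ by an $e^{-1}$-isomorphism, which is immaterial. For (iv) your bookkeeping differs mildly from the paper's: the paper decomposes only $X$, as an $e^{-1}$-direct sum of objects of $I$, and exhibits the explicit mutually inverse pair $\phi \mapsto \sum_{i}\sum_{\alpha\in A_i}(\phi q_\alpha)\otimes_\kk p_\alpha$ and $v\otimes_\kk u \mapsto vu$, whereas you decompose both $X$ and $Y$ into simples and verify summand by summand, using (i), (ii) and \eqref{eq-hom-graded}, that composition is an isomorphism. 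Both routes rest on the same mechanism and your reduction is sound.

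There is, however, a genuine gap in your (iii): the assertion that only finitely many degrees $g$ contribute to $\Hom_\cc(X,Y)$ does not follow from the finiteness of the simple decompositions, and it is false in general. By (ii), $\Hom_\cc^g(i,j)\neq 0$ exactly when $i\cong_g j$, and the set of such $g$ is, when nonempty, a coset of the subgroup $\{h\in E \mid j\cong_h j\}$ of $E$; nothing in $E$-semisimplicity forces this subgroup to be finite. Concretely, in the category $\kk\tg_\CM$ of Example~\ref{ex-linearized-crossed-module} — which is $\CM$-fusion, hence $E$-semisimple, by Example~\ref{ex-linearized-crossed-module-fusion} — every object $x$ has $\End_{\kk\tg_\CM}(x)$ free with basis $\Ker(\CM)$, of infinite rank whenever $\Ker(\CM)$ is infinite (e.g.\ for $\CM\co \ZZ \to 1$). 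What your decomposition argument actually establishes (and what the paper's own one-line proof of (iii) yields) is that each homogeneous component $\Hom_\cc^d(X,Y)$ is free of finite rank; this graded form is also the only form invoked later in the paper, where (iii) is applied to multiplicity modules, which are homogeneous Hom-sets. So the step upgrading to the full Hom-set is both unjustified and unnecessary, and you should either drop it or note that it requires an additional finiteness hypothesis on $E$ (or on the subgroups $\{h \mid j\cong_h j\}$).
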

\begin{proof}
Let us prove Part (i). Pick a decomposition $(p_\alpha \co i \to i_\alpha, q_\alpha \co i_\alpha \to i)_{\alpha
\in A}$   of~$i$ as an $e^{-1}$-direct sum of a finite family $(i_\alpha)_{\alpha \in A}$ of simple objects $\cc^1$.
Formula~\eqref{eq-Hom-direct-sum} implies that
$$
\bigoplus_{\alpha,\beta \in A} \Hom_\cc^1(i_\alpha,i_\beta)\simeq\End_\cc^1 (i)
\quad
\text{and so}
\quad
\bigoplus_{\alpha \in A} \End_\cc^1(i_\alpha)\subset\End_\cc^1 (i).
$$
Since $i$ and each $i_\alpha$ are simple in $\cc^1$, taking the rank in the latter inclusion gives that the cardinal of $A$ is less than or equal to 1. Since the set $A$ is nonempty (otherwise $i$ would be the zero object which is not simple in $\cc^1$), it is a singleton $A=\{\alpha_0\}$. Set $j=i_{\alpha_0}$. Then $p=p_{\alpha_0} \co i \to j$ is a homogeneous morphism of degree~$e$ and $q=q_{\alpha_0} \co j \to i$ is a homogeneous morphism of degree $e^{-1}$ such that  $\id_i=qp$ and $pq=\id_j$. Thus~$p$ is  an $e$-isomorphism and so $i \cong_e j$.

Let us prove Part (ii). If $i \cong_e j$, then Formula \eqref{eq-hom-graded} implies that $\Hom_\cc^{e}(i,j) \simeq \End_\cc^1(i)$ is free of rank 1 (since $i$ is simple in $\cc^1$). Assume that $i \not \cong_e j$. By Part (i), there is a simple object $k$ of $\cc^1$ such that $j \cong_{e^{-1}} k$. Then $i \not \cong_1 k$ (indeed, using  \eqref{eq-cong-E} and since $k \cong_e j$, if
$i\cong_1 k$ then $i \cong_e j$). Then the simple objects $i,j$ of $\cc^1$ are not isomorphic in $\cc^1$ and so $\Hom_\cc^1(i,k)=0$ (since $\cc^1$ is semisimple). Hence, using \eqref{eq-hom-graded}, we conclude that
$\Hom_\cc^e(i,j)\simeq \Hom_\cc^1(i,k)=0$.

Since each object of $\cc$ is a $1$-direct sum of a finite family of simple objects of $\cc^1$, Part (iii) follows directly from \eqref{eq-Hom-direct-sum} and Part (ii).

Let us prove Part (iv).  Pick a decomposition $(p_\alpha \co X \to i_\alpha, q_\alpha \co i_\alpha \to X)_{\alpha
\in A}$   of~$X$ as an $e^{-1}$-direct sum of a finite family $(i_\alpha)_{\alpha \in A}$ of objects in $I$. For $i\in I$, consider the subset $A_i$ of $A$ consisting of elements $\alpha \in A$ such that $i_\alpha =i$. Note that $A_i$ is empty for all but a finite number of $i \in I$. It follows from the definition of an~$e^{-1}$-direct sum decomposition (see Section~\ref{sect-Hom-graded-direct-sums}) that the map
$$
\phi \in \Hom_\cc^{fe}(X,Y) \mapsto \sum_{i \in I} \left ( \sum_{\alpha \in A_i} (\phi q_\alpha)  \otimes_\kk p_\alpha \right ) \in \bigoplus_{i \in I} \Hom_\cc^f(i,Y)  \otimes_\kk \Hom_\cc^e(X,i)
$$
is a \kt linear isomorphism with inverse defined on the summands by
\begin{equation*}
v \otimes_\kk u\in \Hom_\cc^f(i,Y)  \otimes_\kk \Hom_\cc^e(X,i) \mapsto vu \in \Hom_\cc^{fe}(X,Y).  \qedhere
\end{equation*}
\end{proof}

\begin{lem}\label{lem-E-action-on-representatives}
Let $\cc$ be an $E$-Hom-graded category such that its 1-subcategory~$\cc^1$ is semisimple. Let $I$ be a representative set of simple objects of $\cc^1$. Then the following assertions are equivalent:
\begin{enumerate}
\labeli
\item The category $\cc$ is $E$-semisimple.
\item For all $i \in I$ and $e \in E$, there exists a unique $j \in I$ such that $i \cong_e j$.
\item There is a left action $E \times I \to I$, $(e,i) \mapsto e \cdot i$, such that:
$$
i \cong_e j \Longleftrightarrow j=e \cdot i \quad \text{for all $i,j\in I$ and $e \in E$.}
$$
\end{enumerate}
\end{lem}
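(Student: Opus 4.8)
The plan is to establish the cycle of implications (i) $\Rightarrow$ (ii) $\Rightarrow$ (iii) $\Rightarrow$ (i). The two forward implications are essentially formal manipulations with the relation $\cong_e$ and its properties recorded in \eqref{eq-cong-E}, while the return implication (iii) $\Rightarrow$ (i) carries the actual content.

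For (i) $\Rightarrow$ (ii), I would invoke Lemma~\ref{lem-E-semisimple}(i): given $i \in I$ and $e \in E$, it produces a simple object $j_0$ of $\cc^1$ with $i \cong_e j_0$; replacing $j_0$ by the unique $j \in I$ with $j_0 \cong_1 j$ and composing via \eqref{eq-cong-E} yields $i \cong_e j$, giving existence. For uniqueness, if $i \cong_e j$ and $i \cong_e j'$ with $j,j' \in I$, then \eqref{eq-cong-E} gives $j \cong_{e^{-1}} i$ and then $j \cong_1 j'$; as $j,j'$ both lie in the representative set $I$, they coincide. For (ii) $\Rightarrow$ (iii), I would define $e \cdot i$ to be the unique $j \in I$ with $i \cong_e j$ furnished by (ii); the axioms of a left action follow from \eqref{eq-cong-E}, since $i \cong_1 i$ forces $1 \cdot i = i$, and chaining $i \cong_e (e\cdot i)$ with $(e\cdot i) \cong_f f\cdot(e\cdot i)$ gives $i \cong_{fe} f\cdot(e\cdot i)$, whence $(fe)\cdot i = f\cdot(e\cdot i)$ by uniqueness. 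The stated equivalence is then immediate from the definition.

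The main work is (iii) $\Rightarrow$ (i), and this is the step I expect to require care. Since $\cc^1$ is semisimple by hypothesis, every object $X$ admits a finite decomposition as a $1$-direct sum $(p_\alpha \co X \to i_\alpha, q_\alpha \co i_\alpha \to X)_{\alpha \in A}$ with each $i_\alpha \in I$ (absorbing $1$-isomorphisms into the $p_\alpha, q_\alpha$ if needed), which settles the case $e=1$. To produce an $e$-direct sum for an arbitrary $e \in E$, I would set $j_\alpha = e^{-1}\cdot i_\alpha \in I$; by (iii) there is an $e$-isomorphism $s_\alpha \co j_\alpha \to i_\alpha$, with inverse $s_\alpha^{-1}$ of degree $e^{-1}$. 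Putting $q'_\alpha = q_\alpha s_\alpha$ (of degree $e$) and $p'_\alpha = s_\alpha^{-1} p_\alpha$ (of degree $e^{-1}$), one checks that $\sum_\alpha q'_\alpha p'_\alpha = \sum_\alpha q_\alpha p_\alpha = \id_X$ and $p'_\alpha q'_\beta = s_\alpha^{-1}(p_\alpha q_\beta) s_\beta = \delta_{\alpha,\beta}\,\id_{j_\alpha}$, exhibiting $X$ as an $e$-direct sum of the simple objects $j_\alpha$ of $\cc^1$. The crux is precisely this twisting of a $1$-direct sum decomposition into an $e$-direct sum by the $e$-isomorphisms supplied by the action, together with the verification that the defining identities of Section~\ref{sect-Hom-graded-direct-sums} survive the twist; everything else is bookkeeping on degrees.
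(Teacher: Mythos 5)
Your proof is correct and follows essentially the same route as the paper's: the implications (i)$\Rightarrow$(ii) and (ii)$\Rightarrow$(iii) via Lemma~\ref{lem-E-semisimple}(i), \eqref{eq-cong-E}, and uniqueness in the representative set, and for (iii)$\Rightarrow$(i) the twisting of a $1$-direct sum decomposition by the $e$-isomorphisms supplied by the action, exactly as in the paper (your $s_\alpha$ playing the role of the paper's $\phi_\alpha$). Your remark about absorbing $1$-isomorphisms so that the summands lie in $I$ is a harmless refinement of the paper's argument.
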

\begin{proof}
Let us prove that (i) implies (ii). Let $i \in I$ and $e \in E$. By Lemma~\ref{lem-E-semisimple}(i), there is a simple object $k$ of $\cc^1$  such that $i \cong_e k$. Since $I$ is a representative set of simple objects of $\cc^1$, there is $j \in I$ such that $k \cong_1 j$. Then $i \cong_e j$ by \eqref{eq-cong-E}. If $j' \in I$ is such that $i \cong_e j'$, then $j \cong_1 j'$ by \eqref{eq-cong-E}, and so $j=j'$ since $I$ is a representative set of simple objects of $\cc^1$.

Let us prove that (ii) implies (iii). For any $i \in I$ and $e \in E$, denote by $e\cdot i$ the unique element of $I$ verifying
$i \cong_e e\cdot i$. This defines a map $E \times I \to I$. It follows from \eqref{eq-cong-E} and the uniqueness in (ii) that this map is a left action such that $i \cong_e j$ if and only if $j=e \cdot i$.

Let us prove that (iii) implies (i). Let $X$ be an object of $\cc$ and $e \in E$. We need to prove that $X$ is an $e$-direct sum of a finite family of simple objects of $\cc^1$. Since~$\cc^1$ is semisimple, there is a decomposition $(p_\alpha \co X \to j_\alpha, q_\alpha \co j_\alpha \to i)_{\alpha \in A}$ of~$X$ as a $1$-direct sum of a finite family $(j_\alpha)_{\alpha \in A}$ of simple objects.  For any $\alpha \in A$, set $i_\alpha=(e^{-1} \cdot j_\alpha) \in I$. Then $j_\alpha =e \cdot i_\alpha$ and so $i_\alpha \cong_e j_\alpha$. Pick an $e$-isomorphism $\phi_\alpha \co i_\alpha \to j_\alpha$ and set
$P_\alpha=\phi_\alpha^{-1} p_\alpha \co X \to i_\alpha$ and  $Q_\alpha=q_\alpha\phi_\alpha \co i_\alpha \to X$. It is easy to verify that $(P_\alpha, Q_\alpha)_{\alpha \in A}$ is a decomposition of~$X$ as an $e$-direct sum of the finite family $(i_\alpha)_{\alpha \in A}$.
\end{proof}

\subsection{Multiplicity numbers and partitions}\label{sect-multiplicity-partitions}
Let $\cc$ be an $E$-semisimple $E$-Hom-graded category. Given an object $X$ of $\cc$, a simple object $i$ of $\cc^1$, and $e \in E$, we denote by $N^{i,e}_X \geq 0$ the number of simple objects 1-isomorphic to~$i$  in decomposition of~$X$ as an $e$-direct sum of a finite family of simple objects of $\cc^1$. By \eqref{eq-Hom-direct-sum} and Lemma~\ref{lem-E-semisimple}(ii), this number is equal to the ranks of   the free \kt modules  $\Hom_\cc^e(i,X)$ and $\Hom_\cc^{e^{-1}}\!(X,i)$. We call $N^{i,e}_X$ the \emph{degree $e$ multiplicity index of $i$ in~$X$}. It follows from \eqref{eq-Hom-direct-sum} that if $X$ is a $f$-direct sum of a finite family $(X_\alpha)_{\alpha \in A}$ of objects of~$\cc$ with $f \in E$, then
$$
N^{i,e}_X=\sum_{\alpha \in A} N^{i,f^{-1}e}_{X_\alpha}.
$$

If $I$ is a representative set of simple objects of $\cc^1$, then for all objects $X$ of $\cc$ and $e \in E$, the family $\{N^{i,e}_X\}_{i\in I}$ has only a finite number  of nonzero terms, and Lemma~\ref{lem-E-semisimple}(iv) implies that
for all objects $X,Y$ of $\cc$ and $e,f \in E$,
$$
\mathrm{rank}_\kk\bigl(\Hom_\cc^{fe} (X,Y) \bigr)=\sum_{i\in I} N^{i,f}_Y  N^{i,e}_X.
$$

Given a simple object $i$ of $\cc^1$ and $e \in E$, an \emph{$(i,e)$-partition} of an object   $X$  of~$\cc$  is a family of morphisms  $(p_\alpha\co X \to i, \,  q_\alpha\co i \to X)_{\alpha \in \Lambda}$ such that $(p_\alpha)_{\alpha \in \Lambda}$ is a basis of $\Hom_\cc^{e^{-1}}\!(X,i)$, $(q_\alpha)_{\alpha \in \Lambda}$ is a basis of $\Hom_\cc^e(i,X)$, and $ p_\alpha q_\beta=\delta_{\alpha,\beta} \, \id_{i} $ for all $\alpha,\beta\in \Lambda$.  Note that the cardinality of $\Lambda$  is equal to the multiplicity index $N^{i,e}_X$. The existence of $(i,e)$-partitions follows from
the fact that each object of $\cc$ is an $e$-direct sum of simple objects of $\cc^1$.

Given a representative set $I$ of simple objects of $\cc^1$ and $e \in E$, an \emph{$(I,e)$-partition} of an object $X$ of $\cc$ is a decomposition $(p_\alpha\co X \to i_\alpha,q_\alpha\co i_\alpha \to X)_{\alpha \in A}$ of $X$ as an $e$-direct sum of a finite family $(i_\alpha)_{\alpha \in A}$ of objects in $I$. For such an $(I,e)$-partition and any $i \in I$, the family   $( p_\alpha  , q_\alpha)_{\alpha \in A, \, i_\alpha=i }$ is an $(i,e)$-partition of $X$. Conversely, a union of $(i,e)$-partitions of $X$  over
all $i\in I$ is  an $(I,e)$-partition of $X$.

\section{Monoidal categories graded by a crossed module}\label{sect-chi-graded-categories}

We introduce categories graded by a crossed module which are linear monoidal categories whose Hom-sets are graded by the source of the crossed module and whose objects are graded by the target of the crossed module in some compatible way (involving the crossed module axioms).

Throughout this section,  $\CM \co E \to H$ denotes a crossed module. Recall from Section~\ref{sect-crossed-modules-def} that it comes equipped with a left action $(x,e) \in H \times E \mapsto \elact{x}{e} \in E$.

\subsection{$\CM$-categories}\label{sect-crossed-module-graded-categories}
A \emph{$\CM$-graded category (over $\kk$)}, or shorter a  \emph {$\CM$-category}, is a $\kk$-linear  mo\-noi\-dal category $\cc=(\cc,\otimes,\un)$ which is $E$-Hom-graded (see Section~\ref{sect-Hom-graded-categories}) and is endowed with a subclass $\cch$ of nonzero objects of $\cc$, called \emph{homogeneous objects}, and with a map $|\cdot| \co \cch \to H$, called the \emph{degree map}, such that:
\begin{enumerate}
  \labela
  \item  Each object of $\cc$ is a $1$-direct sum of a finite family of homogeneous objects.
  \item  For all homogenous objects $X,Y$ and $e \in E$ such that $|Y|\neq \CM(e)|X|$,
  $$
  \Hom_\cc^e(X,Y)=0.
  $$
  \item  The monoidal product $X \otimes Y$ of any two homogeneous objects $X,Y$ is a $1$-direct sum of homogeneous objects of degree $|X||Y|$.
  \item The unit object $\un$ is homogeneous with trivial degree (i.e., $|\un|=1\in H$).
    \item The monoidal product $\alpha \otimes \beta$ of any two nonzero homogeneous morphisms $\alpha,\beta$ is a homogeneous morphism of degree
$$
|\alpha\otimes \beta|=|\alpha|\,\lact{|s(\alpha)|}{|\beta|},
$$
whenever the source $s(\alpha)$ of $\alpha$ is a homogeneous object. In other words, for any objects $X,Y,Z,T$ with $X$ homogeneous  and for any morphisms $\alpha \in \Hom_\cc^e(X,Y)$, $\beta \in \Hom_\cc^f(Z,T)$ with $e,f \in E$, we have:
$$
\alpha \otimes \beta \in  \Hom_\cc^{e \lact{|X|}{\!f}}(X\otimes Z,Y \otimes T).
$$
\item The associativity constraints $(X\otimes Y)\otimes Z\cong
X\otimes (Y\otimes Z)$ and the unitality constraints $X\otimes \un
\cong X\cong \un \otimes X$ of $\cc$ are all homogenous of degree $1 \in E$.
\end{enumerate}
Note that we  will suppress in our formulas  the associativity  and unitality  constraints. Indeed, this does not lead  to   ambiguity because all legitimate ways of inserting these constraints  give the same result (by  MacLane's   coherence theorem) and do not affect the degree of morphisms (by the above axioms).

Axiom (a) implies that the Hom-sets in $\cc$ are fully determined by the Hom-sets between homogeneous objects.
Axiom (b) and  \eqref{eq-Hom-direct-sum} imply that if $(X_\alpha)_{\alpha \in A}$,  $(Y_\beta)_{\beta \in B}$ are  finite families of homogeneous objects and  $e \in E$ is such that $|Y_\beta| \neq \CM(e)\,|X_\alpha|$ for all $\alpha \in A$ and $\beta \in B$, then
\begin{equation}\label{eq-dir-sum-homogeneous-objects}
\Hom_\cc^e \left( \bigoplus_{\alpha \in A}^1 X_\alpha,\bigoplus_{\beta \in B}^1 Y_\beta \right )=0.
\end{equation}
Axiom (b) implies that for all homogenous objects $X,Y$,  we have
$$
\Hom_{\cc}(X,Y)= \hspace{-1.8em} \bigoplus_{e \in \CM^{-1}(|Y||X|^{-1})} \hspace{-1.8em} \Hom_\cc^e(X,Y) \quad \text{and} \quad \End_{\cc}(X)=\!\!\bigoplus_{e \in \Ker(\CM)} \!\!\End_\cc^e(X).
$$
In particular $\Hom_\cc(X,Y)=0$ whenever $|Y||X|^{-1} \not  \in \Ima(\CM)$.
Also, if $\alpha \co X \to Y$ is a nonzero homogeneous morphism between homogeneous objects, then
\begin{equation}\label{eq-deg-morphism-objects}
|Y|=\CM(|\alpha|)\,|X|,
\end{equation}
where $|\alpha| \in E$ is the degree of $\alpha$ (see Section~\ref{sect-Hom-graded-categories}). In particular, 1-isomorphic homogenous  objects have the same degree.

Axiom (e) implies that for any homogenous objects $X,Y,Z$ of $\cc$ and $e \in E$,
$$
X \cong_e Y \quad \Longrightarrow \quad X \otimes Z \cong_e Y \otimes Z \quad \text{and} \quad  Z \otimes X \cong_{\lact{|Z|}{e}} Z \otimes Y.
$$

\subsection{Relations with group-graded monoidal categories}\label{sect-group-graded-categories}
Recall that a \kt linear monoidal category $\dd$ is \emph{graded by the group $H$}, or shorter \emph{$H$-graded}, if it decomposes as a direct sum
$$
\dd=\bigoplus_{h \in H} \dd_h,
$$
of full subcategories   such that $\un \in \dd_1$ and $\dd_g \otimes \dd_h \subset \dd_{gh}$ for all $g,h \in H$ (see for example \cite{TVi1} or \cite{EGNO}). The monoidal subcategory $\dd_1$ is called the \emph{neutral component} of $\dd$.

The 1-subcategory~$\cc^1$ of a $\CM$-graded category $\cc$ is an $H$-graded \kt linear monoidal category:
$$
\cc^1=\bigoplus_{h \in H} \cc^1_h
$$
where $\cc^1_h$ is the subcategory of $\cc$ whose objects are (finite)  $1$-direct sums of homogeneous objects of $\cc$ of degree $h$ and whose morphisms are homogeneous morphisms of degree $1\in E$.

Also, any $\CM$-graded category $\cc$ is $\Coker(\CM)$-graded:
$$
\cc=\!\!\! \bigoplus_{p \in \Coker(\CM)} \!\!\! \cc_p
$$
where $\cc_p$ is the full subcategory of $\cc$ whose objects are (finite) $1$-direct sums of homogeneous objects of $\cc$ with degree in the preimage $\pi^{-1}(p) \subset H$ of $p$ under the canonical projection $\pi \co H \to \Coker(\CM)=H/\Ima(\CM)$.

\subsection{Particular cases} 1. Given a group $H$, the trivial map $1 \to H$ is a crossed module and the notion of a $(1\to H)$-category  agrees with that of an $H$-graded \kt linear monoidal category (see Section~\ref{sect-group-graded-categories}).

2. Given an abelian group $A$, the trivial map $A \to 1$ is a crossed module and the notion of a $(A\to 1)$-category  corresponds to that of a \kt linear monoidal category whose Hom-sets are $A$-graded \kt modules so that the identities and monoidal constraints have trivial degree and the composition and monoidal product of morphisms are both multiplicative with respect to the degree: for any $\alpha \in \Hom_\cc^a(X,Y)$, $\beta \in \Hom_\cc^b(Y,Z)$, $\gamma \in \Hom_\cc^c(U,V)$ with $a,b,c \in A$, we have:
$$
\beta \circ \alpha \in  \Hom_\cc^{ba}(X,Z) \quad \text{and} \quad \alpha \otimes \gamma \in  \Hom_\cc^{ac}(X\otimes U,Y \otimes V).
$$

\subsection{Example}\label{ex-linearized-crossed-module}
Recall from Section~\ref{sect-crossed-modules-two-groups} that the crossed module $\CM$ gives rise to a strict 2-group $\tg_\CM$.
Let us consider the linearization $\kk\tg_\CM$ of this 2-group. The objects of $\kk\tg_\CM$ are those of $\tg_\CM$, that is, the elements of~$H$.
For any $x,y \in H$, the set $\Hom_{\kk\tg_\CM}(x,y)$ is the free \kt module with basis $\Hom_{\tg_\CM}(x,y)=\{e \in E \, | \, y=\CM(e)x\}$. The composition and monoidal product of $\kk\tg_\CM$ extend those of $\tg_\CM$ by bilinearity.
Then $\kk\tg_\CM$ is a \kt linear strict monoidal category. It is $\CM$-graded as follows: each object $x \in H$ is homogeneous with degree $|x|=x$, and for any $x,y \in H$ and $e \in E$,
$$
\Hom_{\kk\tg_\CM}^e(x,y)=\left\{ \begin{array}{ll} \kk e & \text{if $y=\CM(e)x$,} \\ 0 & \text{otherwise.} \end{array} \right.
$$

\subsection{Example}\label{ex-graded-vector-graded-maps}
Let $E$ be a normal subgroup of a group $H$. A \kt linear homomorphism $\alpha \co M \to N$ between $H$-graded \kt modules is \emph{homogeneous of degree $e\in E$} if $\alpha(M_h)  \subset N_{eh}$ for all $h \in H$.  Such homomorphisms form a submodule $\Hom_\kk^e(M,N)$ of $\Hom_\kk(M,N)$. A \kt linear homomorphism $\alpha \co M \to N$ between $H$-graded \kt modules is \emph{$E$-graded} if it is a linear combination of homogeneous \kt linear homomorphisms with degree in $E$. Such homomorphisms form a submodule $\Hom_\kk^E(M,N)$ of $\Hom_\kk(M,N)$. Clearly,
$$
\Hom_\kk^E(M,N)=\bigoplus_{e\in E} \Hom_\kk^e(M,N).
$$
With usual composition and identities, $H$-graded \kt modules and $E$-graded \kt linear homomorphisms form an $E$-Hom-graded category denoted $\lmm{E,H}$.
Note that $\lmm{E,E}$ is the category $\lmm{E}$ of Example~\ref{ex-graded-vector-all-maps}. We endow $\lmm{E,H}$ with   the   monoidal product    defined on objects   by
$$
M \otimes N=\bigoplus_{h\in H} \, (M \otimes N)_h \quad \text{where} \quad (M \otimes N)_h=\bigoplus_{\substack{x,y \in H \\ \, xy=h}} M_x \otimes_\kk N_y ,
$$
and on morphisms by $\alpha \otimes \beta=\alpha \otimes_\kk \beta$. This gives a \kt linear monoidal category  with unit object  $\un$ defined by $\un_1=\kk$ and $\un_h=0$ for all $h\in H\setminus \{1\}$. An $H$-graded module $M=\bigoplus_{h \in H} M_h$ is \emph{homogeneous} if it is nonzero and there is $h \in H$ such that $M_x=0$ for all $x\in H\setminus \{h\}$. Such an $h$ is then unique and called the \emph{degree} of~$M$. With these homogenous objects, $\lmm{E,H}$ becomes a $(E\hookrightarrow H)$-category, where the inclusion $E\hookrightarrow H$ is a crossed module with the conjugation action of $H$ on $E$.

\subsection{Examples from representations}\label{ex-Hopf-Xi-coalgebras}
Recall that the category of representations of a Hopf algebra is monoidal (and closed). Examples of $\CM$-graded categories can be constructed similarly: in \cite{SV}, we introduce Hopf $\CM$-(co)algebras and prove that their categories of representations are $\CM$-categories.

\subsection{Push-forward of $\CM$-categories}\label{sect-push-forward}
Let $\phi=(\psi \co E \to E',\varphi \co H \to H')$ be a morphism from a crossed module $\CM \co E \to H$ to a crossed module $\CM' \co E' \to H'$ (see Section~\ref{sect-crossed-modules-maps}). Every      $\CM$-category  $\cc$ determines a $\CM'$-category  $\phi_*(\cc)$   called the \emph{push-forward} of~$\cc$. By definition, $\phi_*(\cc)=\cc$ as  monoidal categories. The $E'$-Hom-grading of $\phi_*(\cc)$ is given for any objects $X,Y$ and $e' \in E$ by
$$
\Hom_{\phi_*(\cc)}^{e'}(X,Y)=\bigoplus_{e \in \psi^{-1}(e')} \Hom_\cc^e(X,Y).
$$
Homogenous objects of $\phi_*(\cc)$ are the same as those of $\cc$. The degree of a homogenous object $X$ of $\phi_*(\cc)$ is computed from the degree of $X$ in $\cc$ by
$$
|X|_{\phi_*(\cc)}=\varphi(|X|_\cc).
$$
Note that 1-subcategory of $\phi_*(\cc)$ has the same objects as $\cc$ and its morphisms are the linear combinations of homogenous morphisms of $\cc$ with degree in $\Ker(\psi)$.

\subsection{Pivotal $\CM$-categories}\label{sect-crossed-module-graded-pivotal}
Recall that a monoidal category $\cc=(\cc,\otimes,\un)$ is pivotal if it is endowed with a pivotal structure, that is, if  each
object $X$ of $\cc$ has a dual object~$X^*\in \cc$ and four morphisms
\begin{align*}
& \lev_X \co X^*\otimes X \to\un,  \qquad \lcoev_X\co \un  \to X \otimes X^*,\\
&   \rev_X \co X\otimes X^* \to\un, \qquad   \rcoev_X\co \un  \to X^* \otimes X,
\end{align*}
called left/right (co)evaluations, satisfying   several   conditions which say, in summary,  that the associated  left/right dual functors   coincide as   monoidal functors (see \cite{TVi5} for details). In particular, each morphism $\alpha\co X \to Y$ in~$\cc$ has a dual morphism $\alpha^*\co Y^* \to X^*$  computed by
\begin{align*}
\alpha^*&=(\lev_Y \otimes  \id_{X^*})(\id_{Y^*}  \otimes \alpha \otimes \id_{X^*})(\id_{Y^*}\otimes \lcoev_X)\\
 &= (\id_{X^*} \otimes \rev_Y)(\id_{X^*} \otimes \alpha \otimes \id_{Y^*})(\rcoev_X \otimes \id_{Y^*}).
\end{align*}
The \emph{left} and  \emph{right traces} of an endomorphism $\alpha$ of an object
$X$ of a pivotal category~$\cc$ are defined by
$$
\tr_l(\alpha)=\lev_X(\id_{\ldual{X}} \otimes \alpha) \rcoev_X  \quad {\text {and}}\quad \tr_r(\alpha)=  \rev_X( \alpha \otimes
\id_{\ldual{X}}) \lcoev_X .
$$
Both traces take values in   the commutative monoid   $\End_\cc(\un)$ and are symmetric: $\tr_l
(\beta\gamma)=\tr_l(\gamma\beta)$ for any morphisms $\beta\co X\to Y$, $\gamma\co Y\to X$ in $\cc$
and similarly for~$\tr_r$. Also $\tr_l( \alpha^*)=\tr_r(\alpha)$ and $\tr_r( \alpha^*)=\tr_l(\alpha)$ for any endomorphism in $\cc$.
The \emph{left} and  \emph{right dimensions} of an object $X$ of~$\cc$   are defined by
$$
\dim_l(X)=\tr_l(\id_X) \in\End_\cc(\un)  \quad \text{and} \quad \dim_r(X)=\tr_r(\id_X) \in\End_\cc(\un).
$$
Clearly, $\dim_l(X^*)=\dim_r(X)$ and $\dim_r(X^*)=\dim_l(X)$ for all $X$.

A $\CM$-category is \emph{pivotal} if it is endowed with a pivotal structure such that:
\begin{enumerate}
  \labela
  \item The dual $X^*$ of any homogenous object $X$ is homogenous of degree $$|X^*|=|X|^{-1}.$$
  \item All (co)evaluations morphisms are homogenous of degree $1 \in E$.
\end{enumerate}
The second axiom implies that the 1-subcategory of a pivotal $\CM$-category is itself pivotal.
In a pivotal $\CM$-category $\cc$, the dual $\alpha^* \co Y^* \to X^*$ of a nonzero homogenous morphism $\alpha \co X \to Y$ between homogenous objects is homogenous of degree
$$
|\alpha^*|=\lact{|Y|^{-1}}{\!|\alpha|}=\lact{|X|^{-1}}{\!|\alpha|}.
$$
This follows from the above expressions of the dual morphism. (Note that the latter equality may also be deduced from \eqref{eq-deg-morphism-objects} and the crossed module axioms.) Moreover, for any $e \in E$ and any homogeneous endomorphism $\alpha \in \End_\cc^e(X)$ of a homogenous object $X$ of $\cc$,
\begin{equation*}
\tr_l(\alpha)\in \End_\cc^{\bigl(\lact{|X|^{-1}}{\!\!e}\bigr)}(\un) \quad {\text {and}}\quad\tr_r(\alpha)\in \End_\cc^e(\un) .
\end{equation*}
This together with the fact that  objects of $\cc$ are $1$-direct sum of homogeneous objects imply that both traces of homogeneous endomorphisms of degree 1 take values in  $\End_\cc^1(\un)$. In particular, both dimensions of objects of $\cc$ take values in~$\End_\cc^1(\un)$.

\subsection{Spherical $\CM$-categories}\label{sect-crossed-module-graded-spherical}
A \emph{spherical $\CM$-category} is a pivotal $\CM$-category $\cc$ such that its 1-subcategory $\cc^1$ is spherical (as a pivotal category), that is, if the left and right traces of any homogenous endomorphism of degree 1  are equal.

In a spherical $\CM$-category, the \emph{trace} of a homogeneous endomorphism~$\alpha$ of degree~1 is defined by
$$
\tr(\alpha)=\tr_l(\alpha)=\tr_r(\alpha) \in \End_\cc^1(\un).
$$
Likewise, the \emph{dimension} of an object~$X$ is  defined by
$$
\dim(X)=\tr(\id_X)=\dim_l(X)=\dim_r(X)  \in \End_\cc^1(\un).
$$

\subsection{$\CM$-fusion categories}\label{sect-crossed-module-graded-fusion}
A $\CM$-category $\cc$ is \emph{pre-fusion} if it is $E$-semisimple (see Section~\ref{sect-Hom-graded-semisimple}) and the unit object $\un$ is  simple in the 1-subcategory $\cc^1$  of $\cc$, that is, $\End_\cc^1(\un)$ is free of rank 1. The map $\kk \to \End_\cc^1(\un)$, $k \mapsto k \, \id_\un$  is then a \kt algebra isomorphism
which we use  to identify $\End_\cc^1(\un)=\kk$. In a pre-fusion $\CM$-category, if an object $X$ is a $1$-direct sum of homogeneous objects of degree in $D \subset H$ and $i$ is a homogeneous simple object of $\cc^1$, then $N^{i,e}_X=0$ for all $e \in E$ such that $\CM(e)|i| \not \in D$. The left and right dimensions of a simple object in the 1-subcategory of a pivotal pre-fusion $\CM$-category are invertible (see for example \cite[Lemma 4.2]{TVi5}).

A \emph{$\CM$-representative set} for a pre-fusion $\CM$-category $\cc$ is a  set $I$  of simple  objects of the 1-subcategory $\cc^1$ such that $\un \in I$, all elements of $I$ are homogeneous, and every simple object of $\cc^1$ is 1-isomorphic to a unique element of~$I$. Note that $I$ is then representative set of simple objects of $\cc^1$ in the sense of Section~\ref{sect-linear-categories}.
Any   such  $\CM$-representative set $I$ splits  as a disjoint union $I=\amalg_{h\in H}\, I_h$  where $I_h$ is the set of all elements of $I$ of degree $h$.  Moreover, any homogeneous object of~$\cc$ of degree $h \in H$ is a 1-direct sum of a finite family of elements in $I_h$.

By a \emph{$\CM$-fusion category (over $\kk$)} we mean a pre-fusion $\CM$-category $\cc$ (over $\kk$) such that for any $h \in H$, the set of $1$-isomorphism classes of degree $h$ homogeneous simple objects of~$\cc^1$  is finite  and non-empty. In particular, any $\CM$-representative set $I=\amalg_{h\in H}\, I_h$ for a $\CM$-fusion category is such that $I_h$ is finite for all $h \in I$.

Let $\cc$ be a pivotal $\CM$-fusion category. Recall from Section~\ref{sect-group-graded-categories} that  the 1-subcategory $\cc^1$ of $\cc$  decomposes as $\cc^1=\bigoplus_{h \in H} \cc^1_h$. Then $\cc^1$ is a pivotal $H$-fusion category in the sense of \cite{TVi1}, and its neutral component $\cc^1_1$ is a pivotal fusion category in the usual sense.
The \emph{dimension} of $\cc_1^1$ (as a pivotal fusion category) is defined by
$$
\dim(\cc_1^1)=\sum_{i\in J} \dim_l(i)\dim_r(i) \in \End_\cc^1(\un)=\kk,
$$
where $J$ is any (finite) representative set   of   simple objects of   $\cc_1^1$.
Note that  if   $\kk $ is an algebraically closed field of characteristic zero, then $\dim(\cc_1^1)\neq 0$ (see \cite{ENO}). By \cite[Lemma 4.1]{TVi1}, if $I=\amalg_{h\in H}\, I_h$ is a $\CM$-representative set for $\cc$, then for all~$h \in H$,
\begin{equation}\label{eq-computation-dimension-neutral-component}
\sum_{i \in I_h} \dim_l( i)\dim_r( i) =\dim(\cc_1^1).
\end{equation}

\subsection{Remark}\label{rk-case-injective-G-fusion}
If $\CM\co E \to H$ is injective (for example when $E=1$), then any $\CM$-fusion category $\cc$, endowed with the $\Coker(\CM)$-grading defined in Section~\ref{sect-group-graded-categories}, is a $\Coker(\CM)$-fusion category (in the sense of \cite{TVi1}) whose neutral component has dimension $\dim(\cc_1^1)$.

\subsection{Example}\label{ex-linearized-crossed-module-fusion}
The  $\CM$-category $\kk\tg_\CM$ from Example~\ref{ex-linearized-crossed-module} has a canonical pivotal structure: the dual of an object $h \in H$ is $h^*=h^{-1}$ with (co)evaluation morphisms all given by the unit element $1 \in E$. Then $\kk\tg_\CM$ is a spherical $\CM$-fusion category with $\CM$-representative set $I=H$ and with $\dim((\kk\tg_\CM)^1_1)=1_\kk$.

\subsection{Example}\label{ex-graded-vector-graded-maps-fusion}
Let $E$ be a normal subgroup of a group $H$. The inclusion $E\hookrightarrow H$ is a crossed module with the conjugation action of $H$ on $E$. Denote by $(E,H)\mti\vect_\kk$ the category of $H$-graded free \kt modules of finite rank and $E$-graded \kt linear homomorphisms. This is an $(E\hookrightarrow H)$-category (as a full subcategory of the category~$\lmm{E,H}$ from Example~\ref{ex-graded-vector-graded-maps}).
It has  a canonical pivotal structure: the dual of an object $M=\bigoplus_{h \in H} M_h $ is
$$
M^*=\bigoplus_{h \in H} (M^*)_h \quad \text{where} \quad (M^*)_h=\Hom_\kk(M_{h^{-1}},\kk)
$$
with evaluations $\lev_M\co M^* \otimes M \to \un$ and $\rev_M \co M \otimes M^* \to \un$ given  for any $h,k \in H$, $m \in M_h$, and $f \in (M^\ast)_k$ by
$$
\lev_M(f \otimes_\kk m)=\rev_M(m \otimes_\kk f)=  \delta_{k,h^{-1}}  \,f(m).
$$
Then $(E,H)\mti\vect_\kk$ is a spherical $(E\hookrightarrow H)$-fusion category. For any $h \in H$, let~$\kk_h $ be the $H$-graded free \kt module   of rank 1  defined by $(\kk_h)_h=\kk$ and $(\kk_h)_x=0$ for $x\in H\setminus \{h\}$. Note that $\kk_h$ is homogeneous of degree $h$.
Then $\{\kk_h\}_{h \in H}$ is a  $(E\hookrightarrow H)$-representative set for $(E,H)\mti\vect_\kk$.
Clearly,  $\dim(\kk_h)= 1_\kk\in \End_\cc^1(\un)=\kk$ for all $h\in H$. In particular, $\dim\bigl(((E,H)\mti\vect_\kk)^1_1 \bigr)=1_\kk$. Also,  for all $g,h\in H$ and $e \in E$,
$$
\kk_g \otimes \kk_h \cong_1 \kk_{gh}, \quad \kk_h^* \cong_1 \kk_{h^{-1}},  \quad \text{and} \quad
\kk_g \cong_e \kk_h \Longleftrightarrow  \CM ( e)=hg^{-1}.
$$
Note that by Remark~\ref{rk-case-injective-G-fusion}, $(E,H)\mti\vect_\kk$ is  $(H/E)$-fusion in the sense of \cite{TVi1} with neutral component of dimension $1_\kk$.

\subsection{Push forward of fusion $\CM$-categories}\label{sect-push-forward-fusion}
Let $\phi=(\psi \co E \to E',\varphi \co H \to H')$ be a morphism from a crossed module $\CM \co E \to H$ to a crossed module $\CM' \co E' \to H'$ (see Section~\ref{sect-crossed-modules-maps}). Assume that $\psi$ and $\varphi$ are surjective, $\Ker(\psi)\cap \Ker(\CM)=1$, and $\Ker(\varphi)$ is finite. These conditions imply that the map $\CM$ induces an injective group homomorphism $\Ker(\psi)\hookrightarrow\Ker(\varphi)$, and so $\Ker(\psi)$ is finite and its order divides that of $\Ker(\varphi)$. Then the push-forward $\phi_*(\cc)$ of any $\CM$-fusion category $\cc$ (defined in Section~\ref{sect-push-forward}) is a $\CM'$-fusion category.
Also, if $\cc$ is pivotal or spherical, then so is~$\phi_*(\cc)$ (with the same pivotal structure) and
$$
\dim\bigr(\phi_*(\cc)^1_1\bigl)=\frac{\card\bigr(\Ker(\varphi)\bigl)}{\card\bigr(\Ker(\psi)\bigl)} \, \dim(\cc^1_1).
$$
Indeed, for any homogeneous object~$X$ of $\cc$, we have:
$$
\End_{\phi_*(\cc)}^1(X)=\hspace{-2em} \bigoplus_{e \in \Ker(\psi)\cap \Ker(\CM)=1}\hspace{-2em} \End_\cc^e(X)  = \End_\cc^1(X),
$$
and so an object is simple in $\phi_*(\cc)^1$ if and only if it is simple in $\cc^1$. Then the semisimplicity of $\phi_*(\cc)^1$ follows from that of $\cc^1$. Also, the $E$-semisimplicity of $\cc$ together with the surjectivity of $\psi$ imply that $\phi_*(\cc)$ is $E'$-semisimple and so a pre-fusion $\CM'$-category. Moreover, the facts that $\cc$ is a $\CM$-fusion and $\varphi$ is surjective with finite kernel imply that $\phi_*(\cc)$ is a $\CM'$-fusion category. Finally, the formula for the dimension of $\phi_*(\cc)^1_1$ is proved in Claim~\ref{claim-dim-push} below.

\subsection{Example}\label{ex-crossed-module-vector-spaces}
A crossed module $\CM \co E \to H$ comes with a left action of $H$ on $E$ and so induces the semidirect product
group  $H\ltimes E=(H \times E,\ast)$ where
$$
(x,e)\ast (y,f)=(xy,e\smallspace\lact{x}{\!f}).
$$
Then $E=1 \times E$ is a normal subgroup of $H\ltimes E$ and so the inclusion $\iota \co E \hookrightarrow H\ltimes E$ is a crossed module (with the conjugation action of $H\ltimes E$ on $E$). Consider spherical $\iota$-fusion category $(E,H\ltimes E)\mti\vect_\kk$ from Example~\ref{ex-graded-vector-graded-maps-fusion}.
The pair
$$
\phi=(\id_E \co E \to E,\varphi \co  H\ltimes E \to H) \quad \text{with} \quad \varphi(h,e)=\CM(e)h
$$
is a crossed module morphism from $\iota$ to~$\CM$. Therefore, by Section~\ref{sect-push-forward}, the push-forward $\phi_*((E,H\ltimes E)\mti\vect_\kk)$ is a $\CM$-category denoted $\CM \ti \vect_\kk$. More explicitly, the objects of $\CM \ti \vect_\kk$ are the
$(H\ltimes E)$-graded free \kt modules of finite rank. The degree of a homogeneous object $M=M_{(h,e)}$ is $|M|=\CM(e)h$. The morphisms of $\CM \ti \vect_\kk$ are the $E$-graded \kt linear homomorphisms (see Example~\ref{ex-graded-vector-graded-maps}).
Note that~$\id_E$ and~$\varphi$ are surjective,  $\Ker(\id_E)\cap \Ker(\CM)=1$, and there is a group isomorphism
$$
\Ker(\varphi)=\{(h,e)\in H \ltimes E \, | \, \CM(e)h=1\} \cong E, \quad (h,e) \mapsto e^{-1}.
$$
Consequently, by Section~\ref{sect-push-forward-fusion}, if $E$ is finite, then $\CM \ti \vect_\kk$ is a spherical $\CM$-fusion category and
$$
\dim\bigr((\CM \ti \vect_\kk)^1_1\bigl)= \card(E) 1_\kk.
$$

\subsection{Twisting $\CM$-categories by cocycles}\label{sect-twisting-fusion-cocycles}
By a \emph{3-cocycle} for the crossed module $\CM\co E \to H$ with values in an abelian group $A$, we mean a map $\omega\co H^3 \times E^3 \to A$ such that for all $(x,y,z,t) \in H^4$ and $(a,b,c,d,e,f) \in E^6$,
\begin{gather*}
\omega\bigl(x,y,z,a,b,c\bigr) \,  \omega\bigl(x,\CM(c)yz,t,ba\smallspace\lact{x}{\smallnegspace(}c^{-1}),f,d\bigr) \,\omega\bigl(y,z,t,c,d,e\bigr)
\\
= \omega\bigl(\CM(a)xy,z,t,b,f,e\bigr) \, \omega\bigl(x,y,\CM(e)zt,a,fba\smallspace\lact{xy}{\smallnegspace(}e^{-1})a^{-1}, dc\smallspace\lact{y}{\smallnegspace(}e^{-1})\bigr).
\end{gather*}
Note that the map $\widetilde{\omega} \co H^3 \to A$,  defined by $\widetilde{\omega}(x,y,z)=\omega(x,y,z,1,1,1)$, is then a (usual) 3-cocycle for the group $H$ with values in $A$.
A 3-cocycle $\omega$ for $\CM$ is \emph{normalized} if for all $x,y \in H$ and $e \in E$,
$$
\omega(1,x,y,1,e,1)=1_A=\omega(x,1,y,1,1,1).
$$
Normalized 3-cocycles for~$\CM$ with values in $A$ can be used to describe the third cohomology classes in $H^3(B\CM,A)$, where $B\CM$ is the classifying space of $\CM$ (see \cite{FP} for the unnormalized case).

Any normalized 3-cocycle $\omega$ for $\CM$ with values in the group $\kk^*$ can be used to twist the composition of morphisms, the monoidal product of morphisms, and the associativity constraints of a $\CM$-category $\cc$ (over $\kk$) to obtain a new $\CM$-category~$\cc^\omega$  (over $\kk$). More precisely, the objects, the homogeneous objects, the degree of homogeneous objects, the monoidal product of objects, the unit object~$\un$, the Hom-sets (as $E$-graded \kt modules), and the identity morphisms for $\cc^\omega$ are the same as those for $\cc$. The composition of morphisms in  $\cc^\omega$ is given, for all $\alpha \in \Hom^e_\cc(X,Y)$ and $\beta \in \Hom^f_\cc(Y,Z)$ with $X$ homogeneous, by
$$
\beta \circ^\omega \alpha = \omega\bigl(|X|,1,1,e,f,1\bigr)^{-1}  \beta\alpha \in \Hom^{fe}_\cc(X,Z).
$$
The monoidal product of morphisms in  $\cc^\omega$ is given, for all $\alpha \in \Hom^e_\cc(X,Y)$ and $\beta \in \Hom^f_\cc(Z,T)$ with $X,Y$ homogeneous, by
$$
\alpha \otimes^\omega  \beta = \dfrac{\omega\bigl(|X|,|Y|,1,e\lact{|X|}{\!f},1,f\bigr)}{\omega\bigl(|X|,1,\CM(f)|Y|,e,1,1\bigr)}\, \alpha \otimes \beta \in \Hom^{e\lact{|X|}{\!f}}_\cc(X \otimes Z, Y \otimes T).
$$
For any homogeneous objects $X,Y,Z$, the monoidal constraint
$$
(X\otimes Y)\otimes Z\cong X\otimes (Y\otimes Z)
$$
of $\cc^\omega$ is $\widetilde{\omega}(|X|,|Y|,|Z|)$ times that of $\cc$, where  $\widetilde{\omega} \co H^3 \to \kk^*$ is the  3-cocycle for~$H$ derived from $\omega$ as above. Since $\widetilde{\omega}$ is normalized (because $\omega$ is), the unitality constraints $X\otimes \un \cong X \cong X\otimes \un$ of $\cc^\omega$ are those of $\cc$.

Note that $(\cc^\omega)^1_1=\cc^1_1$ as \kt linear monoidal categories. Also, recall that the 1-subcategory of a $\CM$-category is $H$-graded (see Section~\ref{sect-group-graded-categories}). Then $(\cc^\omega)^1$ is the twisted $H$-graded category $(\cc^1)^{\widetilde{\omega}}$, that is, the $H$-graded category~$\cc^1$ with monoidal constraints twisted by the 3-cocycle $\widetilde{\omega}$.

If $\cc$ is pivotal, then so is $\cc^\omega$ with the same dual objects and with the following twisted (co)evaluations: for any homogeneous object $X$,
\begin{align*}
& \lev^\omega_X=  \lev_X,  && \lcoev^\omega_X=  \widetilde{\omega}\bigl(|X|,|X|^{-1},|X|\bigr)^{-1}  \lcoev_X,\\
& \rcoev^\omega_X=  \rcoev_X, && \rev^\omega_X=  \widetilde{\omega}\bigl(|X|,|X|^{-1},|X|\bigr) \, \rev_X.
\end{align*}
If $\cc$ is spherical, then so is $\cc^\omega$. Also, if $\cc$ is pre-fusion, then so is $\cc^\omega$ with the same $\CM$-representative sets. Finally, if~$\cc$ is $\CM$-fusion, then so is $\cc^\omega$.

\subsection{Penrose graphical calculus}\label{sect-graphical-calculus}
We will represent morphisms in a category $\cc$ by plane   diagrams to be read from the bottom to the top.
The  diagrams are made of   oriented arcs colored by objects of $\cc$  and of boxes colored by morphisms of~$\cc$.  The arcs connect the boxes and   have no mutual intersections or self-intersections.
The identity $\id_X$ of $X\in  \cc$, a morphism $f\co X \to Y$, and the composition of two morphisms $f\co X \to Y$ and $g\co Y \to Z$ are represented as follows:
\begin{center}
 \psfrag{X}[Bc][Bc]{\scalebox{.7}{$X$}}
 \psfrag{Y}[Bc][Bc]{\scalebox{.7}{$Y$}}
 \psfrag{h}[Bc][Bc]{\scalebox{.8}{$f$}}
 \psfrag{g}[Bc][Bc]{\scalebox{.8}{$g$}}
 \psfrag{Z}[Bc][Bc]{\scalebox{.7}{$Z$}}
 $\id_X=$ \rsdraw{.45}{.9}{identitymorph}\,,\quad $f=$ \rsdraw{.45}{.9}{morphism} ,\quad \text{and} \quad $gf=$ \rsdraw{.45}{.9}{morphismcompo}\,.
\end{center}
If $\cc$ is monoidal, then the monoidal product of two morphisms $f\co X \to Y$ and $g \co Z \to T$ is represented by juxtaposition:
\begin{center}
 \psfrag{X}[Bc][Bc]{\scalebox{.7}{$X$}}
 \psfrag{h}[Bc][Bc]{\scalebox{.8}{$f$}}
 \psfrag{Y}[Bc][Bc]{\scalebox{.7}{$Y$}}
 $f\otimes g=$ \rsdraw{.45}{.9}{morphism} \psfrag{X}[Bc][Bc]{\scalebox{.8}{$Z$}} \psfrag{g}[Bc][Bc]{\scalebox{.8}{$g$}}
 \psfrag{Y}[Bc][Bc]{\scalebox{.7}{$T$}}
 \rsdraw{.45}{.9}{morphism3}\,.
\end{center}
 Suppose that $\cc$ is      pivotal. By convention,   if an arc colored by $X\in \cc$ is oriented upwards,
then the corresponding object   in the source/target of  morphisms
is $X^*$. For example, $\id_{X^*}$  and a morphism $f\co X^* \otimes
Y \to U \otimes V^* \otimes W$  may be depicted as
\begin{center}
 $\id_{X^*}=$ \, \psfrag{X}[Bl][Bl]{\scalebox{.7}{$X$}}
\rsdraw{.45}{.9}{identitymorphdual} $=$  \,
\psfrag{X}[Bl][Bl]{\scalebox{.7}{$\ldual{X}$}}
\rsdraw{.45}{.9}{identitymorph2}  \quad and \quad
\psfrag{X}[Bc][Bc]{\scalebox{.7}{$X$}}
\psfrag{h}[Bc][Bc]{\scalebox{.8}{$f$}}
\psfrag{Y}[Bc][Bc]{\scalebox{.7}{$Y$}}
\psfrag{U}[Bc][Bc]{\scalebox{.7}{$U$}}
\psfrag{V}[Bc][Bc]{\scalebox{.7}{$V$}}
\psfrag{W}[Bc][Bc]{\scalebox{.7}{$W$}} $f=$
\rsdraw{.45}{.9}{morphism2} \,.
\end{center}
The duality morphisms   are depicted as follows:
\begin{center}
\psfrag{X}[Bc][Bc]{\scalebox{.7}{$X$}} $\lev_X=$ \rsdraw{.45}{.9}{leval}\,,\quad
 $\lcoev_X=$ \rsdraw{.45}{.9}{lcoeval}\,,\quad
$\rev_X=$ \rsdraw{.45}{.9}{reval}\,,\quad
\psfrag{C}[Bc][Bc]{\scalebox{.7}{$X$}} $\rcoev_X=$
\rsdraw{.45}{.9}{rcoeval}\,.
\end{center}
The dual of a morphism $f\co X \to Y$ and the traces of an endomorphism $\alpha\co X \to X$ can be depicted as
follows:
\begin{center}
 \psfrag{X}[Bc][Bc]{\scalebox{.7}{$X$}}
 \psfrag{h}[Bc][Bc]{\scalebox{.8}{$f$}}
 \psfrag{Y}[Bc][Bc]{\scalebox{.7}{$Y$}}
 \psfrag{g}[Bc][Bc]{\scalebox{.8}{$\alpha$}}
 $f^*=$ \rsdraw{.45}{.9}{dualmorphism2}$=$ \rsdraw{.45}{.9}{dualmorphism}\quad \text{and} \quad
 $\tr_l(\alpha)=$ \rsdraw{.45}{.9}{ltrace}\,,\quad  $\tr_r(\alpha)=$ \rsdraw{.45}{.9}{rtrace}\,.
\end{center}
The morphisms represented by such diagrams are invariant under isotopies of the diagrams in $\RR^2$  keeping the bottom and   top endpoints fixed (see for example \cite{JS} or \cite[Theorem 2.6]{TVi5}).

\subsection{Enriched graphical calculus}\label{sect-enriched-graph-calc}
For a pivotal pre-fusion $\CM$-category $\cc$, we enrich  the Penrose graphical calculus   of the previous section. Let $X$ be an object of~$\cc$, $i$ be a simple object of the 1-subcategory $\cc^1$ of $\cc$, and $e \in E$.
Consider a (finite) formal sum of diagrams
\begin{equation}\label{SUMS}
 \psfrag{p}[Bc][Bc]{\scalebox{.9}{$p_\alpha$}}
 \psfrag{q}[Bc][Bc]{\scalebox{.9}{$q_\alpha$}}
 \psfrag{X}[Bl][Bl]{\scalebox{.9}{$X$}}
 \psfrag{i}[Bl][Bl]{\scalebox{.9}{$i$}}
 \sum_{\alpha \in \Lambda} \,\; \rsdraw{.45}{.9}{tensor0}
\end{equation}
where $(p_\alpha\co X \to i, q_\alpha \co i \to X)_{\alpha \in \Lambda}$ is an $(i,e)$-partition of~$X$ (see Section~\ref{sect-multiplicity-partitions}) and the gray area represents a part of these  diagrams   independent of $\alpha\in \Lambda$ (and, in particular,   not involving    $p_\alpha, q_\alpha$).   By the Penrose graphical calculus and the \kt linearity of $\cc$,   the sum \eqref{SUMS} represents a morphism in $\cc$.
Since the  tensor
\begin{equation}\label{SUMS1}
\sum_{\alpha \in \Lambda} p_\alpha \otimes q_\alpha \in \Hom_\cc^{e^{-1}}\!(X, i)
\otimes_\kk \Hom_\cc^e(i,X)
\end{equation}
does not depend on the choice of the $(i,e)$-partition of $X$,
the morphism~\eqref{SUMS} in~$\cc$    also does not
depend on this choice. We therefore can eliminate the colors $p_\alpha, q_\alpha$ of  the two boxes, keeping in mind only   the order of the    boxes and  the fact that they jointly stand for the tensor~\eqref{SUMS1}.      We will graphically represent  this pair of  boxes by  two curvilinear boxes (a semi-disk  and a  compressed rectangle) labeled by $e$ and painted with the same
color which stand respectively for~$p_\alpha$ and~$q_\alpha$ where~$\alpha$ runs over~$\Lambda$ :
\begin{equation}\label{eq-def-extended-penrose}
 \psfrag{p}[Bc][Bc]{\scalebox{.9}{$p_\alpha$}}
 \psfrag{q}[Bc][Bc]{\scalebox{.9}{$q_\alpha$}}
 \psfrag{e}[Bc][Bc]{\scalebox{.9}{$e$}}
 \psfrag{X}[Bl][Bl]{\scalebox{.9}{$X$}}
 \psfrag{i}[Bl][Bl]{\scalebox{.9}{$i$}}
 \rsdraw{.45}{.9}{tensor1e-col} \, =\, \sum_{\alpha \in \Lambda} \,\; \rsdraw{.45}{.9}{tensor0} \, .
\end{equation}
The   gray  areas in the picture are the same as above.
We will also use similar  notation    obtained from \eqref{eq-def-extended-penrose}  by reorienting the $X$-colored arcs upward and replacing $(p_\alpha, q_\alpha)_{\alpha \in \Lambda}$ with an  $(i,e)$-partition of $X^*$,  or by reorienting the $i$-colored arcs upward and replacing $(p_\alpha, q_\alpha)_{\alpha \in \Lambda}$ with an  $(i^*,e)$-partition of $X$.
We   will allow  several   arcs to be attached to the bottom of the semi-disk
and to the top of the  compressed rectangle in \eqref{eq-def-extended-penrose}. The number of these arcs, their directions (up/down), and their colors should be the same.  For example,
\begin{equation*}
\psfrag{p}[Bc][Bc]{\scalebox{.9}{$p_\alpha$}}
\psfrag{q}[Bc][Bc]{\scalebox{.9}{$q_\alpha$}}
\psfrag{X}[Br][Br]{\scalebox{.9}{$X$}}
\psfrag{Y}[Bl][Bl]{\scalebox{.9}{$Y$}}
\psfrag{i}[Br][Br]{\scalebox{.9}{$i$}}
\psfrag{e}[Bc][Bc]{\scalebox{.9}{$e$}}
\rsdraw{.45}{.9}{tensor-not-1-col} \, =\, \sum_{\alpha \in \Lambda} \,\; \rsdraw{.45}{.9}{tensor-not-2} \,,
\end{equation*}
where $(p_\alpha, q_\alpha)_{\alpha \in \Lambda}$ is any $(i^*,e)$-partition of $X^*\otimes Y$.
We will allow  to erase $i$-colored arcs for $i=\un$. In particular,
\begin{equation*}
\psfrag{p}[Bc][Bc]{\scalebox{.9}{$p_\alpha$}}
\psfrag{q}[Bc][Bc]{\scalebox{.9}{$q_\alpha$}}
\psfrag{X}[Bl][Bl]{\scalebox{.9}{$X$}}
\psfrag{e}[Bc][Bc]{\scalebox{.9}{$e$}}
\rsdraw{.45}{.9}{tensor-not-3-col} \, =\, \sum_{\alpha \in \Lambda} \,\; \rsdraw{.45}{.9}{tensor-not-4} \,,
\end{equation*}
where $(p_\alpha, q_\alpha)_{\alpha \in \Lambda}$ is any $(\un,e)$-partition of $X$.
Furthermore,    in accordance with the isotopy invariance,
\begin{equation*}
\psfrag{e}[Bc][Bc]{\scalebox{.9}{$e$}}
\psfrag{a}[Br][Br]{\scalebox{.9}{$i$}}
\psfrag{R}[Br][Br]{\scalebox{.9}{$X$}}
\psfrag{X}[Bl][Bl]{\scalebox{.9}{$X$}}
\psfrag{i}[Bl][Bl]{\scalebox{.9}{$i$}} \rsdraw{.45}{.9}{tensor-dual-2-col} \qquad \text{will stand for} \qquad
\rsdraw{.45}{.9}{tensor-dual-1a-col} \;=\; \rsdraw{.45}{.9}{tensor-dual-1b-col}
\;\;\phantom{.}
\end{equation*}
and
\begin{equation*}
\psfrag{e}[Bc][Bc]{\scalebox{.9}{$e$}}
\psfrag{a}[Br][Br]{\scalebox{.9}{$i$}}
\psfrag{R}[Br][Br]{\scalebox{.9}{$X$}}
\psfrag{X}[Bl][Bl]{\scalebox{.9}{$X$}}
\psfrag{i}[Bl][Bl]{\scalebox{.9}{$i$}}
\rsdraw{.45}{.9}{tensor-dual-4-col} \qquad \text{will stand for} \qquad
\rsdraw{.45}{.9}{tensor-dual-3a-col} \;=\; \rsdraw{.45}{.9}{tensor-dual-3b-col}
 \;\;.
\end{equation*}
Similar  notation  will be applied when the $X$-colored  or $i$-colored   arcs are oriented downward and   when there are several arcs attached to the horizontal sides of the curvilinear boxes.

Every enriched diagram as above represents a morphism in~$\cc$ which is invariant under ambient isotopies (relative endpoints) of the diagram  in the plane.

We now state four properties of the above enriched graphical calculus associated with the pivotal pre-fusion $\CM$-category $\cc$. First, for any object $X$ of $\cc$, any simple object $i$ of $\cc^1$, and any $e \in E$,
\begin{equation}\label{eq-Nxi}
\psfrag{i}[Bl][Bl]{\scalebox{.9}{$i$}}
\psfrag{e}[Bc][Bc]{\scalebox{.9}{$e$}}
\psfrag{X}[Bl][Bl]{\scalebox{.9}{$X$}}
\rsdraw{.45}{.9}{tensor-ppte-3-ng-col}\;\, = \,N_X^{i,e}\;\; \rsdraw{.45}{.9}{tensor-ppte-4-ng}
\end{equation}
where $N^{i,e}_X$ is the degree $e$ multiplicity index of $i$ in $X$. This equality follows from the fact that for any  $(i,e)$-partition $(p_\alpha,q_\alpha)_{\alpha \in \Lambda}$   of $X$,  we have  $p_\alpha q_\alpha=\id_i$ for all $\alpha \in \Lambda$ and   $\card(\Lambda)=N_X^{i,e}$.  Second,  pick a $\CM$-representative set~$I$  for~$\cc$.
For any object $X$ of $\cc$ and $e \in E$,
\begin{equation}\label{eq-pre-fus-sum}
\psfrag{i}[Bl][Bl]{\scalebox{.9}{$i$}}
\psfrag{X}[Bl][Bl]{\scalebox{.9}{$X$}}
\psfrag{e}[Bc][Bc]{\scalebox{.9}{$e$}}
\sum_{i \in I} \;\, \rsdraw{.45}{.9}{tensor-ppte-1-ng-col}\;\, = \;\, \rsdraw{.45}{.9}{tensor-ppte-2-ng}\;\,.
\end{equation}
This follows from the fact that the union of $(i,e)$-partitions of   $X$ over all $i\in I$ is  an $(I,e)$-partition of $X$.
Third, for any homogenous morphisms $\alpha \in \Hom_\cc^e(\un,X)$ and $\beta \in \Hom_\cc^{e^{-1}}\!(X,\un)$ with $e \in E$, we have
\begin{equation}\label{eq-pre-fus-un}
\psfrag{X}[Bl][Bl]{\scalebox{.9}{$X$}}
\psfrag{f}[Bc][Bc]{\scalebox{1}{$\alpha$}}
\psfrag{e}[Bc][Bc]{\scalebox{.9}{$e$}}
\rsdraw{.45}{.9}{tensor-ppte-1-un-col}\;\, = \;\, \rsdraw{.45}{.9}{tensor-ppte-2-un-col}
\qquad \text{and} \qquad
\psfrag{f}[Bc][Bc]{\scalebox{1}{$\beta$}}
\rsdraw{.45}{.9}{tensor-ppte-3-un-col}\;\, = \;\, \rsdraw{.45}{.9}{tensor-ppte-4-un-col} \;\,.
\end{equation}
These equalities follow from \eqref{eq-pre-fus-sum} and the fact $\Hom_\cc^1(\un,i)=0=\Hom_\cc^1(i,\un)$ for all $i\in I \setminus \{ \un\}$. Fourth, for any object $X$ of $\cc$ and $e \in E$,
\begin{equation}\label{eq-dim-mult-numbers}
\dim_l(X)=\sum_{i\in I} \dim_l(i) N_X^{i,e} \quad \text{and} \quad \dim_r(X)=\sum_{i\in I} \dim_r(i) N_X^{i,e}.
\end{equation}
Here, recall that $N^{i,e}_X=0$ for all but a finite number of $i\in I$. These equalities follow from Formulas \eqref{eq-Nxi} and \eqref{eq-pre-fus-sum} together with the isotopy invariance of the enriched graphical calculus:
\begin{gather*}
   \dim_{l}(X)=  \;
   \psfrag{i}[Bl][Bl]{\scalebox{.9}{$X$}}
\rsdraw{.45}{.9}{tensor-cor-3}
\;=\;
\psfrag{e}[Bc][Bc]{\scalebox{.9}{$e$}}
\psfrag{i}[Bl][Bl]{\scalebox{.9}{$i$}}
\psfrag{X}[Bl][Bl]{\scalebox{.9}{$X$}}
\sum_{i \in I}\;  \rsdraw{.45}{.9}{tensor-cor-1-col}
\;=\;
\psfrag{e}[Bc][Bc]{\scalebox{.9}{$e$}}
\psfrag{i}[Bc][Bc]{\scalebox{.9}{$i$}}
\psfrag{X}[Bc][Bc]{\scalebox{.9}{$X$}}
\sum_{i \in I}  \; \rsdraw{.45}{.9}{tensor-cor-4-col}\\
\psfrag{e}[Bc][Bc]{\scalebox{.9}{$e$}}
\psfrag{i}[Bl][Bl]{\scalebox{.9}{$i$}}
\psfrag{X}[Bl][Bl]{\scalebox{.9}{$X$}}
=\;\sum_{i \in I}  \; \rsdraw{.45}{.9}{tensor-cor-2-col}
\;=\;
\psfrag{i}[Bl][Bl]{\scalebox{.9}{$i$}}
\sum_{i \in I}\;   N^{i,e}_X\;\rsdraw{.45}{.9}{tensor-cor-3}
\;=\;
\sum_{i\in I} \dim_{l}(i) N_X^{i,e}
\end{gather*}
and similarly for the right trace.
As an application,  we prove the following claim.
\begin{lem}\label{lem-bubbleidentity}
Let $\cc$ be a pivotal $\CM$-fusion category and let $I=\amalg_{h\in H}\, I_h$ be  a  $\CM$\ti represen\-ta\-ti\-ve set for $\cc$. Then, for any homogeneous objects $X,Y$ of $\cc$, $g,h \in H$, and $e \in E$ such that $\CM(e)=|X|g|Y|h$, we have:
$$
\sum_{\substack{m\in I_g\\ n\in I_h}}\!\dim_l(m)\dim_l(n)
N_{X \otimes m \otimes Y \otimes n}^{\un,e}  =   \dim_r(X) \dim_r(Y)   \dim(\cc_1^1)
$$
and similarly with the subscripts $l,r$ exchanged.
\end{lem}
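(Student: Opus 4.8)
The plan is to prove the identity by peeling off the two summation variables one at a time, reducing everything to the two global formulas \eqref{eq-dim-mult-numbers} and \eqref{eq-computation-dimension-neutral-component}. Throughout I use that a multiplicity number is the rank of a graded Hom-module, $N_Z^{i,e}=\mathrm{rank}_\kk\,\Hom_\cc^e(i,Z)$ (see Section~\ref{sect-multiplicity-partitions}), and I write $W=X\otimes m\otimes Y$.

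First I would recognize the closing of the $n$-strand against $\un$ as a duality. Since the (co)evaluations of a pivotal $\CM$-category are homogeneous of degree $1\in E$, the standard adjunction isomorphism $\Hom_\cc(\un,W\otimes n)\cong\Hom_\cc(n^*,W)$, obtained by bending the $n$-strand via $\rev_n$ and $\rcoev_n$, preserves the $E$-degree: a direct check with the tensor-degree axiom (e) of Section~\ref{sect-crossed-module-graded-categories} shows that $f\mapsto(\id_W\otimes\rev_n)(f\otimes\id_{n^*})$ carries $\Hom_\cc^e(\un,W\otimes n)$ isomorphically onto $\Hom_\cc^e(n^*,W)$ (both directions being built from degree-$1$ morphisms). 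Hence $N_{X\otimes m\otimes Y\otimes n}^{\un,e}=N_{X\otimes m\otimes Y}^{n^*,e}$.

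Next I would perform the sum over $n\in I_h$. Using $\dim_l(n)=\dim_r(n^*)$ together with the fact that $n\mapsto n^*$ induces a bijection of $I_h$ onto the $1$-isomorphism classes represented by $I_{h^{-1}}$, this partial sum becomes $\sum_{j\in I_{h^{-1}}}\dim_r(j)\,N_{W}^{j,e}$. The degree hypothesis $\CM(e)=|X|g|Y|h$ with $m\in I_g$ forces, via \eqref{eq-deg-morphism-objects}, the multiplicity $N_{W}^{j,e}$ to vanish unless $|j|=\CM(e)^{-1}|X|g|Y|=h^{-1}$; therefore the restricted sum over $I_{h^{-1}}$ already exhausts every nonzero term of the full sum in the right-dimension formula of \eqref{eq-dim-mult-numbers}, so it equals $\dim_r(W)=\dim_r(X\otimes m\otimes Y)$. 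By multiplicativity of the right dimension (a standard property of pivotal monoidal categories), this is $\dim_r(X)\dim_r(m)\dim_r(Y)$.

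Finally, summing over $m\in I_g$ and factoring out $\dim_r(X)\dim_r(Y)$ leaves $\sum_{m\in I_g}\dim_l(m)\dim_r(m)$, which is exactly $\dim(\cc_1^1)$ by \eqref{eq-computation-dimension-neutral-component}; this gives the asserted equality. The statement with $l$ and $r$ exchanged follows verbatim by invoking the left-dimension formula of \eqref{eq-dim-mult-numbers} and $\dim_r(n)=\dim_l(n^*)$ in the $n$-sum, noting that \eqref{eq-computation-dimension-neutral-component} is symmetric in the product $\dim_l\dim_r$. I expect the only genuine obstacle to be the degree bookkeeping: checking that the duality isomorphism is degree-preserving and that the constraint $\CM(e)=|X|g|Y|h$ pins the contributing simples to degree $h^{-1}$, so that the two restricted sums coincide with the global identities \eqref{eq-dim-mult-numbers} and \eqref{eq-computation-dimension-neutral-component}.
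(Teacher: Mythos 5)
Your proof is correct and follows essentially the same route as the paper's: the duality isomorphism giving $N^{\un,e}_{W\otimes n}=N^{n^*,e}_{W}$, the identity $\dim_l(n)=\dim_r(n^*)$ combined with the degree constraint pinning the contributing simples to $I_{h^{-1}}$ so that \eqref{eq-dim-mult-numbers} yields $\dim_r(X\otimes m\otimes Y)$, and then $\otimes$-multiplicativity of dimensions together with \eqref{eq-computation-dimension-neutral-component}. The only cosmetic difference is that the paper justifies the vanishing of $N^{i,e}_{X\otimes m\otimes Y}$ for $|i|\neq h^{-1}$ via the pre-fusion remark of Section~\ref{sect-crossed-module-graded-fusion} (since $X\otimes m\otimes Y$ is only a $1$-direct sum of homogeneous objects, not homogeneous itself), and notes that multiplicativity of dimensions relies on $\un$ being simple in $\cc^1$ rather than being automatic in any pivotal category.
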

\begin{proof}
Observe first that for any object $Z$ of $\cc$,
$$
N^{\un,e}_{Z \otimes n}=N^{n^*\!,e}_Z.
$$
Indeed the free \kt modules $\Hom_\cc^e(\un,Z \otimes n)$ and $\Hom_\cc^e(n^*,Z)$ have the same rank since  the map
$\alpha \in \Hom_\cc^e(\un,Z \otimes n) \mapsto (\id_Z \otimes \rev_n)(\alpha \otimes \id_{n^*})  \in \Hom_\cc^e(n^*,Z)$ is a well-defined \kt linear isomorphism. Thus, using  the identity $\dim_l(n)=\dim_r(n^*)$, we obtain:
\begin{gather*}
\sum_{\substack{m\in I_g\\ n\in I_h}} \!\dim_l(m)\dim_l(n)
N_{X \otimes m \otimes Y \otimes n}^{\un,e}
=\!\sum_{\substack{m\in I_g\\ n\in I_h}} \! \dim_l(m) \dim_r(n^*)
N_{X \otimes m\otimes Y}^{n^*\!,e}\\
=\sum_{m\in I_g} \!\dim_l(m) \!\sum_{i\in I_{h^{-1}}}\!\!\dim_r(i) N_{X \otimes m\otimes Y}^{i,e}.
\end{gather*}
Now, for any $m \in I_g$, since $X \otimes m\otimes Y$ is a 1-direct sum of homogenous objects of degree $|X|g|Y|$, we have $N_{X \otimes m\otimes Y}^{i,e}=0$ whenever
$i \in I$ satisfies $\CM(e)|i| \neq |X|g|Y|$ or equivalently $|i| \neq h^{-1}$ (see  Section~\ref{sect-crossed-module-graded-fusion}) and so Formula \eqref{eq-dim-mult-numbers} gives that
$$
\sum_{i\in I_{h^{-1}}}\!\!\dim_r(i) N_{X \otimes m\otimes Y}^{i,e} =
\sum_{i\in I}\! \dim_r(i) N_{X \otimes m\otimes Y}^{i,e} = \dim_r(X \otimes m\otimes Y).
$$
Then, using  the $\otimes$-multiplicativity of the dimensions (which follows from the fact that the unit object $\un$ is simple in~$\cc^1$, see Sections~2.6.2 and~4.2.2 of \cite{TVi5})  and Formula~\eqref{eq-computation-dimension-neutral-component}, we obtain:
\begin{gather*}
\sum_{\substack{m\in I_g\\ n\in I_h}} \!\dim_l(m)\dim_l(n)
N_{X \otimes m \otimes Y \otimes n}^{\un,e}
=\sum_{m\in I_g} \! \dim_l(m) \dim_r(X \otimes m\otimes Y)\\
= \dim_r(X)  \dim_r(Y) \sum_{m\in I_g} \dim_l(m) \dim_r(m)=
 \dim_r(X)  \dim_r(Y) \dim(\cc_1^1).
\end{gather*}
A similar proof works  when   the subscripts $l,r$ are exchanged.
\end{proof}

In the next lemma, each equality has gray areas   which are supposed to be the same on both   sides.

\begin{lem}\label{lem-graphical-calculus-prefusion-Xi}
For any  object $X$ of $\cc$, any homogeneous simple object $i$ of $\cc^1$, and any $e \in E$, we have:
$$
   \psfrag{e}[Bc][Bc]{\scalebox{.9}{$\lact{|i|}{e}$}}
 \psfrag{i}[Bl][Bl]{\scalebox{.9}{$i$}}
 \psfrag{X}[Bl][Bl]{\scalebox{.9}{$X$}}
 \rsdraw{.45}{.9}{tensor-new-col}  \,
 \psfrag{e}[Bc][Bc]{\scalebox{.9}{$e$}}
\psfrag{i}[Br][Br]{\scalebox{.9}{$i$}}
\psfrag{j}[Bl][Bl]{\scalebox{.9}{$i$}}
\psfrag{Y}[Br][Br]{\scalebox{.9}{$X$}}
\psfrag{X}[Bl][Bl]{\scalebox{.9}{$X$}}
= \, \dim_l(i) \;\; \rsdraw{.45}{.9}{tensor-ppte-13a-col}\;\,.
$$
\end{lem}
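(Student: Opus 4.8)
The plan is to reduce the asserted identity to the elementary Penrose calculus by unfolding the two curvilinear boxes via their definition \eqref{eq-def-extended-penrose}, and then to produce the scalar $\dim_l(i)$ by evaluating a closed $i$-colored component, using that $i$ is simple in $\cc^1$. First I would expand the left-hand box as a sum $\sum_\alpha p_\alpha \otimes q_\alpha$ over a fixed $(i,\lact{|i|}{e})$-partition $(p_\alpha,q_\alpha)_{\alpha\in\Lambda}$ of the relevant object; by the remark following \eqref{eq-def-extended-penrose}, the resulting morphism is independent of this choice, so I am free to pick a convenient partition. Likewise I would expand the box on the right-hand side over an $(i,e)$-partition. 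The two partitions are supported over objects whose degrees differ, and the crux is to match them after a pivotal slide.

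The decisive step is a degree computation governed by Axiom~(e) of a $\CM$-category. Sliding the $i$-colored strand from one side of the box to the other replaces a morphism of the form $\id_i \otimes \beta$ by $\beta \otimes \id_i$ (up to the associativity and unitality constraints, which are homogeneous of degree $1$ and may be suppressed). If $\beta$ is homogeneous of degree $f$ with homogeneous source, then $|\id_i \otimes \beta| = \lact{|i|}{f}$ whereas $|\beta \otimes \id_i| = f$ by Axiom~(e); this is exactly the passage from the box-degree $\lact{|i|}{e}$ on the left to the box-degree $e$ on the right, and it is also compatible with the dual-degree formula $|\alpha^*| = \lact{|X|^{-1}}{|\alpha|}$ when the $i$-strand is reoriented. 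I would then check that under this slide an $(i,\lact{|i|}{e})$-partition is carried to an $(i,e)$-partition, so that the reassembled right-hand box is legitimate.

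To produce the prefactor I would evaluate the closed $i$-colored component that accounts for the discrepancy between the two sides. Since $i$ is simple in $\cc^1$, we have $\End_\cc^1(i) = \kk\,\id_i$, so the endomorphism sitting on this component is a scalar multiple of $\id_i$; the partition relations $p_\alpha q_\beta = \delta_{\alpha,\beta}\,\id_i$ collapse the attendant double sum to its diagonal, each diagonal term contributing $\id_i$, and taking the appropriate partial trace of $\id_i$ yields precisely $\dim_l(i) = \tr_l(\id_i)$. Re-contracting the remaining boxes through \eqref{eq-def-extended-penrose} then gives the claimed right-hand side, the whole computation being justified by the isotopy invariance of the enriched graphical calculus.

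The main obstacle is the degree bookkeeping: one must verify, using Axiom~(e), the dual-degree formula, and equation \eqref{eq-deg-morphism-objects}, that every partition appearing is supported in, and nonzero only in, the asserted degree, so that the slide genuinely identifies an $(i,\lact{|i|}{e})$-partition with an $(i,e)$-partition rather than introducing a spurious degree shift. A secondary subtlety is to confirm that the loop closure contributes the \emph{left} dimension $\dim_l(i)$ and not $\dim_r(i)$; this is pinned down by the orientation of the $i$-strand in the figure together with the trace conventions of Section~\ref{sect-crossed-module-graded-pivotal}.
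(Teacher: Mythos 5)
Your proposal follows the same route as the paper's proof: unfold the curvilinear boxes via partitions, bend the $i$-strand with Axiom~(e) controlling the degrees, use simplicity of $i$ together with a (left) trace computation to produce $\dim_l(i)$, and reassemble using the partition-independence of the enriched notation and isotopy invariance. The only structural difference is the direction: the paper starts from a $(\un,e)$-partition $(p_\alpha,q_\alpha)$ of $i^*\otimes X$ and bends it into an $(i,\lact{|i|}{e})$-partition of $X$, whereas you start from an $(i,\lact{|i|}{e})$-partition of $X$ and bend the other way. (A minor imprecision: the right-hand box stands for a $(\un,e)$-partition of $i^*\otimes X$, with the $\un$-colored arc erased, not an ``$(i,e)$-partition''.)

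There is, however, one step that as stated would fail, and it sits exactly where the content of the lemma lies. You claim that the slide carries an $(i,\lact{|i|}{e})$-partition to a legitimate partition on the other side. It does not. Write $a=\lact{|i|}{e}$, let $(R_\alpha\co X\to i,\,S_\alpha\co i\to X)_\alpha$ be an $(i,a)$-partition of $X$, and consider the slid morphisms
$$
\widehat S_\alpha=(\id_{i^*}\otimes S_\alpha)\,\rcoev_i \in \Hom_\cc^{e}(\un,i^*\otimes X),
\qquad
\widehat R_\alpha=\lev_i\,(\id_{i^*}\otimes R_\alpha)\in \Hom_\cc^{e^{-1}}(i^*\otimes X,\un),
$$
whose degrees are as indicated by your Axiom-(e) computation. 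Then
$$
\widehat R_\alpha\,\widehat S_\beta
=\lev_i\,(\id_{i^*}\otimes R_\alpha S_\beta)\,\rcoev_i
=\delta_{\alpha,\beta}\,\lev_i\,\rcoev_i
=\delta_{\alpha,\beta}\,\dim_l(i)\,\id_\un,
$$
not $\delta_{\alpha,\beta}\,\id_\un$. So the slid family becomes a $(\un,e)$-partition of $i^*\otimes X$ only after dividing one of the two families by $\dim_l(i)$, which is legitimate because $\dim_l(i)$ is invertible ($\cc$ is pivotal pre-fusion); this renormalization is also what makes the basis conditions hold. Consequently your second and third paragraphs are inconsistent: if the slide really preserved partitions, partition-independence of the box notation would yield the identity with \emph{no} prefactor, and there would be no closed $i$-loop anywhere to produce one. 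The closed loop $\tr_l(\id_i)=\dim_l(i)$ of your third paragraph is precisely the failure of your second paragraph's claim, and it must enter the computation exactly once. With this correction the argument closes as in the paper: $(\dim_l(i)^{-1}\widehat R_\alpha,\widehat S_\alpha)_\alpha$ is a $(\un,e)$-partition of $i^*\otimes X$, so the right-hand box equals $\dim_l(i)^{-1}\sum_\alpha\widehat S_\alpha\otimes\widehat R_\alpha$, which after un-bending is $\dim_l(i)^{-1}$ times the left-hand box.
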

\begin{proof}
Set $a=\lact{|i|}{e}$. Pick  a   $(\un,e)$-partition $(p_\alpha,q_\alpha)_{\alpha \in \Lambda}$  of~$i^* \otimes X$. For each $\alpha  \in \Lambda$, set
$$
\psfrag{p}[Bc][Bc]{\scalebox{1}{$p_\alpha$}}
\psfrag{q}[Bc][Bc]{\scalebox{1}{$q_\alpha$}}
\psfrag{X}[Br][Br]{\scalebox{.9}{$X$}}
\psfrag{i}[Br][Br]{\scalebox{.9}{$i$}}
P_\alpha=\dim_l(i) \;\rsdraw{.5}{.9}{tensor-ppte-pf-2-col}   \, \in\Hom_\cc^{\lact{|i|}{e^{-1}}}\!(X,i)=\Hom_\cc^{a^{-1}}\!(X,i)
$$
and
$$
\psfrag{p}[Bc][Bc]{\scalebox{1}{$p_\alpha$}}
\psfrag{q}[Bc][Bc]{\scalebox{1}{$q_\alpha$}}
\psfrag{X}[Br][Br]{\scalebox{.9}{$X$}}
\psfrag{i}[Br][Br]{\scalebox{.9}{$i$}}
Q_\alpha=\rsdraw{.5}{.9}{tensor-ppte-pf-1-col}\,\in\Hom_\cc^{\lact{|i|}{e}}( i,X )=\Hom_\cc^{a}(i,X).
$$
Since  $(p_\alpha)_{\alpha \in \Lambda}$ is a basis of the \kt module $\Hom_\cc^{e^{-1}}\!(i^* \otimes X,\un)$  and $\dim_l(i) $ is invertible in $\kk$,   the family $(P_\alpha)_{\alpha \in \Lambda}$ is  a basis  of the \kt module   $\Hom_\cc^{a^{-1}}\!(X,i)$.  Since     $(q_\alpha)_{\alpha \in \Lambda}$ is a basis of the \kt module $\Hom_\cc^e(\un,i^* \otimes X)$,  the family $(Q_\alpha)_{\alpha \in \Lambda}$ is  a basis  of the \kt module   $\Hom_\cc^{a}(i,X)$. Since the object  $i$ is simple in $\cc^1$, for any  $\alpha,\beta \in \Lambda$, we have $P_\alpha Q_\beta \in \End_\cc^1(i)=\kk\,\id_i$ and so $P_\alpha Q_\beta =\lambda_{\alpha,\beta}\, \id_i$ where
$$
\lambda_{\alpha,\beta}\;\overset{(i)}{=}\;\frac{\tr_l(P_\alpha Q_\beta)}{\dim_l(i)}
\;\overset{(ii)}{=} \;\;
\psfrag{p}[Bc][Bc]{\scalebox{1}{$p_\alpha$}}
\psfrag{q}[Bc][Bc]{\scalebox{1}{$q_\beta$}}
\psfrag{X}[Br][Br]{\scalebox{.9}{$X$}}
\psfrag{i}[Bl][Bl]{\scalebox{.9}{$i$}}
\psfrag{i}[Br][Br]{\scalebox{.9}{$i$}}
\psfrag{X}[Bl][Bl]{\scalebox{.9}{$X$}}
\rsdraw{.45}{.9}{tensor-ppte-pf-4-col} \;\; \overset{(iii)}{=} \; \delta_{\alpha,\beta}.
$$
Here $(i)$ follows from the invertibility of $\dim_l(i)$, $(ii)$ from the definitions of $P_\alpha$, $Q_\beta$ and the isotopy invariance of the graphical calculus, and $(iii)$ from the equality $p_\alpha q_\beta= \delta_{\alpha,\beta}$. Thus $(P_\alpha,Q_\alpha)_{\alpha \in \Lambda}$ is an $(i,a)$-partition of $X$. This directly implies the lemma.
\end{proof}

\section{Multiplicity modules and \texorpdfstring{$\CM$}{X}-graphs}\label{sec-multimodulesandgraphs}

Throughout this section, $\CM \co E \to H$ is a crossed module and $\cc$ is a pivotal $\CM$\ti category.
We first introduce $\cc$-colored $\CM$-cyclic sets and their multiplicity modules. Then we define an isotopy invariant of $\cc$-colored $\CM$-graphs.

\subsection{Multiplicity modules of colored $\CM$-cyclic sets}\label{sect-muliplicity-modules}
A \emph{$\CM$-cyclic set} is a totally cyclically ordered finite set $S$ endowed with maps $\alpha\co S \to H$, $\beta\co S \to E$, and $\varepsilon\co S \to \{+, - \}$ such that for any $s \in S$,
\begin{equation}\label{eq-xi-graph-1}
  \CM(\beta(s))=\alpha(s_1)^{\varepsilon(s_1)} \cdots \alpha(s_n)^{\varepsilon(s_n)}
\end{equation}
where $n$ is the number of elements of $S$ and $s =s_1<s_2< \cdots <s_n$ are the elements of $S$ ordered in the given cyclic order starting from $s$, and
\begin{equation}\label{eq-xi-graph-2}
  \beta(\suc(s))=\lact{\left(\alpha(s)^{-\varepsilon(s)}\right)}{\!\beta(s)}
\end{equation}
where  $\suc(s)$ is the successor of $s$ in the given cyclic order. Note that \eqref{eq-xi-graph-2} implies that the map $\beta$ is fully determined by its value on one element of $S$, and that if~\eqref{eq-xi-graph-1} holds for one element of $S$, then it holds for all elements of $S$.

A $\CM$-cyclic set~$S$ is \emph{$\cc$-colored} if it is endowed with a map $c \co S \to \Ob(\cc)$ such that for any $s \in S$, the object $c(s)$ is homogeneous of degree $|c(s)|=\alpha(s)$. For shortness, $\cc$-colored $\CM$-cyclic sets are also called \emph{$\CM$-cyclic $\cc$-sets}.

Each $\CM$-cyclic $\cc$-set $S=(S,\alpha, \beta, \varepsilon,c)$ determines a   \kt module   $H(S)$     called the \emph{multiplicity module} of~$S$ and defined as follows. For   $s\in S$, set
$$
H_s  =H_s(S) =\Hom_\cc^{\beta(s)} (\un, c(s_1)^{\varepsilon(s_1)} \otimes \cdots \otimes
c(s_n)^{\varepsilon(s_n)}  ),
$$
where  $s =s_1<s_2< \cdots <s_n$ are the elements of $S$ ordered in the given cyclic order starting from $s$. Here  $X^+=X$ and $X^-=X^*$ for any object $X$ of $\cc$.  If $t\in S\setminus \{s\}$, then $t=s_k$
for some    $k\in\{2,\dots,n\}$.  Set
$$
[st)  = c(s_1)^{\varepsilon(s_1)} \otimes \cdots \otimes c(s_{k-1})^{\varepsilon(s_{k-1})}
\quad \text{and} \quad
[ts)  =c(s_k)^{\varepsilon(s_k)} \otimes \cdots \otimes c(s_n)^{\varepsilon(s_n)}.
$$
Clearly,
$$
H_s=\Hom_\cc^{\beta(s)} (\un, [st) \otimes [ts)) \quad \text{and} \quad  H_t=\Hom_\cc^{\beta(t)} (\un, [ts)\otimes [st)).
$$
Set $p_{s,s}=\id_{H_s}\co {H_s}\to {H_s}$. For distinct $s,t \in S$, define $p_{s,t}\co H_s\to H_t$ by
\begin{equation}\label{eq-def-permutation}
v \in H_s \mapsto p_{s,t}(v)=\!\!
\psfrag{X}[Bl][Bl]{\scalebox{.8}{$[st)$}}
\psfrag{e}[Bc][Bc]{\scalebox{1}{$v$}}
\psfrag{Y}[Br][Br]{\scalebox{.8}{$[ts)$}}
\rsdraw{.45}{.9}{defsigma2n} \;=\;\rsdraw{.45}{.9}{defsigman}\!\!\!\in H_t.
\end{equation}
It follows from \eqref{eq-xi-graph-1}, \eqref{eq-xi-graph-2}, and the axioms of a pivotal $\CM$-category that $p_{s,t}$ is a well-defined \kt linear isomorphism and   $p_{s,t} \, p_{r,s}=p_{r,t}$ for all $r,s,t \in S$.
Thus the family $(\{H_s\}_{s \in S},\{p_{s,t}\}_{s,t \in S})$ is a projective system of \kt modules and \kt linear isomorphisms.
The    multiplicity module $H(S)$ is the projective limit    of this system:
$$
H(S)=\underleftarrow{\lim} \, H_s \, .
$$
The module $H(S)$   depends only on $S=(S,\alpha, \beta, \varepsilon,c)$ and is equipped with  a family of    \kt linear isomorphisms $\{\tau_s\co H(S) \to H_s \}_{s\in S}$ such that $p_{s,t} \, \tau_s= \tau_t$ for all $s,t \in S$.    We call~$\tau_s$   the \emph{cone isomorphism} and    the family $ \{\tau_s \}_{s\in S}$ the \emph{universal cone}.
Recall that $H(S)$ may be realized as the following submodule of the \kt module $\prod_{s \in S} H_s$:
$$
H(S)=\Bigl\{(x_s)_{s \in S} \in \prod_{s \in S} H_s \; \Big | \; p_{s,t}(x_s)=x_t \; \text{for all $s,t \in S$}\Bigr\}.
$$
The cone isomorphism $\tau_s\co H(S) \to H_s$ associated with this realization is given by $\tau_s((x_t)_{t \in S})=x_s$ and its inverse is computed by $\tau_s^{-1}(v)=(p_{s,t}(v))_{t \in S}$.

Note that if   the category   $\cc$ is pre-fusion (see Section~\ref{sect-crossed-module-graded-fusion}), then the \kt module  $H_s$ is free of finite rank for all $s\in S$, and thus so is  the   multiplicity   module $H(S)$.

An isomorphism   of $\CM$-cyclic $\cc$-sets $\phi\co S \to  S' $ is  a bijection which preserves the cyclic order and commutes with the maps to $H$, $E$, $\{+, - \}$, and $\cc$. Such  a   map  $\phi$ induces    a  \kt isomorphism  $H(\phi) \co H(S ) \to H(S' )$ in the obvious way.

The \emph{dual} of  a $\CM$-cyclic $\cc$-set  $S=(S,\alpha, \beta, \varepsilon,c)$ is the $\CM$-cyclic $\cc$-set
$$
S^\opp=(S^\opp,\alpha, \beta^\opp, -\varepsilon,c)
$$
where~$S^\opp$ is~$S$ with opposite cyclic order and, for any $s \in S$,  $ \beta^\opp(s)=\beta(\suc(s))^{-1}$. Here, $\suc(s)$ denotes the successor of $s$ in the cyclic order of~$S$.
If $\cc$ is spherical pre-fusion, then the \kt bilinear pairings
$$
\left\{\omega_{S,s}\co H_{\pred(s)}(S^\opp) \otimes_\kk H_s(S) \to \End_\cc^1(\un)=\kk, \quad u \otimes_\kk v \mapsto
  \psfrag{u}[Bc][Bc]{\scalebox{.9}{$u$}}
  \psfrag{v}[Bc][Bc]{\scalebox{.9}{$v$}}
  \rsdraw{.35}{.9}{Ev-CM}\,\right\}_{\!\!s \in S},
$$
where $\pred(s)$ denotes the predecessor in the cyclic order of~$S$,
are symmetric, non-degenerate, and compatible with the permutation maps \eqref{eq-def-permutation}, and so induce
a non-degenerate \kt bilinear  pairing $\omega_S \co H(S^\opp ) \otimes H(S) \to \kk$, where~$\otimes$ is the unordered tensor product  of \kt modules. In this case, the vector
$$
\ast_S=\Omega_S(1_\kk)\in  H(S) \otimes H(S^\opp ),
$$
where $\Omega_S  \co \kk \to H(S) \otimes H(S^\opp )$ is the inverse pairing of $\omega_S$, is called the \emph{contraction vector} of $S$. Note that it is induced from the cone isomorphisms and the vectors
$$
\left\{\,\ast_{S,s}=\Omega_{S,s}(1_\kk) \in H_s(S) \otimes_\kk H_{\pred(s)}(S^\opp) \,\right\}_{s \in S},
$$
where $\Omega_{S,s}  \co \kk \to H_s(S) \otimes_\kk H_{\pred(s)}(S^\opp)$ is the inverse pairing of $\omega_{S,s}$. A computation similar to \cite[Lemma 4.8]{TVi5} shows that
\begin{equation}\label{eq-compute-inverse-s}
\ast_{S,s}=\;\,
\psfrag{e}[Bc][Bc]{\scalebox{.9}{$e$}}
\rsdraw{.35}{.9}{compute-inverse-s-1} \;\, \otimes_\kk \;\, \rsdraw{.35}{.9}{compute-inverse-s-2} \;,
\end{equation}
where $e=\beta(s) \in E$ and the arcs are colored and oriented according to the maps $c \co S \to \Ob(\cc)$ and  $\varepsilon\co S \to \{+, - \}$.

\subsection{Graphs}\label{sect-graphs-defs}
By a  \emph{graph} we mean a topological space~$G$    obtained from   a finite number  of disjoint copies of the closed interval $[0,1]$   by identification of certain endpoints. The images of the copies of $[0,1]$ in~$G$ are called   \emph{edges} of~$G$. The   endpoints
of the edges of~$G$  (that is, the images of   $0,1\in [0,1]$)   are called \emph{vertices}  of~$G$. Each edge of~$G$
connects two (possibly, coinciding)  vertices, and  each vertex  of~$G$ is incident to at least one edge. By \emph{half-edges} of~$G$, we mean  the images of the closed intervals $[0,1/2]\subset [0,1]$ and $[1/2,1] \subset [0,1]$   in~$G$.   The number of half-edges of~$G$ incident to a vertex~$v$ of~$G$ is    greater than or equal to $1$ and is  called the \emph{valence} of~$v$.

A graph is  \emph{oriented} if all its edges are oriented.  The   empty set is viewed as an oriented graph with  no vertices and no edges.

\subsection{$\CM$-graphs}\label{sect-Xi-graphs}
Let $\Sigma$ be an oriented surface. By a \emph{graph} in~$\Sigma$, we mean a  graph embedded in~$\Sigma$. A vertex $v$ of a graph~$G$ in $\Sigma$ determines a totally cyclically ordered  set~$G_v$ consisting of the   half-edges of~$G$ incident to~$v$ with cyclic order induced by the opposite orientation of~$\Sigma$.  If $G$ is oriented, then  we  have a map $\varepsilon_v \co G_v \to \{+,-\}$ assigning $+$ to the half-edges oriented towards $v$ and $-$ to the half-edges oriented away from $v$.

A \emph{$\CM$-graph} in~$\Sigma$ is an oriented graph $G$ in~$\Sigma$ whose every edge is labeled with an element of $H$, called \emph{$H$-label}, and every half-edge is labeled with an element of $E$, called \emph{$E$-label}, in such a way that, for every  vertex $v$ of $G$, the cyclically ordered set~$G_v$ together with the maps $\alpha_v \co G_v \to H$, $\beta_v \co G_v \to E$ induced by the labels and the map $\varepsilon_v \co G_v \to \{+,-\}$ defined above is a $\CM$-cyclic set. (Here, the $H$-label of a half-edge is the $H$-label of the edge containing it.) Explicitly, this means that for any vertex $v$ of $G$ and any half-edge $s_1$ of $G$ incident to $v$,
$$
\CM(\beta_v(s_1))=\alpha_v(s_1)^{\varepsilon_v(s_1)} \cdots \alpha_v(s_n)^{\varepsilon_v(s_n)}
\quad \text{and} \quad  \beta_v(s_2)=\lact{\left(\alpha_v(s_1)^{-\varepsilon_v(s_1)}\right)}{\!\beta_v(s_1)},
$$
where $s_1<s_2< \dots < s_n$ are the half-edges of $G$ incident to $v$ ordered by the opposite orientation of~$\Sigma$
starting from $s_1$:
\begin{equation*}
 \psfrag{h}[Bc][Bc]{\scalebox{.9}{$s_1$}}
 \psfrag{k}[Br][Br]{\scalebox{.9}{$s_2$}}
 \psfrag{s}[Br][Br]{\scalebox{.9}{$s_n$}}
 \psfrag{v}[Br][Br]{\scalebox{.8}{$v$}}
 \rsdraw{.45}{.9}{Xi-graph-axiom}\;.
\end{equation*}

When depicting a $\CM$-graph $G$ in $\Sigma$, we draw the $H$-label of an edge next to it. By
\eqref{eq-xi-graph-2}, the $E$-labels of the half-edges incident to a vertex $v$ are fully determined by the $E$-label of one half-edge incident to~$v$. Now a dot next to~$v$ together with the orientation of $\Sigma$ determine a half-edge incident to $v$: it is the first half-edge encountered while traversing from the dot a small loop negatively encircling $v$. Thus we draw next to each vertex of $G$ a dot and  the $E$-label of the half-edge determined by this dot. For example, consider the following trivalent vertex $v$ of a $\CM$-graph $G$ in an oriented surface:
\begin{equation*}
 \psfrag{h}[Bc][Bc]{\scalebox{.9}{$h$}}
 \psfrag{k}[Bc][Bc]{\scalebox{.9}{$k$}}
 \psfrag{l}[Bc][Bc]{\scalebox{.9}{$\ell$}}
 \psfrag{e}[Bl][Bl]{\scalebox{.8}{$e$}}
 \rsdraw{.45}{.9}{ijk-coln}\;.
\end{equation*}
Here $h,k,\ell$ are the $H$-labels of the edges incident to~$v$, and $e$ is the $E$-label of the half-edge determined by the dot, that is, of the half-edge labeled by $k$. Condition~\eqref{eq-xi-graph-1} reduces to $\CM(e)=k^{-1}\ell^{-1}h$. By \eqref{eq-xi-graph-2}, the $E$-labels of the half-edges labeled by $\ell$ and $h$ are $\elact{k}{e}$ and $\elact{\ell k}{e}$, respectively.

\subsection{Grade of planar $\CM$-graphs}\label{sect-Xi-graphs-degree}
We always orient the plane~$\R^2$ counterclockwise. Let $G$ be a $\CM$-graph in~$\R^2$. Pick a point $m$  in the unbounded component of $\RR^2 \setminus G$.
For each vertex $v$ of $G$, pick a small loop $\ell_v$  negatively (i.e. clockwisely)  encircling~$v$ and an arc $\gamma_v$ in $\RR^2$
starting at $m$, ending in a point of $\ell_v \setminus G$, avoiding the vertices of $G$, and intersecting the edges of $G$ transversally. We require that the arcs $\{\gamma_v\}_v$  intersect each other only at their initial point $m$.
Denote by $e_v \in E$ the $E$-label of the first encountered half-edge incident to $v$ while traversing $\ell_v$ from the endpoint of~$\gamma_v$.
We assign an element~$h_v \in H$ to each arc~$\gamma_v$ as follows. Start with $h_v=1_H$. Go through the arc $\gamma_v$ starting from~$m$. Each time~$\gamma_v$ intersects an edge $s$ of~$G$ at some point~$c$, replace $h_v$ by $h_vx^\varepsilon$, where $\varepsilon=+$ if $(d_cs,d_c\gamma_v)$ is a positively oriented basis of $\RR^2$, $\varepsilon=-$ otherwise, and $x$ is the $H$-label of the edge $s$. Denote by $v_1, \dots,v_k$ the vertices of~$G$
so that $\gamma_{v_1},\dots, \gamma_{v_k}$ are the arcs successively encountered while traversing a small loop negatively encircling $m$ (starting from any point on that loop).
We define the \emph{grade} of $G$  by
$$
|G|=\lact{(h_{v_1})}{e_{v_1}} \cdots \lact{(h_{v_k})}{e_{v_k}} \in E.
$$
For example, consider the following $\CM$-graph in $\RR^2$:
$$
\psfrag{a}[Br][Br]{\scalebox{.9}{$e$}}
\psfrag{w}[Bc][Bc]{\scalebox{.9}{$f$}}
\psfrag{c}[Bl][Bl]{\scalebox{.9}{$g$}}
\psfrag{k}[Bl][Bl]{\scalebox{.9}{$k$}}
\psfrag{x}[Bc][Bc]{\scalebox{.9}{$x$}}
\psfrag{y}[Br][Br]{\scalebox{.9}{$y$}}
\psfrag{r}[Br][Br]{\scalebox{.9}{$r$}}
\psfrag{s}[Bl][Bl]{\scalebox{.9}{$s$}}
\psfrag{z}[Bl][Bl]{\scalebox{.9}{$z$}}
\psfrag{t}[Bl][Bl]{\scalebox{.9}{$t$}}
G=\,\rsdraw{.45}{.9}{exa-grade-1b} \, \;.
$$
Here $x,y,z,r,s,t$ are the $H$-labels of the edges and $e,f,g,k$ are the $E$-labels of the half-edges determined by the dots (with the graphical convention of Section~\ref{sect-Xi-graphs}).
The conditions for $G$ to be a $\CM$-graph reduce to
$$
\CM(e)=y^{-1}rx^{-1}, \quad \CM(f)=z^{-1}s^{-1}y, \quad \CM(g)=xt^{-1}z, \quad \CM(k)=tr^{-1}s.
$$
Pick the following point $m$, loops $\{\ell_i\}_{1 \leq i \leq 4}$, and arcs  $\{\gamma_i\}_{1 \leq i \leq 4}$:
$$
\psfrag{D}[Br][Br]{\scalebox{.9}{\color{mylightblue}{$\partial D$}}}
\psfrag{m}[Bc][Bc]{\scalebox{.9}{\color{myblue}{$m$}}}
\psfrag{a}[Br][Br]{\scalebox{.9}{\color{myred}{$\ell_1$}}}
\psfrag{d}[Bc][Bc]{\scalebox{.9}{\color{myred}{$\ell_2$}}}
\psfrag{c}[Bl][Bl]{\scalebox{.9}{\color{myred}{$\ell_4$}}}
\psfrag{k}[Bl][Bl]{\scalebox{.9}{\color{myred}{$\ell_3$}}}
\psfrag{u}[Bl][Bl]{\scalebox{.85}{\color{myblue}{$\gamma_1$}}}
\psfrag{v}[Bl][Bl]{\scalebox{.85}{\color{myblue}{$\gamma_2$}}}
\psfrag{n}[Bl][Bl]{\scalebox{.85}{\color{myblue}{$\gamma_3$}}}
\psfrag{o}[Bl][Bl]{\scalebox{.85}{\color{myblue}{$\gamma_4$}}}
\rsdraw{.45}{.9}{exa-grade-2nb} \, \;.
$$
For any $1 \leq i \leq 4$,  denote by  $e_i$ the element of $E$ associated with the vertex encircled by~$\ell_i$ and by $h_i$ the element of $H$ associated to the arc $\gamma_i$ as described above. Using \eqref{eq-xi-graph-2}, we obtain:
\begin{align*}
& e_1=e, && e_2=\lact{y}{\!f}, && e_3=\elact{t^{-1}}{k}, && e_4=\elact{z}{g}, \\
& h_1=1, && h_2=xr^{-1}, && h_3=x, && h_4=xt^{-1}.
\end{align*}
Then the grade of $G$ is
$$
|G|=(\lact{1}{e}) (\lact{xr^{-1}y}{\!f}) (\lact{xt^{-1}}{\!k}) (\lact{xt^{-1}z}{g})=e(\lact{\CM(e^{-1})}{\!f}) (\lact{\CM(g)z^{-1}}{\!k})(\elact{\CM(g)}{g}) =feg (\lact{z^{-1}}{\!k})
$$
where the last equality follows from \eqref{eq-Peiffer}. Note that this final expression for $|G|$ corresponds to the following choice of loops and arcs:
$$
\psfrag{D}[Br][Br]{\scalebox{.9}{\color{mylightblue}{$\partial D$}}}
\psfrag{m}[Bc][Bc]{\scalebox{.9}{\color{myblue}{$m$}}}
\psfrag{a}[Br][Br]{\scalebox{.9}{\color{myred}{$\ell_1$}}}
\psfrag{d}[Bc][Bc]{\scalebox{.9}{\color{myred}{$\ell_2$}}}
\psfrag{c}[Bl][Bl]{\scalebox{.9}{\color{myred}{$\ell_4$}}}
\psfrag{k}[Bl][Bl]{\scalebox{.9}{\color{myred}{$\ell_3$}}}
\psfrag{u}[Bl][Bl]{\scalebox{.9}{\color{myblue}{$\gamma_1$}}}
\psfrag{v}[Bl][Bl]{\scalebox{.9}{\color{myblue}{$\gamma_2$}}}
\psfrag{n}[Bl][Bl]{\scalebox{.9}{\color{myblue}{$\gamma_3$}}}
\psfrag{o}[Bl][Bl]{\scalebox{.9}{\color{myblue}{$\gamma_4$}}}
\rsdraw{.45}{.9}{exa-grade-3nb} \, \;.
$$

\begin{lem}\label{lem-grade-labeled-graphs}
The grade $|G|$ is well-defined and $|G|\in \Ker(\CM)$.
\end{lem}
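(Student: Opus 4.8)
The plan is to realize the datum defining $|G|$ as the monodromy of a flat $H$-connection around the vertices of $G$, and then to promote this $H$-valued holonomy to an $E$-valued quantity using the crossed module structure. Write $V$ for the (finite) set of vertices of $G$. The $H$-labels of the edges assign to any path $\delta$ in $\RR^2$ that avoids $V$ and meets the edges transversally a \emph{holonomy} $\rho(\delta)\in H$: start from $1_H$ and multiply by $x^{\varepsilon}$ at each crossing with an edge of $H$-label $x$, with the sign convention used in the definition of $h_v$. Since $G$ is \emph{embedded}, its edges are pairwise disjoint away from $V$, so a generic homotopy of $\delta$ rel endpoints in $\RR^2\setminus V$ decomposes into bigon moves across a single edge, each inserting a cancelling pair $x^{\varepsilon}x^{-\varepsilon}=1_H$. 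Hence $\rho$ depends only on the homotopy class of $\delta$ in $\RR^2\setminus V$ and defines a homomorphism $\rho\co\pi_1(\RR^2\setminus V,m)\to H$. By construction $h_v=\rho(\gamma_v)$, while condition \eqref{eq-xi-graph-1} together with the orientation conventions says precisely that $\rho(\ell_v)=\CM(e_v)$ when $\ell_v$ is read from the endpoint of $\gamma_v$. Writing $\lambda_v=\gamma_v\ell_v\gamma_v^{-1}$ for the loop at $m$ encircling only $v$, equation \eqref{eq-precrossed} then gives
\[
\rho(\lambda_v)=h_v\,\CM(e_v)\,h_v^{-1}=\CM\bigl(\lact{h_v}{e_v}\bigr).
\]

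This immediately yields $|G|\in\Ker(\CM)$. Applying $\CM$ to the definition of $|G|$ and using that $\CM$ is a homomorphism together with the identity above,
\[
\CM(|G|)=\prod_{i=1}^{k}\CM\bigl(\lact{h_{v_i}}{e_{v_i}}\bigr)=\prod_{i=1}^{k}\rho(\lambda_{v_i})=\rho(\lambda_{v_1}\cdots\lambda_{v_k}).
\]
With the chosen ordering, $\lambda_{v_1}\cdots\lambda_{v_k}$ is in $\pi_1(\RR^2\setminus V,m)$ the boundary loop of a large disk containing $G$; realizing this loop outside $G$, where it crosses no edge, gives $\rho(\lambda_{v_1}\cdots\lambda_{v_k})=1_H$, so $\CM(|G|)=1_H$.

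For well-definedness I would show that any two admissible systems $(m,\{\ell_v\},\{\gamma_v\})$ are joined by a finite sequence of elementary moves, each preserving $|G|$. Continuous isotopies of the whole configuration avoiding $V$ change nothing, since $\rho$ is locally constant, so only the discrete transitions between isotopy classes must be checked. First, moving the endpoint of $\gamma_v$ across one half-edge $s$ of $H$-label $x$ replaces $e_v$ by $\lact{x^{\mp 1}}{e_v}$ (by \eqref{eq-xi-graph-2}) and $h_v$ by $h_v x^{\pm 1}$, with correlated signs, so that the left-action identity $\lact{(h_v x^{\pm 1})}{(\lact{x^{\mp 1}}{e_v})}=\lact{(h_v x^{\pm 1}x^{\mp 1})}{e_v}=\lact{h_v}{e_v}$ shows $|G|$ is unchanged. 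Homotoping an arc $\gamma_v$ rel endpoints in $\RR^2\setminus V$ fixes both $h_v$ and $e_v$, and displacing $m$ inside the connected unbounded region crosses no edge; neither affects $|G|$.

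The hard part will be the move that exchanges the cyclic order of two consecutive arcs $\gamma_{v_i},\gamma_{v_{i+1}}$ at $m$. Such an exchange forces one arc to sweep across a loop $\lambda_{v_{i+1}}$, multiplying the corresponding holonomy by $\rho(\lambda_{v_{i+1}})=\CM(g_{i+1})$, where I set $g_j=\lact{h_{v_j}}{e_{v_j}}$. The affected factor $g_i$ is thereby replaced by $\lact{\CM(g_{i+1})^{\pm 1}}{g_i}$, which by the Peiffer identity \eqref{eq-Peiffer} equals $g_{i+1}^{\pm 1}\,g_i\,g_{i+1}^{\mp 1}$; the ordered product is then modified by a Hurwitz move of the form $(g_i,g_{i+1})\mapsto(g_{i+1},\,g_{i+1}^{-1}g_i g_{i+1})$ (or its inverse), which preserves $g_1\cdots g_k$. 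The crux is exactly this compatibility: it is the Peiffer identity that guarantees the conjugation takes place inside $E$ and reproduces the Hurwitz relation, so that reordering does not alter $|G|$. It remains to verify, as the final technical point, that the elementary moves listed above indeed suffice to connect any two admissible systems of choices (in particular, that changes of the loops $\ell_v$ reduce to endpoint moves and arc homotopies).
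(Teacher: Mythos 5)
Your proposal follows essentially the same route as the paper's proof: the holonomy map (the paper's $\phi$, your $\rho$), the big-loop argument giving $\CM(|G|)=1$, the endpoint move across a half-edge, and the exchange move reduced via the Peiffer identity to a Hurwitz-type move preserving the ordered product all match the paper's argument step for step, and your computations are correct. Your final open point (that homotopies plus these two moves connect any two admissible systems of arcs) is also left unproved in the paper, which simply asserts it, so you are no worse off there.

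There is, however, one choice in the definition of $|G|$ that you never address: the starting point on the small loop around $m$ used to turn the cyclic order of the arcs $\gamma_{v_1},\dots,\gamma_{v_k}$ into a linear enumeration. Changing it replaces the product $g_1\cdots g_k$ by a cyclic rotation $g_r\cdots g_k\,g_1\cdots g_{r-1}=w_r^{-1}(g_1\cdots g_k)\,w_r$ with $w_r=g_1\cdots g_{r-1}$, and such conjugates need not coincide a priori; your list of admissible data $(m,\{\ell_v\},\{\gamma_v\})$ does not include this choice, and none of your moves affects it. The fix is one line from what you have already established: since $\CM(|G|)=1$ holds for each fixed choice, and $\Ker(\CM)$ is central in $E$ by the crossed module axioms, conjugation acts trivially on $|G|$, so all rotations give the same element. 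This is exactly how the paper disposes of this point (and it is the reason the paper proves kernel membership \emph{before} well-definedness); you should add it to complete the well-definedness argument.
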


In particular, $|G|$ is  a central element of $E$ (because $\Ker(\CM)$ is central in $E$). Also, the grade is multiplicative with respect to the disjoint union: if $G$ and $G'$ are $\CM$-graphs in~$\R^2$ lying in disjoint disks, then $|G \amalg G'|=|G||G'|$.

\begin{proof}
We first prove that, for any choice (as above) of the arcs  $\gamma_{v_1},\dots, \gamma_{v_k}$, we have $|G| \in \Ker(\CM)$.
Denote by $G_0$ the set of vertices of $G$.
To any oriented arc~$\lambda$ in $\R^2\setminus G_0$ intersecting the edges of $G$ transversally, we associate an element $\phi(\lambda) \in H$ as follows. Start with $\phi(\lambda)=1 \in H$. Go through the arc $\lambda$ starting from the initial point of $\lambda$. Each time~$\lambda$ intersects an edge $s$ of~$G$ at some point~$c$, replace $\phi(\lambda)$ by $\phi(\lambda)x^\varepsilon$, where $\varepsilon=+$ if $(d_cs,d_c\gamma)$ is a positively oriented basis of $\RR^2$, $\varepsilon=-$ otherwise, and $x$ is the $H$-label of the edge $s$. In particular $\phi(\lambda)=1$ if~$\lambda$ does not intersect any edge of $G$. Clearly, $\phi$ is multiplicative with respect to the concatenation of arcs in $\RR^2\setminus G_0$: $\phi(\lambda \mu)=\phi(\lambda) \phi(\mu)$ if the endpoint of $\lambda$ is the starting point of $\mu$. Moreover, if $\lambda$ and $\lambda'$ are homotopic (relative endpoints) in $\RR^2\setminus G_0$, then  $\phi(\lambda)=\phi(\lambda')$. For example, for any vertex $v$ of $G$, we have $\phi(\gamma_v)=h_v$ (by definition of $h_v$). Also, viewing the loop $\ell_v$ as an oriented arc with starting point the endpoint of $\gamma_v$, it follows from $\eqref{eq-xi-graph-1}$ that $\phi(\ell_v)=\CM(e_v)$. Then
\begin{gather*}
\CM(|G|)   \overset{(i)}{=} \CM\left(\lact{(h_{v_1})}{e_{v_1}} \cdots \lact{(h_{v_k})}{e_{v_k}} \right )
 \overset{(ii)}{=} h_{v_1} \CM(e_{v_1}) h_{v_1}^{-1} \cdots h_{v_k} \CM(e_{v_k}) h_{v_k}^{-1} \\
 \overset{(iii)}{=} \phi(\gamma_{v_1}) \phi(\ell_{v_1})\phi(\gamma_{v_1})^{-1} \cdots \phi(\gamma_{v_k}) \phi(\ell_{v_k})\phi(\gamma_{v_k})^{-1} \\
 \overset{(iv)}{=} \phi\bigl(\gamma_{v_1} \ell_{v_1} \gamma_{v_1}^{-1} \cdots \gamma_{v_k} \ell_{v_k} \gamma_{v_k}^{-1} \bigr)
 \overset{(v)}{=} 1.
\end{gather*}
Here $(i)$ follows from the definition of $|G|$, $(ii)$ from the multiplicativity of $\CM$ and~\eqref{eq-precrossed}, $(iii)$ from
the fact that  $h_v=\phi(\gamma_v)$ and $\CM(e_v)=\phi(\ell_v)$ for any vertex $v$ of $G$, $(iv)$ from the multiplicativity of $\phi$ with respect to the concatenation of arcs, and $(v)$ from the fact that the concatenated arc  $\gamma_{v_1} \ell_{v_1} \gamma_{v_1}^{-1} \cdots \gamma_{v_k} \ell_{v_k} \gamma_{v_k}^{-1}$ is homotopic (relative endpoints) in $\RR^2\setminus G_0$ to an arc not intersecting the edges of $G$ (namely, to the boundary of a disk containing $m$ in its boundary and $G$ in its interior).

Next, $|G|$ is independent of the choice of the starting point on the small loop negatively encircling $m$ used to enumerate the vertices $v_1, \dots,v_k$ of $G$. This follows from the fact that for any $1 \leq r \leq k$,
\begin{gather*}
\lact{(h_{v_r})}{e_{v_r}} \cdots \lact{(h_{v_k})}{e_{v_k}} \lact{(h_{v_1})}{e_{v_1}} \cdots \lact{(h_{v_{r-1}})}{e_{v_{r-1}}}
 \overset{(i)}{=} w_r^{-1} |G| w_r  \overset{(ii)}{=} |G|,
\end{gather*}
where $w_r=\lact{(h_{v_1})}{e_{v_1}} \cdots \lact{(h_{v_{r-1}})}{e_{v_{r-1}}} \in E$. Here $(i)$ follows from the definition of $|G|$ and $(ii)$ from the fact that $|G|$ is central in $E$ (since $|G| \in \Ker(\CM)$).

Let us prove the invariance of $|G|$ on the choice of the system of arcs $\{\gamma_v\}_{v \in G_0}=(\gamma_1,\dots,\gamma_k)$ where $\gamma_i=\gamma_{v_i}$.  Any two such systems of arcs are related by a finite sequence of homotopies (relative endpoints) in $\RR^2\setminus G_0$ and of the following moves:
\begin{enumerate}
\labela
\item local move around a vertex $v$ of $G$:
$$
\psfrag{j}[Bc][Bc]{\scalebox{.9}{\color{myblue}{$\gamma'_v$}}}
\psfrag{g}[Bc][Bc]{\scalebox{.9}{\color{myblue}{$\gamma_v$}}}
\psfrag{v}[Bl][Bl]{\scalebox{.9}{\color{myblue}{$v$}}}
\psfrag{x}[Bl][Bl]{\scalebox{.9}{$x \in H$}}
\psfrag{d}[Bc][Bc]{\scalebox{.9}{\color{myred}{$\ell_v$}}}
\rsdraw{.45}{.9}{grade-local-move} \, \;.
$$
\item global exchange move for $1 \leq i <k$:
$$
\psfrag{d}[Bc][Bc]{\scalebox{.9}{\color{myred}{$\ell_i$}}}
\psfrag{m}[Bc][Bc]{\scalebox{.9}{\color{myblue}{$m$}}}
\psfrag{u}[Bl][Bl]{\scalebox{.9}{\color{myblue}{$\gamma_i$}}}
\psfrag{v}[Bl][Bl]{\scalebox{.9}{\color{myblue}{$\gamma_{i+1}$}}}
\psfrag{a}[Bl][Bl]{\scalebox{.9}{\color{myblue}{$\gamma'_{i+1}$}}}
\psfrag{r}[Bl][Bl]{\scalebox{.9}{\color{myblue}{$\gamma'_{i}$}}}
\rsdraw{.45}{.9}{grade-global-move}
$$
which replaces the arcs $(\gamma_i, \gamma_{i+1})$ by the arcs $(\gamma'_i, \gamma'_{i+1})$
where $\gamma'_{i+1}=\gamma_i$ and $\gamma'_i$ is the concantenation $\gamma_i \ell_i \gamma_i^{-1} \gamma_{i+1}$ (sligthly homotoped so that it becomes disjoint from $\gamma_i$ except at $m$).
\end{enumerate}
Invariance of $|G|$ under homotopies (relative endpoints) in $\RR^2\setminus G_0$ of the arcs $\gamma_v$ follows directly from the invariance of the elements  $h_v \in H$ and $e_v \in E$ under such homotopies. To prove the invariance under the move (a), denote by $h'_v \in H$ and $e'_v \in E$ the elements associated (as above) to $\gamma'_v$.
Set $\varepsilon=+$ if the edge labeled with $x \in H$ is oriented towards $v$ and  $\varepsilon=-$ otherwise. It follows directly from the definitions that $h'_v=h_vx^{-\varepsilon}$. Also $e_v=\lact{x^{-\varepsilon}}{\!e'_v}$ by \eqref{eq-xi-graph-2}, and so
$e'_v=\lact{x^\varepsilon}{\!e_v}$. Then
$$
\lact{h'_v}{e'_v}=\lact{h_vx^{-\varepsilon}x^\varepsilon}{\!e_v}=\lact{h_v}{e_v}.
$$
This proves the invariance of $|G|$ under the move (a). To prove the invariance under the move (b), denote by $h_r \in H$, $e_r \in E$ the elements associated (as above) to $\gamma_r$ and by $h'_r \in H$, $e'_r \in E$ those associated to $\gamma'_r$ for $r\in \{i,i+1\}$. Clearly, $h'_{i+1}=h_i$, $e'_{i+1}=e_i$, and $e'_i=e_{i+1}$. It follows from \eqref{eq-xi-graph-1} that $h'_i=h_i\CM(e_i) h_i^{-1}h_{i+1}$, and so $h'_i=\CM(\lact{h_i}{e_i}) h_{i+1}$ by \eqref{eq-precrossed}. Then, using \eqref{eq-Peiffer}, we obtain
$$
\lact{h'_i}{e'_i}\, \lact{h'_{i+1}}{e'_{i+1}}
=\lact{\CM(\lact{h_i}{e_i})h_{i+1}}{e_{i+1}}\, \lact{h_i}{e_i}
= \lact{h_i}{e_i}\, \lact{h_{i+1}}{e_{i+1}}.
$$
This proves the invariance of $|G|$ under the move (b).

Finally, the invariance of   $|G|$ on the choice of the point $m$ in the unbounded component of $\RR^2 \setminus G$ follows from the fact that this component is path connected.
\end{proof}

\subsection{Colored $\CM$-graphs}\label{sect-colored-Xi-graphs}
Let $\Sigma$ be an oriented surface. A $\CM$-graph in~$\Sigma$ is \emph{$\cc$-colored} if each edge is endowed with a homogeneous object of $\cc$, called its \emph{$\cc$-color}, whose degree is the $H$-label of this edge.

Let $G\subset \Sigma$ be a $\cc$-colored $\CM$-graph. The $\CM$-cyclic set $G_v$ associated with a vertex~$v$ of $G$ is $\cc$-colored by the map $c_v \co G_v \to \Ob(\cc)$ assigning to each half-edge the $\cc$-color of the edge containing it. Let
$$
H_v(G)=H(G_v )
$$
be the  multiplicity   module  of the $\CM$-cyclic $\cc$-set~$G_v$.  It can be described   as follows. Let $n \geq 1$ be the valence of~$v$ and let $s_1 < s_2 < \cdots < s_n < s_1$ be the half-edges of~$G$ incident to~$v$ with cyclic order induced by the opposite orientation of~$\Sigma$. Let $X_r=c_v(s_r)$, $e_r=\beta(s_r)$, and $\varepsilon_r=\varepsilon_v(s_r)$ be the $\cc$-color, the $E$-label, and the sign of~$s_r$, respectively. Then  we have the cone isomorphism
$$
\tau^v_{s_1} \co H_v(G) \stackrel{\simeq} {\longrightarrow}  \Hom_\cc^{e_1}(\un, X_1^{\varepsilon_1} \otimes  \cdots \otimes X_n^{\varepsilon_n} ).
$$
By the definition of $H_v(G)$, the cone isomorphisms determined by different elements of~$G_v$ are related via composition with the permutation maps~\eqref{eq-def-permutation}. We set
$$
H(G)=\otimes_v \, H_v(G),
$$
where $v$ runs over all vertices of~$G$ and   $\otimes$ is the unordered tensor product  of \kt modules.  To
emphasize the role of~$\Sigma$, we   sometimes write $H_v(G;\Sigma)$ for~$H_v(G)$ and $H(G;\Sigma)$ for~$H(G)$.   By definition,   for $G=\emptyset$, we have $H(G)=\kk$.

Any orientation preserving embedding $f $ of $\Sigma$ into an oriented surface $\Sigma'$ carries a   $\cc$-colored $\CM$-graph $G\subset \Sigma$  into  a $\cc$-colored $\CM$-graph $G'  =f(G)   \subset \Sigma'$ preserving the  vertices, the edges, the  $E$-labels of the half-edges,  and the orientations, $H$-labels,  and $\cc$-colors   of the edges. The map~$f$  induces a \kt linear isomorphism
$
H(f)\co   H(G;\Sigma ) \to H(G';\Sigma')
$
in the obvious way. This applies,   in particular, when~$f$ is an  orientation preserving self-homeomorphism of~$\Sigma$.

Given  $\cc$-colored $\CM$-graphs $G$ and $G'$ in $\Sigma$,   by an \emph{isotopy} of~$G$  to~$G'$, we mean  an  ambient    isotopy of~$G$  to~$G'$ in $\Sigma$ preserving the   vertices, the edges,   the  $E$-labels of the half-edges, and the orientations, $H$-labels,  and $\cc$-colors   of the edges. Such an isotopy  $\iota$   induces an  orientation preserving   homeomorphism  $\Sigma \to \Sigma$ carrying~$G$  to~$G'$. This  homeomorphism induces  a \kt linear  isomorphism   $  H(G) \to H(G')$ denoted $H(\iota)$.

Let $-\Sigma$ be $\Sigma$ with opposite orientation.
The \emph{dual} of $\cc$-colored $\CM$-graph $G$ in $\Sigma$ is the $\cc$-colored $\CM$-graph $G^\opp$ in $-\Sigma$ obtained from $G$ by reversing the orientation of all edges, keeping the $H$-labels and $\cc$-colors of the edges, and defining the $E$-labels of half-edges of $G^\opp$ as follows: the $E$-label of a half-edge $\ell$ of $G^\opp$ incident to a vertex $v$ is the inverse of the $E$-label of the successor of $\ell$ in $G_v$. Observe that $(G^\opp)_v=(G_v)^\opp$ for any vertex $v$ of $G$, where $(G_v)^\opp$ is the dual of $G_v$ in the sense of Section~\ref{sect-muliplicity-modules}. If $\cc$ is spherical pre-fusion, then the  (unordered) tensor product of the non-degenerate \kt bilinear pairings
$ \omega_{G_v}   \co  H(G_v^\opp ) \otimes H(G_v )    \to  \kk$ (see Section~\ref{sect-muliplicity-modules})
over all vertices $v$ of~$G$ yields a non-degenerate \kt bilinear pairing
$H (G^\opp ) \otimes H (G )  \to  \kk$,
and so these pairings  induce canonical \kt linear isomorphisms
$$
H_v(G^\opp )\simeq H_v(G )^\star \quad {\text {and}} \quad  H (G^\opp )\simeq H (G )^\star.
$$
Here we denote by $N^\star=\Hom_{\kk}(N, \kk)$ the dual of a \kt module $N$.

\subsection{An invariant of  planar $\cc$-colored  $\CM$-graphs}\label{sect-inv-Xi-graphs}
Let    $G$ be a   $\cc$-colored $\CM$-graph in~$\R^2$.  For
each vertex $v$   of~$G$,   pick a half-edge $s_v$ incident to $v$ and deform $G$ near~$v$ so that the half-edges incident to $v$   lie above $v$ with respect to the second coordinate of $\R^2$ and $s_v$ is the leftmost of them. Pick any $\alpha_v \in H_v(G)$ and replace~$v$ by a box colored with $\tau^v_{s_v}(\alpha_v)$, where $\tau^v$ is the universal cone of $H_v(G)$:
\begin{center}
  \psfrag{R}[Bc][Bc]{\scalebox{.9}{$\R^2$}}
  \psfrag{h}[Bc][Bc]{\scalebox{.9}{$\tau^v_{s_v}(\alpha_v)$}}
  \psfrag{e}[Bl][Bl]{\scalebox{.9}{$s_v$}}
  \psfrag{v}[Br][Br]{\scalebox{.9}{$v$}}
\rsdraw{.45}{.9}{inv-R2a-col} \quad \rsdraw{.45}{.6}{fleche} \quad
  \psfrag{e}[Br][Br]{\scalebox{.9}{$s_v$}}
  \psfrag{v}[Bc][Bc]{\scalebox{.9}{$v$}}
\rsdraw{.45}{.9}{inv-R2b-col} \quad \rsdraw{.45}{.6}{fleche} \quad \rsdraw{.45}{.9}{inv-R2c-col} \,.
\end{center}
This   transforms~$G$ into a diagram  without free ends. Let $\inv_\cc(G)(\otimes_v\alpha_v) \in \End_\cc(\un)$ be the associated morphism computed via the Penrose graphical calculus.  Since the  (co)evaluations and the morphisms $\tau^v_{s_v}(\alpha_v)$ are homogeneous morphisms, so is $\inv_\cc(G)(\otimes_v\alpha_v)$. It follows from the axioms of a pivotal $\CM$-category and of the definition of $H_v(G)$ that the homogeneous morphism $\inv_\cc(G)(\otimes_v\alpha_v)$ is independent of the choice of the half-edges $s_v$ and its degree is the grade $|G|\in \Ker(\CM)$ of $G$. Then $\inv_\cc(G)(\otimes_v\alpha_v) \in \End_\cc^{|G|}(\un)$. This   extends by linearity to  a \kt linear homomorphism
$$
\inv_\cc(G)\co H(G) =\otimes_v \, H_v(G) \to \End_\cc^{|G|}(\un).
$$
For $G=\emptyset$, we have $|G|=1$ and the map $\inv_\cc(G)\co H(G) =\kk \to \End_\cc^1(\un)$ is the \kt linear homomorphism carrying $1_\kk$ to $\id_\un$.

The   homomorphism  $\inv_\cc (G) \co H(G) \to \End_\cc^{|G|}(\un)$  is a well-defined isotopy invariant of   the   $\cc$-colored $\CM$-graph $G$ in $\R^2$. More precisely, for any isotopy~$\iota$ between   $\cc$-colored $\CM$-graphs $G$ and $G'$ in $\R^2$,
we have  $\inv_\cc(G')\, H(\iota)=\inv_\cc(G)$, where $H(\iota)\co H(G) \to H(G')$ is the  \kt linear   isomorphism induced by $\iota$. The invariance under isotopies follows from the isotopy invariance of the Penrose graphical calculus.

We state a few simple properties of $\inv_\cc$:

\begin{enumerate}
\labeli
\item (Naturality) If a $\cc$-colored $\CM$-graph $G'$ in   $\R^2$ is obtained from a
$\cc$-colored $\CM$-graph $G$ in $\R^2$ through the replacement of the $\cc$-color $X$ of an edge by a 1-isomorphic object $X'$, then any 1-isomorphism $\phi \co X\to X'$ induces a  \kt linear isomorphism $\Phi \co H(G')\to H(G)$  and $\inv_\cc(G')=\inv_\cc(G)\, \Phi$.

\item If a $\cc$-colored $\CM$-graph $G'$ in   $\R^2$ is obtained from a $\cc$-colored $\CM$-graph $G$ in $\R^2$
by  reversing the orientation of an edge and replacing the $\cc$-color  of this edge  by the dual object of~$\cc$, then
the pivotal structure of~$\cc$ induces a  \kt linear isomorphism $\Psi \co H(G')\to H(G)$ and $\inv_\cc(G')=\inv_\cc(G) \, \Psi$.

\item If   an edge~$e$ of   a $\cc$-colored $\CM$-graph $G$   in  $\R^2$    is colored with $\un$ and   the endpoints of~$e$ are also  endpoints of other edges of~$G$, then  $G'=G\setminus  \Int(e)\subset \RR^2$ inherits from~$G$ the structure of a $\cc$-colored $\CM$-graph,  there is a canonical \kt linear isomorphism $\Delta \co H(G')\to H(G)$,  and $\inv_\cc(G')=\inv_\cc(G ) \, \Delta$.

\item ($\otimes$-multiplicativity) If $G$ and $G'$ are disjoint $\cc$-colored $\CM$-graphs   in  $\R^2$  lying on different sides of a straight line,  then there is a canonical \kt linear isomorphism $\Theta \co H(G\amalg G') \to H(G) \otimes H (G')$ and $$
    \inv_\cc(G\amalg G')=\mu\bigl(\inv_\cc(G) \otimes \inv_\cc(G')\bigr)\Theta,
    $$
    where $\mu $ is  the multiplication of the \kt algebra   $\End_\cc(\un)$.
\end{enumerate}

\subsection{Example}
Consider the following $\cc$-colored $\CM$-graph   in $\R^2$ with two vertices and two edges $H$-labeled with $h,k \in H$ and $\cc$-colored with homogenous objects~$X,Y$:
$$
  \psfrag{R}[Bc][Bc]{\scalebox{.9}{$\R^2$}}
  \psfrag{X}[Br][Br]{\scalebox{.9}{$X$}}
  \psfrag{Y}[Bc][Bc]{\scalebox{.9}{$Y$}}
  \psfrag{h}[Bl][Bl]{\scalebox{.9}{$h$}}
  \psfrag{k}[Bc][Bc]{\scalebox{.9}{$k$}}
  \psfrag{e}[Br][Br]{\scalebox{.9}{$e$}}
  \psfrag{f}[Bl][Bl]{\scalebox{.9}{$f$}}
G=\,\rsdraw{.45}{.9}{inv-R2-exa-col}  \,.
$$
There are 3 half-edges $s_1, s_2,s_3$ incident to the leftmost vertex $u$ and one half-edge~$t$ incident to rightmost vertex $v$:
$$
s_1=\,\rsdraw{.45}{.9}{inv-R2-ex-ed1-col} \, , \quad s_2=\,\rsdraw{.45}{.9}{inv-R2-ex-ed2-col} \, , \quad
s_3=\,\rsdraw{.2}{.9}{inv-R2-ex-ed3-col} \; , \quad t=\,\rsdraw{.2}{.9}{inv-R2-ex-ed4-col} \;.
$$
The cyclic order   on $G_u=\{s_1,s_2,s_3\}$ is $s_1<s_2<s_3<s_1$. By the convention of Section~\ref{sect-Xi-graphs-degree}, the elements $e$ and $f$ of $E$ are the $E$-labels of $s_2$ and $t$, respectively. The conditions for $G$ to be a $\cc$-colored $\CM$-graph reduce to
$$
\CM(e)=h^{-1}k^{-1}h, \quad  \CM(f)=k , \quad |X|=h, \quad \text{and} \quad |Y|=k.
$$
The degree of $G$ is $|G|=\elact{h}{e}f$. By \eqref{eq-xi-graph-2}, the $E$-labels of the half-edges $s_1$ and $s_3$ are $\lact{h}{e}$ and $\lact{k h}{e}$, respectively.
The cone isomorphisms associated with the half-edges are:
\begin{align*}
\tau^u_{s_1} \co H_u(G) & \to \Hom_\cc^{\lact{h}{\!e}}(\un,X \otimes X^* \otimes Y^*),\\
\tau^u_{s_2} \co H_u(G) & \to \Hom_\cc^e(\un,X^* \otimes Y^* \otimes X),\\
\tau^u_{s_3} \co H_u(G) & \to \Hom_\cc^{\lact{k h}{\!e}}(\un,Y^* \otimes X \otimes X^*),\\
\tau^v_{t} \co H_v(G) & \to \Hom_\cc^f(\un,Y).
\end{align*}
The first three isomorphisms are related  to each other via composition with the permutation   maps~\eqref{eq-def-permutation}.   For  instance, for any $\alpha \in H_u(G)$,
$$
\tau^u_{s_2}(\alpha)=
  \psfrag{X}[Bl][Bl]{\scalebox{.9}{$X$}}
  \psfrag{Y}[Bl][Bl]{\scalebox{.9}{$Y$}}
  \psfrag{h}[Bc][Bc]{\scalebox{.9}{$\tau^u_{s_1}(\alpha)$}}
  \rsdraw{.39}{.9}{inv-R2-exe-col}  \;.
$$
By definition,   $H(G)=H_u(G) \otimes H_v(G)$. For any $\alpha \in H_u(G)$ and  $\beta \in  H_v(G) $,
$$
  \psfrag{X}[Bc][Bc]{\scalebox{.9}{$X$}}
  \psfrag{Y}[Bc][Bc]{\scalebox{.9}{$Y$}}
  \psfrag{k}[Bc][Bc]{\scalebox{.9}{$\tau^v_t(\beta)$}}
  \inv_\cc (G)(\alpha \otimes \beta) \, = \,
  \psfrag{h}[Bc][Bc]{\scalebox{.9}{$\tau^u_{s_1}(\alpha)$}}
  \rsdraw{.39}{.9}{inv-R2-exb-col}  \, = \,
  \psfrag{h}[Bc][Bc]{\scalebox{.9}{$\tau^u_{s_2}(\alpha)$}}
  \rsdraw{.39}{.9}{inv-R2-exc-col}  \, = \,
  \psfrag{h}[Bc][Bc]{\scalebox{.9}{$\tau^u_{s_3}(\alpha)$}}
  \rsdraw{.39}{.9}{inv-R2-exd-col}  \;.
$$

\subsection{The spherical  case}\label{sect-Xi-graphs-spherical}
Consider the 2-sphere $S^2=\R^2 \cup \{\infty\}$ endowed with   the  orientation   extending the
counterclockwise orientation in $\R^2$.  We say that a $\CM$-graph $G$ in~$S^2$ is \emph{1-spherical} if when pushing $G$ away from $\infty$, we obtain   a $\CM$-graph $G_0$ in~$\R^2$ with trivial grade.
Note that this notion does not depend on the  way $G$ is pushed away from~$\infty$. Indeed, if $G'_0$ is another $\CM$-graph in $\R^2$ obtained by pushing $G$ away from~$\infty$, then it follows from Section~\ref{sect-Xi-graphs-degree} that $|G'_0|=\lact{x}{|G_0|}$ for some $x\in H$, and so $|G'_0|=1$ if and only $|G_0|=1$.

Assume $\cc$ is spherical (see Section~\ref{sect-crossed-module-graded-spherical}).
Then the invariant $\inv_\cc$ of  $\cc$-colored $\CM$-graphs in $\RR^2$ induces an isotopy invariant of 1-spherical $\cc$-colored $\CM$-graphs in  $S^2$. Indeed,  consider  a $\cc$-colored $\CM$-graph~$G$ in~$S^2$. Pushing $G$ away from $ \infty $ by an isotopy, we obtain  a $\cc$-colored   $\CM$-graph $G_0$ in $\R^2$. The isotopy induces  a \kt linear isomorphism $H (G;S^2)\simeq H  (G_0;\R^2)$. Composing    with $\inv_\cc(G_0)\co H (G_0;\R^2)\to \End_\cc^1(\un)$ we obtain a \kt linear homomorphism
$$
\inv_\cc(G) \co  H (G;S^2)\to \End_\cc^1(\un).
$$
The sphericity of $\cc^1$ ensures that $\inv_\cc(G)$ is a well-defined isotopy invariant of $G$. The properties  of $\inv_\cc$ formulated in Section~\ref{sect-inv-Xi-graphs} extend to 1-spherical $\cc$-colored $\CM$-graphs in    $S^2 $ in the obvious way. The condition in (iv) involving a straight line  may be dropped here because for any pair of  disjoint connected graphs in $S^2$, there is an isotopy of $S^2$ in itself carrying these graphs into new positions in $\RR^2$ separated by a line.

The invariant $\inv_\cc$ further   extends to 1-spherical $\cc$-colored $\CM$-graphs in   an arbitrary oriented  surface~$\Sigma$ homeomorphic to $S^2 $. Given a 1-spherical $\cc$-colored $\CM$-graph $G$ in $\Sigma$, pick an orientation preserving homeomorphism $f \co \Sigma \to S^2$ and set
$$
\inv_\cc (G)=\inv_\cc (f(G))\, H(f) \co H(G; \Sigma) \to \End_\cc^1(\un).
$$
Since all orientation preserving homeomorphisms $\Sigma \to   S^2$  are isotopic, the homomorphism $\inv_\cc (G)$ does not depend on the choice of~$f$.

\subsection{Dual vertices}\label{sect-dula-verticies-Xi-graphs}
Consider a $\cc$-colored $\CM$-graph~$G$ in an oriented surface~$\Sigma$ and a $\cc$-colored $\CM$-graph~$G'$ in an oriented surface~$\Sigma'$.
By a \emph{duality} between a vertex~$u$ of~$G$ and a vertex $v$ of $G'$,  we mean an isomorphism  of $\CM$-cyclic $\cc$-sets $ \phi \co G_u^\opp \to G_v'$, where $G_u^\opp$ is the dual of $G_u$ (see Section~\ref{sect-muliplicity-modules}).
When $\cc$ is spherical pre-fusion, the \emph{contraction vector}   of a duality   $ \phi \co G_u^\opp \to G_v'$ is the  vector
$$
\ast_\phi=\bigl(\id_{H(G_u)} \otimes H(\phi)\bigr)(\ast_{G_u})\in H(G_u) \otimes H(G_v' )=H_u(G) \otimes H_v(G'),
$$
where
$$
H(\phi) \co H( G_u^\opp) \to H(G_v') \quad \text{and} \quad \ast_{G_u}\in  H(G_u) \otimes H(G_u^\opp )
$$
are respectively  the \kt linear isomorphism induced by $\phi$ and  the contraction vector of the $\CM$-cyclic $\cc$-set $G_u$ (see Section~\ref{sect-muliplicity-modules}).
Clearly,  $\ast_\phi =\ast_{\phi^{-1}}$.

The next lemma formulates a local relation for the invariant $\inv_\cc$ of $\cc$-colored $\CM$-graphs in $\RR^2$ involving dual vertices and contraction vectors.

\begin{lem}\label{lem-calc-diag}
Assume that $\cc$ is spherical pre-fusion and let $I$ be a $\CM$-representative set for~$\cc$. The pictures below stands  for  a    pieces   of  $\cc$-colored $\CM$-graphs in $\RR^2$. The orientation of the edges $\cc$-colored by the object $X_i$ should match in both sides of the equality, and let $\varepsilon_i=+$ if this orientation is downwards and $\varepsilon_i=-$ otherwise.
Set $h=|X_1|^{\varepsilon_1} \cdots |X_n|^{\varepsilon_n} \in H$.
Then, for all $e \in E$,
$$
\psfrag{i}[Br][Br]{\scalebox{.9}{$i$}}
\psfrag{u}[Bl][Bl]{\scalebox{.9}{$e$}}
\psfrag{v}[Bl][Bl]{\scalebox{.9}{$e^{-1}$}}
\psfrag{X}[Br][Br]{\scalebox{.8}{$X_1$}}
\psfrag{Z}[Bl][Bl]{\scalebox{.8}{$X_n$}}
\inv_\cc \left (\
\rsdraw{.45}{.9}{inv-graph-prefusion-col-1}
\,\right )
= \sum_{i \in I_{h \CM(e^{-1})}} \dim(i) \;
\inv_\cc \left (\,
\rsdraw{.45}{.9}{inv-graph-prefusion-col-2}
\,\right )  (\ast ).
$$
Here, $e^{\pm 1}$ are the $E$-labels of the half-edges determined by the points (with the graphical convention of Section~\ref{sect-Xi-graphs}) and  $\ast$ is the   contraction vector provided by the duality between the two vertices induced   by the symmetry   with respect to  a horizontal  line.
\end{lem}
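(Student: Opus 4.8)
The plan is to reduce the identity to the completeness relation \eqref{eq-pre-fus-sum} of the enriched graphical calculus of Section~\ref{sect-enriched-graph-calc}, and then to recognize the enriched boxes it produces as a multiple of the contraction vector of the reflection duality. Since the statement is a local relation between pieces of planar $\cc$-colored $\CM$-graphs, the isotopy invariance and the properties of $\inv_\cc$ (Section~\ref{sect-inv-Xi-graphs}) reduce it to the corresponding equality of homogeneous morphisms in $\cc$ obtained by cutting out the two displayed vertices and applying the cone isomorphisms; the gray part of the graph (the free ends and the remainder) is carried along unchanged. Write $X=X_1^{\varepsilon_1}\otimes\cdots\otimes X_n^{\varepsilon_n}$, which by the sign convention is a $1$-direct sum of homogeneous objects of degree $h=|X_1|^{\varepsilon_1}\cdots|X_n|^{\varepsilon_n}$.

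First I would view the $n$ parallel strands $X_1,\dots,X_n$ joining the two vertices on the left-hand side as the single object $X$, and insert the resolution of the identity \eqref{eq-pre-fus-sum} with $E$-label $e$. This replaces the direct junction by a sum $\sum_{i\in I}$ of the enriched $(i,e)$-boxes linked by an $i$-colored arc, turning the two original vertices into two vertices each carrying one extra $i$-half-edge. By the degree relation \eqref{eq-deg-morphism-objects} — equivalently, by the vanishing of the multiplicity index $N^{i,e}_X$ recalled in Section~\ref{sect-crossed-module-graded-fusion} — the summand indexed by $i$ vanishes unless $i$ is homogeneous of the degree prescribed by the vertex conditions \eqref{eq-xi-graph-1}, that is, unless $|i|=h\CM(e^{-1})$. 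This restricts the sum to $i\in I_{h\CM(e^{-1})}$, which is the index set appearing on the right-hand side; note that $N^{i,e}_X$, and hence the corresponding term, is nonzero for only finitely many $i$ (Section~\ref{sect-multiplicity-partitions}), so the sum is well-defined even if $\cc$ is merely pre-fusion.

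It then remains, for each surviving $i$, to identify the inserted configuration with the right-hand term $\inv_\cc(\cdots)(\ast)$ and to extract the scalar $\dim(i)$. The two $i$-augmented vertices produced by the insertion are exchanged by the reflection in a horizontal line, hence are dual in the sense of Section~\ref{sect-dula-verticies-Xi-graphs}; let $\phi\co G_u^\opp\to G_v'$ be the associated duality. By \eqref{eq-def-extended-penrose} the pair of enriched boxes stands, under the cone isomorphisms, for an $(i,e)$-partition, i.e.\ for $\sum_{\alpha}q_\alpha\otimes p_\alpha$ with $p_\alpha q_\beta=\delta_{\alpha,\beta}\,\id_i$. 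On the other hand, the contraction vector $\ast_\phi$ is, by Section~\ref{sect-muliplicity-modules}, the inverse of the close-up pairing $\omega_{G_u}$. Computing this pairing in the partition basis exactly as in the proof of Lemma~\ref{lem-graphical-calculus-prefusion-Xi}, its Gram matrix equals $\delta_{\alpha,\beta}\,\tr(p_\alpha q_\beta)=\delta_{\alpha,\beta}\,\dim(i)$, where the use of a single trace $\tr=\tr_l=\tr_r$ is legitimate precisely because $\cc$ is spherical and $\dim(i)$ is invertible (pivotal pre-fusion). Hence $\ast_\phi=\dim(i)^{-1}\sum_\alpha q_\alpha\otimes p_\alpha$, so that the enriched boxes equal $\dim(i)\,\ast_\phi$, whose two tensor factors match those of \eqref{eq-compute-inverse-s}. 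Summing $\dim(i)\,\inv_\cc(\cdots)(\ast_\phi)$ over $i\in I_{h\CM(e^{-1})}$ then yields the claimed identity.

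The main obstacle is the normalization and orientation bookkeeping of the last step: one must verify that under the horizontal reflection the $E$-labels $e,e^{-1}$, the orientations of the $i$-arc, and the cyclic orders at the two new vertices are exactly those making $\phi$ an isomorphism of $\CM$-cyclic $\cc$-sets, so that the pairing being inverted is genuinely $\omega_{G_u}$ and not a permuted variant, and that the factor produced by closing up is $\dim(i)$ rather than its inverse. Sphericity is essential here, both to ensure a single well-defined scalar $\dim(i)$ and to make the computation independent of which of the two dual vertices one closes up; once these normalizations are aligned, the finiteness of the nonzero terms gives a well-defined sum equal to the left-hand side.
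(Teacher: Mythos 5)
Your overall route --- reading the identity from left to right by inserting the completeness relation \eqref{eq-pre-fus-sum} and then recognizing the resulting pair of enriched boxes as $\dim(i)$ times the contraction vector of the reflection duality --- is the paper's own computation run backwards: the paper starts from the right-hand side, expands $\ast$ via \eqref{eq-compute-inverse-s}, rewrites it using Lemma~\ref{lem-graphical-calculus-prefusion-Xi}, and only at the end contracts with \eqref{eq-pre-fus-sum}. Your Gram-matrix argument for the normalization (the close-up pairing has matrix $\delta_{\alpha,\beta}\dim(i)$ in a bent partition basis, hence $\ast=\dim(i)^{-1}\sum_\alpha q_\alpha\otimes p_\alpha$) is sound in itself and amounts to re-deriving the special case of \eqref{eq-compute-inverse-s} combined with Lemma~\ref{lem-graphical-calculus-prefusion-Xi} that is needed here.

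However, there is a genuine error in the degree bookkeeping, and it sits exactly at the point you defer as ``the main obstacle''. If you insert \eqref{eq-pre-fus-sum} with the untwisted label $e$, the boxes stand for an $(i,e)$-partition of $X$, i.e.\ bases of $\Hom_\cc^{e}(i,X)$ and $\Hom_\cc^{e^{-1}}(X,i)$. By axiom (b) of Section~\ref{sect-crossed-module-graded-categories} (equivalently, the vanishing of $N_X^{i,e}$ that you cite), such a partition is nonzero only if $h=\CM(e)|i|$, i.e.\ $|i|=\CM(e^{-1})h$ --- left multiplication by $\CM(e^{-1})$. The lemma sums over $i\in I_{h\CM(e^{-1})}$ --- right multiplication --- and these index sets differ whenever $\CM(e)$ and $h$ do not commute, so your term-by-term identification cannot hold as stated. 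What actually corresponds to the two vertices of the right-hand picture, dotted with $e^{\pm 1}$, is an $(i,\elact{g}{e})$-partition of $X$ with $g=h\CM(e^{-1})$: bending the $i$-colored leg through the dual/cone isomorphisms conjugates $E$-degrees by $|i|$ (axiom (e) of Section~\ref{sect-crossed-module-graded-categories}), and indeed $\Hom_\cc^{\elact{g}{e}}(i,X)\neq 0$ forces $|i|=g\CM(e^{-1})g^{-1}h=h\CM(e^{-1})=g$, which is the lemma's index set. Supplying precisely this twist is the role of Lemma~\ref{lem-graphical-calculus-prefusion-Xi} in the paper's proof. Your argument becomes correct once you insert \eqref{eq-pre-fus-sum} with label $\elact{g}{e}$, restrict to $i\in I_g$ by the degree constraint just stated, and match the bent $(i,\elact{g}{e})$-boxes with $\dim(i)\,\ast$ via your Gram-matrix computation; with the label $e$ as written, the argument produces an identity indexed by $I_{\CM(e^{-1})h}$, which is not the statement being proved.
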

\begin{proof}
We only need to compare the contributions to~$\inv_\cc$   of the depicted pieces of $\cc$-colored $\CM$-graphs. The contribution of the left-hand piece is $\id_X$ where $X=X_1^{\varepsilon_1} \otimes \cdots  \otimes X_n^{\varepsilon_n}$.
Using the definitions  of $\inv_\cc$ and $\ast$, Formula~\eqref{eq-compute-inverse-s}, and considering   the isotopy
\begin{equation*}
\psfrag{i}[Br][Br]{\scalebox{.9}{$i$}}
\psfrag{u}[Bl][Bl]{\scalebox{.9}{$u$}} \psfrag{v}[Bl][Bl]{\scalebox{.9}{$v$}}
\rsdraw{.45}{.9}{inv-graph-prefusion-3dcol}\,\; \simeq \;\, \psfrag{i}[Bc][Bc]{\scalebox{.9}{$i$}}\rsdraw{.45}{.9}{inv-graph-prefusion-3ccol}\;,
\end{equation*}
we obtain that the contribution  of the right-hand piece is equal to
\begin{gather*}
\psfrag{e}[Bc][Bc]{\scalebox{.9}{$e$}}
\psfrag{X}[Br][Br]{\scalebox{.8}{$X$}}
\psfrag{i}[Bc][Bc]{\scalebox{.9}{$i$}}
\sum_{i \in I_g} \dim(i)\!\! \psfrag{i}[Bl][Bl]{\scalebox{.9}{$i$}} \rsdraw{.45}{.9}{tensor-contract-i-4-col} \;
\overset{(i)}{=} \;  \sum_{i \in I_g} \dim(i)\; \psfrag{i}[Bl][Bl]{\scalebox{.9}{$i$}} \rsdraw{.45}{.9}{tensor-contract-i-5-col}  \; \overset{(ii)}{=} \;
\psfrag{e}[Bc][Bc]{\scalebox{.9}{$\elact{g}{e}$}}
\sum_{i \in I_g} \; \rsdraw{.45}{.9}{tensor-contract-i-6-col}  \;\overset{(iii)}{=}  \,\id_{X}
\end{gather*}
where $g=h \CM(e^{-1})$.
Here $(i)$ follows from the isotopy invariance of the graphical calculus,  $(ii)$   from Lemma~\ref{lem-graphical-calculus-prefusion-Xi}, and $(iii)$  from  \eqref{eq-pre-fus-sum} and the fact that
$\Hom_\cc^{\elact{g}{e}}(i,X)=0$ for all $i \in I\setminus I_g$ (since $X$ is a 1-direct sum of homogenous objects of degree $h$).
\end{proof}

\section{Skeletons and maps}\label{sect-skeletons-maps}

Throughout this section, $\CM \co E \to H$ is a crossed module and $B\CM$ is the classifying space of $\CM$ (see Section~\ref{sect-crossed-modules-classifying-spaces}). We first recall  the notions of stratified 2-polyhedra and skeletons of 3-manifolds (referring to \cite{TVi5} for details). Then we explain how to encode maps from closed 3-manifolds to $B\CM$ in terms of labelings of skeletons.

\subsection{Stratified 2-polyhedra}\label{sect-stratified-polyhedron}
A \emph{2-polyhedron} is a compact topological space that can be triangulated using a finite number of simplices of dimension $ \leq 2$ so that all $0$-simplices and $1$-simplices are faces of $2$-simplices. The interior $\Int(P)$ of a $2$-polyhedron $P$ consists of all points which have a neighborhood homeomorphic to~$\RR^2$. By the definition of a 2-polyhedron, the surface $\Int(P)$ is dense in $P$.

A \emph{stratified polyhedron} is a $2$-polyhedron $P$ endowed with a graph $P^{(1)}$ embedded in~$P$  such that $P \setminus \Int(P) \subset P^{(1)}$. The vertices and edges of $P^{(1)}$ are called respectively the \emph{vertices} and \emph{edges} of $P$. Any 2-polyhedron~$P$ can be stratified. For example,   the 0-simplices and the 1-simplices of a  triangulation of~$P$ form a graph which is   a stratification of~$P$.    Any (possibly, empty) graph embedded in a   compact surface without boundary  is   a stratification of this surface.

An \emph{orientation} of a  stratified 2-polyhedron $P$ is an orientation of the surface $P \setminus P^{(1)}$. A  stratified 2-polyhedron  is \emph{oriented} if it is endowed with an orientation. Cutting a stratified 2-poly\-hed\-ron~$P$ along the graph $P^{(1)}\subset P$,  we obtain a compact surface~$\widetilde{P}$ with interior $P \setminus P^{(1)}$.
The connected components of~$\widetilde{P}$ are called the \emph{regions} of~$P$. Each component of $P \setminus P^{(1)}\subset \widetilde{P} $ is the interior of a   unique   region. The set $\Reg(P) $ of the regions of~$P$ is finite. To orient~$P$, one must orient all its regions.

A \emph{branch}   of a stratified 2-polyhedron~$P$ at a vertex~$v$ of~$P$ is a germ  at~$v$ of an adjacent region.
Similarly,  a \emph{branch}   of~$P$ at an edge~$e$ of~$P$ is a germ  at~$e$ of an adjacent region.
The set of branches of~$P$ at~$e$ is denoted~$P_e$. This set    is finite and
non-empty.  The number of elements of $P_e$ is   called    the \emph{valence} of~$e$.

The edges  of a  stratified 2-polyhedron~$P$  of valence 1 together with their vertices  form a graph called the
\emph{boundary} of~$P$ and denoted~$\partial P$. An  orientation of~$P$ induces an orientation of~$\partial P$: each  edge~$e$ of $\partial P$ is oriented so that the orientation of the
unique region  of~$P$ adjacent to~$e$   is determined by a   pair (a  vector directed outward   at an interior point of~$e$, a positive tangent vector of~$e$).

\subsection{Skeletons of closed 3-manifolds}\label{sect-skeletons}
A \emph{skeleton} of  a closed oriented
(possibly, non-connected) 3-dimensional manifold $M$  is an oriented  stratified 2-polyhedron $P\subset M$ such that $\partial P=\emptyset$ and $M\setminus P$ is a disjoint union of open 3-balls. These open 3-balls are called \emph{$P$-balls}.  The condition $\partial P=\emptyset$ ensures  that all edges of~$P $ have valence   $ \geq 2$.
An example of a skeleton of~$M$ is provided by the
(oriented) 2-skeleton~$t^{(2)}$ of a triangulation~$t$ of~$M$, where
the edges of~$t^{(2)}$ are the edges of~$t$.

Given a skeleton $P$ of $M$, a \emph{$P$-ball branch}  at an edge~$e$ of $P$ is a germ  at~$e$ of an adjacent $P$-ball.

Any    vertex   $x$ of a skeleton~$P$ of $M$  has a closed ball neighborhood $B_x \subset M$ such that $\Gamma_x=P\cap \partial B_x$ is a   non-empty graph and $ P\cap B_x $ is the cone over~$  \Gamma_x $ with summit~$x$.  The vertices of $\Gamma_x$ are the intersection   points of the 2-sphere $\partial B_x$ with the edges   of~$P$ incident to~$x$.
Each  edge of $\Gamma_x$ is the intersection  of $\partial B_x$ with a branch $b$ of $P$ at $x$, and we endow  this edge  with the orientation induced by that of $b$ restricted to $b \setminus \Int (B_x)$.
We endow~$\partial B_x\cong S^2$ with the orientation induced by that of~$M$ restricted to $M\setminus \Int(B_x)$.   In this way,
$\Gamma_x$ becomes an oriented graph in the oriented 2-sphere $\partial B_x$.  We call $B_x$ a \emph{$P$-cone neighborhood}   of~$x$ and call   $\Gamma_x$ the \emph{link graph} of~$x$.  The pair $(B_x, \Gamma_x)$  is    determined by  the triple $ (M,P,x) $  uniquely  up to    homeomorphism.  Since all edges of~$P$ have valence $\geq 2$, so do  all vertices of~$\Gamma_x$.
\begin{figure}
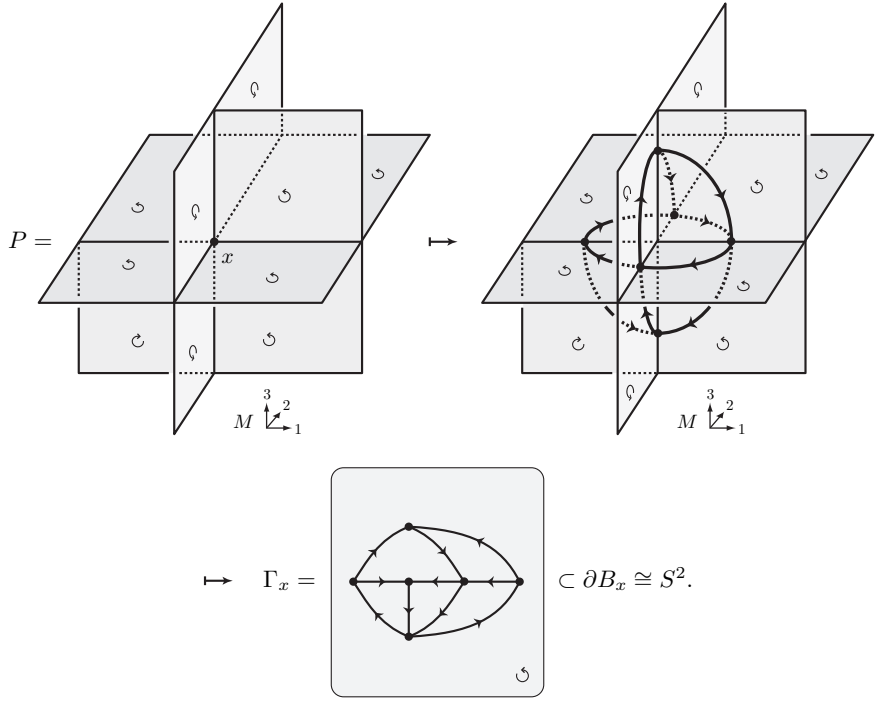

\begin{center}
\scalebox{.9}{\psfrag{P}[Br][Br]{\scalebox{1.111111}{$P=$}}
\psfrag{x}[Bc][Bc]{\scalebox{1}{$x$}}
\psfrag{1}[Bc][Bc]{\scalebox{.7}{$1$}}
\psfrag{2}[Bc][Bc]{\scalebox{.7}{$2$}}
\psfrag{3}[Bc][Bc]{\scalebox{.7}{$3$}}
\psfrag{M}[Br][Br]{\scalebox{.9}{$M$}}
\psfrag{S}[Bl][Bl]{\scalebox{.9}{$S^2$}}
\psfrag{G}[Br][Br]{\scalebox{1.111111}{$\Gamma_x=$}}
\psfrag{H}[Bl][Bl]{\scalebox{1.111111}{$ \subset \partial B_x \cong S^2$.}}
\rsdraw{.45}{.9}{link-vertex}}
\end{center}
\captionsetup{justification=centering}
\caption{The link graph of a vertex}
\label{fig-link-graph}
\end{figure}
An example  is given in Figure~\ref{fig-link-graph} where there are 10 branches of $P$ at $x$, 6 $P$-ball branches at~$x$,  and $\partial B_x$ is identified with $S^2=\R^2\cup\{\infty\}$ via the stereographic projection from a pole in the upper left quarter.

\subsection{$\CM$-labelings}\label{sect-Xi-labelings}
Let $P$ be a skeleton of an oriented closed  3-dimensional manifold~$M$.
By an \emph{oriented edge} of~$P$, we mean an edge of $P$ endowed with an orientation.
For each oriented edge $e$ of $P$, the orientations of~$e$ and~$M$ determine a positive
direction on a small loop in~$M$ encircling~$e$ (so that the linking number of this loop with $e$ is
$+1$).  The  resulting oriented loop determines  cyclic orders on the set $P_e$ of branches of~$P$ at~$e$ and on the set
$P^{\text{ball}}_e$ of $P$-ball branches at~$e$.
Any $P$-ball branch $B \in P^{\text{ball}}_e$ determines a branch $b_B\in P_e$ which is the first branch of~$P$ at~$e$ encountered while traversing the oriented loop starting from $B$. This induces a bijection $P^{\text{ball}}_e \to P_e$ preserving the cyclic order.
In particular, any $P$-ball branch $B\in P^{\text{ball}}_e$ turns the cyclic order on $P_e$ into a linear order so that the  first element is $b_B$.
To each  branch $b\in P_e$ we assign a sign equal to~$ +$ if the orientation  of~$b$  induces the one of $e\subset \partial b$ (that is, the orientation of~$b$ is given by the orientation of~$e$ followed by a vector at a point of~$e$ directed inside~$b$)    and equal  to $ -$ otherwise. This gives a   map $\varepsilon_e \co P_e \to \{+, -\}$.  When orientation  of $e$ is reversed, the cyclic orders  on $P_e$ and $P^{\text{ball}}_e$ are reversed and $ \varepsilon_e$ is multiplied by $-$.

Recall that  $\Reg(P)$ denotes the (finite) set of regions of~$P$ (see Section~\ref{sect-stratified-polyhedron}).
Let $\EB(P)$ be the set of pairs $(e,B)$ where $e$ is an oriented edge of $P$ and $B$ is a $P$-ball branch at $e$. Note that
$
\EB(P)=\bigcup_{e} \, \{e \} \times P_e^{\text{ball}}
$
where $e$ runs over all oriented edges of $P$.
A \emph{pre-$\CM$-labeling} of $P$ is a pair
$
\bigl(\colr \co \Reg (P) \to H,\coleb \co \EB(P) \to E\bigr)
$
of maps such that for any $(e,B) \in \EB(P)$,
\begin{align}
 \CM(\coleb(e,B)) &=\colr(b_1)^{\varepsilon_e(b_1)} \cdots \colr(b_n)^{\varepsilon_e(b_n)},  \label{precol1} \\
\coleb(e,\suc(B)) & =\lact{\left(\colr(b_1)^{-\varepsilon_e(b_1)}\right)}{\!\coleb(e,B)}, \label{precol2} \\
\coleb(-e,B) & =\coleb(e,B)^{-1}, \label{precol3}
\end{align}
where $b_1< \cdots <b_n$ are the elements of $P_e$ enumerated in the linear order determined by $B$, $\suc(B)$ is the successor of $B$ in the cyclic order on $P^{\text{ball}}_e$, and $-e$ is $e$ with opposite orientation:
$$
\psfrag{1}[Bc][Bc]{\scalebox{.7}{$1$}}
\psfrag{2}[Bc][Bc]{\scalebox{.7}{$2$}}
\psfrag{3}[Bc][Bc]{\scalebox{.7}{$3$}}
\psfrag{M}[Br][Br]{\scalebox{.9}{$M$}}
\psfrag{a}[Bc][Bc]{\scalebox{.8}{$b_1$}}
\psfrag{b}[Bc][Bc]{\scalebox{.8}{$b_2$}}
\psfrag{c}[Bc][Bc]{\scalebox{.7}{$b_3$}}
\psfrag{d}[Bc][Bc]{\scalebox{.8}{$b_{n-1}$}}
\psfrag{z}[Bc][Bc]{\scalebox{.8}{$b_n$}}
\psfrag{e}[Bc][Bc]{\scalebox{.9}{$e$}}
\psfrag{B}[Br][Br]{\scalebox{.9}{$B$}}
\psfrag{S}[Br][Br]{\scalebox{.9}{$\suc(B)$}}
\rsdraw{.45}{.9}{neighborhood-edge} \; .
$$
Here and in the sequel, for any branch $b$ of $P$ at a vertex or an edge of $P$, we let $\colr(b)=\colr(r_b)$ where~$r_b$ is the unique region of $P$ containing $b$.
It follows from \eqref{precol2} that for any oriented edge $e$ of $P$, the map $B \in P^{\text{ball}}_e \mapsto \beta(e,B) \in E$ is fully determined by its value on one element of~$P^{\text{ball}}_e$. Also, if \eqref{precol1} holds for one $P$-ball branch at $e$, then it holds for all $P$-ball branches at $e$. Similarly, \eqref{precol3} implies that if \eqref{precol1} or \eqref{precol2} holds for $(e,B) \in \EB(P)$, then it holds for $(-e,B)$.

A pre-$\CM$-labeling $(\colr,\coleb)$ of $P$ turns the set $P_e$ of branches of $P$ at an oriented edge~$e$ of $P$ into a $\CM$-cyclic set $P_e=(P_e,\alpha_e,\beta_e,\varepsilon_e)$ as follows. For any branch $b \in P_e$, set $\alpha_e(b)=\colr(b) \in H$ and $\beta_e(b)=\coleb(e,B)\in E$, where $B$ is the $P$-ball branch at $e$ such that $b=b_B$ (in the above notation).

A pre-$\CM$-labeling $(\colr,\coleb)$ of $P$ turns the link graph $\Gamma_x$ of a vertex $x$ of $P$ into a $\CM$-graph in $\partial B_x \cong S^2$ as follows, where $B_x$ is a $P$-cone neighborhood  of~$x$ (see Section~\ref{sect-skeletons}).
Each  edge of $\Gamma_x$ is the intersection  of $\partial B_x$ with a branch $b$ of~$P$ at~$x$ and we endow  this edge  with the $H$-label $\alpha(b) \in H$. Each half-edge $h$ of~$\Gamma_x$ is uniquely determined by an oriented edge $e$ of $P$  incident to $x$ and a $P$-ball branch~$B$ at $e$. Indeed, the vertex $v$ of $\Gamma_x$ to which $h$ is incident is the first intersection point of~$\partial B_x$ with $e$ while traversing $e$ following its orientation. The orientation of $e$ and $M$ determine as above a
branch $b_B$ of $P$ at $e$. The half-edge $h$ is then the connected component containing $v$ of the intersection of $\partial B_x$ with $b_B$. We endow  the half-edge  $h$ with the $E$-label $\beta(e,B) \in E$. It follows from \eqref{precol1}-\eqref{precol3} that these labels of (half-)edges make $\Gamma_x$ a $\CM$-graph (see Section~\ref{sect-Xi-graphs}).

A \emph{$\CM$-labeling} of $P$ is a pre-$\CM$-labeling of $P$ such that, for any vertex $x$ of $P$, the $\CM$-graph $\Gamma_x$ in $B_x \cong S^2$ is 1-spherical (see Section~\ref{sect-Xi-graphs-spherical}).

Let us define the \emph{gauge group} of $\mathcal{G}_P$ of $P$ as follows. As a set,
$$
\mathcal{G}_P=\mathrm{Map}(\Ball(P),H) \times \mathrm{Map}(\Reg(P),E)
$$
where $\Ball(P)$ is the set of $P$-balls. For any $r \in \Reg(P)$, let $r_\pm$ be the $P$-balls (possibly coinciding) adjacent to the region~$r$ and  indexed so that the orientation of $r$ followed by a normal vector pointing from $r_-$ to $r_+$
yields the given orientation of $M$. The product in $\mathcal{G}_P$ is defined by
$
(\lambda',\mu') \cdot (\lambda,\mu)=(\lambda'',\mu'')
$
with
$$
\lambda''(B)=\lambda'(B) \lambda(B) \quad \text{and} \quad \mu''(r)= \mu(r) \left(\lact{\lambda(r_+)^{-1}}{\!\!\mu'(r)} \right)
$$
for any $P$-ball $B$ and any region $r$ of $P$.  The group $\mathcal{G}_P$ acts (on the left) on the set of $\CM$-labelings of~$P$. To define this action we need some notations. Pick  a point, called the  \emph{center}, in every $P$-ball. For each region $r$ of $P$, pick an oriented arc $\gamma_r$ in $M$ connecting the center of $r_-$ to the center of $r_+$ and intersecting $r$ transversely at a single point. We choose the arcs $\{\gamma_r\}_r$ so that they intersect with each other only at their endpoints. For any branch~$b$ of $P$, we set $\gamma_b=\gamma_{r_b}$ where~$r_b$ is the unique region of $P$ containing~$b$. Each $(e,B) \in \EB(P)$ determines a loop $\gamma_{(e,B)}$ in~$M$ based at the center of $B$ defined by the concatenation
$$
\gamma_{(e,B)}=\gamma_{b_1}^{\varepsilon_e(b_1)} \cdots \gamma_{b_n}^{\varepsilon_e(b_n)},
$$
where $b_1< \cdots <b_n$ are the elements of $P_e$ enumerated in the linear order determined by $B$. For example, we have $\gamma_{(e,B)}=\gamma_1 \gamma_2^{-1} \gamma_3$ for the following trivalent oriented edge~$e$ and the $P$-ball branch $B$:
$$
\psfrag{1}[Bc][Bc]{\scalebox{.7}{$1$}}
\psfrag{2}[Bc][Bc]{\scalebox{.7}{$2$}}
\psfrag{3}[Bc][Bc]{\scalebox{.7}{$3$}}
\psfrag{M}[Br][Br]{\scalebox{.9}{$M$}}
\psfrag{a}[Bc][Bc]{\scalebox{.9}{$b_2$}}
\psfrag{b}[Bc][Bc]{\scalebox{.9}{\textcolor{blue}{$\gamma_2$}}}
\psfrag{c}[Bc][Bc]{\scalebox{.9}{\textcolor{blue}{$\gamma_1$}}}
\psfrag{k}[Bc][Bc]{\scalebox{.9}{\textcolor{blue}{$\gamma_3$}}}
\psfrag{d}[Bc][Bc]{\scalebox{.9}{$b_1$}}
\psfrag{z}[Bc][Bc]{\scalebox{.9}{$b_3$}}
\psfrag{e}[Bc][Bc]{\scalebox{.9}{\textcolor{red}{$e$}}}
\psfrag{B}[Bl][Bl]{\scalebox{.9}{\textcolor{red}{$B$}}}
\psfrag{x}[Bc][Bc]{\scalebox{.9}{$c_e$}}
\psfrag{v}[Bc][Bc]{\scalebox{.9}{$\delta_e$}}
\rsdraw{.45}{.9}{neighborhood-edge-loop-new} \; .
$$
The action of $\mathcal{G}_P$ on the set of $\CM$-labelings of~$P$ is then defined as follows.  For any  $(\lambda,\mu) \in \mathcal{G}_P$ and any $\CM$-labeling $(\colr,\coleb)$ of $P$, the $\CM$-labeling $(\colr',\coleb')=(\lambda,\mu) \cdot (\colr,\coleb)$ of $P$ is defined by
\begin{align}
\colr'(r) &= \lambda(r_-) \colr(r) \CM(\mu(r)) \lambda(r_+)^{-1},  \label{act-col1} \\
\coleb'(e,B) & =\lact{\lambda(B)}{\!{\big(\coleb(e,B) \mu_\alpha(\gamma_{(e,B)})\big)}}, \label{act-col2}
\end{align}
for all $r \in \Reg(P)$ and $(e,B) \in \EB(P)$.
Here, $\mu_\alpha$ extends $\mu$ to the groupoid freely generated by the morphisms  $\{\gamma_r\co r_- \to r_+\}_{r \in \Reg(P)}$ via the following rules:
$\mu_\alpha(\emptyset)=1$ and for any region $r$ of $P$ and any morphism $\omega$ in this groupoid,
$$
\mu_\alpha(\omega \ast \gamma_r)= \left(\lact{\alpha(r)^{-1}}{\!\!\mu_\alpha(\omega)}\right) \!\mu(r)  \quad \text{and} \quad
\mu_\alpha(\omega \ast \gamma_r^{-1})= \lact{\alpha(r)}{\!\left(\mu_\alpha(\omega)\mu(r)^{-1}\right)},
$$
where the symbol $\ast$ denotes the concatenation of the edges of a graph (which is opposite to the composition of morphisms in this groupoid). Two  $\CM$-labelings of $P$ are \emph{gauge equivalent} if they belong to the same orbit under the action of the gauge group $\mathcal{G}_P$.

\begin{lem}\label{lem-gauge-group-labelings}
There is an injective
map from the set of gauge equivalence classes of $\CM$-labelings of $P$ to the set of homotopy classes of maps $M \to B\CM$ which is bijective when the regions of $P$ are disks.
\end{lem}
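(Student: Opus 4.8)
The plan is to construct the desired map $\Phi$ concretely by interpreting a $\CM$-labeling $(\colr,\coleb)$ of $P$ as cocycle data for a map $M \to B\CM$ on the cell structure of $M$ dual to $P$, and then to identify the gauge action of $\mathcal{G}_P$ with homotopy of the resulting maps. In this dual structure the $P$-balls are the (dual) $0$-cells, the regions the $1$-cells, the edges the $2$-cells, and the vertices the $3$-cells; this is a genuine CW decomposition of $M$ precisely when the regions of $P$ are disks. To build a representative map I would send all $P$-balls to the basepoint, send the dual $1$-cell of a region $r$ to the $1$-cell of $BH \subset B\CM$ labeled by $\colr(r) \in H$, and then extend over the dual $2$-cells (the edges of $P$). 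The boundary of the dual $2$-cell of an edge $e$ is the loop spelled by the $H$-labels of the branches in $P_e$, whose class in $\pi_1(BH)=H$ is $\colr(b_1)^{\varepsilon_e(b_1)}\cdots\colr(b_n)^{\varepsilon_e(b_n)}$; by property (ii) of $B\CM$ one has $\Pi_2(B\CM,BH)=\CM$, so $\pi_2(B\CM,BH)=E$ with boundary map $\CM$, and condition \eqref{precol1} says exactly that $\coleb(e,B)\in E$ provides a relative class filling this boundary. Conditions \eqref{precol2} and \eqref{precol3} ensure this filling datum is independent of the auxiliary choices of $P$-ball branch and edge orientation, via the $H$-action on $E=\pi_2(B\CM,BH)$. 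Finally, the obstruction to extending over the dual $3$-cell of a vertex $x$ is the class in $\pi_2(B\CM)=\Ker(\CM)$ of the map of $\partial B_x\cong S^2$ already built on the $2$-skeleton; this class is precisely the grade of the link $\CM$-graph $\Gamma_x$ (Lemma~\ref{lem-grade-labeled-graphs}), so the extension exists if and only if $\Gamma_x$ is $1$-spherical, which is built into the definition of a $\CM$-labeling. Since $M$ has no cells above dimension $3$, this produces a well-defined class $\Phi(\colr,\coleb)\in[M,B\CM]$.

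Next I would show that $\Phi$ descends to gauge equivalence classes. Here the two factors of $\mathcal{G}_P=\Map(\Ball(P),H)\times\Map(\Reg(P),E)$ play the roles of the two layers of a cellular homotopy: $\lambda\co\Ball(P)\to H$ is an $H$-valued $0$-cochain recording, in each $P$-ball, a change of the chosen path to the basepoint, and $\mu\co\Reg(P)\to E$ is an $E$-valued $1$-cochain recording a deformation of the fillings across the dual $2$-cells. The transformation formulas \eqref{act-col1} and \eqref{act-col2} are exactly the equations satisfied by the $1$- and $2$-dimensional strata of such a homotopy $M\times I\to B\CM$; the twists by $\CM(\mu(r))$ and by $\mu_\colr(\gamma_{(e,B)})$ encode the interaction of the $\pi_1$-gauge and the $\pi_2$-gauge through the crossed-module structure. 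Because $\pi_k(B\CM)=0$ for $k\geq 3$ (property (i)), the obstruction to extending such a partial homotopy over the top-dimensional cells of $M\times I$ lies in a trivial group, so any gauge transformation integrates to a homotopy, and $\Phi$ induces a well-defined map on gauge equivalence classes.

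For the two remaining assertions I would argue by obstruction theory, first in the case where the regions of $P$ are disks so that the dual decomposition is an honest CW structure on $M$. Surjectivity then follows from cellular approximation: any map $M\to B\CM$ is homotopic to one cellular for the dual structure and for the CW structure of $B\CM$, and such a cellular map assigns to each region an element of $\pi_1(BH)=H$ and to each edge a relative class in $\pi_2(B\CM,BH)=E$, thereby producing a $\CM$-labeling (conditions \eqref{precol1}--\eqref{precol3} hold because the map is defined on the $2$-skeleton, and $1$-sphericity holds because it extends over the $3$-cells). For injectivity I would show conversely that if two labelings yield homotopic maps then a cellular homotopy between them restricts, on the $1$- and $2$-strata, to data of the form $(\lambda,\mu)$ satisfying \eqref{act-col1}--\eqref{act-col2}, so the labelings are gauge equivalent; again $\pi_3(B\CM)=0$ lets the homotopy be taken cellular with no higher obstruction. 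To handle a general skeleton $P$ whose regions need not be disks, I would subdivide each region into disks, obtaining a finer skeleton $P'$ with an honest dual CW structure together with a gauge-compatible injection from $\CM$-labelings of $P$ into those of $P'$ (extend $\colr$ to be constant on the pieces of each region and propagate $\coleb$ accordingly, so that the new valence-$2$ edges carry trivial $E$-data). Since $\Phi_{P'}$ is injective and the injection intertwines the two maps to $[M,B\CM]$ and respects gauge equivalence, injectivity of $\Phi_P$ follows; surjectivity generally fails because only those $P'$-labelings constant along each original region arise from $P$.

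The hard part will be the obstruction-theoretic dictionary, and in particular the injectivity statement: one must verify carefully that an arbitrary homotopy between the two representative maps can be made cellular and that its cellular data is genuinely of the form $(\lambda,\mu)$ with the precise twists appearing in \eqref{act-col1}--\eqref{act-col2}, rather than merely some cohomologous cocycle. This is where the bookkeeping of base paths in the $P$-balls (the $\lambda$-factor) is essential, since $[M,B\CM]$ consists of free rather than based homotopy classes and the $\pi_1$-conjugation ambiguity must be absorbed exactly by $\Map(\Ball(P),H)$. The analogous result in the group-graded case (target $K(G,1)$) is established in \cite{TVi1,TVi4}, and I would follow that strategy, the new features being the $E$-valued layer $\coleb$ and the $1$-sphericity condition coming from $\pi_2(B\CM)=\Ker(\CM)$.
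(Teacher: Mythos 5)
Your core construction follows essentially the same route as the paper's own proof (Section~\ref{sect-proof-lem-gauge-group-labelings}): the paper chooses centers of $P$-balls, arcs $\gamma_r$, and disks $\delta_e$, producing a filtration $C \subset S \subset D \subset M$ which, when the regions are disks, is precisely the skeletal filtration of the CW-decomposition of $M$ dual to $P$; Claim~\ref{claim-filtered-map-to-Xi-labelings} is your obstruction-theoretic construction of a (filtered) map from a labeling, with \eqref{precol1} feeding the extension over the $2$-cells via $\pi_2(B\CM,BH;x)=E$ and $1$-sphericality feeding the extension over the $3$-cells, and Claim~\ref{claim-gauge-equiv-homotopic} is your dictionary between the gauge action \eqref{act-col1}--\eqref{act-col2} and (filtered) homotopies. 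The technical point you correctly flag as the hard part of injectivity --- that an arbitrary homotopy between the two representatives can be made compatible with the filtration --- is resolved in the paper by the observation that $\pi_1(BH,x)\to\pi_1(B\CM,x)=\Coker(\CM)$ is surjective, so the tracks of the homotopy over the centers can be pushed into $BH$. Surjectivity in the disk-region case is cellular approximation in both treatments.

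The genuine divergence, and the gap, is your treatment of non-disk regions. The paper never subdivides: it collapses each region $r$ (a disk with holes) onto a wedge of circles $W_r$ and replaces the arc $\gamma_r$ by an embedded suspension $S_r$, so that $(M,S)$ with $S=\cup_r S_r$ still has a relative CW-decomposition with only $2$- and $3$-cells; Claims~\ref{claim-filtered-map-to-Xi-labelings} and~\ref{claim-gauge-equiv-homotopic} are then proved for arbitrary $P$ in one stroke, and disk regions are needed only for surjectivity. Your alternative --- subdivide $P$ to a skeleton $P'$ with disk regions and transfer injectivity --- does not follow from the two properties you state for the comparison map $\iota$ (intertwining with the maps to $[M,B\CM]$, and ``respects gauge equivalence''). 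Those give only the implication: $\Phi_P[\ell]=\Phi_P[\ell'] \Rightarrow \iota(\ell)\sim_{\mathcal{G}_{P'}}\iota(\ell')$. To conclude $\ell\sim_{\mathcal{G}_P}\ell'$ you need the converse direction, i.e.\ that $\iota$ is \emph{injective on gauge equivalence classes}, which is a statement about the gauge groups and is precisely the nontrivial content at this step. It is true, but it requires an argument you have not given: if $(\lambda,\mu')\in\mathcal{G}_{P'}$ carries $\iota(\ell')$ to $\iota(\ell)$, then \eqref{act-col1} applied to the subregions of a fixed region $r$ (all of which have the same adjacent $P$-balls $r_\pm$ and carry the same $H$-label) forces $\CM(\mu')$ to be constant on them, and \eqref{act-col2} applied to the $2$-valent subdivision edges (whose $E$-labels are $1$ both before and after the transformation) forces $\mu'$ itself to be constant on the subregions of $r$, using that the subregions of $r$ are connected through subdivision edges; only then does $\mu'$ descend to a map $\Reg(P)\to E$ exhibiting $\ell\sim_{\mathcal{G}_P}\ell'$. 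Without this computation the reduction does not go through, and as written your argument establishes the injectivity assertion of the lemma only when the regions of $P$ are already disks.
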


We prove Lemma~\ref{lem-gauge-group-labelings} in Section~\ref{sect-proof-lem-gauge-group-labelings}. The next example illustrates the lemma for lens spaces.

\subsection{Example}\label{ex-lens-spaces-labelings}
Consider the lens space $L(p,q)$ where  $p,q$ are coprime integers with $1 \leq q < p$.
It is a  closed connected oriented   3-dimensional manifold  obtained by gluing two solid tori as follows.  Let   $D=\{z \in \CC\,\vert \, \vert z\vert \leq 1\}$ be the unit disk in~$\CC$    with   counterclockwise orientation. Let $S^1=\partial D$ be the unit circle    with     counterclockwise orientation.
Let $U,V$ be two copies of the solid torus $S^1 \times D$. The boundaries of~$U$ and~$V$ are copies of    $ S^1 \times S^1$. Then $L(p,q)=U\cup_\varphi V$  is obtained by  gluing~$U$ to~$V$ along a  homeomorphism $\varphi \co \partial U \to \partial V$ such that   $\varphi (1,t)=(t^p, t^q)$ for all~$t\in S^1$. We provide $L(p,q)$ with the  orientation extending the product orientation of $V=S^1 \times D$.

A skeleton~$P$ of $L(p,q)$ is  obtained from the disk $\{1\}\times D\subset U$ and the circle  $S^1\times \{0\}\subset V$ by pulling the boundary of this  disk towards   this  circle in $V$.  More precisely,
$$
P=(\{1\}\times D)  \cup_\varphi  \{(t^p, kt^q)\, \vert \, t\in S^1, k\in [0,1]\}  \subset U\cup_\varphi V=L(p,q).
$$
The polyhedron~$P$ contains the circle  $S^1\times \{0\}\subset V$ which, viewed as a loop  based at the point $(1,0)$, determines a stratification of~$P$.  The stratified polyhedron~$P$ has one vertex, one edge, and one disk region which we orient by extending the orientation of $D=\{1\}\times D \subset U$. In this way,  $P$ becomes a  skeleton of $L(p,q)$ with a single $P$-ball. Note that there are $p$ branches of~$P$ at the edge of $P$.

Any pre-$\CM$-labeling of $P$ is determined by a pair $(h,e) \in H \times E$ with $\CM(e)=h^p$. Here $h$ is the $H$-label of the disk region and $e$ is the $E$-label of the edge (oriented as $S^1=S^1\times \{0\}\subset V$ and with respect to some chosen adjacent $P$-ball branch). The $\CM$-graph $\Gamma$ in $S^2=\R^2\cup\{\infty\}$ associated with the vertex of $P$ is
$$
\psfrag{e}[Br][Br]{\scalebox{.9}{$e$}}
\psfrag{a}[Br][Br]{\scalebox{.9}{$e^{-1}$}}
\psfrag{h}[Bc][Bc]{\scalebox{.9}{$h$}}
\psfrag{q}[Bl][Bl]{\scalebox{.7}{$q$}}
\psfrag{p}[Bl][Bl]{\scalebox{.7}{$p-q$}}
\Gamma=\,\rsdraw{.45}{.9}{Xi-graph-lens}
$$
with $q$ edges below and $(p-q)$ edges above the horizontal axis. By Section~\ref{sect-Xi-graphs-degree}, the grade of $\Gamma$ is
$$
|\Gamma|=e\left(\elact{(h^{-q})}{(e^{-1})}\right).
$$
Then $\Gamma$ is 1-spherical (that is, $|\Gamma|=1$) if and only if $\lact{(h^q)}{e}=e$. Since $p,q$ are coprime and $\lact{(h^p)}{e}=\lact{\CM(e)}{e}=e e e^{-1}=e$, this condition is equivalent to $\lact{h}{e}=e$. Consequently any $\CM$-labeling of $P$ is fully determined by a pair $(h,e) \in H \times E$ such that
\begin{equation}\label{eq-Xi-labeling-lens}
\CM(e)=h^p \quad \text{and} \quad  \lact{h}{e}=e.
\end{equation}
Note that the gauge group of $P$ is $\mathcal{G}_P=H \times E$ with product
$$
(\lambda',\mu') \ast (\lambda,\mu)=\left(\lambda'\lambda, \, \mu \lact{(\lambda^{-1})}{\!\mu'}\right)
$$
and that it acts on $\CM$-labelings as:
$$
(\lambda,\mu) \cdot (h,e)=\left(\lambda h \CM(\mu) \lambda^{-1}, \,\lact{\lambda}{(e f_p)} \right) \quad \text{where} \quad
f_p= \left(\lact{(h^{-(p-1)})}{\mu}\right) \cdots \left(\lact{(h^{-1})}{\mu} \right) \mu.
$$

By Lemma~\ref{lem-gauge-group-labelings}, the $\CM$-labeling of $P$ induced by a pair $(h,e) \in H \times E$ satisfying~\eqref{eq-Xi-labeling-lens} determines a homotopy class $g_{h,e} \in [L(p,q), B\CM]$ which depends only on the $\mathcal{G}_P$-orbit of $(h,e)$. Moreover, since the region of $P$ is a disk, any $g \in [L(p,q), B\CM]$ is of the form $g=g_{h,e}$ for some $(h,e) \in H \times E$ satisfying \eqref{eq-Xi-labeling-lens}.

\subsection{Proof of Lemma~\ref{lem-gauge-group-labelings}}\label{sect-proof-lem-gauge-group-labelings}
For each (unoriented) edge~$e$ of~$P$, pick a $P$-ball branch~$B_e$ at $e$, an orientation for~$e$, and a disk $\delta_e$ embedded in~$M$ which bounds $\gamma_{(e,B_e)}$, intersects~$e$ transversely at a single point, and intersects $P$ transversely in arcs lying in the regions adjacent to~$e$. We endow $\delta_e$ with the basepoint given by the center~$c_e$ of $B_e$ and orient~$\delta_e$ so that its orientation followed by that of $e$ yields the given orientation of $M$. The condition $\delta_e$ bounds $\gamma_{(e,B_e)}$ means that $\partial \delta_e=\gamma_{(e,B_e)}$ as loops in $M$ based at $c_e$. (Here we endow $\partial \delta_e$ with the boundary orientation using the first outward pointing convention.)  For example, we have $\partial \delta_e=\gamma_1 \gamma_2^{-1} \gamma_3$ for the following trivalent edge~$e$ and $P$-ball branch $B_e$:
$$
\psfrag{1}[Bc][Bc]{\scalebox{.7}{$1$}}
\psfrag{2}[Bc][Bc]{\scalebox{.7}{$2$}}
\psfrag{3}[Bc][Bc]{\scalebox{.7}{$3$}}
\psfrag{M}[Br][Br]{\scalebox{.9}{$M$}}
\psfrag{a}[Bc][Bc]{\scalebox{.9}{$b_2$}}
\psfrag{b}[Bc][Bc]{\scalebox{.9}{$\gamma_2$}}
\psfrag{c}[Bc][Bc]{\scalebox{.9}{$\gamma_1$}}
\psfrag{k}[Bc][Bc]{\scalebox{.9}{$\gamma_3$}}
\psfrag{d}[Bc][Bc]{\scalebox{.9}{$b_1$}}
\psfrag{z}[Bc][Bc]{\scalebox{.9}{$b_3$}}
\psfrag{e}[Bc][Bc]{\scalebox{.9}{\textcolor{red}{$e$}}}
\psfrag{B}[Bl][Bl]{\scalebox{.9}{\textcolor{red}{$B_e$}}}
\psfrag{x}[Bl][Bl]{\scalebox{.9}{$c_e$}}
\psfrag{v}[Bc][Bc]{\scalebox{.9}{$\delta_e$}}
\rsdraw{.45}{.9}{neighborhood-edge-disk-new} \; .
$$
Each region~$r$ of $P$ is a disk with holes and may be collapsed onto a wedge of circles $W_r \subset r \setminus \partial r$ based at the point $\alpha_r\cap r$. (Here, a circle corresponding to a hole $h$ is chosen to be isotopic to
$p_h (\partial h) p_h^{-1}$, where $p_h$ is a path from the point $\alpha_r\cap r$ to some vertex of $h$.)
Let $S_r$ be obtained from $W_r\times [-1,1]$ by contracting the sets ~$W_r\times \{-1\}$ and $W_r\times \{1\}$ into points which are called the summits of ~$S_r$. (If the two $P$-balls adjacent to~$r$ are equal, then we identify the two summits of $S_r$.)  We embed $S_r$ into $M$ as a union of two cones with base~$W_r$ and with summits
in the centers of the $P$-balls adjacent to $r$. We can assume that~$\gamma_r \subset S_r$ and that $S_r$ does not meet $S_{r'}$ for $r\neq r'$ except possibly  in the summits. Note that $S_r=\gamma_r$ when $r$ is a disk. Set $S=\cup_r S_r$. Then the pair $(M, S)$ has a relative CW-decomposition with only 2-cells and 3-cells given by the disks~$\{\delta_e\}_e$ and ball neighborhoods  in $M$ of the vertices of~$P$. Let $D=S\cup (\cup_e \delta_e)$ be the relative 2-skeleton of $(M, S)$ and  $C$ be the set of centers of $P$-balls.
Consider the following filtration of $M$:
$$
M_*=(C \subset S \subset D \subset M).
$$

By Section~\ref{sect-crossed-modules-classifying-spaces}, the classifying space $B\CM$ of $\CM$ has a canonical filtration $$(B\CM)_*=(\{x\}\subset BH \subset B\CM)$$
where $x$ is the unique 0-cell of $B\CM$ (serving as basepoint) and $BH$ is a subcomplex of $B\CM$ which is a classifying space of the group $H$, contains all 1-cells of $B\CM$, and whose associated boundary map $\partial \co \pi _{2}(B\CM,BH;x)\to \pi_{1}(BH,x)$ is the crossed module $\CM\co E \to H$.

We denote by $\FTop(M_*,(B\CM)_*)$ the set of filtered maps $M_*\to (B\CM)_*$, that is, of continuous maps $f \co M \to B\CM$ such that $f(C)=\{x\}$ and $f(S) \subset BH$.

\begin{claim}\label{claim-filtered-map-to-Xi-labelings}
Any $f \in \FTop(M_*,(B\CM)_*)$ defines a $\CM$-labeling $(\alpha_f,\beta_f)$ of $P$ such that:
\begin{itemize}
\item $\alpha_f(r)=f_*([\gamma_r]) \in \pi_1(BH,x)=H$ for every region $r$ of $P$,
\item $\beta_f(e,B_e)=f_*([\delta_e]) \in \pi_2(B\CM,BH;x)=E$ for every edge $e$ of $P$.
\end{itemize}
The induced map $f \in \FTop(M_*,(B\CM)_*)\mapsto (\alpha_f,\beta_f) \in \{\text{$\CM$-labelings of $P$}\}$ has a section $(\alpha,\beta) \mapsto f_{\alpha,\beta}$.
\end{claim}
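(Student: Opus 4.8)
The plan is to handle both assertions by a single obstruction-theoretic induction along the filtration $M_*=(C\subset S\subset D\subset M)$, using throughout the identifications recorded in Section~\ref{sect-crossed-modules-classifying-spaces}: $\pi_1(BH,x)=H$, $\pi_2(B\CM,BH;x)=E$ with boundary map $\partial=\CM$, and $\pi_2(B\CM,x)=\Ker(\CM)$. The two directions are mirror images of one another, the same local computation reading first as a constraint satisfied by $f$ and then as the vanishing of an obstruction that lets $f_{\alpha,\beta}$ be built.

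For $f\in\FTop(M_*,(B\CM)_*)$ I would first check that $\alpha_f,\beta_f$ are well defined. Since $f(C)=\{x\}$, the arc $\gamma_r$ joining two $P$-ball centers is sent to a loop at $x$, and as $\gamma_r\subset S$ this loop lies in $BH$, so $\alpha_f(r)=f_*[\gamma_r]\in\pi_1(BH,x)=H$; likewise $f$ sends $(\delta_e,\partial\delta_e)$, based at $c_e\in C$, into $(B\CM,BH)$, whence $\beta_f(e,B_e)=f_*[\delta_e]\in\pi_2(B\CM,BH;x)=E$. I then define $\beta_f$ on the remaining ball branches and the opposite orientations via \eqref{precol2}--\eqref{precol3}. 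Condition \eqref{precol1} is precisely the naturality of $\partial$: using $\partial\delta_e=\gamma_{(e,B_e)}=\gamma_{b_1}^{\varepsilon_e(b_1)}\cdots\gamma_{b_n}^{\varepsilon_e(b_n)}$ one gets $\CM(\beta_f(e,B_e))=\partial f_*[\delta_e]=f_*[\partial\delta_e]=\prod_i \alpha_f(b_i)^{\varepsilon_e(b_i)}$. (The cyclic consistency of the propagation \eqref{precol2} around the full edge is exactly the Peiffer computation $\lact{\CM(\beta)^{-1}}{\beta}=\beta$, combining \eqref{eq-Peiffer} and \eqref{precol1}.) Thus $(\alpha_f,\beta_f)$ is at least a pre-$\CM$-labeling.

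The remaining point in the forward direction, and what I expect to be the main obstacle, is the $1$-sphericity of each link graph $\Gamma_x$. Here I would prove the key identity that the combinatorial grade $|\Gamma_x|\in\Ker(\CM)$ of Section~\ref{sect-Xi-graphs-degree} equals the homotopy class $f_*[\partial B_x]\in\pi_2(B\CM,x)=\Ker(\CM)$ of the restriction of $f$ to the boundary sphere of the cone neighbourhood $B_x$. This rests on the fact that $B$ realizes the crossed module, i.e. $\Pi_2(B\CM,BH)=\CM$ (Section~\ref{sect-crossed-modules-classifying-spaces}\,(ii), \cite{BH1}): in the cell structure of $\partial B_x\cong S^2$ adapted to $\Gamma_x$, each vertex $v$ of $\Gamma_x$ corresponds to an edge of $P$ through $x$ and hence to a relative $\pi_2$-class with value the $E$-label $e_v=\beta_f$, while the arc $\gamma_v$ carries the $H$-holonomy $h_v=\alpha_f$. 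Transporting each such class to the base point $m$ (which acts on $e_v$ by the $\pi_1$-action of $h_v$) and multiplying over all vertices is exactly the defining formula $|\Gamma_x|=\lact{h_{v_1}}{e_{v_1}}\cdots\lact{h_{v_k}}{e_{v_k}}$; that this lies in $\Ker(\CM)$ and is independent of all choices is Lemma~\ref{lem-grade-labeled-graphs}, matching the behaviour of a genuine class in $\pi_2(B\CM)$. Since $f$ is defined on the whole $3$-ball $B_x$, the class $f_*[\partial B_x]$ vanishes, so $|\Gamma_x|=1$ and $\Gamma_x$ is $1$-spherical. Hence $(\alpha_f,\beta_f)$ is a genuine $\CM$-labeling.

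For the section I would reverse this, building $f_{\alpha,\beta}$ cell by cell from a given $\CM$-labeling $(\alpha,\beta)$, so that $f\mapsto(\alpha_f,\beta_f)$ admits a set-theoretic right inverse. Send $C$ to $x$; on each $1$-cell $\gamma_r$ of $S$ choose a based loop in $BH$ representing $\alpha(r)$. The $2$-cells of $S$ are the suspensions of the circles of the wedges $W_r$, so each is attached along a loop of the form $\gamma_r\gamma_r^{-1}$ in the $1$-skeleton $\gamma_r$ of $S_r$; these are null in $BH$ and the obstruction to extending lies in $\pi_2(BH)=0$, so $f$ extends over $S$ into $BH$ with $\alpha_f=\alpha$. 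Over each disk $\delta_e$, condition \eqref{precol1} gives $f_*[\partial\delta_e]=\CM(\beta(e,B_e))=\partial\beta(e,B_e)$, so the element $\beta(e,B_e)\in\pi_2(B\CM,BH;x)=E$ has the correct boundary and is realized by a map $(\delta_e,\partial\delta_e)\to(B\CM,BH)$, extending $f$ over $D$ with $\beta_f=\beta$. Finally the obstruction to extending $f|_{\partial B_x}$ over each $3$-ball $B_x$ is its class in $\pi_2(B\CM)=\Ker(\CM)$, which by the identity above equals the grade $|\Gamma_x|$; as $(\alpha,\beta)$ is a $\CM$-labeling this grade is trivial, so $f$ extends over every $B_x$ and hence over $M$. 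By construction the resulting filtered map $f_{\alpha,\beta}$ satisfies $(\alpha_{f_{\alpha,\beta}},\beta_{f_{\alpha,\beta}})=(\alpha,\beta)$. The delicate step throughout remains the identification of the grade with $f_*[\partial B_x]$, which demands care with the cell structure of $\partial B_x$ dual to $\Gamma_x$ and with the base-point transport encoded by the arcs $\gamma_v$.
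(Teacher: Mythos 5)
Your proposal is correct and follows essentially the same route as the paper's proof: define $(\alpha_f,\beta_f)$ on the chosen pairs $(e,B_e)$, verify \eqref{precol1} via $\CM(f_*[\delta_e])=f_*[\partial\delta_e]$, extend uniquely to a pre-$\CM$-labeling by \eqref{precol2}--\eqref{precol3}, and construct the section by extending a map over $S$, then over the disks $\delta_e$ using \eqref{precol1}, then over the $3$-cells using $1$-sphericality. The only differences are presentational: you extend over $S$ cell-by-cell instead of using the retraction $S_r\to\gamma_r$, and you make explicit (and rightly flag as the delicate point) the identification of the grade $|\Gamma_x|$ with the obstruction class $f_*[\partial B_x]\in\pi_2(B\CM,x)=\Ker(\CM)$, which the paper's proof leaves implicit when asserting that $1$-sphericality corresponds to null-homotopy of the restriction of the map to the boundary of each $3$-cell.
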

\begin{proof}
For each (unoriented) edge~$e$ of~$P$, the above chosen orientation for~$e$ and $P$-ball branch $B_e$ at $e$ makes $(e,B_e)$ an element of $\EB(P)$. Denote by $\EB_0(P)$ the subset of $\EB(P)$ made of pairs $(e,B_e)$ as $e$ runs over all edges of $P$.
The definitions in the claim give well defined maps $\alpha_f \co \Reg(P) \to H$ and $\beta_f \co \EB_0(P) \to E$.
For any edge~$e$ of~$P$, recall that $\partial\delta_e=\gamma_{b_1}^{\varepsilon_e(b_1)} \cdots \gamma_{b_n}^{\varepsilon_e(b_n)}$
where $b_1< \cdots <b_n$ are the branches of $P$ at~$e$ enumerated in the linear order determined by $B_e$ (and the chosen orientation of $e$), and so
\begin{align*}
\CM(\beta_f(e,B_e))& =\CM(f_*([\delta_e]))=f_*([\partial\delta_e])
= \alpha_f(b_1)^{\varepsilon_e(b_1)} \cdots \alpha_f(b_n)^{\varepsilon_e(b_n)}.
\end{align*}
Thus \eqref{precol1} is satisfied for all elements in $\EB_0(P)$. Now, one easily verifies that any pair of maps $(\alpha\co \Reg(P) \to H,\beta \co \EB_0(P) \to E)$ satisfying \eqref{precol1} for all elements in $\EB_0(P)$ extends uniquely to a pre-$\CM$-labeling of $P$ (by extending $\beta$ to $\EB(P)$ using~\eqref{precol2} and \eqref{precol3}). Then $(\alpha_f,\beta_f)$ extends uniquely to a pre-$\CM$-labeling of $P$ also denoted by~$(\alpha_f,\beta_f)$. This pre-$\CM$-labeling is a $\CM$-labeling  since the restriction of $f$ to the boundary of any $P$-cone neighborhood of a vertex of $P$ (see Section~\ref{sect-skeletons}) is null-homotopic.

Let us prove the second assertion of the claim. For each region $r$ of $P$, the projection $W_r\times [-1,1]\to [-1,1]$ induces a retraction $S_r\to \gamma_r$. Composing with loops $\gamma_r \to BH$ representing the elements $\alpha(r)\in H=\pi_1(BH,x)$, we obtain   a map $S=\cup_r S_r\to BH \subset B\CM $ carrying the centers of the $P$-balls to the basepoint~$x$  of $B\CM$
and carrying  each $\gamma_r$ to a loop in $BH$ representing $\alpha(r)$.   Condition~\eqref{precol1} ensures that this map extends to the relative 2-skeleton $D=S\cup (\cup_e \delta_e)$ of $(M, S)$ by carrying each pointed disk $\delta_e$ to a disk in~$B\CM$ representing the element $\coleb(e, B_e)\in E=\pi _{2}(B\CM,BH;x)$.
The $1$-sphericality of  the $\CM$-graphs
associated with the vertices of $P$   implies that the restriction of this map to the boundary of each~$3$-cell is null-homotopic.    Therefore, there is a further extension to~$M$ producing a filtered map ~$f_{\alpha,\beta}\co  M_* \to (B\CM)_*$.  The conditions \eqref{precol2} and \eqref{precol3} ensure that the map
$f_{\alpha,\beta}$ does not depend on the choices of the adjacent $P$-ball branches and orientations for the edges of $P$. Finally, it follows directly from the definitions that $(\alpha_{f_{\alpha,\beta}},\beta_{f_{\alpha,\beta}})=(\alpha,\beta)$.
\end{proof}

Note that for any $\CM$-labeling $(\alpha,\beta)$ of $P$, the homotopy class  $[f_{\alpha,\beta}]\in [M,B\CM]$
of the map $f_{\alpha,\beta}$ defined in Claim~\ref{claim-filtered-map-to-Xi-labelings} depends only on $P$ and $(\colr,\coleb)$ since any two systems of arcs $\{\gamma_r\}_r$, disks $\{ \delta_e\}_e$, and suspensions $\{ S_r\}_r$ are  isotopic  in $M$.

\begin{claim}\label{claim-gauge-equiv-homotopic}
Two $\CM$-labelings $(\alpha,\beta)$ and $(\alpha',\beta')$ of $P$ are gauge equivalent if and only if their associated maps  $f_{\alpha,\beta}$  and $f_{\alpha',\beta'}$ are homotopic.
\end{claim}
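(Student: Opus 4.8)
The plan is to read both directions off the relative CW structure of the pair $(M \times I, M \times \{0,1\})$, where $I=[0,1]$, whose relative cells are the products $\sigma \times I$ of the cells $\sigma$ of $M$. The cells of $M$ arising from the filtration $M_*$ are: the centers of the $P$-balls ($0$-cells), the transverse arcs $\gamma_r$ ($1$-cells, one per region $r$), the disks $\delta_e$ ($2$-cells, one per edge $e$), the vertex $P$-cone neighborhoods ($3$-cells), together with the auxiliary spherical $2$-cells of $S$, which map into $BH$ and carry no data since $\pi_2(BH)=0$. By the construction in Claim~\ref{claim-filtered-map-to-Xi-labelings}, the map $f_{\alpha,\beta}$ realizes $\alpha(r)=f_*([\gamma_r]) \in \pi_1(BH,x)=H$ on the $1$-cells and $\beta(e,B_e)=f_*([\delta_e]) \in \pi_2(B\CM,BH;x)=E$ on the $2$-cells. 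From Section~\ref{sect-crossed-modules-classifying-spaces} and the homotopy exact sequence of $(B\CM,BH)$, the pair is $1$-connected, $\pi_2(B\CM,BH)=E$ with boundary $\CM\co E\to H$, and $\pi_k(B\CM,BH)=0$ for $k\geq 3$; moreover $\pi_3(B\CM)=\pi_4(B\CM)=0$.

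For the implication ``gauge equivalent $\Rightarrow$ homotopic'', I would fix $(\lambda,\mu)\in\mathcal{G}_P$ with $(\alpha',\beta')=(\lambda,\mu)\cdot(\alpha,\beta)$ and build a homotopy $F\co M\times I\to B\CM$ from $f_{\alpha,\beta}$ to $f_{\alpha',\beta'}$ cell by cell over this relative structure. On the vertical arc over the center of a $P$-ball $B$, let $F$ trace a loop in $BH$ representing $\lambda(B)\in H$. Over the square $\gamma_r\times I$ the four sides then form a loop in $BH$, which by \eqref{act-col1} represents $\CM(\mu(r))$ in $\pi_1(BH)=H$; hence the square admits a filling into $B\CM$ whose relative class in $\pi_2(B\CM,BH)=E$ is $\mu(r)$. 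It remains to extend $F$ over the $3$-cells $\delta_e\times I$ and over the top cells; the latter is unobstructed because $\pi_3(B\CM)=\pi_4(B\CM)=0$, while the former is possible precisely because the class of the boundary $2$-sphere of $\delta_e\times I$ in $\pi_2(B\CM)=\Ker(\CM)$ vanishes, this vanishing being exactly the transformation rule \eqref{act-col2} for $\beta$.

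For the converse ``homotopic $\Rightarrow$ gauge equivalent'', I would start from an arbitrary homotopy $F\co M\times I\to B\CM$ between $f_{\alpha,\beta}$ and $f_{\alpha',\beta'}$ and, using that $(B\CM,BH)$ is $1$-connected, deform it rel $M\times\{0,1\}$ so that each vertical arc over a center maps into $BH$ (its endpoints already lie at $x\in BH$). I then set $\lambda(B)=[F|_{\{c_B\}\times I}]\in\pi_1(BH,x)=H$ and define $\mu(r)\in\pi_2(B\CM,BH)=E$ to be the relative class of the square $\gamma_r\times I$, whose boundary now lies in $BH$. Applying the boundary map $\partial=\CM$ to this square yields \eqref{act-col1}, and computing the class of the boundary $2$-sphere of each $3$-cell $\delta_e\times I$ in $\pi_2(B\CM)=\Ker(\CM)$, which must vanish since $F$ is defined on $\delta_e\times I$, yields \eqref{act-col2}; together these say $(\alpha',\beta')=(\lambda,\mu)\cdot(\alpha,\beta)$. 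Independence of $(\lambda,\mu)$ from the choices made (the deformation and the auxiliary arcs, disks and suspensions) follows as in the note following Claim~\ref{claim-filtered-map-to-Xi-labelings}.

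The main obstacle is the verification of \eqref{act-col2}, namely the bookkeeping of the $E$-labels. Along the boundary $\partial\delta_e=\gamma_{(e,B_e)}=\gamma_{b_1}^{\varepsilon_e(b_1)}\cdots\gamma_{b_n}^{\varepsilon_e(b_n)}$, the relative $2$-cells carrying $\mu(r_{b_1}),\dots,\mu(r_{b_n})$ are assembled into the boundary $2$-sphere of $\delta_e\times I$, and each is transported and conjugated by the $H$-values accumulated along $\gamma_{(e,B_e)}$ through the action of $\pi_1(BH)=H$ on $\pi_2(B\CM,BH)=E$. This transport is exactly what the groupoid extension $\mu_\alpha$ records, and identifying the resulting element of $E$ with the right-hand side of \eqref{act-col2} requires the crossed-module axioms \eqref{eq-precrossed} and \eqref{eq-Peiffer}; the remaining steps are routine cell-by-cell extensions or immediate consequences of the vanishing of the higher homotopy groups of $B\CM$.
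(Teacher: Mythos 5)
Your proposal is correct and follows essentially the same route as the paper's proof: both directions are read off the relative cell structure coming from the filtration $M_*$, with $\lambda$ extracted from the vertical arcs over the centers, $\mu$ from the squares $\gamma_r\times I$, the rules \eqref{act-col1} and \eqref{act-col2} accounting for the squares and the 3-cells $\delta_e\times I$, and the 2-type property of $B\CM$ handling the top cells. The only cosmetic differences are that you phrase the key step as vanishing of boundary-sphere classes in $\pi_2(B\CM)$ where the paper compares relative classes in $\pi_2(B\CM,BH;x)$, and your appeal to the 1-connectedness of the pair $(B\CM,BH)$ is equivalent to the paper's use of the surjectivity of $\pi_1(BH,x)\to\pi_1(B\CM,x)$ to make the homotopy filtered.
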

\begin{proof}
Assume first that $f_{\alpha,\beta}$  and $f_{\alpha',\beta'}$ are homotopic. Set $I=[0,1]$ with its canonical filtration $I_*=(\{0,1\} \subset I)$. Let $G \co M \times I \to B\CM$ be a homotopy from $f_{\alpha',\beta'}$ to $f_{\alpha,\beta}$. Note that $G$ is filtered (where $M \times I$ is endowed with the product filtration) if and only if  and
$G(C \times I) \subset BH$. The surjectivity of the map $\pi_1(BH,x) \to \pi_1(B\CM,x)=\text{Coker}(\CM)$ implies that for each $c \in C$, the loop $G_{|\{c\} \times I}$ in $B\CM$ is homotopic relative $BH$ to a loop in $BH$.
Consequently, any homotopy among filtered maps $M_* \to (B\CM)_*$ can be realized by a filtered homotopy. Thus, without loss of generality, we can assume that $G$ is filtered.
For any $P$-ball $B$, let $c_B \in C$ be the center of $B$. The restriction of $\theta_B=G_{|\{c_B\}\times I}$ is a loop in $BH$ and represents a class $\lambda(B) \in \pi_1(BH,x)=H$. For any region~$r$ of~$P$, viewing the square $\gamma_r \times I$ (with product orientation) as a disk pointed at $(c_{r_+}, 1)$, the restriction $u_r=G_{|\gamma_{r}\times I}$ represents a class $\mu(r) \in \pi_2(B\CM, BH;x)=E$.  These assignments give rise to an element $(\lambda,\mu) \in \mathcal{G}_P$.
Let us check that $(\alpha',\beta')=(\lambda,\mu)\cdot (\alpha,\beta)$.
For any region $r$ of~$P$, we represent $u_r$ as
$$
\psfrag{e}[Bl][Bl]{\scalebox{.9}{$(c_{r_+}, 1)$}}
\psfrag{d}[Br][Br]{\scalebox{.9}{$\theta_{r_-}$}}
\psfrag{b}[Bl][Bl]{\scalebox{.9}{$\theta_{r_+}$}}
\psfrag{c}[Bc][Bc]{\scalebox{.9}{$a_r$}}
\psfrag{a}[Bc][Bc]{\scalebox{.9}{$a'_r$}}
\psfrag{x}[Bc][Bc]{\scalebox{.9}{\textcolor{mygreen}{$u_r$}}}
\rsdraw{.45}{.9}{gamma-r-I}
$$
where $a_r=G_{|\gamma_r \times \{1\}}=(f_{\alpha,\beta})_{| \gamma_{r}}$ and $a'_r=G_{|\gamma_r \times \{0\}}=(f_{\alpha',\beta'})_{| \gamma_r}$ are loops in~$BH$.
By Claim~\ref{claim-filtered-map-to-Xi-labelings}, the homotopy classes of $a_r$ and $a'_r$ are $\alpha(r)$ and $\alpha'(r)$, respectively. Since the loop $\partial u_r=G_{|\partial(\gamma_r \times I)}$ is homotopic to the concatenation
$a_r^{-1}\theta_{r_-}^{-1}  a'_r \theta_{r_+}$, we obtain that
$\CM(\mu(r))=\alpha(r)^{-1} \lambda(r_-)^{-1} \alpha'(r) \lambda(r_+)$, and so \eqref{act-col1} is satisfied.
As explained in the proof of Claim~\ref{claim-filtered-map-to-Xi-labelings}, we only need to verify \eqref{act-col2} for $(e,B_e) \in EB(P)$ where $e$ is any edge of $P$. Recall that the disk $\delta_e$ is based at the center~$c_e$ of $B_e$ and that $\partial \delta_e=\gamma_{(e,B_e)}=\gamma_{r_1}^{\varepsilon_1} \cdots \gamma_{r_n}^{\varepsilon_n}$ with $r_i \in \Reg(P)$ and $\varepsilon_i\in \{+,-\}$. Consider the restrictions $b_e=G_{|\delta_e \times \{1\}}=(f_{\alpha,\beta})_{|\delta_e}$ and
$b'_e=G_{|\delta_e \times \{0\}}=(f_{\alpha',\beta'})_{|\delta_e}$.
Then $b_e$ is homotopic  (relative boundary) to
the restriction of $G$ to $ (\cup_{i=1}^n \gamma_{r_i} \times I)\cup(\delta_e \times \{0\})$ depicted as:
$$
\psfrag{e}[Bc][Bc]{\scalebox{.9}{$a_{r_2}$}}
\psfrag{z}[Bc][Bc]{\scalebox{.9}{\textcolor{mygreen}{$u_{r_2}$}}}
\psfrag{b}[Bc][Bc]{\scalebox{.9}{$a'_{r_2}$}}
\psfrag{x}[Bc][Bc]{\scalebox{.9}{\textcolor{blue}{$b'_e$}}}
\psfrag{n}[Bc][Bc]{\scalebox{.9}{\textcolor{mygreen}{$u_{r_1}$}}}
\psfrag{r}[Bc][Bc]{\scalebox{.9}{\textcolor{mygreen}{$u_{r_n}$}}}
\psfrag{t}[Bc][Bc]{\scalebox{.9}{$\theta_{B_e}$}}
\psfrag{a}[Br][Br]{\scalebox{.9}{$a'_{r_1}$}}
\psfrag{c}[Br][Br]{\scalebox{.9}{$a'_{r_n}$}}
\psfrag{d}[Bl][Bl]{\scalebox{.9}{$a_{r_1}$}}
\psfrag{s}[Bl][Bl]{\scalebox{.9}{$(c_e, 1)$}}
\psfrag{u}[Bl][Bl]{\scalebox{.9}{$a_{r_n}$}}
\rsdraw{.45}{.9}{gamma-e-I-new} 
$$
Choosing $(c_e,1)$ as basepoint, they thus represent the same class in $\pi_2(B\CM,BH;x)$.
Now it follows from Claim~\ref{claim-filtered-map-to-Xi-labelings}, the definition of $\mu_\alpha$, and the definition of the action of $\pi_1(BH,x)$ on $\pi_2(B\CM,BH;x)$ that $b_e$ represents the class  $\beta(e,B_e)$ and the latter restriction of $G$ represents the class $\lact{\lambda(B_e)^{-1}}{\!\!\beta'(e,B_e)}\,\mu_\alpha(\gamma_{(e,B_e)})^{-1}$. Then the equality of these classes implies that \eqref{act-col2} is satisfied for $(e,B_e)$.
Hence $(\alpha',\beta')=(\lambda,\mu)\cdot (\alpha,\beta)$. In particular, $(\alpha,\beta)$ and $(\alpha',\beta')$  are gauge equivalent.

Conversely, assume that  $(\alpha,\beta)$ and $(\alpha',\beta')$ are gauge equivalent. Pick $(\lambda,\mu) \in \mathcal{G}_P$ such that $(\alpha',\beta')=(\lambda,\mu)\cdot (\alpha,\beta)$. For any $c \in C$, let $B_c$ be the $P$-ball whose center is~$c$. Pick a loop $\theta_c$ in $BH$ representing $\lambda(B_c) \in H=\pi_1(BH,x)$. For any region~$r$ of $P$, $a_r=(f_{\alpha,\beta})_{| \gamma_{r}}$ and $a'_r=(f_{\alpha',\beta'})_{| \gamma_r}$ are loops in~$BH$ whose homotopy classes are $\alpha(r)$ and $\alpha'(r)$, respectively (see Claim~\ref{claim-filtered-map-to-Xi-labelings}). Using \eqref{act-col1}, the homotopy class of the loop $a_r^{-1}(\theta_{c_{r_-}})^{-1}  a'_r \theta_{c_{r_+}}$ is
$$
\alpha(r)^{-1} \lambda(r_-)^{-1} \alpha'(r) \lambda(r_+)=\CM(\mu(r)).
$$
Thus, viewing the square $\gamma_r \times I$ as a disk pointed at $(c_{r_+}, 1)$, there is a map
$u_r \co \gamma_{r}\times I \to B\CM$ which represents $\mu(r) \in E=\pi_2(B\CM, BH;x)$ and such that
$$
(u_r)_{|\{c_{r_+}\}\times I}=\theta_{c_{r_+}}, \quad (u_r)_{|\{c_{r_-}\}\times I}=\theta_{c_{r_-}}, \quad (u_r)_{|\gamma_r\times \{1\}}=a_r, \quad (u_r)_{|\gamma_r\times \{0\}}=a'_r
$$
as depicted above. We now construct a (filtered) homotopy $f_{\alpha',\beta'}$ to $f_{\alpha,\beta}$ via successive extensions.
First, we define a map $G_1 \co (M \times \{0,1\}) \cup (S \times I) \to B\CM$ as follows: set  $(G_1)_{|M \times \{0\}}=f_{\alpha',\beta'}$, $(G_1)_{|M \times \{1\}}=f_{\alpha,\beta}$, and for any region $r$ of $P$,
let $(G_1)_{|S_r\times I}$ be the composition of the map $S_r \times I \to \gamma_r \times I$ (induced by the canonical retraction $S_r \to \gamma_r$ and $\id_I$) with the map $u_r$. For any edge $e$ of $P$, \eqref{act-col2} applied to $(e,B_e)$ implies that
the restriction of $G_1$ to the boundary of the 3-cell $\delta_e \times I$ is null-homotopic, and so $G_1$ can be extended to $\delta_e \times I$. Pick then an extension $G_2 \co (M \times \{0,1\}) \cup (D \times I) \to B\CM$ of $G_1$. Note that $G_2$ can further be extended to $M \times I$ since the target $B\CM$ is a $2$-type. Any such extension $G \co M \times I \to B\CM$ is then a (filtered) homotopy from $f_{\alpha',\beta'}$ to $f_{\alpha,\beta}$.
\end{proof}

Claims~\ref{claim-filtered-map-to-Xi-labelings} and \ref{claim-gauge-equiv-homotopic}
imply that the assignment $(\alpha,\beta) \mapsto f_{\alpha,\beta}$ induces a well-defined injective map $\{ \CM\text{-labelings of } P\}/\mathcal{G}_P \to [M , B\CM]$ which maps the gauge equivalence class of a  $\CM$-labeling $(\alpha,\beta)$ to the homotopy class of the map $f_{\alpha,\beta}$.

Finally, assume that the regions of $P$ are disks. Then the above filtration $M_*=(C \subset S \subset D \subset M)$ is the skeletal filtration of the CW-decomposition of~$M$ dual to~$P$ (since  $S=\cup_r \gamma_r$).
By the cellular approximation theorem, any map $f\co M \to B\CM$ is homotopic to a cellular map  $f'\co M \to B\CM$. Any such map $f'$ is then a filtered map $M_*\to (B\CM)_*$ since $f'(C)=\{x\}$ and $f'(S) \subset \{\text{1-cells of $B\CM$}\}\subset BH$. Thus any map $M \to B\CM$ is homotopic to a filtered one $M_*\to (B\CM)_*$. Also,  any filtered map $f \co M_*\to (B\CM)_*$ is homotopic to the map $f_{\alpha_f,\beta_f}$ (because the $\CM$-labeling $(\alpha_f,\beta_f)$ encodes the restriction of these maps on the 2-skeleton of $M$ and $B\CM$ is a 2-type). Consequently, the map $\{ \CM\text{-labelings of } P\}/\mathcal{G}_P \to [M , B\CM]$ is surjective.

\subsection{Remark}
As in the end of the above proof, assume that the regions of $P$ are disks. Let $\Pi(P)$ be the crossed complex associated with the skeletal filtration of the CW-decomposition of~$M$ dual to~$P$. Then $\CM$-labelings of $P$ correspond to crossed complex morphisms $\Pi(P) \to \CM$ (called formal maps in \cite{P}) and the last claim of Lemma~\ref{lem-gauge-group-labelings}  follows from the homotopy classification theorem \cite[Theorem~A]{BH1}.

\subsection{Closed $\CM$-manifolds and their $\CM$-skeletons}\label{sect-Xi-skeletons}
By a \emph{closed $\CM$-manifold}, we mean a pair $(M,g)$ where $M$ is a closed oriented 3-dimensional manifold and $g$ is  a homotopy class of maps $M\to B\CM$.

A \emph{$\CM$-skeleton}  of a closed $\CM$-manifold $(M,g)$
is a skeleton $P$ of $M$ endowed with a $\CM$-labeling of $P$ representing $g \in [M,B\CM]$.
For brevity, $\CM$-skeleton will be often denoted by the same letter $P$ as the underlying skeleton.  Lemma~\ref{lem-gauge-group-labelings} shows that any closed $\CM$\ti manifold has a $\CM$-skeleton (since any oriented closed 3-dimensional manifold always has a skeleton with disk regions, such as the 2-skeleton  of a triangulation).

\subsection{Moves on $\CM$-skeletons}\label{sect-Xi-moves}
Let $M$ be a closed oriented 3-dimensional manifold.  We consider four  local moves   $T_0, T_1, T_2, T_3$  on a skeleton~$P$   of~$M$ transforming~$P$ into   a new skeleton of~$M$ (see \cite{TVi5}).  Each of these moves  modifies~$P$ inside  a closed 3-ball in $M$ as   in Figure~\ref{fig-moves} where the shaded  horizontal  plane represents (a piece of)~$P$ and the upper half-space represents an adjacent $P$-ball branch $B$.  The edges shown  in the horizontal plane  before the moves    may be adjacent  to   regions of~$P$ lying in the lower half-space (and not shown in the picture).

The \emph{bubble  move} $T_0$  adds to~$P$ an embedded disk $D_+\subset M$ such that $\partial D_+=D_+\cap P \subset
P\setminus P^{(1)}$, the circle $\partial D_+$ bounds a disk~$D_-$
in $ P\setminus P^{(1)}$, and the 2-sphere $D_+\cup D_-$ bounds a
ball in $M$ meeting~$P$ precisely    along~$D_-$.   A  point of  the circle  $\partial D_+=\partial D_- $ is chosen as a vertex   of the   skeleton $T_0(P)=P\cup D_+$, and the circle itself is   an edge of $T_0(P)$. The disks $D_+$ and $D_-$ become  regions of   $T_0(P)$, and all other regions of $T_0(P)$ correspond bijectively to the regions of $P$.

The \emph{phantom edge  move}~$T_1$ keeps~$P$ as a polyhedron and  adds a new  2-valent   edge   meeting
$P^{(1)}$ solely at its endpoints which must be distinct vertices of~$P$.
The \emph{contraction move} $T_2$ collapses an edge of~$P$ with distinct endpoints into a point.
This move   is allowed only when at least one endpoint of the collapsing  edge is an endpoint of some other edge.   (The valence of the collapsing edge may be arbitrary.)
The \emph{percolation  move} $T_3$  pushes a branch  of~$P$
through a vertex~$x$ of~$P$. The branch  is pushed across a small
disk~$D$ lying in    another branch of~$P$ at~$x$  so that   these branches are adjacent to the same component of $M\setminus P$ and
$D\cap P^{(1)}= \partial D\cap P^{(1)}=\{x\}$.
The loop $\partial D$ based at~$x$ becomes an edge of the resulting 2-polyhedron.   The disk $D$ becomes a region of the resulting skeleton $T_3(P)$ and all other regions of $T_3(P)$ correspond bijectively to the regions of $P$ in the obvious way.

\begin{figure}[!ht]
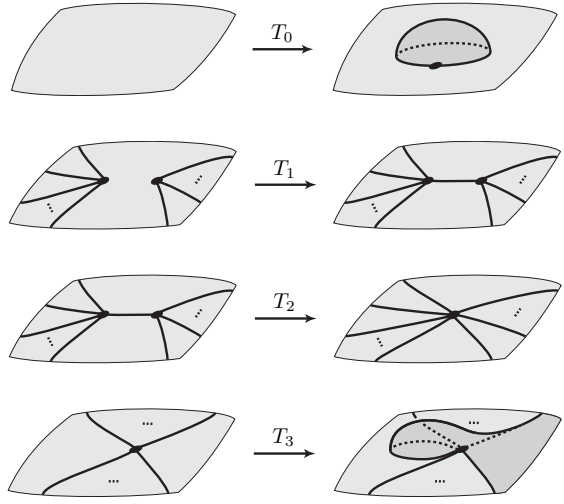

\begin{center}
   \psfrag{T}[Bc][Bc]{\scalebox{.9}{$T_0$}}  \rsdraw{.45}{.9}{skel-move4a-col}\\[1.6em]
   \psfrag{T}[Bc][Bc]{\scalebox{.9}{$T_1$}}  \rsdraw{.45}{.9}{skel-move2f-col} \\[1.6em]
   \psfrag{T}[Bc][Bc]{\scalebox{.9}{$T_2$}}  \rsdraw{.45}{.9}{skel-move2e-col} \\[1.6em]
   \psfrag{T}[Bc][Bc]{\scalebox{.9}{$T_3$}}  \rsdraw{.45}{.9}{skel-move3-col}
\end{center}
\caption{Local moves on skeletons}
\label{fig-moves}
\end{figure}

In the  pictures of $T_0-T_3$, we
distinguish the \lq\lq small" regions and edges  entirely contained in the
3-ball where the move proceeds,  from  the \lq\lq big" regions and edges  not
entirely contained in  that  3-ball.  Recall that all regions of a skeleton are oriented. The     moves $T_0-T_3$ preserve orientation in the  big regions while orientation of the small disk regions created  by $T_0$ and $T_3$   may be arbitrary.

The  moves $T_0-T_3$  lift to $\CM$-labelings as follows. The $H$-labels of all big regions and $E$-labels of all big edges are preserved under the moves.
Under $T_0$, the $H$-label of $D_+$ and the $E$-label of the edge $\partial D_+$ (for some orientation of $\partial D_+$ and with respect to $B$) may be chosen arbitrarily, and then the $H$-label of $D_-$ is determined uniquely by \eqref{precol1}.
One easily verifies that different choices for the labels of $D_+$ and $\partial D_+$ give rise to gauge equivalent $\CM$-labelings.
Under $T_1$, the $E$-label of the added  2-valent edge should be  $1\in E$.
For $T_2$, there are no conditions on the $E$-label of the collapsed edge.
Under~$T_3$, the $E$-label of the edge $\partial D$ (for some orientation of $\partial D$ and with respect to $B$) may be chosen arbitrarily and the $H$-label of $D$ is then determined uniquely by \eqref{precol1}. Different choices for the $E$-label of $\partial D$ give rise to gauge equivalent $\CM$-labelings.
Note that the link graphs of the vertices created by these moves are indeed 1-spherical $\CM$-graphs. These moves lifted to $\CM$-labelings are called \emph{$\CM$-moves}.

The $\CM$-moves $T_0-T_3$ have inverses. The $\CM$-move  $T_0^{-1}$ is obvious from the picture of  $T_0$. The $\CM$-move $T_1^{-1}$ deletes a 2-valent edge with distinct endpoints which are also endpoints of some other edges. This move   is allowed only when the $E$-label (with respect to $B$) of the edge is $1$ and the orientations of the  two   regions adjacent to that edge are compatible.  The $\CM$-move $T_2^{-1}$ stretches a vertex into  an edge and the $E$-label (with respect to $B$) of this new edge is uniquely determined by the 1-sphericality of the $\CM$-graph associated to a created vertex.
The $\CM$-move $T_3^{-1}$ collapses an embedded disk region whose boundary is formed by a single 3-valent edge with coinciding endpoints.

The $\CM$-moves $T_0^{\pm 1}, T_1^{\pm 1}, T_2^{\pm 1}, T_3^{\pm 1}$ together with the label-preserving
ambient isotopies of $\CM$-skeletons in $M$ are called {\it primary $\CM$-moves}.

\begin{lem}\label{lem-Xi-moves}
Any primary $\CM$-move transforms a $\CM$-skeleton of a closed $\CM$-manifold $(M,g)$ into a $\CM$-skeleton of $(M,g)$.
Moreover, any two $\CM$-skeletons  of $(M,g)$ can be related by a finite sequence of primary $\CM$-moves.
\end{lem}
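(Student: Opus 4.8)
The plan is to treat the two assertions separately: first, that a single primary $\CM$-move sends a $\CM$-skeleton of $(M,g)$ to a $\CM$-skeleton of $(M,g)$; and second, that the primary $\CM$-moves act transitively on the set of $\CM$-skeletons of a fixed $(M,g)$.

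For the first assertion, I would argue in three steps. That the underlying stratified polyhedron is again a skeleton of $M$ is exactly the content of the corresponding unlabeled statement for $T_0,T_1,T_2,T_3$ and their inverses, which I would invoke from \cite{TVi5}; isotopies are clear. That the new labeling is again a $\CM$-labeling is a local check carried out move by move: the label-lifting rules of Section~\ref{sect-Xi-moves} are designed so that \eqref{precol1}--\eqref{precol3} persist at every edge touched by the move, and the link graphs of the newly created vertices are the $1$-spherical $\CM$-graphs recorded there. The only genuinely homotopy-theoretic point is that the represented class $g \in [M,B\CM]$ is unchanged. Here I would use the filtered-map model of Claim~\ref{claim-filtered-map-to-Xi-labelings}: a primary move is supported inside a closed $3$-ball $B \subset M$, so choosing the auxiliary arcs, disks, and suspensions to agree outside $B$, the filtered maps $f_{\colr,\coleb}$ and $f_{\colr',\coleb'}$ attached to the two $\CM$-skeletons can be taken to coincide on $M \setminus \Int(B)$ and, in particular, to restrict to the same map on $\partial B \cong S^2$. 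Their two extensions over $B \cong D^3$ then glue to a map $S^3 \to B\CM$, which is null-homotopic because $\pi_3(B\CM)=0$ (Section~\ref{sect-crossed-modules-classifying-spaces}); hence the two extensions are homotopic rel $\partial B$ and $f_{\colr,\coleb} \simeq f_{\colr',\coleb'}$.

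For the second assertion, I would first forget the labels and relate the underlying skeletons $P_0$ and $P_1$ of the two given $\CM$-skeletons by a finite sequence of moves $T_i^{\pm 1}$ and isotopies, again citing \cite{TVi5}. Lifting this sequence to $\CM$-moves, making arbitrary but definite choices for the free labels created by the bubble and percolation moves, transports $(\colr_0,\coleb_0)$ on $P_0$ to some $\CM$-labeling $(\colr_1',\coleb_1')$ on $P_1$. By the first assertion $(\colr_1',\coleb_1')$ represents $g$, and by hypothesis so does the target labeling $(\colr_1,\coleb_1)$; since both live on the same skeleton $P_1$, the injectivity in Lemma~\ref{lem-gauge-group-labelings} forces them to be gauge equivalent, i.e.\ to differ by the action of an element of $\mathcal{G}_{P_1}$. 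It therefore remains to realize an arbitrary gauge transformation by primary $\CM$-moves keeping the underlying skeleton fixed.

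This last point is where I expect the real work to lie. I would decompose a general element $(\lambda,\mu) \in \mathcal{G}_{P_1}$ into elementary gauge transformations, each supported either on a single $P$-ball (an $H$-valued factor $\lambda$, acting on the incident regions by the left/right multiplication in \eqref{act-col1}) or on a single region (an $E$-valued factor $\mu$, acting through $\CM(\mu)$ and the holonomy $\mu_\colr$ of \eqref{act-col1}--\eqref{act-col2}). Each elementary transformation should then be produced by an explicit, localized sequence of bubble moves $T_0^{\pm 1}$ and percolation moves $T_3^{\pm 1}$: precisely the freedom in choosing the $H$-label of the new disk and the $E$-label of the new edge under $T_0$ and $T_3$, noted in Section~\ref{sect-Xi-moves} to yield gauge-equivalent labelings, is what lets a create--modify--annihilate cycle implement a prescribed conjugation at a ball or a prescribed $\coleb$-shift along a region. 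Verifying that these local cycles compose to the given $(\lambda,\mu)$, and that all intermediate configurations are legitimate $\CM$-skeletons, is the main obstacle and the step requiring the most care; the remaining bookkeeping, matching orientations, successors, and the $-e$ convention across the moves, is routine given \eqref{precol1}--\eqref{precol3}.
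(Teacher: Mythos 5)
Your overall architecture for the second assertion (relate the underlying skeletons, lift the sequence, reduce to gauge equivalence via the injectivity in Lemma~\ref{lem-gauge-group-labelings}, then realize gauge transformations by moves on a fixed skeleton) matches the paper's, but the lifting step contains a genuine gap. You claim that a sequence of unlabeled moves relating $P_0$ to $P_1$ can be lifted to $\CM$-moves ``making arbitrary but definite choices for the free labels.'' This is false for the inverse moves: $T_1^{-1}$ is only defined as a $\CM$-move when the $2$-valent edge being deleted carries the $E$-label $1\in E$ and the adjacent regions have compatible orientations and $H$-labels, and $T_0^{-1}$ requires the two regions being re-united to carry the same $H$-label, which for an arbitrary transported labeling need not hold (under $T_0$ the label of $D_-$ is forced by \eqref{precol1}, so a generic labeling of the bubble configuration is not in the image of any lifted $T_0$). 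A sequence produced by the unlabeled theory of \cite{TVi5} will in general contain such inverse moves, so the transported labeling gets stuck. This is precisely why the paper does not lift an arbitrary sequence: it first makes all regions disks by $T_1$-moves, then invokes Claim~\ref{claim-disk-region-skeletons} to relate disk-region skeletons using only $(T_0')^{\pm 1}$, $T_2^{\pm 1}$, $T_3^{\pm 1}$, where $T_0'$ is an auxiliary composite move engineered so that its inverse \emph{always} lifts (the $1$-sphericality of the link graph on one side implies it on the other), and $T_2^{\pm 1}$, $T_3^{\pm 1}$ always lift. Without this (or an equivalent) device, your step 2 does not go through.

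The second gap is the one you flagged yourself: realizing an arbitrary gauge transformation by primary $\CM$-moves on a fixed skeleton is not ``routine bookkeeping'' but the core of the paper's Claim~\ref{claim-gauge-Xi-moves}. The missing idea is the bubble sweep: an elementary transformation $(\lambda_{B,h},\mu_{r,e})$ is realized by a bubble move creating a disk $D_+$ with prescribed $H$- and $E$-labels in the region $r$, followed by an isotopy in which $D_+$ sweeps the $P$-ball $B$ almost entirely while its boundary slides along $\partial \overline B$ --- implemented as a finite sequence of the lune moves $\mathcal{L}^{\pm 1}$ and the moves $T^{m,n}$, themselves composites of primary $\CM$-moves --- and then the inverse bubble move; the sweep is what makes the $H$-factor $\lambda_{B,h}$ act at the ball $B$ rather than just shifting $\coleb$ locally. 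Finally, concerning the first assertion: your $\pi_3(B\CM)=0$ argument is a legitimate alternative to the paper's (which checks that the move-induced bijections on gauge classes commute with the maps of Lemma~\ref{lem-gauge-group-labelings}), but your claim that the two filtered maps can be taken to \emph{coincide} on $M\setminus \Int(B)$ needs repair: the auxiliary filtration (suspensions $S_r$ over the modified regions, arcs to the centers of new balls) is not supported inside $B$, so what you can actually arrange is agreement up to homotopy rel the complement, after which the gluing-over-$S^3$ argument applies.
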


We prove Lemma~\ref{lem-Xi-moves} in the next section.

\subsection{Proof of Lemma~\ref{lem-Xi-moves}}\label{sect-proof-lem-Xi-moves}
If a skeleton $P$ of $M$ is transformed into a skeleton~$P'$ of $M$ via a move $T_i$ with $i\in\{0,2,3\}$, then any lift of this move to a $\CM$-move induces a bijection
$$
\{\text{$\CM$-labelings of $P$}\}_{/{\mathcal{G}_P}} \xrightarrow{{\; \cong \;}} \{\text{$\CM$-labelings of $P'$}\}_{/{\mathcal{G}_{P'}}}.
$$
Also, if a skeleton $P$ of $M$ is transformed into a skeleton $P'$ of $M$ via the move $T_1$ by adding an edge $e$ inside a region $r$ of $P$, then the lift of this move to a $\CM$-move (by labeling $e$ with $1\in E$) induces a bijection
$$
\{\text{$\CM$-labelings of $P$}\}_{/{\mathcal{G}_P}} \xrightarrow{{\; \cong \;}} \{\text{$\CM$-labelings of $P'$ with $e$ labeled by $1$}\}_{/{\mathcal{G}_{P'}}}.
$$
It is worth to notice that if adding the edge $e$ transforms $r$ into two distinct regions, then
$$
\{\text{$\CM$-labelings of $P'$ with $e$ labeled by $1$}\}_{/{\mathcal{G}_{P'}}} = \{\text{$\CM$-labelings of $P'$}\}_{/{\mathcal{G}_{P'}}}.
$$
The existence and injectivity of these maps follow from the fact that two $\CM$-labelings of $P$ give rise to gauge equivalent $\CM$-labelings of $P'$ if and only if they are gauge equivalent. The surjectivity of these maps follow from the existence of inverse $\CM$-moves.  Moreover, these bijections commutes with the maps to the set of homotopy classes of maps from $M$ to $B\CM$ given by Lemma~\ref{lem-gauge-group-labelings}. This implies the first assertion of the lemma.

To prove the second assertion of the lemma, we need further $\CM$-moves on $\CM$-skeletons.
The lune $\CM$-move  $\mathcal{L}$ is defined as:
$$
\psfrag{T}[Bc][Bc]{\scalebox{.9}{$\mathcal{L}$}}
\psfrag{L}[Bc][Bc]{\scalebox{.9}{$\mathcal{L}^{-1}$}}
 \rsdraw{.45}{.9}{move-lune-col}
$$
and, for any non-negative  integers $m, n$ with $m+n\geq 1$, the $\CM$-move $T^{m,n}$ is defined as:
$$
\psfrag{T}[Bc][Bc]{\scalebox{1}{$T^{m,n}$}}
\psfrag{E}[cr][cr]{\scalebox{1.5}[3.1]{$\{$}}
\psfrag{L}[cl][cl]{\scalebox{1.5}[4.4]{$\}$}}
\psfrag{u}[cc][cc]{\scalebox{.9}{$m$}}
\psfrag{v}[cc][cc]{\scalebox{.9}{$n$}}
\rsdraw{.175}{.9}{movemn-col}  \;.
$$
The $\CM$-move $\mathcal{L}$ creates a small disk region. The $\CM$-move $T^{m,n}$  destroys $\max(m-1, 0)$ small disk regions and creates $\max(n-1, 0)$ small disk regions. These $\CM$-moves preserve orientation in the  big regions. The orientation of a small
created disk is induced by the orientation of the corresponding big region before the move. As above, the $H$-labels of all big regions and $E$-labels of all big edges are preserved under the move. The  $E$-labels of the created edges depicted in red are all equal to $1\in E$. This uniquely determines the $H$-labels of the small created disk regions (using \eqref{precol1}) as well as the $E$-labels of the created edges depicted in blue (using the 1-sphericity of the $\CM$-graphs associated to the created vertices).
The $\CM$-move  $\mathcal{L}$ has an obvious inverse~$\mathcal{L}^{-1}$. It is allowed only
when the orientations and $H$-labels of two regions united under this move as well as the $E$-labels of two edges united under this move are compatible. The $\CM$-moves $\mathcal{L}^{\pm 1}$ and $T^{m,n}$ transform a $\CM$-skeleton of $(M,g)$ into a $\CM$-skeleton of $(M,g)$.  Indeed, they are compositions of the primary $\CM$-moves $T_1^{\pm 1}$, $T_2^{\pm 1}$, and $T_3$ for which the $E$-label of the created edge is $1\in E$.  (These compositions lift the decomposition of the corresponding moves on skeletons given in \cite[Section 11.3.2]{TVi5}.)

We also need the $\CM$-move $T'_0$ defined as:
$$
\psfrag{T}[Bc][Bc]{\scalebox{.9}{$T_0'$}}  \rsdraw{.45}{.9}{moveT0prime}
$$
and viewed as the composition of the $\CM$-moves $T_0$, $T_1$, and then $T_2$ (first apply a bubble move to the left region, then connect the two vertices with an edge, and lastly contract this edge).  The inverse of $T'_0$ always exists. This follows from the fact that the 1-sphericality of the $\CM$-graph associated to the vertex of the right-hand side of the picture always implies the 1-sphericality of the $\CM$-graph associated to the vertex of the left-hand side.
As above, we will denote the moves on skeletons underlying the $\CM$-moves $T^{m,n}$ and $(T'_0)^{\pm 1}$ by the same letters.
\begin{claim}\label{claim-disk-region-skeletons}
Two skeletons of $M$ with disk regions 
can be related by a finite sequence of the moves $(T_0')^{\pm 1}$, T$_2^{\pm 1}$, and  $T_3^{\pm 1}$.
\end{claim}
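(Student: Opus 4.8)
The plan is to deduce the claim from the general connectivity theorem for skeletons of \cite{TVi5}, which asserts that any two skeletons of $M$ are related by a finite sequence of the moves $T_0^{\pm 1}, T_1^{\pm 1}, T_2^{\pm 1}, T_3^{\pm 1}$ together with label-preserving isotopies. Applying this to the two given disk-region skeletons $P$ and $P'$ produces such a sequence; the task is then to replace it by one that (a) uses only $(T_0')^{\pm 1}, T_2^{\pm 1}, T_3^{\pm 1}$ and (b) passes only through skeletons all of whose regions are disks.

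First I would record that each of the three allowed moves preserves the disk-region property. For $T_3$ this is immediate from its description, since the created region $D$ is a disk and all other regions correspond bijectively to those of $P$. For $T_2$ the collapse of an edge with distinct endpoints only alters the combinatorial boundary of the adjacent regions without changing their homeomorphism type, so disks remain disks. For $T_0'$ it follows from its defining decomposition $T_0'=T_2\,T_1\,T_0$: the bubble $T_0$ creates an annular region together with the disk $D_-$, the intermediate $T_1$ cuts the annulus into a disk by an edge, and the final $T_2$ contracts, leaving only disk regions. Hence any sequence built from these three moves automatically stays inside the class of disk-region skeletons, and it remains to show that such a sequence connecting $P$ and $P'$ exists.

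The core of the argument is to reduce both $P$ and $P'$ to a common normal form. I would take as normal form the $2$-skeleton $t^{(2)}$ of a triangulation $t$ of $M$, which is a disk-region skeleton (its regions are the $2$-simplices). The plan then has two ingredients: (i) every disk-region skeleton is connected, through disk-region skeletons and using only $(T_0')^{\pm 1}, T_2^{\pm 1}, T_3^{\pm 1}$, to some $t^{(2)}$; and (ii) if triangulations $t,t'$ differ by a single Pachner (bistellar) flip, then $t^{(2)}$ and $t'^{(2)}$ are related by an explicit word in these same moves. Combining (i) and (ii) with Pachner's theorem (any two triangulations of $M$ are joined by bistellar flips) connects $t^{(2)}$ to $t'^{(2)}$, hence $P$ to $P'$.

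For ingredient (ii) I would verify the $2\leftrightarrow 3$ and $1\leftrightarrow 4$ flips directly on the corresponding $2$-skeletons, matching each to a short composition of $(T_0')^{\pm 1}, T_2^{\pm 1}, T_3^{\pm 1}$; this is a local, finite computation of the same kind already carried out for the primary moves in \cite[Section 11.3.2]{TVi5}. The most delicate point, however, is ingredient (i): performing the reduction of an arbitrary disk-region skeleton without ever leaving the disk-region class, which in particular forces me to re-express the two problematic generators of the general theorem. Each occurrence of $T_0^{\pm 1}$ is to be absorbed into a $(T_0')^{\pm 1}$, and each $T_1^{\pm 1}$ between disk-region skeletons is to be traded for a sequence of $T_0', T_2, T_3$ — an edge splitting a disk into two disks should be producible by creating a small $T_0'$-bubble, percolating it across the region via $T_3$, and contracting the auxiliary edges via $T_2$. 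Establishing that this normalization can always be carried out and that all intermediate configurations remain genuine disk-region skeletons is the hard part of the proof, and is where I expect the main obstacle to lie.
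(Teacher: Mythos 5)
Your preliminary observation is fine: the moves $(T_0')^{\pm 1}$, $T_2^{\pm 1}$, $T_3^{\pm 1}$ do stay inside the class of disk-region skeletons (for $T_3$ one should note that the new edge $\partial D$ meets the old $1$-skeleton only at the vertex $x$, so cutting a disk region along $D$ again yields disks; the bijection with the old regions alone does not give this). The problem is what comes after. Your plan reduces the claim to two sub-claims, (i) every disk-region skeleton can be connected, by the allowed moves and through disk-region skeletons, to the $2$-skeleton of a triangulation, and (ii) Pachner flips on triangulations can be realized on $2$-skeletons by words in the allowed moves; and you prove neither. Ingredient (ii) is a finite local computation that is plausible but genuinely unverified (note, for instance, that a $2\to 3$ flip changes the number of complementary balls, so it must consume a $(T_0')^{\pm1}$, not just $T_2^{\pm1},T_3^{\pm1}$ moves). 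Ingredient (i) is worse: connecting an \emph{arbitrary} disk-region skeleton to a triangulation $2$-skeleton by these moves is essentially the original claim in disguise --- triangulation $2$-skeletons are a very rigid normal form, and nothing in the general connectivity theorem you invoke at the start (which uses $T_0^{\pm1}$ and $T_1^{\pm1}$, both of which leave the disk-region class or need to be traded away) helps you carry out this normalization. You acknowledge this is ``the hard part''; but it is the entire content of the claim, so the proposal is an outline, not a proof.

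For comparison, the paper's proof succeeds precisely by choosing a normal form for which both steps are already available in the literature: $s$-skeletons (skeletons locally modeled on the ``butterfly'' polyhedron, in the sense of \cite[Section 11.1.5]{TVi5}). The reduction step --- turning any disk-region skeleton into an $s$-skeleton with disk regions using only $(T_0')^{\pm 1}$, $T_2^{\pm 1}$, $T_3^{\pm 1}$ --- is extracted from the proof of \cite[Lemma 11.4]{TVi5}, and the connectivity of the normal forms is exactly Matveev's theorem \cite[Theorem 1.2.30]{Mat1}: any two $s$-skeletons of $M$ with disk regions are related by the moves $T^{1,2}$ and $T^{2,1}$, which in turn decompose into $T_2^{\pm 1}$- and $T_3^{\pm 1}$-moves. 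If you want to salvage your triangulation-based route, you would in effect have to reprove one of these two inputs; the natural fix is to replace your normal form by special ($s$-) skeletons and cite Matveev, which is the paper's argument.
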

\begin{proof}
Recall from \cite[Section 11.1.5]{TVi5} that an $s$-skeleton of $M$ is a skeleton $P$ of~$M$ such that any point of $P$ has an open neighborhood homeomorphic to an open subset of the set
$$
\{(x_1, x_2, x_3) \in \RR^3 \, \vert\, \,  x_3 = 0, \, {\text {or}}\,\,  x_1 = 0 \, \,{\text {and}}\,\,
   x_3 > 0, \, \,{\text {or}}\, \,x_2 =
0 \, \, {\text {and}}\, \, x_3 < 0\}
$$
depicted as
\begin{center}
\psfrag{a}[Bl][Bl]{\scalebox{1}{$x_1$}}
\psfrag{x}[Bc][Bc]{\scalebox{1}{$x_2$}}
\psfrag{e}[Bc][Bc]{\scalebox{1}{$x_3$}}
\rsdraw{.34}{.9}{fig-papillon-col} \quad\,.
\end{center}
The proof of \cite[Lemma 11.4]{TVi5} implies that any skeleton of~$M$ with disk regions can be turned into an $s$-skeleton of $M$ with disk regions by finitely many $(T_0')^{\pm 1}, T_2^{\pm 1}, T_3^{\pm 1}$-moves.
Next, by \cite[Theorem 1.2.30]{Mat1}, any two $s$-skeletons of~$M$ with disk regions can be related by finitely many $T^{1,2}$ and $T^{2,1}$-moves which can be expressed using $T_2^{\pm 1}$- and $T_3^{\pm 1}$-moves.
\end{proof}

\begin{claim}\label{claim-gauge-Xi-moves}
Two $\CM$-skeletons of $(M,g)$ with the same underlying skeleton of $M$ are related by a finite sequence of primary $\CM$-moves.
\end{claim}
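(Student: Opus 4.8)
The plan is to reduce the statement to an algebraic fact about the gauge group $\mathcal{G}_P$ and then to realize gauge transformations by explicit sequences of primary $\CM$-moves that fix the underlying skeleton. First I would observe that, since the two $\CM$-labelings $(\alpha,\beta)$ and $(\alpha',\beta')$ of $P$ both represent $g$, the filtered maps $f_{\alpha,\beta}$ and $f_{\alpha',\beta'}$ of Claim~\ref{claim-filtered-map-to-Xi-labelings} are homotopic; hence by Claim~\ref{claim-gauge-equiv-homotopic} the two labelings are gauge equivalent, say $(\alpha',\beta')=(\lambda,\mu)\cdot(\alpha,\beta)$ for some $(\lambda,\mu)\in\mathcal{G}_P$. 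It therefore suffices to prove that for every $(\lambda,\mu)\in\mathcal{G}_P$ the $\CM$-skeletons $(P,\alpha,\beta)$ and $\bigl(P,(\lambda,\mu)\cdot(\alpha,\beta)\bigr)$ are related by primary $\CM$-moves.

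Next I would reduce to generators. The subsets $\Map(\Ball(P),H)\times\{1\}$ and $\{1\}\times\Map(\Reg(P),E)$ are subgroups of $\mathcal{G}_P$ which together generate it, and each is in turn generated by the maps supported on a single $P$-ball, resp.\ a single region. So it is enough to realize by primary $\CM$-moves the two elementary gauge transformations $g_{B,x}=(\lambda,1)$, with $\lambda$ equal to $x\in H$ on one ball $B$ and $1$ elsewhere, and $h_{r,e}=(1,\mu)$, with $\mu$ equal to $e\in E$ on one region $r$ and $1$ elsewhere. By \eqref{act-col1}--\eqref{act-col2}, $h_{r,e}$ multiplies $\colr(r)$ by $\CM(e)$ and twists the $E$-labels of the edges bounding $r$, whereas $g_{B,x}$ multiplies by $x^{\pm1}$ the $H$-labels of all regions incident to $B$ (conjugating those adjacent to $B$ on both sides) and correspondingly modifies the $E$-labels of the edges meeting $B$.

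The crux is then to produce these two elementary transformations geometrically, and here the arbitrary label choices built into the moves $T_0$ and $T_3$ are exactly what is needed. For $h_{r,e}$, I would perform a bubble move $T_0$ on a small sub-disk of $r$ with the new parallel disk $D_+$ given the $E$-label $e$ on $\partial D_+$ (its $H$-label then being forced by \eqref{precol1}), slide $D_+$ across $r$ by label-preserving isotopy together with percolation moves $T_3$ pushing it past the vertices on $\partial r$, and finally collapse the residual disk by $T_0^{-1}$; the net effect replaces $r$ by a parallel copy carrying $\colr(r)\CM(e)$, i.e.\ applies $h_{r,e}$. For $g_{B,x}$, whose $H$-label change lies outside $\Ima(\CM)$ and so cannot arise from any region move, I would exploit the freedom to choose the $H$-label of $D_+$ in $T_0$ arbitrarily in $H$: enclosing the center of $B$ in a $2$-sphere assembled from bubble moves and carrying the constant value $x$, then sliding this sphere outward past the regions incident to $B$ by $T_3$ before closing up with $T_0^{-1}$, realizes the simultaneous multiplication by $x$.

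The hard part will be the verification in this last step: one must confirm that each prescribed sequence of primary $\CM$-moves returns to the underlying skeleton $P$ and that the induced change of labels matches the gauge action \eqref{act-col1}--\eqref{act-col2} on the nose. This is local but delicate, since the $E$-labels transform through the twisted rule \eqref{act-col2} involving $\mu_\alpha(\gamma_{(e,B)})$, so one has to track how the auxiliary labels propagate as the bubble is pushed across successive vertices by $T_3$, matching the constraints \eqref{precol1}--\eqref{precol3} and the $1$-sphericity condition against the crossed-module identities \eqref{eq-precrossed}--\eqref{eq-Peiffer} at each step.
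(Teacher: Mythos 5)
Your top-level strategy coincides with the paper's: invoke Lemma~\ref{lem-gauge-group-labelings} (equivalently Claims~\ref{claim-filtered-map-to-Xi-labelings} and~\ref{claim-gauge-equiv-homotopic}) to conclude the two labelings are gauge equivalent, reduce to elementary generators of $\mathcal{G}_P$, and realize each generator by a bubble move, a sweep, and an inverse bubble move. Your algebraic reduction is sound: the paper uses the combined generators $(\lambda_{B,h},\mu_{r,e})$ with $r$ adjacent to $B$, and your singly-supported generators $g_{B,x}=(\lambda_{B,x},1)$ and $h_{r,e}=(1,\mu_{r,e})$ also generate, since $(\lambda_{B,h},\mu_{r,e})=(\lambda_{B,h},1)\cdot(1,\mu_{r,e})$ in $\mathcal{G}_P$. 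The genuine gap is in the geometric realization, which is exactly the step you defer at the end.

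Two of your constructions fail as described. For $h_{r,e}$, sliding $D_+$ ``across $r$'' and only past the vertices of $\partial r$ cannot produce the gauge action: if the bubble's boundary circle sweeps a neighborhood of $\overline{r}$ in $\partial \overline{B}$ and then contracts back in $r$, the sequence of moves is undone by its inverse and the net effect on labels is trivial; and immediately after crossing $\partial r$ the circle lies in several regions at once, so $T_0^{-1}$ is not applicable there. The nontrivial effect appears only when the boundary circle sweeps the \emph{entire} sphere $\partial \overline{B}$ of an adjacent $P$-ball $B$ — crossing all regions, edges, and vertices adjacent to $B$, not just $\partial r$ — and finally contracts in $r\setminus D_-$ on the far side; this is what the paper does, and it implements the sweep not by isotopy and $T_3$ but by the composite $\CM$-moves $\mathcal{L}^{\pm 1}$ and $T^{m,n}$, which it first shows are compositions of primary $\CM$-moves. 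For $g_{B,x}$, a ``2-sphere assembled from bubble moves enclosing the center of $B$'' does not exist: a sphere inside $B$ disjoint from $P$ would leave a complement component homeomorphic to $S^2\times(0,1)$, so the result is not a skeleton, and no primary move produces it; the only available sphere is the bubble $D_+\cup D_-$ with $D_-$ lying in a region $r$ adjacent to $B$, which returns you to the same global sweep. A further slip: in $T_0$ it is the $H$-label of $D_+$ \emph{and} the $E$-label of $\partial D_+$ that are free, while the $H$-label of $D_-$ is forced by \eqref{precol1} — not the reverse as you state — and these free choices (in the paper: $H$-label $h$ on $D_+$, $E$-label $e$ or $\elact{\alpha(r)}{e}$ on $\partial D_-$ according to whether $r_+=B$) are precisely what makes the terminal labeling equal $(\lambda_{B,h},\mu_{r,e})\cdot(\colr,\coleb)$ as required by \eqref{act-col1}--\eqref{act-col2}. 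Since the sequences you prescribe either have trivial net effect or never reach a configuration where they can be closed up, the central step of the proof is missing.
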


\begin{proof}
Let  $(P,(\colr,\coleb))$ and $(P,(\colr',\coleb'))$ be $\CM$-skeletons of $(M,g)$ with the same underlying skeleton $P$ of $M$. By Lemma~\ref{lem-gauge-group-labelings}, the $\CM$-labelings $(\colr,\coleb)$ and $(\colr',\coleb')$ are gauge equivalent (since they represent the same homotopy class $g$).
Moreover, the gauge group $\mathcal{G}_P$ is generated by the set
$$
\bigl\{(\lambda_{B,h},\mu_{r,e})\, \big | \, \text{B is a $P$-ball, $r$ is a region adjacent to $B$, $h \in H$, $e \in E$}\bigr\},
$$
where $\lambda_{B,h} \co \pi_0(M\setminus P) \to H$ carries $B$ to $h$ and carries all other $P$-balls to $1$, while
$\mu_{r,e} \co \Reg(P) \to E$ carries $r$ to $e$ and carries all other regions to $1$. Thus, it is enough to show that there is a finite sequence of primary $\CM$-moves between $(P,(\colr,\coleb))$ and $(P,(\colr',\coleb'))$
when $(\colr',\coleb')= (\lambda_{B,h},\mu_{r,e}) \cdot (\colr,\coleb)$. To prove this, we
first apply a bubble move which adds to $P$ a 2-disk $D_+ \subset \overline B$  such that the circle  $\partial D_+$ lies in~$r$   and bounds a 2-disk $D_-\subset r$.
We endow $D_-$ with the orientation induced by that of $r$. We orient $D_+$ so that the orientation of $D_+$ followed by a normal vector pointing towards $B$ yields the given orientation of $M$. We endow the small region~$D_+$ with the $H$-label $h$. We endow the oriented edge $\partial D_-$ (relative to~$B$) with $E$-label $e$ if $r_+=B$ and with the $E$-label $\elact{\alpha(r)}{e}$ otherwise. The $H$-label of $D_-$, uniquely determined by \eqref{precol1}, is then equal to $\alpha'(r)$.
Next,  we isotop  the disk~$D_+$ in $B$ so that it  sweeps $B$ almost entirely   while its   boundary  slides along $\partial \overline B$. We arrange that in the terminal position $D'_{+}$ of the moving disk,  its boundary circle lies   in   $r \setminus
D_-$  and bounds there a   2-disk. This isotopy of $D_+$ can be performed using a finite sequence of the $\CM$-moves $\mathcal{L}^{\pm 1}$  and $T^{m,n}$. Next we remove the disk $D'_{+}$ using the inverse bubble $\CM$-move. This sequence transforms  the $\CM$-skeleton  $(P,(\colr,\coleb))$ into a $\CM$-skeleton $(P,(\colr'',\coleb''))$ of $(M,g)$. Finally, it follows from the choices of labelings of the first bubble $\CM$-move that $(\colr'',\coleb'')=(\lambda_{B,h},\mu_{r,e}) \cdot (\colr,\coleb)=(\colr',\coleb')$.
\end{proof}

Let us prove the second assertion of the lemma. Let $(P,(\colr,\coleb))$ and $(P',(\colr',\coleb'))$ be two $\CM$-skeletons of $(M,g)$.
If necessary, apply finitely many $T_1$-moves to transform non-disk regions of $P$ and $P'$ into disk regions, obtaining then skeletons~$Q$ and~$Q'$ of $M$ with disk regions. These $T_1$-moves lift to $\CM$-moves by labeling added edges with $1 \in E$ and we obtain $\CM$-skeletons $(Q,(\gamma,\delta))$ and $(Q',(\gamma',\delta'))$ of $(M,g)$. Note that these $\CM$-moves have inverses. In particular, there are  finite sequences of primary $\CM$-moves from $(P,(\colr,\coleb))$ to $(Q,(\gamma,\delta))$ and from $(Q',(\gamma',\delta'))$ to $(P',(\colr',\coleb'))$.
By Claim \ref{claim-disk-region-skeletons}, there is a finite sequence of moves $(T_0')^{\pm 1}$, $T_2^{\pm 1}$, and $T_3^{\pm 1}$ transforming $Q$ into $Q'$. Since all these moves can always be lifted to $\CM$-moves (because they transform a skeleton with disk regions into a skeleton with disk regions), the $\CM$-labeling $(\gamma,\delta)$ of $Q$ can be transported under this sequence (by lifting successively the moves to $\CM$-moves) to a $\CM$-labeling $(\gamma'',\delta'')$ of $Q'$. In particular, there is a sequence of primary $\CM$-moves between $(Q,(\gamma,\delta))$ and $(Q',(\gamma'',\delta''))$.
By the first assertion of the lemma, $(Q',(\gamma'',\delta''))$ and $(Q',(\gamma',\delta'))$ are $\CM$-skeletons of $(M,g)$.
Then Claim~\ref{claim-gauge-Xi-moves} implies that there is a finite sequence of primary $\CM$-moves between $(Q',(\gamma'',\delta''))$ and $(Q',(\gamma',\delta'))$. Hence, by composing the above sequences, we obtain a finite sequence of primary $\CM$-moves between $(P,(\colr,\coleb))$ and $(P',(\colr',\coleb'))$.

\section{State-sum invariants of  closed \texorpdfstring{$\CM$}{X}-manifolds}\label{sect-state-sum-invariants-closed}

We fix throughout this section a crossed module $\CM \co E \to H$, a  spherical $\CM$-fusion category $\cc$ (over~$\kk$), and a $\CM$-representative set $I=\amalg_{h\in H}\, I_h$ for $\cc$.
We   assume that the dimension $\dim(\cc_1^1)$ of the neutral component $\cc_1^1$ of the 1-subcategory $\cc^1$ of $\cc$ is invertible in the ground ring $\kk$ (see Section~\ref{sect-crossed-module-graded-fusion}). We derive from  this data a scalar topological invariant of closed $\CM$-manifolds (see Section~\ref{sect-Xi-skeletons}) defined as a state sum on $\CM$-skeletons.

\subsection{The state sum invariant}\label{sec-computat}
Let $(M,g)$ be   a   closed $\CM$-manifold.
Pick a $\CM$-skeleton~$P$ of~$M$ (see Section~\ref{sect-Xi-skeletons}). Denote by $(\colr,\coleb)$ the $\CM$-labeling of $P$ and recall from Section~\ref{sect-stratified-polyhedron} that $\Reg(P)$ is the (finite) set of regions of~$P$.

A \emph{coloring} of $P$ is a map $c \co \Reg(P) \to I$ such that $c(r)\in I_{\colr (r)}$ for all $r \in \Reg(P)$.
The object $c(r) \in I$ assigned to a region $r$ of $P$ is called the \emph{$c$-color} of $r$.
Note that there are finitely many colorings of $P$ (because the sets $I_h$ with $h \in H$ and $\Reg(P)$ are finite).
For a coloring $c$ of $P$, we set
\begin{equation}\label{dimcoloring}
\dim(c)=   \prod_{r \in \Reg(P)} (\dim c(r))^{\Euler(r)}      \in \kk,
\end{equation}
where    $\Euler(r) $ is the Euler characteristic of the region $r$. Note that $\dim(c)$ is well-defined since the dimension of a simple object in the 1-subcategory $\cc^1$ of $\cc$ is invertible (see Section~\ref{sect-crossed-module-graded-fusion}).

Next, we define    a    scalar   $\vert c \vert \in \kk$  for  any coloring $c$ of $P$.
For each oriented edge~$e$ of $P$, the coloring $c$ of $P$ turns the $\CM$-cyclic set $P_e$ of branches of $P$ at $e$ (see Section~\ref{sect-Xi-labelings}) into a $\CM$-cyclic $\cc$-set by assigning to each branch $b \in P_e$ the $c$-color of the region of~$P$ containing  $b$.  Let $H_c(e)=H (P_e)$  be its multiplicity   module (see Section~\ref{sect-muliplicity-modules}). It is a free \kt module of finite rank (by Lemma~\ref{lem-E-semisimple}).
Let
$$
H_c=   \underset{e}{\bigotimes} \, H_c(e)
$$
be the unordered tensor product of the \kt  modules $H_c(e)$ over all oriented edges  $e$  of~$P$.

Each unoriented edge  $e$  of  $P$ gives rise to two opposite oriented edges $e_1$, $e_2$ of~$P$. The  $\CM$-cyclic $\cc$-set $P_{e_1}$ and $P_{e_2}$ are dual: $P_{e_2}=(P_{e_1})^\opp$. This   yields a vector
\begin{equation*}
\ast_e =\ast_{P_{e_1}} \in  H_c(e_1)  \otimes H_c(e_2)
\end{equation*}
independent of the numeration of  $e_1$, $e_2$ (see Section~\ref{sect-muliplicity-modules}).
Set
\begin{equation*}
\ast_c = \otimes_e \, \ast_e\in H_c
\end{equation*}
where  $\otimes_e$ is the unordered tensor product   over all unoriented edges $e$ of $P$.

For a  vertex $x$ of $P$, consider the link graph $\Gamma_x \subset \partial B_x$ where $B_x\subset M$ is a $P$\ti cone neighborhood of $x$ (see Section~\ref{sect-skeletons}). Recall from Section~\ref{sect-Xi-labelings} that the $\CM$-labeling  of $P$ turns $\Gamma_x$ into a 1-spherical $\CM$-graph in $\partial B_x \cong S^2$. The coloring $c$ of~$P$ makes it $\cc$-colored: every edge of $\Gamma_x$ lies   in a region~$r$ of~$P$ and is colored with   $c(r) \in I$.
In this way, $\Gamma_x$  becomes  a 1-spherical $\cc$-colored $\CM$-graph  in~$\partial B_x$. It is    denoted $\Gamma_x^c$.
Since $\End_\cc^1(\un)=\kk$, Section~\ref{sect-Xi-graphs-spherical} yields a vector
\begin{equation*}
\inv_\cc (\Gamma^c_x)\in H(\Gamma_x^c)^\star
\end{equation*}
where we denote by $U^\star=\Hom_{\kk}(U, \kk)$ the dual of a \kt module $U$.
Note that the $\CM$-cyclic $\cc$-set associated with any vertex $v$ of $\Gamma_x^c$ is canonically isomorphic to the  $\CM$-cyclic $\cc$-set $P_{e}$, where $e$ is the edge of $P$ containing $v$ and oriented away from $x$. Therefore,  there are canonical isomorphisms
\begin{equation*}
H(\Gamma_x^c)   \simeq  \otimes_{e_x} \, H_c(e_x) \qquad \text{and} \qquad
H(\Gamma_x^c)^\star  \simeq  \otimes_{e_x} \, H_c(e_x)^\star,
\end{equation*}
where $e_x$ runs over all edges of $P$ incident to $x$ and oriented away from $x$. (An edge   with both endpoints in $x$ appears in each of these tensor products twice  with opposite orientations.)  The tensor product of the latter isomorphisms over all vertices $x$ of~$P$ yields a \kt linear isomorphism
$$
\bigotimes_x  H(\Gamma_x^c)^\star \simeq \bigotimes_x \bigotimes_{e_x}   H_c(e_x)^\star \simeq \bigotimes_e  H_c(e)^\star \simeq H_c^\star
$$
where $e$ runs over all oriented edges of $P$.
The image  under this isomorphism  of the unordered tensor product $\bigotimes_x \inv_\cc (\Gamma^c_x)$, where $x$ runs over all
vertices of~$P$, is a vector  $V_c \in H_c^\star$.
We evaluate  $V_c$ on   $\ast_c$ and set
$$
\vert c\vert=  V_c(\ast_c) \in \kk.
$$
Finally,   set
\begin{equation}\label{eq-simplstatesum+}
|M,g|_\cc =\dim (\cc_1^1)^{-\vert M\setminus P \vert} \sum_{c} \,\,  \dim (c) \,   \vert c\vert  \in \kk,
\end{equation}
where    $\vert M\setminus P \vert$  is the number of components of $M\setminus P$ and  $c$ runs over all colorings of~$P$.    The right-hand side of~\eqref{eq-simplstatesum+} is well-defined because there are finitely many colorings of $P$.

\begin{thm}\label{thm-state-3man-Xi}
$|M,g|_\cc$ is a topological invariant of the closed $\CM$-manifold $(M,g)$ independent of the choice    of~$P$ and~$I$.
\end{thm}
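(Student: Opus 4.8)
The plan is to follow the standard state-sum strategy: show that $|M,g|_\cc$ does not change when the $\CM$-representative set $I$ is replaced, and that it is invariant under the primary $\CM$-moves, so that Lemma~\ref{lem-Xi-moves} yields independence of the $\CM$-skeleton~$P$. For the independence of $I$, I would fix another $\CM$-representative set $I'$ and, using that every simple object of $\cc^1$ is $1$-isomorphic to a unique element of each set, choose $1$-isomorphisms giving a bijection between $I$-colorings and $I'$-colorings of $P$ that preserves the $H$-degrees. Since left and right dimensions are invariant under $1$-isomorphism (they lie in $\End_\cc^1(\un)=\kk$ and depend only on the $1$-isomorphism class), the factor $\dim(c)$ of \eqref{eq-simplstatesum+} is unchanged. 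By the naturality property (i) of $\inv_\cc$ in Section~\ref{sect-inv-Xi-graphs} and the compatibility of the contraction vectors $\ast_e$ with the induced isomorphisms of multiplicity modules, each scalar $|c|=V_c(\ast_c)$ matches that of the corresponding $I'$-coloring. Hence the two state sums agree term by term. This argument also shows that $|M,g|_\cc$ is independent of the auxiliary choices (realizations of multiplicity modules, the chosen $1$-isomorphisms, and so on).

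It then remains to prove invariance of $|M,g|_\cc$, computed with a fixed $I$, under the primary $\CM$-moves $T_0^{\pm1},T_1^{\pm1},T_2^{\pm1},T_3^{\pm1}$ and label-preserving isotopies. Isotopy invariance is immediate, since multiplicity modules, contraction vectors, and $\inv_\cc$ are all isotopy invariants. Each move $T_i$ is supported in a $3$-ball, so I would fix the colors of all ``big'' regions and compare, as functions of those colors, the partial state sums obtained by summing over the colors of the ``small'' regions created or destroyed inside the ball, together with the local edge contractions $\ast_e$ and the local vertex vectors $\inv_\cc(\Gamma_x^c)$. Because the whole expression $\sum_c \dim(c)\,|c|$ is a contraction of region, edge, and vertex tensors glued along the combinatorics of $P$, establishing invariance amounts to a purely local identity in each case.

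For the percolation move $T_3$, which pushes a branch of $P$ across a vertex and creates one small disk region, the relevant local identity is exactly Lemma~\ref{lem-calc-diag}: the sum over the admissible colors $i\in I_g$ of the new region, weighted by $\dim(i)$ and contracted against the induced duality vector $\ast$, reproduces the identity morphism on the unchanged strands, so the summed local contribution equals the pre-move one. The moves $T_1$ (a $2$-valent phantom edge with $E$-label $1\in E$) and $T_2$ (collapse of an edge) give local identities of a more elementary type: a $2$-valent edge with trivial $E$-label yields a trivial pairing and the two associated cone isomorphisms compose to the identity, while the contraction move merely rebrackets the same tensor contraction; in both cases one checks that the changes in $\dim(c)$ through the Euler characteristics of the affected regions exactly compensate. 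For the bubble move $T_0$, which creates two new disk regions $D_\pm$ and increases the number of components of $M\setminus P$ by one, I would sum over the colors of $D_\pm$ constrained by \eqref{precol1}; Lemma~\ref{lem-bubbleidentity} evaluates this sum to a single global factor $\dim(\cc^1_1)$, which is absorbed by the normalization $\dim(\cc^1_1)^{-|M\setminus P|}$ in \eqref{eq-simplstatesum+}. Invertibility of $\dim(\cc^1_1)$ makes this cancellation reversible, handling $T_0^{-1}$ as well.

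The main obstacle will be the local bookkeeping for $T_3$ and $T_0$: one must identify the change in the global contraction $V_c(\ast_c)$ precisely with the form of the relations in Lemmas~\ref{lem-calc-diag} and~\ref{lem-bubbleidentity}, tracking the branch orientations (the signs $\varepsilon$), the $H$-labels, and the $E$-labels transported according to \eqref{eq-xi-graph-2}, while matching the link-graph multiplicity modules $H(\Gamma_x^c)$ with the edge modules $H_c(e)$ under the canonical isomorphisms. In particular, the crossed-module grading must be shown to make the degree constraint $i\in I_g$ of Lemma~\ref{lem-calc-diag} coincide with the admissibility constraint $c(r)\in I_{\colr(r)}$ on colorings, and the $\dim(i)$ weights must be placed correctly relative to the $\dim(c)$ factors; verifying these compatibilities in the $\CM$-graded (rather than merely $H$-graded) setting is where the argument requires the most care.
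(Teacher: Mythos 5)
Your overall strategy coincides with the paper's: independence of $I$ via naturality of $\inv_\cc$ and of the contraction vectors, reduction to the primary $\CM$-moves via Lemma~\ref{lem-Xi-moves}, and move-by-move ``local invariance'', with Lemma~\ref{lem-bubbleidentity} doing the work for $T_0$ and Lemma~\ref{lem-calc-diag} for $T_3$. Those parts, as well as your sketch for $T_1$ (mismatched colors contribute zero, and the factor $\dim(i)^{-1}$ coming from the explicit contraction vector of Formula~\eqref{eq-compute-inverse-s} is compensated by the change of Euler characteristics), are essentially what the paper does.

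The genuine gap is your treatment of the contraction move $T_2$. It is not true that collapsing an edge ``merely rebrackets the same tensor contraction'', and there is no Euler-characteristic change under $T_2$ to compensate anything: no regions or $P$-balls are created or destroyed, and the Euler characteristics of the regions are unchanged, so the local identity must hold on the nose. Before the move the local contribution is $\bigl(\inv_\cc(\Gamma_x)\otimes\inv_\cc(\Gamma_{x'})\bigr)(\ast)$, a contraction of two separately defined planar-graph invariants along the abstractly defined duality vector $\ast$ of the collapsed edge; after the move it is $\inv_\cc(\Gamma_y)$, the invariant of the single merged link graph. Equating these two quantities is precisely the nontrivial gluing identity, not a reassociation of tensors: in the paper it is established using the $\otimes$-multiplicativity of $\inv_\cc$ (Section~\ref{sect-inv-Xi-graphs}), the $1$-sphericality of $\Gamma_x$ together with $\Hom_\cc^1(i,\un)=0$ for $i\in I_1\setminus\{\un\}$ and $\dim(\un)=1_\kk$, and then Lemma~\ref{lem-calc-diag} --- the very lemma you reserve for $T_3$. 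As written, your $T_2$ step would fail; the fix is to invoke Lemma~\ref{lem-calc-diag} there as well. A minor further inaccuracy: for $T_0$, the local sum produced by Lemma~\ref{lem-bubbleidentity} (applied with $X=\un$ and $Y=i^{\varepsilon}$) is $\dim(i)\dim(\cc_1^1)$, not just $\dim(\cc_1^1)$; the factor $\dim(\cc_1^1)$ is absorbed by the change in $\vert M\setminus P\vert$, while the factor $\dim(i)$ is compensated by the drop in the Euler characteristic of the big region, so both compensations are needed.
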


We prove Theorem~\ref{thm-state-3man-Xi} in Section~\ref{sect-proof-thm-state-3man-Xi} using moves on $\CM$-skeletons introduced in Section~\ref{sect-Xi-moves}. The invariant of Theorem~\ref{thm-state-3man-Xi} is nontrivial and may even distinguish homotopy classes of phantom maps (i.e., of maps inducing trivial homomorphisms on homotopy groups), see for instance Example~\ref{ex-nontrivial-inv} below.

Note that if $g$ is the homotopy class of a constant map, then $|M,g|_\cc= |M|_{\cc^1_1}$ where the right hand side is the Turaev-Viro-Barrett-Westbury state sum invariant of the oriented closed 3-dimensional manifold $M$ derived from the spherical fusion category $\cc_1^1$ (see \cite{BW,TVi5}). Moreover Theorem~\ref{thm-state-3man-Xi} generalizes  the construction of \cite{TVi1} which produces an invariant
of closed oriented 3-dimensional manifolds endowed with a homotopy class of maps to a $K(G,1)$ space, where $G$ is a group, from any spherical $G$-fusion category (over $\kk$) whose neutral component has invertible dimension in $\kk$. Indeed, the classifying space $B\CM$ is aspherical if and only if~$\CM$ is injective. And if $\CM\co E \to H$ is injective (for example $E=1$), then $B\CM$ is a $K(G,1)$ space with $G=\Coker(\CM)=H/\Ima(\CM)$, the category $\cc$ is a $G$-fusion category (in the sense of \cite{TVi1}) whose neutral component has invertible dimension $\dim(\cc_1^1)$ (see Remark~\ref{rk-case-injective-G-fusion}), and the invariant of closed $\CM$-manifolds from \cite{TVi1} defined using this $G$-fusion category is equal to the invariant $|\cdot |_\cc$ of Theorem~\ref{thm-state-3man-Xi}.

We illustrate Theorem~\ref{thm-state-3man-Xi} with the following two examples.

\subsection{Example}
Consider the closed 3-dimensional manifold $S^1 \times S^2$ with the product orientation. Pick   a  point  $p\in
S^1$ and an embedded loop $\gamma\subset S^2$. The 2\ti polyhedron $P=(\{p\} \times S^2 ) \cup (S^1\times \gamma)$ has two disk regions and one annulus region. Stratified by the based loop $\{p\}\times \gamma$, the polyhedron $P$ is a skeleton of $S^1\times S^2$ with one vertex, one edge, and two $P$-balls.
Any $\CM$-labeling of~$P$ is  fully determined by a tuple $\ell=(x,y,z,e) \in H^3 \times E$ such that $\CM(e)=z^{-1}xzy$.
Here $x,y$ are the $H$-labels of the disk regions,  $z$ is the $H$-label of the annulus region, and~$e$ is the $E$-label of the edge of $P$ with respect to the front $P$-ball branch and for the choices of orientations as in the next picture which represents a neighborhood of the  vertex of~$P$:
$$
\psfrag{1}[Bc][Bc]{\scalebox{.7}{$1$}}
\psfrag{2}[Bc][Bc]{\scalebox{.7}{$2$}}
\psfrag{3}[Bc][Bc]{\scalebox{.7}{$3$}}
\psfrag{x}[Bc][Bc]{\scalebox{.9}{$x$}}
\psfrag{y}[Bc][Bc]{\scalebox{.9}{$y$}}
\psfrag{z}[Bc][Bc]{\scalebox{.9}{$z$}}
\psfrag{e}[Bc][Bc]{\scalebox{.9}{$e$}}
\rsdraw{.45}{.9}{skel-S1S2} \; \;.
$$
By Lemma~\ref{lem-gauge-group-labelings}, this
$\CM$-labeling determines a homotopy class $g_\ell\in [S^1 \times S^2,B\CM]$. Then
\begin{gather*}
|S^1 \times S^2,g_\ell|_\cc \overset{(i)}{=}\dim (\cc_1^1)^{-2} \hspace*{-1em} \sum_{i\in I_x,\, j \in I_y, \,k \in I_z}   \hspace*{-1em} \dim (i) \dim(j) N^{\un,e}_{k^*\otimes i \otimes k \otimes j} \\
\overset{(ii)}{=}\dim (\cc_1^1)^{-1}  \sum_{k \in I_z} \dim(k)^2\overset{(iii)}{=}1_\kk.
\end{gather*}
Here $(i)$ follows from \eqref{eq-simplstatesum+}, \eqref{eq-compute-inverse-s}, and \eqref{eq-Nxi},   $(ii)$  from Lemma~\ref{lem-bubbleidentity} (with $g=x$, $h=y$, $X=k^*$, $Y=k$, $m=i$, $n=j$), and $(iii)$ from \eqref{eq-computation-dimension-neutral-component}.

\subsection{Example}
Consider the lens space $L(p,q)$ where  $p,q$ are coprime integers with $1 \leq q < p$. Recall from Example~\ref{ex-lens-spaces-labelings} that any $g\in[L(p,q),B\CM]$ is of the form $g=g_{h,e}$ for some $(h,e) \in H \times E$ satisfying $\CM(e)=h^p$ and $\lact{h}{e}=e$. By applying Formula~\eqref{eq-simplstatesum+} to the $\CM$-skeleton $P$ of $(L(p,q),g)$ induced by $(h,e)$ as in Example~\ref{ex-lens-spaces-labelings}, we obtain
$$
|L(p,q),g|_\cc=\dim (\cc_1^1)^{-1} \sum_{i\in I_h} \,\,   \dim (i)  \,
   \Tr \bigl((\sigma_{i})^q\bigr)\in \kk,
$$
where $\sigma_{i}$ is the \kt linear endomorphism of $\Hom_\cc^e(\un, i^{\otimes p})$ defined by
$$
\psfrag{i}[Bl][Bl]{\scalebox{.8}{$i$}}
\psfrag{u}[Bc][Bc]{\scalebox{1}{$u$}}
\psfrag{e}[Br][Br]{\scalebox{.8}{$i^{\otimes (p-1)}$}}
u \mapsto \sigma_{i}(u) \;=\;\rsdraw{.45}{.9}{sigma-lens}
$$
and $\Tr$ is the standard trace of \kt linear endomorphisms. This follows from the description of the $\CM$-graph $\Gamma$ associated with the single vertex of $P$ given in Example~\ref{ex-lens-spaces-labelings}, the definition of the invariant $\inv_\cc(\Gamma)$, and Formula~\eqref{eq-compute-inverse-s} which provides a computation of the contraction vector associated with the single edge of $P$.

\subsection{A conjecture}
Consider the spherical $\CM$-fusion category $\kk\tg_\CM$ of Example~\ref{ex-linearized-crossed-module-fusion}. It follows directly from the definitions that for any closed $\CM$-manifold $(M,g)$,
\begin{equation}\label{eq-case-G-Xi}
  |M,g|_{\kk\tg_\CM}=1_\kk.
\end{equation}
More generally, consider the spherical $\CM$-fusion category $\kk\tg_\CM^\omega$ obtained by twisting the $\CM$-category $\kk\tg_\CM$ with a normalized 3-cocycle $\omega$ for $\CM$ with values in $\kk^*$ (see Section~\ref{sect-twisting-fusion-cocycles}). Let $[\omega] \in H^3(B\CM,\kk^*)$ be the cohomology class induced by $\omega$. We conjecture that for any  closed $\CM$-manifold $(M,g)$,
\begin{equation}\label{conj-case-G-Xi-twisted}
  |M,g|_{\kk\tg_\CM^\omega}=\langle g^*([\omega]), [M] \rangle,
\end{equation}
where $g^*([\omega])  \in H^3(M,\kk^*)$ is the pullback of $[\omega]$ along $g$, $[M] \in H_3(M,\ZZ)$ is the fundamental class of $M$, and $\langle \,,\, \rangle \co  H^3(M,\kk^*) \times H_3(M,\ZZ) \to \kk$ is the Kronecker pairing. Note that the computations in \cite[Appendix H]{TVi5} imply that this conjecture is true when $E=1$ (and so $\omega$ is a 3-cocycle for $H$), that is, for the crossed modules of the form $1 \to H$ with $H$ a group.

\subsection{Invariants of 3-manifolds}
Let $M$ be a closed oriented 3-dimensional manifold. It follows from Theorem~\ref{thm-state-3man-Xi} that if $\mathfrak{a}=(\mathfrak{a}_g)_{g \in [M, B\CM]}$ is a family of elements of~$\kk$ such that $\mathfrak{a}_g=0$ for all but finitely many $g$, then the linear combination
$$
I_{\cc,\mathfrak{a}}(M)= \!\!\sum_{g \in [M, B\CM]} \!\!\! \mathfrak{a}_g \, |M,g|_\cc \in \kk
$$
is a topological invariant of $M$.

Examples of such families of coefficients can for instance be derived from the mapping space $\TOP(M,B\CM)$  of continuous maps from $M$ to $B\CM$ (endowed with the compact-open topology). In particular, assume that the crossed module $\CM\co E \to H$ is finite (meaning that both $E$ and $H$ are finite), so that the set $[M, B\CM]=\pi_0( \TOP(M,B\CM))$ and the homotopy groups of $\TOP(M,B\CM)$ are finite. Then, assuming also that $\kk$ is a field of characteristic zero, we can define the family of coefficients  $\mathfrak{y}=(\mathfrak{y}_g)_{g \in [M, B\CM]}$  by setting
$$
\mathfrak{y}_g=\dfrac{\card\bigl(\pi_2(\TOP(M,B\CM),g)\bigr)}{\card\bigl(\pi_1(\TOP(M,B\CM),g)\bigr)} \in \kk.
$$
For the spherical $\CM$-fusion category $\kk\tg_\CM$ of Example~\ref{ex-linearized-crossed-module-fusion}, we obtain
$$
I_{\kk\tg_\CM,\mathfrak{y}}(M)=\Yet_\CM(M),
$$
where $\Yet_\CM(M)$ is the Yetter invariant of $M$ associated with the finite crossed module $\CM$ (see \cite{Yet}). This follows from \eqref{eq-case-G-Xi} and the interpretation of the Yetter invariant given in \cite[Theorem 2.25]{FP}.
More generally, for the spherical $\CM$-fusion category $\kk\tg_\CM^\omega$  obtained by twisting the $\CM$-category $\kk\tg_\CM$ with a normalized 3-cocycle~$\omega$ for $\CM$ with values in $\kk^*$ (see Section~\ref{sect-twisting-fusion-cocycles}) and provided Conjecture \eqref{conj-case-G-Xi-twisted} is true, we obtain
$$
I_{\kk\tg_\CM^\omega, \mathfrak{y}}(M)=\Yet_{\CM,\omega}(M),
$$
where $\Yet_{\CM,\omega}(M)$ is the $\omega$-twisted Yetter invariant of $M$ defined by Faria Martins and Porter in \cite{FP}.

\subsection{Proof of   Theorem~\ref{thm-state-3man-Xi}}\label{sect-proof-thm-state-3man-Xi}
The state sum $|M|_\cc$ does not depend on the choice of the $\CM$-representative set~$I$   by the naturality of
$\inv_{\cc}$ and of the contraction vectors.
Denote the right hand side of \eqref{eq-simplstatesum+} by~$|P|_\cc$. Any $\CM$-homeomorphism of $\CM$-manifolds $M \to M'$ carries a $\CM$-skeleton $P $ of~$M$ to a $\CM$-skeleton~$P'$  of~$M'$ and,  clearly,   $|P|_\cc=|P'|_\cc$.   Thus, by Lemma~\ref{lem-Xi-moves},   we need only to prove the invariance of $|P|_\cc$ under the primary $\CM$-moves $P \mapsto T_s(P)$ for all $s \in \{0,1,2,3\}$.  This follows from the \lq\lq local invariance" which says that the contribution of any coloring $c$ of $P$ to the state sum is equal to the sum of the contributions of all colorings of $T_s(P)$ which are equal to~$c$ on all big regions.

Let us verify the local invariance for the bubble $\CM$-move~$T_0$. Denote by $x$  the created vertex, by
$r$ the region of $P$ where the bubble is attached, by $D_\pm$ the disks created by the move:
$$
\psfrag{1}[Bc][Bc]{\scalebox{.7}{$1$}}
\psfrag{2}[Bc][Bc]{\scalebox{.7}{$2$}}
\psfrag{3}[Bc][Bc]{\scalebox{.7}{$3$}}
\psfrag{M}[Br][Br]{\scalebox{.9}{$M$}}
\psfrag{i}[Bc][Bc]{\scalebox{.9}{$r$}}
\psfrag{x}[Bc][Bc]{\scalebox{.9}{$x$}}
\psfrag{m}[Bc][Bc]{\scalebox{.85}{$D_+$}}
\psfrag{n}[Bc][Bc]{\scalebox{.85}{$D_-$}}
\rsdraw{.35}{.9}{proof-Xi-M-inv-0}.
$$
Let $\alpha,\beta,\gamma \in H$ be the $H$-labels and $i,k,\ell \in I$ be the colors of $r,D_+,D_-$, respectively. Let $e \in E$ be the $E$-label of the created edge endowed with the depicted orientation and with respect to the (small) $P$-ball bounded by $D_+\cup D_-$.
Condition~\eqref{precol1} becomes $$\CM(e)=\beta^{\nu} \alpha^\varepsilon \gamma^{\mu},$$ where $\varepsilon,\nu,\mu$ are signs depending on the orientations of the regions $r,D_+,D_-$, respectively. The $\cc$-colored $\CM$-graph in $S^2$ associated with $x$ is
$$
\psfrag{e}[Br][Br]{\scalebox{.9}{$e$}}
\psfrag{a}[Bl][Bl]{\scalebox{.9}{$e^{-1}$}}
\psfrag{i}[Bc][Bc]{\scalebox{.9}{$i$}}
\psfrag{k}[Bc][Bc]{\scalebox{.9}{$k$}}
\psfrag{l}[Bc][Bc]{\scalebox{.9}{$\ell$}}
\Gamma=\,\rsdraw{.45}{.9}{proof-T0-col} \,\subset \R^2 \cup \{\infty \}=S^2,
$$
where the edge colored by $k$  (respectively, $i,\ell$) is oriented towards the left vertex if the sign $\nu$ (respectively, $\varepsilon,\mu$) is positive and towards the right vertex otherwise.
The contraction vector associated to the edge created by the move is the contraction vector $\ast$ determined   by the duality between the two vertices of $\Gamma$  induced   by the symmetry   with respect to  a vertical great circle. Now, using \eqref{eq-compute-inverse-s}  and \eqref{eq-Nxi}, we obtain:
\begin{gather*}
\psfrag{e}[Bc][Bc]{\scalebox{.9}{$e$}}
\psfrag{i}[Bl][Bl]{\scalebox{.9}{$i$}}
\psfrag{k}[Bl][Bl]{\scalebox{.9}{$k$}}
\psfrag{l}[Bl][Bl]{\scalebox{.9}{$\ell$}}
\inv_\cc(\Gamma) (\ast) = \;\rsdraw{.45}{.9}{tensor-contract-col-1} \;
=\;\rsdraw{.45}{.9}{tensor-contract-col-2} \,\;
= N^{\un,e}_{k^{\nu} \otimes i^\varepsilon \otimes \ell^{\mu}}.
\end{gather*}
Then   the contribution of~$x$ to the state sum is
\begin{gather*}
\sum_{\substack{k \in I_{\beta}\\ \ell \in I_{\gamma}}}\dim(k) \dim(\ell) \,  \inv_\cc(\Gamma) (\ast)
\overset{(i)}{=} \sum_{\substack{k \in I_{\beta}\\ \ell \in I_{\gamma}}}\dim(k) \dim(\ell) \, N^{\un,e}_{k^{\nu} \otimes i^\varepsilon \otimes \ell^{\mu}}\\
\overset{(ii)}{=} \sum_{\substack{m \in I_{\beta^{\nu}} \\ n \in I_{\gamma^{\mu}}}}\dim(m) \dim(n) \, N^{\un,e}_{m \otimes i^\varepsilon \otimes n}
\overset{(iii)}{=} \dim(i^\varepsilon)\dim(\cc^1_1) \overset{(iv)}{=} \dim(i) \dim(\cc^1_1).
\end{gather*}
Here  $(i)$ follows from the previous computation,
$(ii)$ and $(iv)$ from the equality $\dim(X^*)=\dim(X)$ for any $X \in \cc$, and $(iii)$ from Lemma~\ref{lem-bubbleidentity} applied with $X=\un$ and $Y=i^\varepsilon$. The   factor  $\dim (i) \dim (\cc_1^1) $ is compensated by the change  in the Euler characteristic of the big region and in
the number of components of $M\setminus P$.

Let us verify the local invariance for the phantom edge $\CM$-move~$T_1$.
This move adds a new edge $e$ connecting  two distinct vertices  of~$P$. This modifies the  link graphs of these vertices  by adding a new vertex $u$ (respectively,~$v$) inside an edge.   The $E$-label of $e$ is $1 \in E$ and so, by \eqref{precol1}, the (possibly coinciding) regions $r$ and~$r'$ of~$T_1(P)$ lying on  the  two sides of $e$ have the same $H$-label $h \in H$. The colorings of~$T_1(P)$ assigning different colors to the
regions $r$ and $r'$ contribute zero to the state sum  (because $\Hom_\cc^1(i,j)=0$ for distinct $i,j \in I$). The colorings of~$T_1(P)$ assigning the same color $i\in I$ to the
regions $r$ and $r'$   contribute the same as the colorings of~$P$ assigning~$i$ to the region of~$P$
containing~$e$. Indeed,
\begin{gather*}
 \psfrag{i}[Bc][Bc]{\scalebox{.9}{$i$}}
\psfrag{e}[Br][Br]{\scalebox{.9}{$1$}}
\psfrag{a}[Bl][Bl]{\scalebox{.9}{$1$}}
\inv_\cc \!\left (\, \rsdraw{.45}{.9}{inv-Xi-graph-prefusion-cup1a}\,\right )
 \otimes \,
 \inv_\cc \!\left (\, \rsdraw{.45}{.9}{inv-Xi-graph-prefusion-cup1b}\,\right ) \, (*_e) \\[.4em]
= \dim(i)^{-1} \;
\psfrag{i}[Bl][Bl]{\scalebox{.9}{$i$}}
\inv_\cc \!\left (\, \rsdraw{.45}{.9}{inv-Xi-graph-prefusion-cup2}\,\right )
\otimes\,
\psfrag{i}[Br][Br]{\scalebox{.9}{$i$}}
\inv_\cc \!\left (\, \rsdraw{.45}{.9}{inv-Xi-graph-prefusion-cup3}\,\right )
\end{gather*}
and the factor $\dim(i)^{-1}$ is compensated by the change in the Euler  characteristics of the regions.
The latter equality follows from the fact that $\ast_e$ is induced by the inverse $\Omega$ of the pairing
$$
\omega \co H_i \otimes_\kk H_i \to \kk, \quad u \otimes_\kk v \mapsto
  \psfrag{u}[Bc][Bc]{\scalebox{.9}{$u$}}
  \psfrag{v}[Bc][Bc]{\scalebox{.9}{$v$}}
  \psfrag{i}[Bc][Bc]{\scalebox{.9}{$i$}}
  \rsdraw{.45}{.9}{omega-S-col}
$$
where $H_i=\Hom_\cc^1(\un,i^* \otimes i)$, and that $\Omega\co \kk \to H_i \otimes_\kk H_i$ is computed by
$$
\Omega(1_\kk)= \dim(i)^{-1} \, \rcoev_i \otimes_\kk \rcoev_i
$$
because  $H_i \simeq \End_\cc^1(i) $ is free \kt module of rank 1 and
\begin{equation*}
\omega (\rcoev_i \otimes_\kk \rcoev_i)= \,
\psfrag{i}[Br][Br]{\scalebox{.8}{$i$}} \rsdraw{.4}{.9}{omega-coev-i-col}
\,=\, \psfrag{u}[Bl][Bl]{\scalebox{.8}{$i$}} \rsdraw{.4}{.9}{omega-coev-i-2-col}
\!=\dim(i).
\end{equation*}

Let us verify the local invariance for the contraction $\CM$-move~$T_2$.
This move collapses an edge of~$P$ between distinct vertices $x,x'$ of $P$ into a vertex $y$ of $T_2(P)$. We can assume that $x$  is an endpoint of some other edge. Let $e \in E$ be the $E$-label of the collapsed edge oriented from $x$ to $x'$ and with respect to the $P$-ball branch represented by the upper half-space. Note that the colorings of $P$ and $T_2(P)$ are the same. Given such a coloring, the $\cc$-colored $\CM$-graphs $\Gamma_x,\Gamma_{x'}$ in $S^2$ associated with $x,x'$ are of the following form:
$$
\psfrag{u}[Bl][Bl]{\scalebox{.9}{$e$}}
\psfrag{v}[Bl][Bl]{\scalebox{.9}{$e^{-1}$}}
\Gamma_x=\, \rsdraw{.45}{.9}{proof-Xi-M-inv-3p} \quad \text{and} \quad \Gamma_{x'}=\, \rsdraw{.45}{.9}{proof-Xi-M-inv-3q}
$$
where  white boxes  stand  for  a    piece   of a $\cc$-colored $\CM$-graph and the depicted vertices are in duality (induced by the collapsed edge). Denote by $\ast$ the contraction vector determined  by this duality.
The $\cc$-colored $\CM$-graph $\Gamma_y$ in $S^2$ associated with $y$ is obtained from $\Gamma_x$ and $\Gamma_{x'}$ by joining their dual vertices:
$$
\psfrag{u}[Bl][Bl]{\scalebox{.9}{$e$}}
\psfrag{v}[Bl][Bl]{\scalebox{.9}{$e^{-1}$}}
\Gamma_y=\, \rsdraw{.45}{.9}{proof-Xi-M-inv-3v}\,\;.
$$
Then the contribution of $x,x'$ to the state sum $|P|_\cc$ is
\begin{gather*}
\left (\inv_\cc(\Gamma_x) \otimes \inv_\cc(\Gamma_{x'}) \right) (\ast)
\overset{(i)}{=}
\psfrag{u}[Bl][Bl]{\scalebox{.9}{$e$}}
\psfrag{v}[Bl][Bl]{\scalebox{.9}{$e^{-1}$}}
\inv_\cc \left (\,
\rsdraw{.45}{.9}{proof-Xi-M-inv-3r}
\,\right )  (\ast ) \\[.4em]
\overset{(ii)}{=}
\sum_{i \in I_1} \dim(i) \;
\psfrag{i}[Br][Br]{\scalebox{.9}{$i$}}
\psfrag{u}[Bl][Bl]{\scalebox{.9}{$e$}}
\psfrag{v}[Bl][Bl]{\scalebox{.9}{$e^{-1}$}}
\inv_\cc \left (\,
\rsdraw{.45}{.9}{proof-Xi-M-inv-3t}
\,\right )  (\ast )
\overset{(iii)}{=}
 \inv_\cc(\Gamma_y)
\end{gather*}
and thus is equal to the contribution of $y$ to the state sum $|T_3(P)|_\cc$.
Here  $(i)$ follows from the $\otimes$-multiplicativity of $\inv_\cc$ (see Section~\ref{sect-inv-Xi-graphs}),
$(ii)$ from the facts that the $\CM$-graph $\Gamma_x$ is 1-spherical, $\Hom_\cc^1(i,\un)=0$
for all $i \in I_1$ distinct from~$\un$, and $\dim(\un)=1_\kk$, and $(iii)$ from Lemma~\ref{lem-calc-diag} since the element $h$ in that lemma is equal to $\CM(e)$ by~\eqref{precol1}.

Finally,  let us verify the local invariance for the percolation $\CM$-move~$T_3$.
This move  pushes a branch $b$ of~$P$ through a vertex~$x$ of $P$ and  across a small disk~$D$ lying in    another branch $b'$ of~$P$ at~$x$. Denote by $y$ the vertex of $T_3(P)$ corresponding to~$x$:
$$
\psfrag{1}[Bc][Bc]{\scalebox{.7}{$1$}}
\psfrag{2}[Bc][Bc]{\scalebox{.7}{$2$}}
\psfrag{3}[Bc][Bc]{\scalebox{.7}{$3$}}
\psfrag{M}[Br][Br]{\scalebox{.9}{$M$}}
\psfrag{b}[Bc][Bc]{\scalebox{.9}{$b$}}
\psfrag{d}[Bc][Bc]{\scalebox{.9}{$b'$}}
\psfrag{x}[Bc][Bc]{\scalebox{.9}{$y$}}
\psfrag{D}[Bc][Bc]{\scalebox{.8}{$D$}}
\rsdraw{.35}{.9}{proof-Xi-M-inv-3n}.
$$
Let $\alpha,\beta,\gamma \in H$ be the $H$-labels and $m,k,\ell \in I$ be the colors of $D,b,b'$, respectively. Let $e \in E$ be the $E$-label of $\partial D$ endowed with the depicted orientation and with respect to the $P$-ball branch represented by the upper half-space above $D$. Condition~\eqref{precol1} becomes $$\CM(e)=\alpha^{-\varepsilon} \gamma^{\mu} \beta^{\nu} ,$$ where $\varepsilon,\nu,\mu$ are signs depending on the orientations of the regions $D,b,b'$, respectively. The $\cc$-colored $\CM$-graph in $S^2$ associated with $y$ is obtained from $\cc$-colored $\CM$-graph in $S^2$ associated with $x$ by locally replacing
$$
\psfrag{X}[Br][Br]{\scalebox{.9}{$\ell$}}
\psfrag{Z}[Bl][Bl]{\scalebox{.9}{$k$}}
\rsdraw{.45}{.9}{proof-Xi-M-inv-3d}
\quad \text{with} \quad
\psfrag{i}[Br][Br]{\scalebox{.9}{$m$}}
\psfrag{u}[Bl][Bl]{\scalebox{.9}{$e$}}
\psfrag{v}[Bl][Bl]{\scalebox{.9}{$e^{-1}$}}
\psfrag{X}[Br][Br]{\scalebox{.8}{$\ell$}}
\psfrag{Z}[Bl][Bl]{\scalebox{.8}{$k$}}
\rsdraw{.45}{.9}{proof-Xi-M-inv-3b}
$$
where the edge colored by $k$  (respectively, $\ell,m$) is oriented downwards if the sign~$\nu$ (respectively, $\mu,\varepsilon$) is positive and upwards otherwise.
The contraction vector associated to the edge $\partial D$ is the contraction vector $\ast$ determined   by the duality between the two vertices (in the above right picture) induced   by the symmetry   with respect to  a horizontal great circle.
Then the contribution of $y$ to the state sum $|T_3(P)|_\cc$ is
\begin{gather*}
\psfrag{i}[Br][Br]{\scalebox{.9}{$m^\varepsilon$}}
\psfrag{u}[Bl][Bl]{\scalebox{.9}{$e$}}
\psfrag{v}[Bl][Bl]{\scalebox{.9}{$e^{-1}$}}
\psfrag{X}[Br][Br]{\scalebox{.8}{$\ell$}}
\psfrag{Z}[Bl][Bl]{\scalebox{.8}{$k$}}
\sum_{m \in I_\alpha} \dim(m) \;
\inv_\cc \left (\,
\rsdraw{.45}{.9}{proof-Xi-M-inv-3e}
\,\right )  (\ast ) \\[.4em]
\overset{(i)}{=}
\psfrag{i}[Br][Br]{\scalebox{.9}{$i$}}
\psfrag{u}[Bl][Bl]{\scalebox{.9}{$e$}}
\psfrag{v}[Bl][Bl]{\scalebox{.9}{$e^{-1}$}}
\psfrag{X}[Br][Br]{\scalebox{.8}{$\ell$}}
\psfrag{Z}[Bl][Bl]{\scalebox{.8}{$k$}}
\sum_{i \in I_{\alpha^\varepsilon}} \dim(i) \;
\inv_\cc \left (\,
\rsdraw{.45}{.9}{proof-Xi-M-inv-3c}
\,\right )  (\ast )
\; \overset{(ii)}{=} \;
\psfrag{X}[Br][Br]{\scalebox{.8}{$\ell$}}
\psfrag{Z}[Bl][Bl]{\scalebox{.8}{$k$}}
\inv_\cc \left (\, \rsdraw{.45}{.9}{proof-Xi-M-inv-3d}
\,\right )
\end{gather*}
and thus is equal to the contribution of $x$ to the state sum $|P|_\cc$.
Here  $(i)$ follows from the equality $\dim(X^*)=\dim(X)$ for any $X \in \cc$ and $(ii)$ from Lemma~\ref{lem-calc-diag} since $\alpha^\varepsilon= \gamma^{\mu} \beta^{\nu} \CM(e^{-1})$.

\subsection{Remark}\label{rem-onemoreinv}
The right-hand side of Formula~\eqref{eq-simplstatesum+}  is the product of $\dim (\cc_1^1)^{-\vert M\setminus P \vert}$ and a certain sum which we denote
$\Sigma_\cc(P)$. The definition of $\Sigma_\cc(P)\in \kk$ does not use the assumption that $ \dim(\cc^1_1) $ is invertible in~$\kk$ and applies to an arbitrary spherical $\CM$-fusion category~$\cc$. This allows us to generalize the invariant $ \vert
 M,g \vert_\cc$ of a closed $\CM$-manifold  $(M,g)$  to  any such~$\cc$.  We use the theory of spines, see \cite{Mat1}. By a spine of $M$, we mean an oriented stratified 2-polyhedron $P\subset M$ such that $P$ has at least 2 vertices,
$P$ is locally  homeomorphic to the cone over the 1-skeleton of a tetrahedron,  and  $M\setminus P$ is an open ball.   By \cite{Mat1}, $M$  has a   spine~$P$ and any two  spines of~$M$ can be related by  the   moves $T^{1,2}$, $T^{2,1}$ in the class of  spines.  These moves lift to $\CM$-moves and the arguments of Section~\ref{sect-proof-thm-state-3man-Xi} imply that   $\Sigma_\cc(P)$ is preserved under the  $\CM$-moves $T^{1,2}$, $T^{2,1}$. Therefore  $\vert \vert M,g \vert\vert_\cc= \Sigma_\cc(P)$  is a topological invariant of $(M,g)$. If $\dim(\cc_1^1)$ is invertible, then $   \vert \vert M,g\vert\vert_\cc= \dim(\cc_1^1)\,  \vert M ,g\vert_\cc$.

\subsection{The case of pointed $\CM$-manifolds}\label{sect-basepoint-case}
For pointed topological spaces $X,Y$, we denote by $[X,Y]_*$ the set of pointed homotopy classes of pointed maps $X \to Y$.
Note that if $X$ is a connected CW-complex with basepoint a 0-cell  and if $Y$ is path-connected, then $\pi_1(Y)$ acts (on the right) on~$[X,Y]_*$ and the canonical map $[X,Y]_* \to [X,Y]$ induces a bijection
$$
[X,Y]_*/\pi_1(Y) \simeq [X,Y].
$$

By a \emph{pointed closed $\CM$-manifold}, we mean a pair $(M,g)$ where $M$ is a pointed closed oriented 3-dimensional manifold and $g\in [M,B\CM]_*$. Here $B\CM$ is pointed by its unique 0-cell (see Section~\ref{sect-crossed-modules-classifying-spaces}).
Then the invariant $\vert \cdot \vert_{\cc}$ of closed $\CM$-manifolds from Theorem~\ref{thm-state-3man-Xi} extends trivially to pointed closed $\CM$-manifolds by setting
$$
\vert M,g \vert_{\cc}=\vert M,\upsilon(g) \vert_{\cc}
$$
for any pointed closed $\CM$-manifold $(M,g)$, where $\upsilon(g)$ is the image of $g$ under the canonical map $[M,B\CM]_* \to [M,B\CM]$.

\subsection{The case of push-forwards}\label{sect-push-3man-Xi}
Let $\phi=(\psi \co E \to E',\varphi \co H \to H')$ be a morphism from a crossed module $\CM \co E \to H$ to a crossed module $\CM' \co E' \to H'$ such that $\psi$ and $\varphi$ are surjective, $\Ker(\psi)\cap \Ker(\CM)=1$, and $\Ker(\varphi)$ is finite. Let $\cc$ be a spherical $\CM$-fusion category. By Section~\ref{sect-push-forward-fusion}, the push-forward $\phi_*(\cc)$ of $\cc$ is a $\CM'$-fusion category such that
$$
\dim\bigr(\phi_*(\cc)^1_1\bigl)=d_\phi \dim(\cc^1_1) \quad \text{with} \quad
d_\phi=\frac{\card\bigr(\Ker(\varphi)\bigl)}{\card\bigr(\Ker(\psi)\bigl)} \in \ZZ_+.
$$
Assume that $\Ker(\CM')$ is finite and that $\card\bigr(\Ker(\varphi)\bigl)$, $\card\bigr(\Ker(\CM')\bigl)$, and  $\dim(\cc^1_1)$ are invertible in $\kk$. Then $d_\phi$ and $\dim(\phi_*(\cc)^1_1)$ are also invertible. Moreover, since $\psi$ induces an injective group homomorphism  $\Ker(\CM) \to \Ker(\CM')$ (because $\CM'\psi=\varphi\CM$ and $\Ker(\psi)\cap \Ker(\CM)=1$), we get that $\card\bigr(\Ker(\CM)\bigl)$ divides $\card\bigr(\Ker(\CM')\bigl)$ and so is finite and invertible in $\kk$.

By Section~\ref{sect-basepoint-case}, the category $\phi_*(\cc)$ defines the invariant  $\vert \cdot \vert_{\phi_*(\cc)}$ of pointed closed $\CM'$-manifolds and the category $\cc$ defines the invariant  $\vert \cdot \vert_\cc$ of pointed closed $\CM$-manifolds. The next theorem computes the former from the latter. A (pointed) closed $\CM$-manifold is \emph{connected} if the underlying manifold is connected.

\begin{thm}\label{thm-pushforward-3man-Xi}
Let $(M,g')$ be a connected pointed closed $\CM'$-manifold.
Then
$$
\vert M,g'\vert_{\phi_*(\cc)} =d_\phi^{-1} \sum_{ \substack{g \in [M,B\CM]_*\\ B\phi\circ g=g'}}
\dfrac{\card\bigl(\pi_1(\TOP_*(M,B\CM'),g')\bigr)}{\card\bigl(\pi_1(\TOP_*(M,B\CM),g)\bigr)} \,
 \vert M,g\vert_{ \cc },
$$
where $\TOP_*(X,Y)$ denotes the space of pointed continuous maps from $X$ to $Y$ (endowed with the compact-open topology) and $B\phi \co B \CM \to B\CM'$ is the (pointed) map induced by $\phi$.
\end{thm}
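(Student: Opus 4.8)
The plan is to fix a convenient skeleton, reduce the identity to a count of labelings, and then extract the homotopy-group factors from a fibration of mapping spaces. Since both sides are invariants (Theorem~\ref{thm-state-3man-Xi}) and $M$ is connected, I would compute on a single special spine $P$, so that $P$ has disk regions and $M\setminus P$ is one ball; then the normalizations collapse. Writing $\vert\vert\cdot\vert\vert$ for the unnormalized state sums of Remark~\ref{rem-onemoreinv} and using $\dim(\phi_*(\cc)^1_1)=d_\phi\dim(\cc^1_1)$ from Section~\ref{sect-push-forward-fusion}, the claimed formula is equivalent to $\Sigma_{\phi_*(\cc)}(P,(\colr',\coleb'))=\sum_g \card(\pi_1(\TOP_*(M,B\CM'),g'))\,\card(\pi_1(\TOP_*(M,B\CM),g))^{-1}\,\vert\vert M,g\vert\vert_\cc$ after dividing by $\dim(\cc^1_1)^{-1}$, the global $d_\phi^{-1}$ being accounted for by the neutral dimension. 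I would first record that pushing a $\CM$-labeling $(\colr,\coleb)$ forward gives the $\CM'$-labeling $(\varphi\colr,\psi\coleb)$ (axioms \eqref{precol1}--\eqref{precol3} and $1$-sphericality are preserved because $\phi$ is a crossed module morphism and grades push forward along $\psi$), and that by naturality of the classifying space functor and the filtered description of Claim~\ref{claim-filtered-map-to-Xi-labelings} one has $B\phi\circ f_{\colr,\coleb}\simeq f_{\varphi\colr,\psi\coleb}$. Hence a $\CM$-labeling represents a lift $g$ of $g'$ precisely when its push-forward is gauge equivalent to $(\colr',\coleb')$, and since $\varphi,\psi$ are surjective one can lift gauge transformations so that each lift $g$ is represented by a labeling in $\mathcal L(\colr',\coleb'):=\{(\colr,\coleb)\mid(\varphi\colr,\psi\coleb)=(\colr',\coleb')\}$, a finite set.

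Next comes the categorical core. Because $\phi_*(\cc)=\cc$ as pivotal categories (only the grading is relabeled), the invariants $\inv_{\phi_*(\cc)}$, the dimensions, and the multiplicity modules are computed by the same morphisms of $\cc$. The grading $\Hom^{e'}_{\phi_*(\cc)}=\bigoplus_{e\in\psi^{-1}(e')}\Hom^e_\cc$ shows that each multiplicity module in the $\phi_*(\cc)$-state sum decomposes over $E$-lifts of the $E'$-labels, and $\Ker(\psi)\cap\Ker(\CM)=1$ forces at most one lift to be degree-compatible via \eqref{precol1}; thus for every coloring $c'$ of $P$ by a $\CM'$-representative set $I'$, the nonzero contribution coincides termwise with the $\cc$-contribution of the $\CM$-labeling it determines. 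I would then use the free action of $\Map(\Reg(P),\Ker(\psi))$ on degree-compatible colorings (free since $\Ker(\psi)$ acts freely on a $\CM$-representative set $I$, again by $\Ker(\psi)\cap\Ker(\CM)=1$): its orbits biject region-by-region with the $\cc$-colorings of a fixed degree profile, and under this bijection the terms $\dim(c)\vert c\vert$ match by naturality of $\inv_\cc$ and of the contraction vectors under $\Ker(\psi)$-isomorphisms (property (i) of Section~\ref{sect-inv-Xi-graphs}). Summing over an orbit rebuilds a full $\cc$-state sum $\vert\vert M,g\vert\vert_\cc$, and counting each orbit once yields $\Sigma_{\phi_*(\cc)}(P,(\colr',\coleb'))=\card(\Ker\psi)^{-\vert\Reg(P)\vert}\sum_g N_g\,\vert\vert M,g\vert\vert_\cc$, where $N_g$ is the number of labelings in $\mathcal L(\colr',\coleb')$ representing $g$.

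It then remains to prove the purely homotopy-theoretic identity $N_g=\card(\Ker\psi)^{\vert\Reg(P)\vert}\,\card(\pi_1(\TOP_*(M,B\CM'),g'))\,\card(\pi_1(\TOP_*(M,B\CM),g))^{-1}$, and this is where I expect the real difficulty. The conceptual input is that the restriction $\CM_0\co\Ker(\psi)\to\Ker(\varphi)$ is again a crossed module whose classifying space is the homotopy fiber of $B\phi\co B\CM\to B\CM'$; here $\Ker(\psi)\cap\Ker(\CM)=1$ makes $\Ker(\CM_0)=1$, so $B\CM_0=K(W,1)$ is aspherical with $W=\Ker(\varphi)/\CM(\Ker\psi)$ finite of order $\card(\Ker\varphi)/\card(\Ker\psi)=d_\phi$. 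Applying $\TOP_*(M,-)$ to the fibration $K(W,1)\to B\CM\to B\CM'$ gives a fibration of pointed mapping spaces whose long exact sequence---all relevant groups being finite, as $\Ker\varphi$, $\Ker\CM'$, and hence $\Ker\CM$ and $W$ are finite---expresses both the set of lifts $g$ of $g'$ and the ratio of the $\pi_1$'s in terms of the finite fiber data. The plan is to match $N_g$ with the orbit of the vertical gauge group on $\mathcal L(\colr',\coleb')$ through Lemma~\ref{lem-gauge-group-labelings}, identifying the gauge stabilizers with the mapping-space fundamental groups and the combinatorial factor $\card(\Ker\psi)^{\vert\Reg(P)\vert}$ with the order of the region-indexed $\Ker(\psi)$-cochains acting trivially. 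The careful bookkeeping of basepoints (pointed versus free homotopy classes) together with the precise identification of $\pi_1(\TOP_*(M,B\CM),g)$ with gauge automorphisms is the main obstacle and the step demanding the most care.
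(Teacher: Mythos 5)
Your first two paragraphs reproduce, correctly, the skeleton of the paper's own argument: work on a skeleton $P$ with disk regions and a single $P$-ball containing the basepoint; observe that push-forward of labelings covers all pointed lifts $g$ of $g'$ (the paper's Claim~\ref{claim-push-repres-lifts}); use $\Ker(\psi)\cap\Ker(\CM)=1$ to show that each $\phi_*(\cc)$-coloring with nonzero multiplicity modules determines a \emph{unique} compatible $\CM$-labeling and that its contribution equals the corresponding $\cc$-contribution (Claims~\ref{claim-Hom-XYe}, \ref{claim-coloring-to-labeling}, \ref{claim-comparison-all-colorings}); and use the free action of $\mm=\mathrm{Map}(\Reg(P),\Ker(\psi))$ coming from the free action of $\Ker(\psi)$ on $I$ (Claims~\ref{claim-free-Ker}, \ref{claim-comparison-colorings-g}, \ref{claim-comparison-of-colorings-comput}) to get
$\Sigma_{\phi_*(\cc)}(P)=\card(\mm)^{-1}\sum_g N_g\,\Sigma_\cc$, with the normalization handled by $\dim(\phi_*(\cc)^1_1)=d_\phi\dim(\cc^1_1)$ (Claim~\ref{claim-dim-push}). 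Your phrase ``summing over an orbit rebuilds a full $\cc$-state sum'' is imprecise — the $\mm$-orbits are transverse to the partition of colorings by labelings $(\colr,\coleb)\in\lhs$, not contained in it — but the resulting formula is the right one. Note also that throughout you must use the \emph{pointed} gauge group $\{1\}\times\mathrm{Map}(\Reg(P),E)$ and a pointed refinement of Lemma~\ref{lem-gauge-group-labelings} (the paper's Claim~\ref{claim-pointed-labelings}), since the theorem concerns $[M,B\CM]_*$; you cite the unpointed lemma.

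The genuine gap is in your third paragraph: the identity $N_g=\card(\mm)\,\card\bigl(\pi_1(\TOP_*(M,B\CM'),g')\bigr)/\card\bigl(\pi_1(\TOP_*(M,B\CM),g)\bigr)$ is stated but not proved, and you explicitly defer the decisive step (``identifying $\pi_1(\TOP_*(M,B\CM),g)$ with gauge automorphisms is the main obstacle''). This is precisely where the paper does its real work, and nothing in your outline substitutes for it. Concretely, the paper introduces the group $\ee$ of maps $\mu\co\Reg(P)\to E$ whose action fixes the push-forward labeling (Claim~\ref{claim-ee-def}), shows $\ee$ acts transitively on the fiber $\{(\colr,\coleb)\in\lhs : g_{\colr,\coleb}=g\}$ with stabilizer $\sss$ (Claim~\ref{claim-ee-action}), proves exactness of $1\to\mm\to\ee\xrightarrow{\psi\circ(-)}\sss'\to 1$ (Claim~\ref{claim-sss-prime}), and — the crux — identifies $\sss\cong\pi_1(\TOP_*(M,B\CM),g)$ and $\sss'\cong\pi_1(\TOP_*(M,B\CM'),g')$ by building filtered homotopies from region-indexed labels (Claim~\ref{claim-sss-pi}); orbit-stabilizer then gives $N_g=\card(\ee)/\card(\sss)=\card(\mm)\card(\sss')/\card(\sss)$. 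Your alternative scaffolding via the fibration $K(W,1)\to B\CM\to B\CM'$ is plausible (the kernel crossed module $\Ker(\psi)\to\Ker(\varphi)$ does have aspherical classifying space of order-$d_\phi$ fundamental group, and it is the homotopy fiber of $B\phi$), but this fibration statement is itself unproved in your write-up and is not available in the paper; more importantly, even granting it, the long exact sequence of mapping spaces only reorganizes homotopy-theoretic data — it does not convert the combinatorial count $N_g$ of labelings into that data. That conversion is exactly the gauge-to-$\pi_1$ dictionary you postpone, so the proof is incomplete without it.
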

Note that the finiteness of the above sum follows from the fact that $\Ker(\varphi)$ is finite. Also, the involved fundamental groups are finite with invertible cardinal in~$\kk$ (since $M$ is compact and $\Ker(\CM)=\pi_2(B\CM)$ and $\Ker(\CM')=\pi_2(B\CM')$ are finite with invertible cardinals, see Claim~\ref{claim-sss-pi} below). We prove  Theorem~\ref{thm-pushforward-3man-Xi} in Section~\ref{sect-proof-thm-pushforward-3man-Xi}.

In the case of groups, that is, when $E$ and $E'$ are trivial, the fundamental groups
$\pi_1(\TOP(M,B\CM),g)$ and $\pi_1(\TOP(M,B\CM'),g')$ are trivial, $d_\phi=\card\bigr(\Ker(\varphi)\bigl)$, and Theorem~\ref{thm-pushforward-3man-Xi} gives back the relationship given in the appendix of \cite{TVi1}.

If $\phi$ is an equivalence of crossed modules (meaning that $\psi$ and $\varphi$ respectively induce isomorphisms $\Ker(\CM)\cong\Ker(\CM')$ and $\Coker(\CM)\cong\Coker(\CM')$), then $B\phi$ is a homotopy equivalence (by the Whitehead theorem),  $d_\phi=1$,  and Theorem~\ref{thm-pushforward-3man-Xi} gives
$$
\vert M,g'\vert_{\phi_*(\cc)}=\vert M,(B\phi)^{-1} \circ g'\vert_{ \cc }
$$
where $(B\phi)^{-1}$ is a homotopy inverse of $B\phi$.

\subsection{Example}\label{ex-nontrivial-inv}
Consider the spherical $\CM$-fusion category $\cc=\kk\tg_\CM$ of Example~\ref{ex-linearized-crossed-module-fusion}.
Then Theorem~\ref{thm-pushforward-3man-Xi} together with~\eqref{eq-case-G-Xi}  implies that for any connected pointed closed $\CM'$-manifold $(M,g')$,
\begin{equation}\label{eq-card-push}
\vert M,g'\vert_{\phi_*(\kk\tg_\CM)} =d_\phi^{-1} \sum_{ \substack{g \in [M,B\CM]_*\\ B\phi\circ g=g'}}
\dfrac{\card\bigl(\pi_1(\TOP_*(M,B\CM'),g')\bigr)}{\card\bigl(\pi_1(\TOP_*(M,B\CM),g)\bigr)} 1_\kk.
\end{equation}
Let us illustrate this with the following explicit morphism of crossed modules: consider  finite abelian groups $A,G$ with invertible cardinal in $\kk$ and such that $H^2(G,A)=0$. Let
$$
\phi=\left(
\vcenter{\xymatrix@R=.7cm @C=.7cm{ A \ar[r]^-{\CM}  \ar[d]_-{\psi} & A \times G \ar[d]^-{\varphi} \\
    A \ar[r]_-{\CM'} & 1}}
\right)
$$
where $\CM(a)=(a,1)$ for all $a \in A$, the action of $A \times G$  on $A$ is trivial, and $\psi=\id_A$.
Note that  $\Ker(\CM)=1$ and so $\Ker(\psi)\cap \Ker(\CM)=1$. Also $\Ker(\varphi)=A \times G$ and $\Ker(\CM')=A$ are finite with invertible cardinal in $\kk$. Using (i) of Section~\ref{sect-crossed-modules-classifying-spaces}, we obtain that the classifying spaces $B\CM$ and $B\CM'$ are Eilenberg-MacLane spaces  of type $K(G,1)$ and $K(A,2)$, respectively. Recall that $[X,K(B,n)]_* \simeq H^n(X,B)$ for any CW-complex $X$, abelian group $B$, and $n \geq 1$.
Then the induced map $B\phi \co B \CM \to B\CM'$ is (pointed) null-homotopic because
$$
[B\CM,B\CM']_* \simeq H^2(B\CM, A)\simeq H^2(G,A)=0.
$$
Let $M$ be a connected pointed closed oriented 3-dimensional manifold. For any $g' \in [X,B\CM']_*$ which is not null-homotopic, there are no $g \in [M,B\CM]_*$ such that $B\phi\circ g=g'$ (since~$B\phi$ is null-homotopic) and so Formula~\eqref{eq-card-push} gives that
$$
\vert M,g'\vert_{\phi_*(\kk\tg_\CM)} =0.
$$
Let $c' \in [M,B\CM']_*$ be the null-homotopy class. For any $g\in [M,B\CM]_*$, we have $B\phi\circ g=c'$ (because $B\phi$ is null-homotopic) and $\pi_1(\TOP_*(M,B\CM),g)=1$ (because $B\CM$ is a $K(G,1)$ space). Since $d_\phi=\card(A)\card(G)$, $[M,B\CM]_* \simeq H^1(M,G)$, and
\begin{gather*}
\pi_1(\TOP_*(M,B\CM'),c')\simeq [S^1,(\TOP_*(M,B\CM'),c')]_* \\ \simeq [M\wedge S^1,B\CM']_* \simeq H^2(M\wedge S^1,A)  \simeq H^1(M,A),
\end{gather*}
Formula~\eqref{eq-card-push} gives that
$$
\vert M,c'\vert_{\phi_*(\kk\tg_\CM)} =\dfrac{\card\bigl(H^1(M,A))\bigr)}{\card(A)} \,
\dfrac{\card\bigl(H^1(M,G)\bigr)}{\card(G)} 1_\kk.
$$
For example, assume that $\kk=\Q$, $A=\Z/2\Z$, $G=\Z/3\Z$, and $M=\R P^3$. (Note that we do have $H^2(G,A)=H^2(\Z/3\Z,\Z/2\Z)=0$.) Since
$$
[\R P^3,B\CM']_*=H^2(\R P^3,\Z/2\Z) \simeq \Z/2\Z,
$$
there are two distinct pointed homotopy classes from $\R P^3$ to $B\CM'$, namely the null-homotopy class $c'$ and another class $g'\neq c'$. Using that $H^1(\R P^3,\Z/2\Z) \simeq \Z/2\Z$ and $H^1(\R P^3,\Z/3\Z)=0$, the above computations give:
$$
\vert \R P^3,c'\vert_{\phi_*(\Q\tg_\CM)} = \frac{1}{3} \quad \text{and} \quad \vert \R P^3,g'\vert_{\phi_*(\Q\tg_\CM)} =0.
$$
In particular, $\vert \R P^3,c'\vert_{\phi_*(\Q\tg_\CM)} \neq \vert \R P^3,g'\vert_{\phi_*(\Q\tg_\CM)}$. Since $\pi_1(B\CM')=1$, we have:
$$
[\R P^3,B\CM']\simeq  [\R P^3,B\CM']_*.
$$
Also, all the maps $\R P^3 \to B\CM'$ are phantom maps (meaning that they induce trivial homomorphisms on homotopy groups) because $\pi_2(\R P^3)=1$ and $B\CM'$ is a $K(\Z/2\Z,2)$ space. This example shows that the invariant~$| \cdot |_\cc$ of Theorem~\ref{thm-state-3man-Xi} is nontrivial and may distinguish distinct homotopy classes of phantom maps.

\subsection{Example}
Let $\CM \co E \to H$ be a crossed module such that $E$ is finite and assume that $\kk$ is a field of characteristic zero.  Then the category $\CM \ti \vect_\kk$ of Example~\ref{ex-crossed-module-vector-spaces} is a spherical $\CM$-fusion category with $\dim\bigr((\CM \ti \vect_\kk)^1_1\bigl)=\card(E) 1_\kk \neq 0$. Recall from Section~\ref{sect-crossed-modules-classifying-spaces} the canonical inclusion $BH \hookrightarrow B \CM$. Then, for any connected pointed closed $\CM$-manifold $(M,g)$, we have:
\begin{equation}\label{eq-push-xi-vect}
\vert M,g\vert_{\CM \ti \vect_\kk}\neq 0 \; \Leftrightarrow \; \text{$g$ factorizes through $BH \hookrightarrow B \CM$.}
\end{equation}
Indeed, recall that $\CM \ti \vect_\kk=\phi_*((E,H\ltimes E)\mti\vect_\kk)$ where
$$
\phi=\left(
\vcenter{\xymatrix@R=.7cm @C=.7cm{ E \ar[r]^-{\iota}  \ar[d]_-{\id_E} & H\ltimes E \ar[d]^-{\varphi} \\
    E \ar[r]_-{\CM} & H}}
\right)
$$
with $\iota(e)=(1,e)$ and $ \varphi(h,e)=\CM(e)h$. Note that $B\iota$ is a $K(H,1)$ space (since $\iota$ is injective and $\Coker(\iota) \simeq H$). For any $\tilde{g}\in [M,B\iota]_*$, $\pi_1(\TOP_*(M,B\iota), \tilde{g})=1$ (because $B\iota$ is a $K(H,1)$ space). Also,
$$
\vert M,\tilde{g}\vert_{(E,H\ltimes E)\mti\vect_\kk}\overset{(i)}{=} \vert M,\tilde{g}\vert_{H\mti\vect_\kk}\overset{(ii)}{=} 1_\kk.
$$
Here $(i)$ follows from the remark after Theorem~\ref{thm-state-3man-Xi} relating the invariants of Theorem~\ref{thm-state-3man-Xi} and \cite{TVi1} (using the injectivity of $\iota$) and the fact that $(E,H\ltimes E)\mti\vect_\kk$ is isomorphic (as a $H$-fusion category) to  the category of finite-dimensional $H$-graded $\kk$-vectors spaces, and $(ii)$ follows from \cite[Example 8.6]{TVi1} (with $\theta=1$). Then, since $d_\phi=\card(E)$, Theorem~\ref{thm-pushforward-3man-Xi} implies that
$$
\vert M,g\vert_{\CM \ti \vect_\kk} =\dfrac{\card\bigl(\pi_1(\TOP_*(M,B\CM),g)\bigr)}{\card(E)}\, \card\bigl(\{ \tilde{g} \in [M,B\iota]_* \, | \, B\phi \circ \tilde{g}=g\} \bigr)\, 1_\kk.
$$
Since $\kk$ is a field of characteristic zero, we obtain that $\vert M,g\vert_{\CM \ti \vect_\kk}  \neq 0$ if and only if there is
$\tilde{g} \in [M,B\iota]_*$ such that $B\phi \circ \tilde{g}=g$. This proves \eqref{eq-push-xi-vect} using that $B\phi$ is pointed homotopic to the canonical inclusion $BH \hookrightarrow B \CM$ (up to a pointed homotopy equivalence $B\iota \simeq BH$).

\subsection{Proof of Theorem~\ref{thm-pushforward-3man-Xi}}\label{sect-proof-thm-pushforward-3man-Xi}
For $X \in \cc_{\mathrm{hom}}= \phi_*(\cc)_{\mathrm{hom}}$, the degree of $X$ in $\cc$ is denoted by $|X|$ and the degree of $X$ in $\phi_*(\cc)$ is then $\varphi(|X|)$.
Let $I=\amalg_{h\in H}\, I_h$ be a $\CM$-representative set  for~$\cc$.  Consider the left action $E \times I \to I$, $(e,i) \mapsto e \cdot i$  given by Lemma~\ref{lem-E-action-on-representatives}(iii). Recall that $j=e \cdot i$  if and only if
$i \cong_e j$, and so $|e \cdot i|=\CM(e) |i|$. This together with the equality $\varphi \CM= \CM'\psi$ implies that for all $e \in \Ker(\psi)$ and $i \in I$,
\begin{equation}\label{eq-varphi-dot-deg}
\varphi(|e \cdot i|)=\varphi(|i|).
\end{equation}

\begin{claim}\label{claim-free-Ker}
The subgroup $\Ker(\psi)$ of $E$ acts freely on $I$.
\end{claim}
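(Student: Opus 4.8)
The plan is to prove freeness by showing that the stabilizer in $\Ker(\psi)$ of each $i \in I$ is trivial. Since $\Ker(\psi)$ acts on $I$ by restriction of the $E$-action furnished by Lemma~\ref{lem-E-action-on-representatives}(iii), it suffices to fix $i \in I$ and $e \in \Ker(\psi)$ with $e \cdot i = i$, and to deduce that $e = 1$.

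First I would unwind the meaning of $e \cdot i = i$. By the defining property of the action in Lemma~\ref{lem-E-action-on-representatives}(iii), the equality $e \cdot i = i$ is equivalent to $i \cong_e i$, that is, to the existence of an $e$-isomorphism $f \co i \to i$. Such an $f$ is in particular a nonzero homogeneous morphism of degree $e$ between the homogeneous objects $i$ and $i$, both of degree $|i|$.

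Next I would apply the degree formula \eqref{eq-deg-morphism-objects}, which states that for any nonzero homogeneous morphism $\alpha \co X \to Y$ between homogeneous objects one has $|Y| = \CM(|\alpha|)\,|X|$. Taking $X = Y = i$ and $\alpha = f$, this gives $|i| = \CM(e)\,|i|$, whence $\CM(e) = 1$, i.e.\ $e \in \Ker(\CM)$. Combined with the standing assumption $e \in \Ker(\psi)$, the hypothesis $\Ker(\psi) \cap \Ker(\CM) = 1$ then forces $e = 1$, which is exactly what freeness of the $\Ker(\psi)$-action requires.

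I do not expect a genuine obstacle here: the statement is essentially a one-line consequence of the degree formula \eqref{eq-deg-morphism-objects} together with the intersection hypothesis $\Ker(\psi) \cap \Ker(\CM) = 1$. The only point deserving a moment's care is that an $e$-isomorphism is invertible and hence nonzero, so that \eqref{eq-deg-morphism-objects} (which applies only to nonzero homogeneous morphisms) is indeed available; everything else is formal.
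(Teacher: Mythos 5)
Your proof is correct and follows essentially the same route as the paper: the paper's own argument also deduces $\CM(e)=1$ from $|i|=|e\cdot i|=\CM(e)\,|i|$ (the identity $|e\cdot i|=\CM(e)|i|$ being exactly the consequence of \eqref{eq-deg-morphism-objects} that you spell out) and then concludes via $\Ker(\psi)\cap\Ker(\CM)=1$. You merely make explicit the nonzero-ness check that the paper leaves implicit.
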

\begin{proof}
Let $e \in \Ker(\psi)$ and $i \in I$ such that $i=e \cdot i$. Then $|i|=|e \cdot i|=\CM(e) |i|$ and so $\CM(e)=1$. Therefore $e \in \Ker(\psi) \cap \Ker(\CM)=1$ and $e=1$.
\end{proof}

Pick a complete set $J \subset I$ of representatives of $I/\Ker(\psi)$ such that $\un \in J$.

\begin{claim}\label{claim-repres-J}
We have:
\begin{enumerate}
\labeli
\item For all $i \in I$, there is a unique $(e,j) \in \Ker(\psi) \times J$ such that $i=e \cdot j$.
\item
$J$ is a $\CM'$-representative set for $\phi_*(\cc)$ splitting as $J=\amalg_{h'\in H'}\, J_{h'}$ with
$$
J_{h'}=  \amalg_{h \in \varphi^{-1}(h')}   \, J\cap I_h .
$$

\item For any $h' \in H'$, the map
$$
\left \{ \begin{array}{ccl} \Ker(\psi) \times J_{h'} & \to & \amalg_{h \in \varphi^{-1}(h')} I_h \\
(e,j) & \mapsto & e \cdot j \end{array} \right.
$$
is a bijection.
\end{enumerate}
\end{claim}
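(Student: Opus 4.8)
The plan is to derive all three parts from Claim~\ref{claim-free-Ker} (freeness of the $\Ker(\psi)$-action on $I$), from the choice of $J$ as a complete set of orbit representatives, from the description of the $1$-subcategory of $\phi_*(\cc)$ given in Sections~\ref{sect-push-forward} and~\ref{sect-push-forward-fusion}, and from Formula~\eqref{eq-varphi-dot-deg}. Part~(i) is immediate: each $i\in I$ lies in a unique $\Ker(\psi)$-orbit, and since $J$ meets every orbit in exactly one point there is a unique $j\in J$ in the orbit of $i$, hence an $e\in\Ker(\psi)$ with $i=e\cdot j$; uniqueness of $e$ is exactly the freeness statement of Claim~\ref{claim-free-Ker}, since $e\cdot j=e'\cdot j$ forces $(e'^{-1}e)\cdot j=j$ and thus $e'^{-1}e=1$.

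The conceptual heart is Part~(ii), namely the identification of $1$-isomorphism in $\phi_*(\cc)$ with the $\Ker(\psi)$-orbit relation on $I$. First I would recall from Section~\ref{sect-push-forward-fusion} that the simple objects of $(\phi_*(\cc))^1$ coincide with those of $\cc^1$, and that the elements of $I$ (hence of $J$) are homogeneous, with $\un\in J$. By the description in Section~\ref{sect-push-forward}, $\Hom_{\phi_*(\cc)}^1(X,Y)=\bigoplus_{e\in\Ker(\psi)}\Hom_\cc^e(X,Y)$, so for simple objects $i,j$ of $\cc^1$ a nonzero summand occurs precisely when $i\cong_e j$ in $\cc$, i.e. when $j=e\cdot i$ with $e\in\Ker(\psi)$. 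An $e$-isomorphism in $\cc$ with $e\in\Ker(\psi)$ is a $1$-isomorphism in $\phi_*(\cc)$, and conversely a $1$-isomorphism $f\colon i\to j$ in $\phi_*(\cc)$ is nonzero, so some summand $\Hom_\cc^e(i,j)$ is nonzero and $j=e\cdot i$ for some $e\in\Ker(\psi)$. Thus $i\cong_1 j$ in $\phi_*(\cc)$ if and only if $i$ and $j$ lie in the same $\Ker(\psi)$-orbit. Since $I$ represents the $1$-isomorphism classes of $\cc^1$, every simple object of $(\phi_*(\cc))^1$ is $1$-isomorphic in $\phi_*(\cc)$ to a \emph{unique} element of $J$ (its orbit representative), which makes $J$ a $\CM'$-representative set. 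The decomposition $J_{h'}=\amalg_{h\in\varphi^{-1}(h')}(J\cap I_h)$ then follows because the degree of $j\in J$ in $\phi_*(\cc)$ equals $\varphi(|j|)$, the union being disjoint as each $j$ has a single $\cc$-degree.

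For Part~(iii), well-definedness and surjectivity both rest on Formula~\eqref{eq-varphi-dot-deg}: for $e\in\Ker(\psi)$ one has $\varphi(|e\cdot j|)=\varphi(|j|)$, so $e\cdot j\in\amalg_{h\in\varphi^{-1}(h')}I_h$ whenever $j\in J_{h'}$, and conversely any $i$ with $\varphi(|i|)=h'$ satisfies $\varphi(|j|)=h'$ for its orbit representative $j$, placing $j$ in $J_{h'}$. Injectivity is precisely the uniqueness established in Part~(i), and surjectivity uses Part~(i) to write an arbitrary such $i$ as $e\cdot j$ together with the previous degree computation to conclude $j\in J_{h'}$. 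I expect the only genuinely nontrivial step to be the dictionary between $1$-isomorphism classes in $\phi_*(\cc)$ and $\Ker(\psi)$-orbits in Part~(ii); once that is in place, the remaining arguments are bookkeeping with the action and the homomorphisms $\psi$ and $\varphi$.
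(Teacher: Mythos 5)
Your proposal is correct and follows essentially the same route as the paper: both rest on Claim~\ref{claim-free-Ker}, the Hom-set decomposition $\Hom_{\phi_*(\cc)}^1=\bigoplus_{e\in\Ker(\psi)}\Hom_\cc^e$, Lemma~\ref{lem-E-semisimple}(ii), and Formula~\eqref{eq-varphi-dot-deg}. Your only departure is organizational --- you package the key step as an explicit dictionary between $1$-isomorphism in $\phi_*(\cc)$ and $\Ker(\psi)$-orbits, where the paper proves existence and uniqueness of the representative directly --- but the underlying argument is the same.
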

\begin{proof}
(i) follows directly from the definition of $J$ and Claim~\ref{claim-free-Ker}. Let us prove~(ii). By definition, $J \subset I \subset \cc_{\mathrm{hom}}= \phi_*(\cc)_{\mathrm{hom}}$ and $\un \in J$.  Recall from Section~\ref{sect-push-forward-fusion} that an object is simple in $\phi_*(\cc)^1$ if and only if it is simple in $\cc^1$. In particular $J \subset I$ is made of simple objects of $\phi_*(\cc)^1$. Next, let $X$ be a simple object of $\phi_*(\cc)^1$. By definition of~$I$ and since $X$ is simple in $\cc^1$, there is $i \in I$ such that $X$ is $1$-isomorphic in~$\cc$ to $i$. By~(i), there is  $(e,j) \in \Ker(\psi) \times J$ such that $i=e \cdot j$, that is,~$i$ is $e$-isomorphic in $\cc$ to $j$. Then, by \eqref{eq-cong-E}, $X$ is $e$-isomorphic in $\cc$ to $j$. Since $\psi(e)=1$, we deduce that $X$ is $1$-isomorphic in $\phi_*(\cc)$ to $j$. To prove the uniqueness of such a $j\in J$, consider another $k \in J$ such that $X$ is $1$-isomorphic in $\phi_*(\cc)$ to~$k$. Then $k$ is $1$-isomorphic in $\phi_*(\cc)$ to $j$ and so, by Lemma~\ref{lem-E-semisimple}(ii), the  \kt module $\Hom_{\phi_*(\cc)}^1(k,j)$ is free  of rank 1. Since
$$
\Hom_{\phi_*(\cc)}^1(k,j)=\bigoplus_{a \in \Ker(\psi)} \Hom_\cc^a(k,j),
$$
there is $a \in \Ker(\psi)$ such that $\Hom_\cc^a(k,j) \neq 0$. Lemma~\ref{lem-E-semisimple}(ii) implies that $k$ is $a$-isomorphic in~$\cc$ to $j$, and so $j=a \cdot k$. Then $j$ and $k$ belong to the same class in~$I/\Ker(\psi)$ and so are equal (by definition of $J$). Then $X$ is $1$-isomorphic in $\phi_*(\cc)$ to a unique element of $J$. Hence $J$ is a $\CM'$-representative set for $\phi_*(\cc)$. The splitting of $J$ is derived directly from the splitting of $I$ and the fact that the degree in $\phi_*(\cc)$ of $j \in J$ is  $\varphi(|j|)$.

Let us prove (iii). The map is well-defined since for any $(e,j) \in \Ker(\psi) \times J_{h'}$, \eqref{eq-varphi-dot-deg} gives that $\varphi(|e \cdot j|)=\varphi(|j|)=h'$ and so $|e \cdot j| \in \varphi^{-1}(h')$. Let $i \in I_h$ with $h \in \varphi^{-1}(h')$. By~(i), there is a unique $(e,j) \in \Ker(\psi) \times J$ such that $i=e \cdot j$. Now $j \in J_{h'}$ since
$j=e^{-1} \cdot i$ and $\varphi(|e^{-1} \cdot i|)=\varphi(|i|)=\varphi(h)=h'$ by \eqref{eq-varphi-dot-deg}.
Thus there is a unique  $(e,j) \in \Ker(\psi) \times J_{h'}$ such that $i=e \cdot j$. Hence the map is bijective.
\end{proof}

\begin{claim}\label{claim-dim-push}
$\dim\bigr(\phi_*(\cc)^1_1\bigl)=d_\phi \dim(\cc^1_1)$.
\end{claim}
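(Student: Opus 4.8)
The plan is to compute both sides of the asserted equality through Formula~\eqref{eq-computation-dimension-neutral-component} and to match the resulting finite sums via the bijection of Claim~\ref{claim-repres-J}(iii). First I would record two preliminary observations. Since $\phi_*(\cc)=\cc$ as pivotal categories (Section~\ref{sect-push-forward-fusion}), the left and right dimensions of an object are literally the same whether computed in $\cc$ or in $\phi_*(\cc)$; in particular Formula~\eqref{eq-computation-dimension-neutral-component} applies to the pivotal $\CM'$-fusion category $\phi_*(\cc)$ with the $\CM'$-representative set $J=\amalg_{h'\in H'}J_{h'}$ of Claim~\ref{claim-repres-J}(ii), yielding $\sum_{j\in J_{h'}}\dim_l(j)\dim_r(j)=\dim(\phi_*(\cc)^1_1)$ for every $h'\in H'$. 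Second, I claim that $e$-isomorphic homogeneous objects have equal left and right dimensions: if $u\co X\to Y$ is an $e$-isomorphism then $\id_X=u^{-1}u$ and $\id_Y=uu^{-1}$, so the symmetry of the pivotal traces (Section~\ref{sect-crossed-module-graded-pivotal}) gives $\dim_l(X)=\tr_l(u^{-1}u)=\tr_l(uu^{-1})=\dim_l(Y)$, and similarly for $\dim_r$. Since $e\cdot j\cong_e j$, this means $\dim_l(e\cdot j)=\dim_l(j)$ and $\dim_r(e\cdot j)=\dim_r(j)$ for all $e\in\Ker(\psi)$ and $j\in J$.

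With these facts in hand, I would fix an arbitrary $h'\in H'$ and sum Formula~\eqref{eq-computation-dimension-neutral-component} for $\cc$ over the fiber $\varphi^{-1}(h')$. As $\varphi$ is surjective, this fiber is a coset of $\Ker(\varphi)$ and so has $\card(\Ker(\varphi))$ elements, whence
\begin{equation*}
\sum_{h\in\varphi^{-1}(h')}\ \sum_{i\in I_h}\dim_l(i)\dim_r(i)=\card\bigl(\Ker(\varphi)\bigr)\,\dim(\cc^1_1).
\end{equation*}
The left-hand side is a sum over $\amalg_{h\in\varphi^{-1}(h')}I_h$, which by the bijection of Claim~\ref{claim-repres-J}(iii) I reindex by $(e,j)\in\Ker(\psi)\times J_{h'}$ through $i=e\cdot j$. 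Invoking the dimension invariance under $e$-isomorphism from the previous paragraph, each summand becomes $\dim_l(j)\dim_r(j)$, independent of $e$, so the left-hand side equals
\begin{equation*}
\card\bigl(\Ker(\psi)\bigr)\sum_{j\in J_{h'}}\dim_l(j)\dim_r(j)=\card\bigl(\Ker(\psi)\bigr)\,\dim(\phi_*(\cc)^1_1).
\end{equation*}

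Combining the two displays gives $\card(\Ker(\psi))\,\dim(\phi_*(\cc)^1_1)=\card(\Ker(\varphi))\,\dim(\cc^1_1)$, which is the claimed identity after dividing by $\card(\Ker(\psi))$; this division is legitimate because $\Ker(\psi)$ is finite, being embedded by $\CM$ into the finite group $\Ker(\varphi)$ (Section~\ref{sect-push-forward-fusion}). The only genuinely delicate point is the invariance of the dimensions under $e$-isomorphism rather than merely under $1$-isomorphism; I expect this to be the step to state with care, although it reduces in one line to the symmetry of the pivotal traces, while all remaining manipulations are finite-sum bookkeeping justified by the finiteness of $\Ker(\varphi)$.
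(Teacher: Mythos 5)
Your argument follows essentially the same route as the paper's proof: the paper likewise combines the bijection of Claim~\ref{claim-repres-J}(iii) (taken there only for $h'=1$) with Formula~\eqref{eq-computation-dimension-neutral-component} and the equality $\dim(e\cdot j)=\dim(j)$, and then cancels $\card(\Ker(\psi))$ to reach the definition of $d_\phi$. Working with an arbitrary $h'\in H'$, and spelling out via the symmetry of traces why isomorphic (not merely $1$-isomorphic) homogeneous objects have equal left and right dimensions, are harmless refinements; the second actually fills in a point the paper asserts without proof.

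However, your justification of the final cancellation is not correct. Your two displays give $n\,a=m\,b$ in $\kk$, where $n=\card(\Ker(\psi))$, $m=\card(\Ker(\varphi))=d_\phi\, n$, $a=\dim\bigl(\phi_*(\cc)^1_1\bigr)$, and $b=\dim(\cc^1_1)$; equivalently $n\,(a-d_\phi\, b)=0$ in $\kk$. Finiteness of $\Ker(\psi)$ is irrelevant to dividing by $n$: if $n\cdot 1_\kk$ is a zero divisor in $\kk$ (for instance if $\kk$ has positive characteristic dividing $n$), this relation does not force $a=d_\phi\, b$. What makes the cancellation legitimate is the standing hypothesis of Section~\ref{sect-push-3man-Xi}, in force throughout the proof of Theorem~\ref{thm-pushforward-3man-Xi} where Claim~\ref{claim-dim-push} sits, that $\card(\Ker(\varphi))$ is invertible in $\kk$: since $\card(\Ker(\psi))$ divides $\card(\Ker(\varphi))$ (by the injection $\Ker(\psi)\hookrightarrow\Ker(\varphi)$ of Section~\ref{sect-push-forward-fusion}), the scalar $\card(\Ker(\psi))\,1_\kk$ is then invertible as well, and one may divide. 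The paper's own proof is equally terse at this step, saying only that one concludes by the definition of $d_\phi$, but it is written under these invertibility assumptions; your proof should invoke them rather than finiteness. With this correction, your argument is complete and coincides in substance with the paper's.
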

\begin{proof}
The bijection of Claim~\ref{claim-repres-J}(iii) for $h'=1$ gives that
$$
\sum_{(e,j)\in \Ker(\psi) \times J_1}\hspace{-1.7em} \dim(e \cdot j)^2 \; =\hspace{-.5em}\sum_{i \in \amalg_{h \in \Ker(\varphi)} I_h} \hspace{-1.5em} \dim(i)^2.
$$
Note that $\dim(e \cdot j)=\dim(j)$ because $e \cdot j$ and $j$ are isomorphic in $\cc$, the left-hand side is equal to
$\card\bigr(\Ker(\psi)\bigl)  \dim\bigr(\phi_*(\cc)^1_1\bigl)$. By \eqref{eq-computation-dimension-neutral-component},
the right-hand side is equal to $\card\bigr(\Ker(\varphi)\bigl)  \dim(\cc^1_1)$. We conclude by using the definition of $d_\phi$.
\end{proof}

Let $M$ be a connected pointed oriented closed 3-dimensional manifold. Since $M$ is connected, there is a skeleton $P$ of $M$ with disk regions and with a single $P$-ball containing the basepoint of $M$. (An example of such a skeleton is provided by a spine of $M \setminus B$ where $B$ is an open 3-ball in~$M$ containing the basepoint of $M$.) Denote by $\mathcal{G}_{P,\CM}$  the gauge group of $\CM$-colorings of~$P$ (see Section~\ref{sect-Xi-labelings}) and consider its subgroup
$$
\mathcal{G}_{P,\CM}^*=\{1\} \times \mathrm{Map}(\Reg(P),E),
$$
where~$1$ is the trivial map (sending the unique $P$-ball to $1\in H$). Two $\CM$-labelings of $P$ are \emph{pointed gauge equivalent} if they belong to the same $\mathcal{G}_{P,\CM}^*$-orbit. The next claim is a version of Lemma~\ref{lem-gauge-group-labelings} for pointed homotopy classes.

\begin{claim}\label{claim-pointed-labelings}
There is a bijection from the set of pointed gauge equivalence classes of $\CM$-labelings of $P$ to the set of pointed homotopy classes of pointed maps $M \to B\CM$.
\end{claim}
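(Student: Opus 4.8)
The plan is to adapt, essentially verbatim, the proof of Lemma~\ref{lem-gauge-group-labelings} carried out in Section~\ref{sect-proof-lem-gauge-group-labelings}, inserting the pointing data at each step. The decisive simplification is that $P$ has a \emph{single} $P$-ball and that this ball contains the basepoint of $M$. I would therefore choose the center of this $P$-ball to be the basepoint itself (legitimate up to pointed homotopy, since the interior of a $P$-ball is an open $3$-ball and hence contractible). With this choice, the set $C$ of centers appearing in the filtration $M_*=(C \subset S \subset D \subset M)$ is the single basepoint, and a filtered map $f\co M_* \to (B\CM)_*$ is exactly a \emph{pointed} map $M \to B\CM$ satisfying $f(S)\subset BH$: indeed $B\CM$ is pointed by its unique $0$-cell $x$, so the filtration condition $f(C)=\{x\}$ coincides with pointedness.

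Under this identification the two ingredients behind Lemma~\ref{lem-gauge-group-labelings} transfer directly. The assignment of Claim~\ref{claim-filtered-map-to-Xi-labelings}, which produces a $\CM$-labeling $(\colr_f,\coleb_f)$ from a filtered map $f$ together with a section $(\colr,\coleb)\mapsto f_{\colr,\coleb}$, uses the basepoint only through the center $c$, so it automatically yields \emph{pointed} maps $f_{\colr,\coleb}$ (as $f_{\colr,\coleb}(c)=x$). The genuinely new input is the pointed version of Claim~\ref{claim-gauge-equiv-homotopic}: I would prove that $(\colr,\coleb)$ and $(\colr',\coleb')$ lie in the same $\mathcal{G}_{P,\CM}^*$-orbit if and only if $f_{\colr,\coleb}$ and $f_{\colr',\coleb'}$ are pointed homotopic. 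In the forward direction, a pointed homotopy $G\co M\times I \to B\CM$ has constant basepoint track $G|_{\{c\}\times I}\equiv x$; hence the element $\lambda(B)=[G|_{\{c\}\times I}]\in H=\pi_1(BH,x)$ attached in Claim~\ref{claim-gauge-equiv-homotopic} to the unique $P$-ball $B$ is trivial, so the resulting gauge transformation $(\lambda,\mu)$ lies in $\{1\}\times\Map(\Reg(P),E)=\mathcal{G}_{P,\CM}^*$. Moreover such a $G$ may be assumed filtered by the same argument as in the unpointed case, the basepoint track already lying in $BH$. Conversely, given $(\colr',\coleb')=(1,\mu)\cdot(\colr,\coleb)$ with $(1,\mu)\in\mathcal{G}_{P,\CM}^*$, I would run the homotopy construction of Claim~\ref{claim-gauge-equiv-homotopic} with the loop $\theta_c$ representing $\lambda(B)=1$ chosen to be the \emph{constant} loop at $x$; the homotopy then fixes the basepoint and is pointed.

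Injectivity of the stated map is immediate from this equivalence. For surjectivity I would argue as at the end of Section~\ref{sect-proof-lem-gauge-group-labelings}: since the regions of $P$ are disks, $M_*$ is the skeletal filtration of the CW-decomposition of $M$ dual to $P$, and the pointed cellular approximation theorem makes any pointed map $M\to B\CM$ pointed homotopic to a cellular map, which is automatically a pointed filtered map; because $B\CM$ is a homotopy $2$-type, such a filtered map is in turn pointed homotopic to $f_{\colr_f,\coleb_f}$. Together these give the desired bijection $\{\CM\text{-labelings}\}/\mathcal{G}_{P,\CM}^*\to[M,B\CM]_*$ (compatibly with the unpointed statement via the canonical map $[M,B\CM]_*\to[M,B\CM]$ of Section~\ref{sect-basepoint-case}).

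The step I expect to be the main obstacle is the exact matching of the pointed homotopy relation with the subgroup $\mathcal{G}_{P,\CM}^*$ rather than any of the formal bookkeeping. One must check that the freedom to deform the basepoint track is recorded \emph{precisely} by the factor $\Map(\Ball(P),H)=H$ of $\mathcal{G}_{P,\CM}$, and, conversely, that a gauge transformation with trivial $H$-component can always be realized by a basepoint-preserving homotopy. Both hinge on the hypothesis that $P$ has a single $P$-ball containing the basepoint, which is exactly what collapses $C$ to one point and turns the $H$-factor into the based loop datum at $x$; verifying that the extension arguments of Claim~\ref{claim-gauge-equiv-homotopic} remain valid with the basepoint held fixed throughout is where the care is needed.
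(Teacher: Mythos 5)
Your proposal is correct and follows essentially the same route as the paper's proof: take the basepoint of $M$ as the center of the unique $P$-ball, observe that the constructions of Claims~\ref{claim-filtered-map-to-Xi-labelings} and~\ref{claim-gauge-equiv-homotopic} then produce pointed maps and pointed homotopies (with the $H$-component $\lambda(B)$ of the gauge transformation trivial precisely when the basepoint track is constant), and deduce surjectivity from pointed cellular approximation since the regions of $P$ are disks. Your elaboration of the ``shown similarly to Claim~\ref{claim-gauge-equiv-homotopic}'' step, including choosing $\theta_c$ to be the constant loop in the converse direction, is exactly the intended argument.
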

\begin{proof}
The proof is similar to that of Lemma~\ref{lem-gauge-group-labelings} by picking the basepoint of~$M$ as the center of the unique $P$-ball and picking arcs $\{\gamma_r\}_r$ and disks~$\{\delta_e\}_e$ as in Section~\ref{sect-proof-lem-gauge-group-labelings}.
For any $\CM$-labeling $(\alpha,\beta)$ of $P$, the pointed homotopy class  of the map~$f_{\alpha,\beta}$ defined in Claim~\ref{claim-filtered-map-to-Xi-labelings} depends only on $P$ and $(\colr,\coleb)$ since any two systems of arcs $\{\gamma_r\}_r$ and disks $\{ \delta_e\}_e$ are  isotopic  in $M$ via an isotopy preserving the basepoint  of $M$.
Also, the fact that two $\CM$-labelings $(\alpha,\beta)$ and $(\alpha',\beta')$ of $P$ are pointed gauge equivalent if and only if their associated maps  $f_{\alpha,\beta}$  and $f_{\alpha',\beta'}$ are pointed homotopic is shown similarly to Claim~\ref{claim-gauge-equiv-homotopic}. Thus the assignment $(\alpha,\beta) \mapsto f_{\alpha,\beta}$ induces a well-defined injective map $\{ \CM\text{-labelings of } P\}/\mathcal{G}_{P,\CM}^* \to [M , B\CM]_*$ which maps the pointed gauge equivalence class of a  $\CM$-labeling $(\alpha,\beta)$ to the pointed homotopy class of the map $f_{\alpha,\beta}$. Since regions of $P$ are disks, this map is surjective by the same arguments of the end of Section~\ref{sect-proof-lem-gauge-group-labelings}.
\end{proof}

Let $g' \in [M,B\CM']_*$. By Claim~\ref{claim-pointed-labelings}, the pointed homotopy class $g'$ is encoded by a $\CM'$-labeling $(\alpha',\beta')$ of $P$. Note that it follows from the definition of a crossed module morphism that if~$(\alpha,\beta)$ is a $\CM$-labeling of $P$, then $(\varphi\alpha, \psi\beta)$ is a  $\CM'$-labeling of $P$. Consider the set
$$
\lhs=\bigl\{ (\alpha,\beta) \text{ $\CM$-labeling of $P$} \, \big | \, (\varphi\alpha, \psi\beta)=(\alpha',\beta') \bigr \}.
$$
For a $\CM$-labeling $(\alpha,\beta)$ of $P$, denote by $g_{\alpha,\beta} \in [M,B\CM]_*$ the pointed homotopy class determined by the pointed gauge equivalence class of $(\alpha,\beta)$ as in Claim~\ref{claim-pointed-labelings}.

\begin{claim}\label{claim-push-repres-lifts}
For any $(\alpha,\beta) \in \lhs$, we have $B\phi\circ g_{\alpha,\beta}=g'$. Conversely, for any $g \in [M,B\CM]_*$ such that $B\phi\circ g=g'$, there is $(\alpha,\beta) \in \lhs$ such that $g=g_{\alpha,\beta}$.
\end{claim}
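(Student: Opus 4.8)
The plan is to reduce both assertions to the single general identity
$$
B\phi\circ g_{\alpha,\beta}=g_{\varphi\alpha,\psi\beta},
$$
valid for \emph{every} $\CM$-labeling $(\alpha,\beta)$ of $P$, where $g_{\varphi\alpha,\psi\beta}\in[M,B\CM']_*$ denotes the pointed homotopy class determined, via Claim~\ref{claim-pointed-labelings} applied to $\CM'$, by the pushed-forward labeling $(\varphi\alpha,\psi\beta)$ (that this is a genuine $\CM'$-labeling, and not merely a pre-labeling, will fall out of the filtered-map argument below). Granting the identity, the forward assertion is immediate: for $(\alpha,\beta)\in\lhs$ one has $(\varphi\alpha,\psi\beta)=(\alpha',\beta')$, whence $B\phi\circ g_{\alpha,\beta}=g_{\alpha',\beta'}=g'$.

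To establish the identity I would first observe that $B\phi$ is a \emph{filtered} map $(B\CM)_*\to(B\CM')_*$, since it preserves basepoints and restricts on the canonical subcomplexes to $B\varphi\co BH\to BH'$. Writing $f=f_{\alpha,\beta}$ for the filtered map of Claim~\ref{claim-filtered-map-to-Xi-labelings} representing $g_{\alpha,\beta}$, the composite $B\phi\circ f$ is again filtered, and Claim~\ref{claim-filtered-map-to-Xi-labelings} applied to $\CM'$ attaches to it a $\CM'$-labeling with values $(B\phi\circ f)_*([\gamma_r])$ on a region $r$ and $(B\phi\circ f)_*([\delta_e])$ on an edge $e$. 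The crucial input is that, under the identifications $\pi_1(BH)=H$ and $\pi_2(B\CM,BH)=E$ of Section~\ref{sect-crossed-modules-classifying-spaces}, the homomorphisms induced by $B\phi$ on these relative homotopy groups are exactly $\varphi$ and $\psi$ (equivalently $\Pi_2(B\phi)=\phi$, by naturality of $B$ and $\Pi_2$). This gives $(B\phi\circ f)_*([\gamma_r])=\varphi(\alpha(r))$ and $(B\phi\circ f)_*([\delta_e])=\psi(\beta(e,B_e))$, so the labeling attached to $B\phi\circ f$ is precisely $(\varphi\alpha,\psi\beta)$ (in particular a $\CM'$-labeling). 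Since $B\CM'$ is a $2$-type and the regions of $P$ are disks, any filtered map is pointed homotopic to the canonical map of its labeling, so $B\phi\circ f_{\alpha,\beta}$ is pointed homotopic to $f_{\varphi\alpha,\psi\beta}$, yielding the identity.

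For the converse, given $g$ with $B\phi\circ g=g'$, I would write $g=g_{\alpha,\beta}$ using Claim~\ref{claim-pointed-labelings} and apply the identity to obtain $g_{\varphi\alpha,\psi\beta}=g'=g_{\alpha',\beta'}$; Claim~\ref{claim-pointed-labelings} for $\CM'$ then forces $(\varphi\alpha,\psi\beta)$ and $(\alpha',\beta')$ to be pointed gauge equivalent, say $(\alpha',\beta')=(1,\mu')\cdot(\varphi\alpha,\psi\beta)$ with $(1,\mu')\in\mathcal{G}_{P,\CM'}^*$. Next I would check the routine fact that $(\lambda,\mu)\mapsto(\varphi\lambda,\psi\mu)$ is a group homomorphism $\mathcal{G}_{P,\CM}\to\mathcal{G}_{P,\CM'}$ intertwining the gauge actions, i.e.\ that the push-forward of $(\lambda,\mu)\cdot(\colr,\coleb)$ equals $(\varphi\lambda,\psi\mu)\cdot(\varphi\colr,\psi\coleb)$; this uses only $\varphi\CM=\CM'\psi$, the equivariance $\psi(\elact{x}{e})=\lact{\varphi(x)}{\psi(e)}$, and an induction showing $\psi$ commutes with the auxiliary maps $\mu_\alpha$ of Section~\ref{sect-Xi-labelings} (so that \eqref{act-col1} and \eqref{act-col2} are respected). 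Using the surjectivity of $\psi$, I would lift $\mu'$ to some $\mu\in\Map(\Reg(P),E)$ with $\psi\circ\mu=\mu'$ and set $(\tilde\alpha,\tilde\beta)=(1,\mu)\cdot(\alpha,\beta)$. This labeling is pointed gauge equivalent to $(\alpha,\beta)$, so $g_{\tilde\alpha,\tilde\beta}=g$, while the intertwining gives $(\varphi\tilde\alpha,\psi\tilde\beta)=(1,\mu')\cdot(\varphi\alpha,\psi\beta)=(\alpha',\beta')$, so that $(\tilde\alpha,\tilde\beta)\in\lhs$, as required.

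The hard part will be the naturality assertion $\Pi_2(B\phi)=\phi$ identifying the homomorphisms induced by $B\phi$ on $\pi_1(BH)$ and on $\pi_2(B\CM,BH)$ with $\varphi$ and $\psi$; this is where the functoriality of the classifying-space construction of \cite{BH1} is genuinely used. The remaining ingredients—the filtered-map description of $g_{\alpha,\beta}$ and the compatibility of push-forward with the gauge action—are formal once the action formulas \eqref{act-col1} and \eqref{act-col2} are recalled.
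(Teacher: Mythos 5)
Your proposal is correct and takes essentially the same route as the paper: the same key identity $B\phi\circ g_{\alpha,\beta}=g_{\varphi\alpha,\psi\beta}$ (which the paper merely asserts with "Notice first that", while you sketch its justification via filtered maps and the functoriality of the classifying space), followed by the same converse argument — express $g$ via Claim~\ref{claim-pointed-labelings}, obtain a pointed gauge equivalence over $\CM'$, lift $\mu'$ through the surjective $\psi$, and modify the labeling. Your general intertwining statement for $(\lambda,\mu)\mapsto(\varphi\lambda,\psi\mu)$ is precisely the paper's explicit verification of \eqref{act-col1} and \eqref{act-col2} (using $\varphi\CM=\CM'\psi$ and $\psi(\mu_{\alpha_0})=(\psi\mu)_{\varphi\alpha_0}$), stated for all gauge elements rather than just $(1,\mu)$.
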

\begin{proof}
Notice first that if $(\alpha,\beta)$ is a $\CM$-labeling of $P$, then $B\phi \circ g_{\alpha,\beta}=g_{\varphi \alpha,\psi \beta}$. In particular, for any $(\alpha,\beta) \in \lhs$, we have:
$B\phi \circ g_{\alpha,\beta}=g_{\varphi \alpha,\psi \beta}=g_{\alpha',\beta'}=g'$.
Conversely, let $g \in [M,B\CM]_*$ such that $B\phi\circ g=g'$. By  Claim~\ref{claim-pointed-labelings}, there is a $\CM$-labeling $(\alpha_0,\beta_0)$ of $P$ such that $g=g_{\alpha_0,\beta_0}$. Since
$$
g_{\varphi \alpha_0,\psi \beta_0}= B\phi \circ g_{\alpha_0,\beta_0}= B\phi \circ g=g'=g_{\alpha',\beta'},
$$
Claim~\ref{claim-pointed-labelings} implies that $(\varphi\alpha_0,\psi\beta_0)$ is pointed gauge equivalent to $(\alpha',\beta')$, and so there is a  map $\mu' \co \Reg(P) \to E'$ such that
$(\alpha',\beta')=(1,\mu')\cdot (\varphi\alpha_0,\psi\beta_0)$. Since~$\psi$ is surjective, there is a map
$\mu \co \Reg(P) \to E$ such that $\mu'=\psi\mu$.
Consider the $\CM$-labeling  $(\alpha,\beta)=(1,\mu)\cdot(\alpha_0,\beta_0)$ of $P$. By Claim~\ref{claim-pointed-labelings}, $g_{\alpha,\beta}=g_{\alpha_0,\beta_0}=g$. Also, for any $r \in \Reg(P)$,
$$
 \varphi\alpha(r) \overset{(i)}{=} \varphi\bigl( \alpha_0(r) \CM(\mu(r))\bigr) \overset{(ii)}{=} \varphi\alpha_0(r) \, \varphi\CM(\mu(r))
 \overset{(iii)}{=}  \varphi\alpha_0(r) \, \CM'(\mu'(r)) \,  \overset{(iv)}{=}\alpha'(r).
$$
Here $(i)$ and $(iv)$ follow from \eqref{act-col1},  $(ii)$ from the multiplicativity of $\varphi$, and $(iii)$ from the facts that $\varphi\CM= \CM'\psi$ and  $\psi\mu=\mu'$.
Moreover, for any $(e,B) \in \EB(P)$,
\begin{gather*}
 \psi\beta(e,B) \overset{(i)}{=} \psi\big(\beta_0(e,B) \mu_{\alpha_0}(\gamma_{(e,B)})\big)
 \overset{(ii)}{=}\psi\beta_0(e,B) \,\psi(\mu_{\alpha_0}(\gamma_{(e,B)}))\\
 \overset{(iii)}{=}\psi\beta_0(e,B)\, \mu'_{\varphi\alpha_0}(\gamma_{(e,B)})\overset{(iv)}{=}\beta'(e,B).
\end{gather*}
Here $(i)$ and $(iv)$ follow from \eqref{act-col2},  $(ii)$ from the multiplicativity of $\psi$, and $(iii)$ from the facts that $\psi(\mu_{\alpha_0})=(\psi\mu)_{\varphi\alpha_0}$ (see Section~\ref{sect-Xi-labelings}) and $\psi\mu=\mu'$. Thus we get that $(\varphi\alpha,\psi\beta)=(\alpha',\beta')$. Hence $(\alpha,\beta) \in \lhs$.
\end{proof}

For $\mathcal{A} \subset \phi_*(\cc)_{\mathrm{hom}}=\cc_{\mathrm{hom}}$, let
$$
  \col_{\mathcal{A}} = \{ c \co \Reg(P)\to \mathcal{A} \, | \, \varphi(|c(r)|)= \alpha'(r) \text{ for all } r \in \Reg(P) \}.
$$
For $c \in \col_{\mathcal{A}}$, we define the \kt module $H_{c,\phi_*(\cc)}=H_c$ and the scalars $\dim(c)$ and $|c|_{\phi_*(\cc)}=  V_c(\ast_c)$ as in Section~\ref{sec-computat} but using the $\CM'$-labeling $(\alpha',\beta')$ and the  spherical $\CM'$-fusion category~$\phi_*(\cc)$. Clearly, $|c|_{\phi_*(\cc)}=0$ whenever $H_{c,\phi_*(\cc)}=0$. Set
$$
  \col^*_{\mathcal{A}} = \{ c \in \col_{\mathcal{A}} \, | \, H_{c,\phi_*(\cc)}\neq 0 \}.
$$
To maps $\eta \co \Reg(P) \to \Ker(\psi)$ and $d \co \Reg(P) \to  J$, we associate the map
$$
 \eta\cdot d \co \left \{ \begin{array}{ccl} \Reg(P) &\to & I \\
r &\mapsto & \eta(r)\cdot d(r). \end{array} \right.
$$
Denote by $\mm$ the set of maps from $\Reg(P)$ to $\Ker(\psi)$.

\begin{claim}\label{claim-comparison-colorings-g}
The map $\mm \times \col_J  \to \col_I$, defined by $(\eta,d)  \mapsto  \eta \cdot d$, is bijective.
\end{claim}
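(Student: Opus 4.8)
The plan is to exploit the fact that both the source and the target of the map are fibered products indexed by the finite set $\Reg(P)$, so that the whole statement reduces, region by region, to the pointwise bijection already established in Claim~\ref{claim-repres-J}(iii). First I would unwind the definitions of $\col_I$ and $\col_J$. Writing $h'_r=\alpha'(r)\in H'$ for a region $r$, a coloring $c\in\col_\aaa$ is precisely a choice, for each $r$, of an element $c(r)\in\aaa$ of degree $h'_r$ in $\phi_*(\cc)$, i.e. with $\varphi(|c(r)|)=h'_r$. Using the decomposition $I=\amalg_{h\in H}I_h$ and the splitting $J=\amalg_{h'\in H'}J_{h'}$ from Claim~\ref{claim-repres-J}(ii), this identifies $\col_I=\prod_{r\in\Reg(P)}\bigl(\amalg_{h\in\varphi^{-1}(h'_r)}I_h\bigr)$ and $\col_J=\prod_{r\in\Reg(P)}J_{h'_r}$, while by definition $\mm=\prod_{r\in\Reg(P)}\Ker(\psi)$.

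Next I would check that the map lands in $\col_I$ and is given factorwise. For $(\eta,d)\in\mm\times\col_J$ and any $r$, the element $\eta(r)\cdot d(r)$ lies in $I$, and \eqref{eq-varphi-dot-deg} gives $\varphi(|\eta(r)\cdot d(r)|)=\varphi(|d(r)|)=h'_r$ since $\eta(r)\in\Ker(\psi)$; hence $\eta\cdot d\in\col_I$. Under the identifications above, the assignment $(\eta,d)\mapsto\eta\cdot d$ is exactly the product over $r\in\Reg(P)$ of the maps $\Ker(\psi)\times J_{h'_r}\to\amalg_{h\in\varphi^{-1}(h'_r)}I_h$, $(e,j)\mapsto e\cdot j$. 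Each of these is a bijection by Claim~\ref{claim-repres-J}(iii) applied with $h'=h'_r$, so the product map is a bijection as well.

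There is no real obstacle: the entire content sits in Claim~\ref{claim-repres-J}, and the present claim is merely its globalization over the finite index set $\Reg(P)$. If one prefers to avoid the product bookkeeping, the two-sided inverse can be written down by hand: for $c\in\col_I$, Claim~\ref{claim-repres-J}(i) yields, for each region $r$, a unique pair $(\eta(r),d(r))\in\Ker(\psi)\times J$ with $c(r)=\eta(r)\cdot d(r)$, defining maps $\eta\in\mm$ and $d\co\Reg(P)\to J$; applying \eqref{eq-varphi-dot-deg} once more shows $\varphi(|d(r)|)=\varphi(|c(r)|)=h'_r$, so that $d\in\col_J$, and the uniqueness clause of Claim~\ref{claim-repres-J}(i) guarantees that $c\mapsto(\eta,d)$ is inverse to $(\eta,d)\mapsto\eta\cdot d$.
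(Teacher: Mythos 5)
Your proposal is correct and takes essentially the same route as the paper: the paper's proof is precisely your ``by hand'' version, using \eqref{eq-varphi-dot-deg} for well-definedness and Claim~\ref{claim-repres-J}(i) (with \eqref{eq-varphi-dot-deg} again for the degree check on $d$) to produce the unique preimage region by region. Your primary packaging via the per-region bijection of Claim~\ref{claim-repres-J}(iii) is only a cosmetic variant, since (iii) was itself deduced from (i) together with the same degree computation.
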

\begin{proof}
Note that $\eta \cdot d \in \col_I$ for any $(\eta,d) \in \mm \times \col_J$ since, by using \eqref{eq-varphi-dot-deg},
$$
 \varphi(|(\eta \cdot d)(r)|)=  \varphi(|\eta(r) \cdot d(r)|) = \varphi(|d(r)|) =\alpha'(r)
$$
for all $r \in \Reg(P)$. Hence the map is well-defined. Let $c \in \col_I$. Claim~\ref{claim-repres-J}(i) implies that there is a unique pair of maps $(\eta \co \Reg(P) \to \Ker(\psi), d \co \Reg(P) \to J)$ such that $c(r)=\eta(r) \cdot d(r)$ for all $r \in \Reg(P)$. Note that $d \in \col_J$ since, by using \eqref{eq-varphi-dot-deg},
$$
 \varphi(|d(r)|)=  \varphi(|\eta(r)^{-1} \cdot c(r)|) = \varphi(|c(r)|) =\alpha'(r)
$$
for all $r \in \Reg(P)$. Thus there is a unique $(\eta,d) \in \mm \times \col_J$  such that $c=\eta \cdot d$. Hence the map is bijective.
\end{proof}

\begin{claim}\label{claim-comparison-of-colorings-comput}
For all $\eta \in \mm$ and $d \in \col_J$,
$$
\dim (\eta \cdot d)=\dim(d) \quad \text{and} \quad  |\eta \cdot d|_{\phi_*(\cc)}=|d|_{\phi_*(\cc)}.
$$
\end{claim}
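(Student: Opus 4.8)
The plan is to exploit that, region by region, the colorings $\eta\cdot d$ and $d$ differ only by replacing each color with a $1$-isomorphic object of $\phi_*(\cc)$, and that both scalars $\dim(\cdot)$ and $|\cdot|_{\phi_*(\cc)}$ depend on the colors only through their $1$-isomorphism class. First I would record that for each region $r$ of $P$ one has $d(r)\cong_{\eta(r)} (\eta\cdot d)(r)$ in $\cc$ by the definition of the action of $E$ on $I$ (Lemma~\ref{lem-E-action-on-representatives}(iii)); since $\eta(r)\in\Ker(\psi)$, any such $\eta(r)$-isomorphism $\theta_r\co d(r)\to(\eta\cdot d)(r)$ has degree $\psi(\eta(r))=1$ in $\phi_*(\cc)$, so it is a $1$-isomorphism $d(r)\cong_1(\eta\cdot d)(r)$ there.

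The dimension identity is then immediate: a $1$-isomorphism is an isomorphism, hence preserves dimension, so $\dim((\eta\cdot d)(r))=\dim(d(r))$ for every $r$, and the product formula~\eqref{dimcoloring} gives $\dim(\eta\cdot d)=\dim(d)$.

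For the second identity I would feed the family $\{\theta_r\}_r$ into the naturality statements for $\inv_{\phi_*(\cc)}$ and for the contraction vectors. The $\theta_r$ induce, for every oriented edge $e$ of $P$, a \kt linear isomorphism $H_d(e)\to H_{\eta\cdot d}(e)$ of multiplicity modules (replacing each branch color by the $1$-isomorphic one), and hence an isomorphism $H_d\to H_{\eta\cdot d}$. Applying property~(i) of Section~\ref{sect-inv-Xi-graphs} to each link graph $\Gamma_x$—whose edge colors all change through the $\theta_r$—shows that the vertex vectors $\inv_{\phi_*(\cc)}(\Gamma_x^{\eta\cdot d})$ and $\inv_{\phi_*(\cc)}(\Gamma_x^{d})$ correspond under these isomorphisms, so that $V_{\eta\cdot d}$ and $V_d$ agree up to the induced identification $H_{\eta\cdot d}^\star\simeq H_d^\star$. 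Dually, the naturality of the contraction vectors—the pairings $\omega_{P_e}$ of Section~\ref{sect-muliplicity-modules} being preserved under precomposition with $1$-isomorphisms—shows that $\ast_{\eta\cdot d}$ corresponds to $\ast_d$ under $H_d\simeq H_{\eta\cdot d}$. Pairing would then give $|\eta\cdot d|_{\phi_*(\cc)}=V_{\eta\cdot d}(\ast_{\eta\cdot d})=V_d(\ast_d)=|d|_{\phi_*(\cc)}$.

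The main obstacle I anticipate is the global compatibility of these identifications: a single $\theta_r$ simultaneously affects every link-graph edge lying in $r$ and every multiplicity module of an edge adjacent to $r$, so I must check that the edge-by-edge isomorphisms assemble into one isomorphism $H_d\simeq H_{\eta\cdot d}$ that transports $V$ and $\ast$ at the same time. This is precisely the bookkeeping already used (at the start of Section~\ref{sect-proof-thm-state-3man-Xi}) to prove independence of the state sum from the choice of $\CM$-representative set, now applied inside $\phi_*(\cc)$ to the pair of colorings $d$ and $\eta\cdot d$; the symmetry and non-degeneracy of the pairings, together with the $1$-sphericality of the $\Gamma_x$, should guarantee that the two transports match.
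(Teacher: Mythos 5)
Your proposal matches the paper's proof: the dimension equality is proved identically, and the second equality is obtained exactly as you outline, by transporting the multiplicity modules along region-wise isomorphisms $\theta_r \co d(r) \to (\eta(r)\cdot d(r))$ of degree $\psi(\eta(r))=1$ in $\phi_*(\cc)$, then invoking naturality of $\inv_{\phi_*(\cc)}$ at the vertices together with compatibility of the contraction vectors at the edges. The global-compatibility obstacle you flag is resolved in the paper not by sphericality but by a consistent choice: fix one $e$-isomorphism $\theta_{e,j}\co j \to e\cdot j$ per pair $(e,j)\in\Ker(\psi)\times J$, act on positively oriented branches by $\theta_{r,+}=\theta_{\eta(r),d(r)}$ and on negatively oriented ones by $\theta_{r,-}=\bigl(\theta_{r,+}^*\bigr)^{-1}$ (not $\theta_{r,+}^*$), which makes a single family of edge isomorphisms $\theta_e$ transport the vertex vectors and the contraction vectors simultaneously.
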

\begin{proof}
For any $r \in \Reg(P)$, the objects $\eta(r) \cdot d(r)$ and $d(r)$ are isomorphic in $\cc$ and so have same dimension. Then
$\dim (\eta \cdot d)=\dim(d)$.

For each $j\in J$ and $e \in \Ker(\psi)$, pick an $e$-isomorphism $\theta_{e,j} \co j \to e \cdot j$ in $\cc$. Note that different choices of such an isomorphism differ by multiplication of an invertible scalar since $\Hom_\cc^e(j,e \cdot j)$ is free  of rank 1 (see Lemma~\ref{lem-E-semisimple}(ii)). For any $r \in \Reg(P)$, define the isomorphism $ \theta_{r,\pm} \co d(r)^\pm \to (\eta(r)\cdot d(r))^\pm$ by
$$
\theta_{r,+}= \theta_{\eta(r),d(r)}  \quad \text{and} \quad \theta_{r,-}=\bigl(\theta^*_{\eta(r),d(r)}\bigr)^{-1}.
$$
Note that  $ \theta_{r,+}$ has degree $\psi(\eta(r))=1$ in $\phi_*(\cc)$, as well as $\theta_{r,-}=(\theta^*_{r,+})^{-1}$.
For any oriented edge $e$ of $P$, the above isomorphisms induce a \kt linear isomorphism
$$
\theta_e \co H_d(e) \xrightarrow{\simeq}  H_{\eta \cdot d}(e)
$$
such that for any $P$-ball branch $B$ at $e$, the following diagram commutes:
$$
\xymatrix@R=.7cm{ H_d(e) \ar[r]^-{\simeq}  \ar[d]_-{\theta_e} & \Hom_{\phi_*(\cc)}^{\beta'(e,B)}\bigl(\un, d(b_1)^{\varepsilon_1} \otimes \cdots \otimes d(b_n)^{\varepsilon_n}\bigr) \ar[d]^-{\theta_{e,B}} \\
    H_{\eta \cdot d}(e) \ar[r]^-{\simeq} & \Hom_{\phi_*(\cc)}^{\beta'(e,B)}\left(\un, (\eta(b_1) \cdot d(b_1))^{\varepsilon_1} \otimes \cdots \otimes (\eta(b_n) \cdot d(b_n))^{\varepsilon_n}\right).
    }
$$
Here, the horizontal maps are the cone isomorphisms, $b_1< \cdots <b_n$  are the branches of $P$ at $e$ enumerated in the linear order determined by $B$,  $\varepsilon_i=\varepsilon_e(b_i) \in \{+,-\}$, and $\theta_{e,B}$ is defined by
$$
\theta_{e,B}(\alpha)=(\theta_{b_1,\varepsilon_1} \otimes \cdots \otimes \theta_{b_n,\varepsilon_n})\circ \alpha.
$$
For any vertex $x$ of $P$,  the unordered tensor product $\otimes_{e_x} \, \theta_{e_x}$, where $e_x$ runs over all edges of $P$ incident to $x$ and oriented away from $x$, induces a \kt linear isomorphism $\theta_x \co H(\Gamma^d_x) \to  H(\Gamma^{\eta \cdot d}_x)$ satisfying
\begin{equation}\label{eq-phi-nat1}
\inv_{\!\phi_*(\cc)}\bigl(\Gamma^d_x\bigr)=  \inv_{\!\phi_*(\cc)}\bigl(\Gamma^{\eta \cdot d}_x\bigr) \circ \theta_x.
\end{equation}
This follows from the naturality of $\inv_{\!\phi_*(\cc)}$ (see Section~\ref{sect-inv-Xi-graphs}). Also, as in Section~\ref{sec-computat}, each unoriented edge  $e$  of  $P$ gives rise to two opposite oriented edges $e_1$, $e_2$ of~$P$ and to  the vectors $\ast^d_e \in H_d(e_1) \otimes H_d(e_2)$ and $\ast^{\eta \cdot d}_e \in H_{\eta \cdot d}(e_1)\otimes H_{\eta \cdot d}(e_2)$ satisfying
\begin{equation}\label{eq-phi-nat2}
(\theta_{e_1} \otimes \theta_{e_2}) (\ast^d_e)= \ast_e^{\eta \cdot d}.
\end{equation}
Then the equality $|\eta \cdot d|_{\phi_*(\cc)}=|d|_{\phi_*(\cc)}$ follows directly from \eqref{eq-phi-nat1} and \eqref{eq-phi-nat2}.
\end{proof}

For a $\CM$-labeling $(\alpha,\beta)$ of $P$, let
$$
  \col(\alpha,\beta) = \bigl\{ c \co \Reg(P)\to I \, \big | \, |c(r)|= \alpha(r) \text{ for all } r \in \Reg(P) \bigr \}.
$$
For $c \in \col(\alpha,\beta)$, we define the \kt module $H_{c,\cc}=H_c$ and the scalar $|c|_\cc=  V_c(\ast_c)$ as in Section~\ref{sec-computat} by using the $\CM$-labeling $(\alpha,\beta)$ and the  spherical $\CM$-fusion category~$\cc$.
Clearly, $|c|_\cc=0$ whenever $H_{c,\cc}=0$. Set
$$
  \col^*(\alpha,\beta) = \{ c \in \col(\alpha,\beta) \,  | \, H_{c,\cc}\neq 0 \}.
$$
The next claim is instrumental to define a map $\col^*_I \to \lhs$ in Claim~\ref{claim-coloring-to-labeling}.

\begin{claim}\label{claim-Hom-XYe}
Let $X$ be a $1$-direct sum in $\cc$ of homogeneous objects of degree $h \in H$. Let $e' \in E'$ with $\Hom_{\phi_*(\cc)}^{e'}(\un,X)\neq 0$. Then there is a unique $e \in \psi^{-1}(e')$ such that
$$
\Hom_{\phi_*(\cc)}^{e'}(\un,X)= \Hom^e_{\cc}(\un,X).
$$
Moreover $\CM(e)=h$.
\end{claim}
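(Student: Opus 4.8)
The plan is to unwind the definition of the push-forward and then isolate a single nonzero graded piece using the two standing hypotheses on $\phi$. First I would use the description of the $E'$-Hom-grading of $\phi_*(\cc)$ from Section~\ref{sect-push-forward}, namely
$$
\Hom_{\phi_*(\cc)}^{e'}(\un,X)=\bigoplus_{e \in \psi^{-1}(e')} \Hom_\cc^e(\un,X),
$$
to reduce the whole statement to a study of the summands $\Hom_\cc^e(\un,X)$ for $e$ ranging over the fiber $\psi^{-1}(e')$. The goal then becomes to show that exactly one of these summands is nonzero, and that the corresponding $e$ satisfies $\CM(e)=h$.

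Next I would establish the degree constraint. Since $\un$ is homogeneous of degree $1 \in H$ and $X$ is a $1$-direct sum of homogeneous objects all of degree $h$, Formula~\eqref{eq-dir-sum-homogeneous-objects} (a consequence of axiom (b) of a $\CM$-category) gives $\Hom_\cc^e(\un,X)=0$ whenever $h \neq \CM(e)\,|\un|=\CM(e)$. Equivalently, any $e \in \psi^{-1}(e')$ with $\Hom_\cc^e(\un,X)\neq 0$ must satisfy $\CM(e)=h$. Because the left-hand side $\Hom_{\phi_*(\cc)}^{e'}(\un,X)$ is nonzero by hypothesis, the displayed direct sum has at least one nonzero term, so such an $e$ exists and automatically verifies $\CM(e)=h$.

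For uniqueness I would compare two such indices. Suppose $e_1,e_2 \in \psi^{-1}(e')$ both satisfy $\Hom_\cc^{e_i}(\un,X)\neq 0$. By the previous step $\CM(e_1)=h=\CM(e_2)$, so $e_1 e_2^{-1}\in\Ker(\CM)$; and $\psi(e_1)=e'=\psi(e_2)$ forces $e_1 e_2^{-1}\in\Ker(\psi)$. The standing assumption $\Ker(\psi)\cap\Ker(\CM)=1$ then yields $e_1=e_2$. Hence there is a unique $e \in \psi^{-1}(e')$ with $\Hom_\cc^e(\un,X)\neq 0$, all other summands in the direct sum vanish, and the decomposition collapses to the single equality
$$
\Hom_{\phi_*(\cc)}^{e'}(\un,X)=\Hom^e_{\cc}(\un,X),
$$
with $\CM(e)=h$ as required.

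The argument is essentially a bookkeeping of two kernel conditions, so there is no serious obstacle; the only point demanding care is the interplay between the $\CM$-category degree axiom (which pins down $\CM(e)$) and the hypothesis $\Ker(\psi)\cap\Ker(\CM)=1$ (which upgrades this to genuine uniqueness of $e$ within the fiber $\psi^{-1}(e')$). Making sure that the degree constraint is applied to the $1$-direct sum $X$ rather than to a single homogeneous object—via Formula~\eqref{eq-dir-sum-homogeneous-objects}—is the one place where I would be explicit rather than terse.
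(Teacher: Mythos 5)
Your proposal is correct and follows essentially the same route as the paper's proof: decompose $\Hom_{\phi_*(\cc)}^{e'}(\un,X)$ over the fiber $\psi^{-1}(e')$, pin down $\CM(e)=h$ for any nonzero summand via the degree axiom applied to the $1$-direct sum (your appeal to \eqref{eq-dir-sum-homogeneous-objects} is just the packaged form of the paper's use of \eqref{eq-Hom-direct-sum} together with axiom (b)), and conclude uniqueness from $\Ker(\psi)\cap\Ker(\CM)=1$. No gaps.
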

\begin{proof}
Assume that $X$ is a 1-direct sum of a finite family $(X_\alpha)_{\alpha \in A}$ of homogeneous objects of $\cc$ of degree $h$. Since
$$
\bigoplus_{e\in \psi^{-1}(e')} \Hom_\cc^e(\un,X)=\Hom_{\phi_*(\cc)}^{e'}(\un,X)\neq 0,
$$
there is $e\in \psi^{-1}(e')$ with $\Hom_\cc^e(\un,X) \neq 0$. Then $\CM(e)=h$. Indeed, since by~\eqref{eq-Hom-direct-sum}
$$
\bigoplus_{\alpha \in A} \Hom_\cc^{e}( \un,X_\alpha) \simeq \Hom_\cc^e(\un,X) \neq 0,
$$
there is $\alpha \in A$ such that $\Hom_\cc^{e}( \un,X_\alpha) \neq 0$, and so $\CM(e)=|X_\alpha|=h$. If $f \in \psi^{-1}(e')$ is such that $\Hom_\cc^f(\un,X) \neq 0$, then $\CM(f)=h$ (as above) and so $f=e$ because $f e^{-1} \in \Ker(\psi) \cap \Ker(\CM)=1$. Thus  $\Hom_\cc^f(\un,X) = 0$ for all $f \in \psi^{-1}(e')$ with $f \neq e$. Hence $\Hom_{\phi_*(\cc)}^{e'}(\un,X)= \bigoplus_{f\in \psi^{-1}(e')} \Hom_\cc^f(\un,X)= \Hom^e_{\cc}(\un,X)$.
\end{proof}

\begin{claim}\label{claim-coloring-to-labeling}
Let $c \in \col_I^*$. Then  there is a unique $(\alpha_c,\beta_c) \in \lhs$ with  $c \in \col^*(\alpha_c,\beta_c)$. Moreover  $|c|_{\phi_*(\cc)}=|c|_\cc$.
\end{claim}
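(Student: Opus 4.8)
The plan is to read off $(\alpha_c,\beta_c)$ from $c$ using Claim~\ref{claim-Hom-XYe}, and then to exploit that $\phi_*(\cc)$ and $\cc$ coincide as pivotal categories, so that all the graphical data entering the scalars $|c|$ are literally the same on both sides. First I would set $\alpha_c(r)=|c(r)|\in H$ for every region $r$; since $c\in\col_I$ this gives $\varphi(\alpha_c(r))=\varphi(|c(r)|)=\alpha'(r)$. To define $\beta_c$, fix for each unoriented edge an orientation and a $P$-ball branch, producing a subset $\EB_0(P)\subset\EB(P)$ as in Section~\ref{sect-Xi-labelings}. For $(e,B)\in\EB_0(P)$ with branches $b_1<\cdots<b_n$ enumerated by $B$, put $X=c(b_1)^{\varepsilon_e(b_1)}\otimes\cdots\otimes c(b_n)^{\varepsilon_e(b_n)}$, a $1$-direct sum of homogeneous objects of degree $h=\alpha_c(b_1)^{\varepsilon_e(b_1)}\cdots\alpha_c(b_n)^{\varepsilon_e(b_n)}$. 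Because $c\in\col_I^*$, the cone isomorphism identifies the nonzero module $H_c(e)$ with $\Hom_{\phi_*(\cc)}^{\beta'(e,B)}(\un,X)$, so Claim~\ref{claim-Hom-XYe} furnishes a unique $\beta_c(e,B)\in\psi^{-1}(\beta'(e,B))$ with $\Hom_{\phi_*(\cc)}^{\beta'(e,B)}(\un,X)=\Hom_\cc^{\beta_c(e,B)}(\un,X)$ and $\CM(\beta_c(e,B))=h$; this last equality is precisely \eqref{precol1}. Extending $\beta_c$ over $\EB(P)$ by \eqref{precol2} and \eqref{precol3} then yields a pre-$\CM$-labeling $(\alpha_c,\beta_c)$.

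Next I would verify $(\alpha_c,\beta_c)\in\lhs$. The equality $\psi\beta_c=\beta'$ holds on $\EB_0(P)$ by construction and propagates to all of $\EB(P)$ since $\psi$ intertwines the extension relations \eqref{precol2} and \eqref{precol3} for $(\alpha_c,\beta_c)$ with the same relations for $(\alpha',\beta')$, using $\psi(\lact{x}{e})=\lact{\varphi(x)}{\psi(e)}$ together with $\varphi\alpha_c=\alpha'$. The crucial point, and the main obstacle, is to show that $(\alpha_c,\beta_c)$ is a genuine $\CM$-labeling, i.e. that each link graph $\Gamma_x$ is $1$-spherical. I would push $\Gamma_x$ into $\R^2$ to a planar $\CM$-graph $G_0$ and compare grades: the morphism $\psi$ carries $|G_0|\in E$ to the grade of the planar $\CM'$-graph obtained from the corresponding link graph of $(\alpha',\beta')$, since the grade of Section~\ref{sect-Xi-graphs-degree} is assembled from $E$-labels and the $H$-action in a way compatible with $\phi$. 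As $(\alpha',\beta')$ is a $\CM'$-labeling, that image grade is trivial, so $\psi(|G_0|)=1$ and $|G_0|\in\Ker(\psi)$; but $|G_0|\in\Ker(\CM)$ by Lemma~\ref{lem-grade-labeled-graphs}, whence $|G_0|\in\Ker(\psi)\cap\Ker(\CM)=1$. Thus $\Gamma_x$ is $1$-spherical and $(\alpha_c,\beta_c)\in\lhs$.

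It then remains to establish $c\in\col^*(\alpha_c,\beta_c)$ and uniqueness. The uniqueness clause of Claim~\ref{claim-Hom-XYe} shows that, for each oriented edge and each $P$-ball branch, the module $H_c(e)$ computed in $\cc$ from $(\alpha_c,\beta_c)$ coincides, as a submodule of $\Hom_{\phi_*(\cc)}(\un,X)$, with the one computed in $\phi_*(\cc)$ from $(\alpha',\beta')$; since the permutation isomorphisms \eqref{eq-def-permutation} are those of the common underlying pivotal category, the two projective systems agree and $H_{c,\cc}(e)=H_{c,\phi_*(\cc)}(e)\neq0$. Hence $H_{c,\cc}\neq0$, i.e. $c\in\col^*(\alpha_c,\beta_c)$. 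For uniqueness, any $(\alpha,\beta)\in\lhs$ with $c\in\col^*(\alpha,\beta)$ forces $\alpha=\alpha_c$ from $|c(r)|=\alpha(r)$, while $\beta(e,B)\in\psi^{-1}(\beta'(e,B))$ with $\Hom_\cc^{\beta(e,B)}(\un,X)\neq0$ is exactly the element singled out uniquely by Claim~\ref{claim-Hom-XYe}, so $\beta=\beta_c$ on $\EB_0(P)$ and therefore everywhere.

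Finally, the identity $|c|_{\phi_*(\cc)}=|c|_\cc$ should follow almost formally. Since $\phi_*(\cc)=\cc$ as pivotal, indeed spherical, categories, the cone isomorphisms, the invariant $\inv$ of $1$-spherical colored graphs, and the bilinear pairings $\omega_{G_v}$ with their inverse contraction vectors of Section~\ref{sect-muliplicity-modules} are all computed by one and the same Penrose graphical calculus in $\End_\cc(\un)=\End_{\phi_*(\cc)}(\un)$. Combined with the edgewise identification $H_{c,\cc}(e)=H_{c,\phi_*(\cc)}(e)$ from the previous step, together with its evident analogue for the dual cyclic sets using $\psi\beta_c^\opp={\beta'}^{\opp}$, this shows that the vector $V_c$ and the contraction $\ast_c$ of Section~\ref{sec-computat} are the same whether formed in $\cc$ or in $\phi_*(\cc)$. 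Evaluating then gives $|c|_{\phi_*(\cc)}=V_c(\ast_c)=|c|_\cc$.
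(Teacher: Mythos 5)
Your proof is correct and takes essentially the same route as the paper: set $\alpha_c(r)=|c(r)|$, extract $\beta_c$ edge-by-edge from Claim~\ref{claim-Hom-XYe} (whose uniqueness also yields the coherence relations \eqref{precol2}--\eqref{precol3} and the uniqueness of $(\alpha_c,\beta_c)$), identify $H_{c,\cc}(e)=H_{c,\phi_*(\cc)}(e)\neq 0$, and deduce $|c|_{\phi_*(\cc)}=|c|_\cc$ from the fact that $\phi_*(\cc)=\cc$ as spherical pivotal categories. The one place you go beyond the paper is the explicit verification that the link graphs are $1$-spherical (pushing the grade through $\psi$ to get the grade of the $(\alpha',\beta')$-labeled link graph, which is trivial, so the grade lies in $\Ker(\psi)\cap\Ker(\CM)=1$); the paper's proof simply asserts that $(\alpha_c,\beta_c)$ is a $\CM$-labeling after checking \eqref{precol1} and \eqref{precol2}, so your grade argument usefully closes that small gap in the published argument.
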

\begin{proof}
The condition $c \in \col^*(\alpha_c,\beta_c)$ implies that  $\alpha_c(r)= |c(r)|$ for all $r \in \Reg(P)$. Let $(e,B) \in \EB(P)$. The cone isomorphism gives a \kt linear isomorphism
$$
H_{c,\phi_*(\cc)}(e)  \simeq  \Hom_{\phi_*(\cc)}^{\beta'(e,B)}\bigl(\un, c(b_1)^{\varepsilon_1} \otimes \cdots \otimes c(b_n)^{\varepsilon_n}\bigr),
$$
where $b_1< \cdots <b_n$  are the branches of $P$ at $e$ enumerated in the linear order determined by $B$ and $\varepsilon_i=\varepsilon_e(b_i) \in \{+,-\}$.
Since $H_{c,\phi_*(\cc)}(e)\neq 0$
and the object $c(b_1)^{\varepsilon_1} \otimes \cdots \otimes c(b_n)^{\varepsilon_n}$ is a $1$-direct sum in $\cc$ of homogeneous objects of degree $|c(b_1)|^{\varepsilon_1} \cdots |c(b_n)|^{\varepsilon_n}$ by (c) of Section~\ref{sect-crossed-module-graded-categories},
Claim~\ref{claim-Hom-XYe} gives that there is a unique
$\beta_c(e,B) \in \psi^{-1}(\beta'(e,B))$ such that
\begin{equation}\label{eq-proof-lift}
\Hom_{\phi_*(\cc)}^{\beta'(e,B)}\bigl(\un, c(b_1)^{\varepsilon_1} \otimes \cdots \otimes c(b_n)^{\varepsilon_n}\bigr)=\Hom_\cc^{\beta_c(e,B)}\bigl(\un, c(b_1)^{\varepsilon_1} \otimes \cdots \otimes c(b_n)^{\varepsilon_n}\bigr),
\end{equation}
and moreover
$$
\CM(\beta_c(e,B))=|c(b_1)|^{\varepsilon_1} \cdots |c(b_n)|^{\varepsilon_n}=\alpha_c(b_1)^{\varepsilon_1}\cdots \alpha_c(b_n)^{\varepsilon_n}.
$$
Let $\suc(B)$ be as in Section~\ref{sect-Xi-labelings}.  Since the cone isomorphisms are compatible with the map $p_{b_1,b_2}$ as in~\eqref{eq-def-permutation}, we obtain that
\begin{gather*}
 \Hom_{\phi_*(\cc)}^{\beta'(e,\suc(B))}\bigl(\un, c(b_2)^{\varepsilon_2} \otimes \cdots \otimes c(b_n)^{\varepsilon_n} \otimes c(b_1)^{\varepsilon_1}\bigr)\\
=\Hom_{\cc}^{f}\bigl(\un, c(b_2)^{\varepsilon_2} \otimes \cdots \otimes c(b_n)^{\varepsilon_n} \otimes c(b_1)^{\varepsilon_1}\bigr)
\end{gather*}
with $f=\lact{\left(|c(b_1)|^{-\varepsilon_1}\right)}{\!\beta_c(e,B)}$ by (e) of Section~\ref{sect-crossed-module-graded-categories}. The uniqueness in Claim~\ref{claim-Hom-XYe} implies that
$$
\beta_c(e,\suc(B))=f=\lact{\left(\alpha_c(b_1)^{-\varepsilon_1}\right)}{\!\beta_c(e,B)}.
$$
Therefore $(\alpha_c,\beta_c)$ is a $\CM$-labeling of $P$. By construction, $(\varphi\alpha_c, \psi\beta_c)=(\alpha',\beta')$, so that  $(\alpha_c,\beta_c)\in \lhs$, and  $c \in \col(\alpha_c,\beta_c)$. It follows from \eqref{eq-proof-lift} that there is a canonical \kt linear isomorphism $H_{c,\phi_*(\cc)}(e) \simeq H_{c,\cc}(e)$ for each oriented edge $e$ of $P$, and so $H_{c,\cc}\simeq H_{c,\phi_*(\cc)}\neq 0$. Hence $c \in \col^*(\alpha_c,\beta_c)$.
The uniqueness of $(\alpha_c,\beta_c)$ follows from the fact that~$\alpha_c$ is fully determined by the condition $c \in \col^*(\alpha_c,\beta_c)$  and from the uniqueness in Claim~\ref{claim-Hom-XYe}.

The fact that $\phi_*(\cc)=\cc$ as pivotal categories implies that, using $\phi_*(\cc)$ and $c \in \col_I^*$ on the one hand and using $\cc$ and $c \in \col^*(\alpha_c,\beta_c)$ on the other,  the contraction vectors associated with the unoriented edges of $P$ are the same and that the vectors associated with the vertices of $P$ are the same (up to the canonical isomorphism $H_{c,\phi_*(\cc)}\simeq H_{c,\cc}$). Consequently, $|c|_{\phi_*(\cc)}=|c|_\cc$.
\end{proof}

\begin{claim}\label{claim-comparison-all-colorings}
The map
$$
\col^*_I \to \hspace{-.3em} \coprod_{(\alpha,\beta) \in \lhs} \hspace{-.6em}  \col^*(\alpha,\beta), \quad c \mapsto c \in \col^*(\alpha_c,\beta_c)
$$
is bijective.
\end{claim}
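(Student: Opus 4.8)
The plan is to prove bijectivity by exhibiting an explicit two-sided inverse, namely the ``forget the label'' map $G$ which sends a coloring $c$ lying in a summand $\col^*(\alpha,\beta)$ to the underlying coloring $c \in \col^*_I$. The essential input is already Claim~\ref{claim-coloring-to-labeling}: it attaches to each $c \in \col^*_I$ a unique $(\alpha_c,\beta_c) \in \lhs$ with $c \in \col^*(\alpha_c,\beta_c)$ and produces a canonical isomorphism $H_{c,\cc} \simeq H_{c,\phi_*(\cc)}$, so that the map $F$ of the statement, $c \mapsto (c \in \col^*(\alpha_c,\beta_c))$, is well defined. Thus it only remains to check that $G$ is well defined and that $F$ and $G$ are mutually inverse.

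First I would verify that $G$ takes values in $\col^*_I$. Given $(\alpha,\beta) \in \lhs$ and $c \in \col^*(\alpha,\beta)$, the degree condition is immediate: since $|c(r)| = \alpha(r)$ and $\varphi\alpha = \alpha'$ (because $(\alpha,\beta)\in\lhs$), we get $\varphi(|c(r)|) = \alpha'(r)$ for all $r$, so $c \in \col_I$. For the nonvanishing $H_{c,\phi_*(\cc)}\neq 0$ I would use the defining inclusion $\Hom_\cc^e(\un,-) \subseteq \Hom_{\phi_*(\cc)}^{\psi(e)}(\un,-)$ coming from the $E'$-grading of $\phi_*(\cc)$ (Section~\ref{sect-push-forward}); this gives an edgewise inclusion $H_{c,\cc}(e) \hookrightarrow H_{c,\phi_*(\cc)}(e)$ for every oriented edge $e$ of $P$. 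Since $H_{c,\cc}=\bigotimes_e H_{c,\cc}(e)$ is a tensor product of free modules, $H_{c,\cc}\neq 0$ forces each factor $H_{c,\cc}(e)$, hence each $H_{c,\phi_*(\cc)}(e)$, to be nonzero, and therefore $H_{c,\phi_*(\cc)}=\bigotimes_e H_{c,\phi_*(\cc)}(e)\neq 0$. Thus $c \in \col^*_I$ and $G$ is well defined.

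Next I would check the two composites. The composite $G \circ F$ is the identity on the nose, since $F$ never alters the underlying coloring. For $F \circ G$, take $(c \in \col^*(\alpha,\beta))$ with $(\alpha,\beta)\in\lhs$; then $F(c) = (c \in \col^*(\alpha_c,\beta_c))$, and I must show $(\alpha_c,\beta_c) = (\alpha,\beta)$. This is exactly the uniqueness assertion of Claim~\ref{claim-coloring-to-labeling}: $(\alpha_c,\beta_c)$ is characterized as the only element of $\lhs$ admitting $c$ in its $\col^*$, while $(\alpha,\beta)$ is such an element, so the two coincide. Hence $F \circ G = \id$ as well, and $F$ is bijective.

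I do not anticipate a serious obstacle beyond careful bookkeeping, since the substantive work is carried out in Claims~\ref{claim-Hom-XYe} and~\ref{claim-coloring-to-labeling}. The one genuine point to watch is that the ``nonvanishing of the multiplicity module'' conditions cutting out $\col^*_I$ and the summands $\col^*(\alpha,\beta)$ are compatible; I reduce this to the edgewise inclusion $H_{c,\cc}(e)\hookrightarrow H_{c,\phi_*(\cc)}(e)$ in the direction needed for $G$, and to the canonical isomorphism $H_{c,\cc}\simeq H_{c,\phi_*(\cc)}$ of Claim~\ref{claim-coloring-to-labeling} in the direction needed for $F$. Everything else follows formally from the disjointness of the coproduct and the fact that $F$ is the identity on underlying colorings.
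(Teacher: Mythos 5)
Your proof is correct and follows essentially the same route as the paper: the paper proves injectivity via the uniqueness assertion of Claim~\ref{claim-coloring-to-labeling} (which is your $F\circ G=\id$ step) and proves surjectivity by checking, for $c\in\col^*(\alpha,\beta)$, the degree condition $\varphi(|c(r)|)=\alpha'(r)$ and the edgewise inclusion $\Hom_{\cc}^{\beta(e,B)}(\un,X)\subset\Hom_{\phi_*(\cc)}^{\beta'(e,B)}(\un,X)$ forcing $H_{c,\phi_*(\cc)}\neq 0$ (which is exactly your well-definedness of $G$). Packaging the argument as an explicit two-sided inverse rather than as injectivity plus surjectivity is only a cosmetic difference.
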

\begin{proof}
The uniqueness in Claim~\ref{claim-coloring-to-labeling} implies  that if $(\alpha,\beta) \in \lhs $ and $c \in \col^*(\alpha,\beta)$, then $(\alpha_c,\beta_c)=(\alpha,\beta)$. In particular, this gives that $\bigcup_{(\alpha,\beta) \in \lhs} \col^*(\alpha,\beta)$ is a disjoint union. Then the map is well-defined and injective. Let us prove that it is surjective. Let $(\alpha,\beta) \in \lhs $ and $c \in \col^*(\alpha,\beta)$. We need to prove that $c \in \col^*_I$.
Since $\alpha'(r)=\varphi(\alpha(r))=\varphi(|c(r)|)$ for all $r \in \Reg(P)$, we get that $c \in \col_I$. Let $e$ be an oriented edge of $P$. Pick a  $P$-ball branch $B$ at $e$ and set
$
X=c(b_1)^{\varepsilon_1} \otimes \cdots \otimes c(b_n)^{\varepsilon_n},
$
where $b_1< \cdots <b_n$  are the branches of $P$ at $e$ enumerated in the linear order determined by $B$ and $\varepsilon_i=\varepsilon_e(b_i) \in \{+,-\}$. Using that $\beta(e,B) \in\psi^{-1}(\beta'(e,B))$ and $c \in \col^*(\alpha,\beta)$, we obtain
$$
H_{c,\phi_*(\cc)}(e)  \simeq  \Hom_{\phi_*(\cc)}^{\beta'(e,B)}(\un,X)\supset \Hom_{\cc}^{\beta(e,B)}(\un,X) \simeq H_{c,\cc}(e) \neq 0.
$$
Thus $H_{c,\phi_*(\cc)}=\bigotimes_e H_{c,\phi_*(\cc)}(e) \neq 0$ and so $c \in \col^*_I$.
\end{proof}

By definition, since $|M \setminus P|=1$,
$$
  |M,g'|_{\phi_*(\cc)} = \dim\bigr(\phi_*(\cc)^1_1\bigl)^{-1}   \sum_{d \in \col_{J}}  \hspace{-.3em} \dim(d) \, |d|_{\phi_*(\cc)}.
$$
Now, we have:
\begin{gather*}
\sum_{\substack{(\alpha,\beta) \in \lhs \\ c \in \col(\alpha,\beta)}} \hspace{-.9em} \dim(c)\, |c|_\cc \,
\overset{(i)}{=}
\hspace{-1em} \sum_{\substack{(\alpha,\beta) \in \lhs \\ c \in \col^*(\alpha,\beta)}} \hspace{-1.1em} \dim(c)\, |c|_\cc \,
\overset{(ii)}{=}
\sum_{c \in \col_I^*} \hspace{-.3em} \dim(c)\, |c|_{\phi_*(\cc)} \\
\overset{(iii)}{=}\sum_{c \in \col_I} \hspace{-.3em} \dim(c)\, |c|_{\phi_*(\cc)}
\overset{(iv)}{=} \hspace{-1.8em} \sum_{(\eta,d) \in \mm \times \col_J} \hspace{-1.8em} \dim(\eta \cdot d) \, | \eta \cdot d|_{\phi_*(\cc)} \\
\overset{(v)}{=} \card(\mm) \hspace{-.3em} \sum_{d \in  \col_J}  \hspace{-.3em} \dim(d) \, |d|_{\phi_*(\cc)}.
\end{gather*}
Here $(i)$ follows from the fact that $|c|_\cc=0$ for all $c \in \col(\alpha,\beta)$ with $H_{c,\cc}=0$,
$(ii)$ from the bijection of Claim~\ref{claim-comparison-all-colorings} and the last assertion of Claim~\ref{claim-coloring-to-labeling}, $(iii)$ from the fact that $|c|_{\phi_*(\cc)}=0$ for all $c \in \col_I$ with $H_{c,\phi_*(\cc)}=0$, $(iv)$ from the bijection in Claim~\ref{claim-comparison-colorings-g}, and $(v)$ from
Claim~\ref{claim-comparison-of-colorings-comput}. Therefore
$$
  |M,g'|_{\phi_*(\cc)}  = \dim\bigr(\phi_*(\cc)^1_1\bigl)^{-1}   \card(\mm)^{-1} \hspace{-.9em}  \sum_{\substack{(\alpha,\beta) \in \lhs \\ c \in \col(\alpha,\beta)}} \hspace{-.9em} \dim(c)\, |c|_\cc
$$
and so, using Claim~\ref{claim-dim-push},  we obtain
$$
  |M,g'|_{\phi_*(\cc)}  = \sum_{(\alpha,\beta) \in \lhs} d_\phi^{-1}  \card(\mm)^{-1} \dim(\cc^1_1)^{-1} \hspace{-.9em}  \sum_{c \in \col(\alpha,\beta)} \hspace{-.9em} \dim(c)\, |c|_\cc.
$$
Then, it follows from the definition of $|\cdot |_\cc$ that
$$
  |M,g'|_{\phi_*(\cc)} = \sum_{(\alpha,\beta) \in \lhs} d_\phi^{-1}  \card(\mm)^{-1} |M,g_{\alpha,\beta}|_\cc.
$$
Now the first assertion of Claim~\ref{claim-push-repres-lifts} gives that for any $(\alpha,\beta) \in \lhs$,  the pointed homotopy class $g_{\alpha,\beta}\in [M,B\CM]_*$ satisfies $B\phi\circ g_{\alpha,\beta}=g'$. Thus
\begin{equation}\label{eq-final-push-lambda-g}
  |M,g'|_{\phi_*(\cc)}  = d_\phi^{-1} \sum_{ \substack{g \in [M,B\CM]_*\\ B\phi\circ g=g'}} \hspace{-.5em}  \lambda_g \, |M,g|_\cc \quad \text{where} \quad
  \lambda_g=  \hspace{-.5em}\sum_{\substack{(\alpha,\beta) \in \lhs \\ g_{\alpha,\beta}=g}} \hspace{-.5em} \card(\mm)^{-1}.
\end{equation}
Let $g \in [M,B\CM]_*$ such that $B\phi\circ g=g'$. By the second assertion of Claim~\ref{claim-push-repres-lifts}, there is $(\alpha_0,\beta_0) \in \lhs$ such that $g=g_{\alpha_0,\beta_0}$. Set
$$
\lhs_{(\alpha_0,\beta_0)}=\bigl\{ (\alpha,\beta)  \in \lhs \, \big | \, \text{$(\alpha,\beta)$ is pointed gauge  equivalent to $(\alpha_0,\beta_0)$} \bigr \}.
$$
For any $(\alpha,\beta) \in \lhs_{(\alpha_0,\beta_0)}$, Claim~\ref{claim-pointed-labelings} gives that $g_{\alpha,\beta}=g$ if and only if $(\alpha,\beta) \in \lhs_{(\alpha_0,\beta_0)}$. Thus
\begin{equation}\label{eq-lambda-g-1}
\lambda_g = \dfrac{\card(\lhs_{(\alpha_0,\beta_0)})}{\card(\mm)}.
\end{equation}
Consider the sets
$$
\ee=\left\{\mu \co \Reg(P) \to E \; \left| \begin{array}{l} \Ima(\mu) \subset \Ker(\varphi \CM), \\
  (\psi \mu)_{\alpha'}(\gamma_{(e,B)})=1_{E'} \text{ for all $(e,B) \in \EB(P)$}\end{array} \!\!\right \}\right.
$$
and
$$
\sss=\left\{\mu \co \Reg(P) \to E \; \left| \begin{array}{l} \Ima(\mu) \subset \Ker(\CM), \\
  \mu_{\alpha_0}(\gamma_{(e,B)})=1_E \text{ for all $(e,B) \in \EB(P)$}\end{array} \!\!\right \}.\right.
$$

\begin{claim}\label{claim-ee-def}
Let $\mu \co \Reg(P) \to E$ be a map and let $(\alpha,\beta) \in \lhs$. Then $\mu \in \ee$ if and only if $(1,\mu)\cdot (\alpha,\beta) \in \lhs$.
\end{claim}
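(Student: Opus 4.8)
The plan is to observe that, because $(1,\mu)$ lies in the gauge group $\mathcal{G}_P$ (with $1$ the trivial map sending the unique $P$-ball to $1\in H$), the pair $(\alpha'',\beta'')=(1,\mu)\cdot(\alpha,\beta)$ is \emph{automatically} a $\CM$-labeling of $P$, being in the $\mathcal{G}_P$-orbit of the $\CM$-labeling $(\alpha,\beta)$. Hence $(1,\mu)\cdot(\alpha,\beta)\in\lhs$ is equivalent, by the very definition of $\lhs$, to the two equalities $\varphi\alpha''=\alpha'$ and $\psi\beta''=\beta'$. Since $\lambda=1$, the gauge formulas \eqref{act-col1} and \eqref{act-col2} collapse to $\alpha''(r)=\alpha(r)\CM(\mu(r))$ and $\beta''(e,B)=\beta(e,B)\,\mu_\alpha(\gamma_{(e,B)})$, so the whole claim reduces to computing $\varphi$ of the former and $\psi$ of the latter and comparing with $(\alpha',\beta')$, using throughout that $(\alpha,\beta)\in\lhs$ gives $\varphi\alpha=\alpha'$ and $\psi\beta=\beta'$.

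First I would treat the object part. Applying $\varphi$ to $\alpha''(r)=\alpha(r)\CM(\mu(r))$ and using the crossed module morphism relation $\varphi\CM=\CM'\psi$ together with $\varphi\alpha=\alpha'$ yields $\varphi\alpha''(r)=\alpha'(r)\,\CM'(\psi\mu(r))$. Thus $\varphi\alpha''=\alpha'$ holds if and only if $\CM'(\psi\mu(r))=1$ for every region $r$, i.e.\ $\psi\mu(r)\in\Ker(\CM')$ for all $r$. Since $\varphi\CM=\CM'\psi$ forces $\Ker(\varphi\CM)=\psi^{-1}(\Ker(\CM'))$, this is precisely the first defining condition $\Ima(\mu)\subset\Ker(\varphi\CM)$ of $\ee$.

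Next I would treat the morphism part. Applying $\psi$ to $\beta''(e,B)=\beta(e,B)\,\mu_\alpha(\gamma_{(e,B)})$ and using $\psi\beta=\beta'$ gives $\psi\beta''(e,B)=\beta'(e,B)\,\psi\bigl(\mu_\alpha(\gamma_{(e,B)})\bigr)$. The key technical input, already invoked in the proof of Claim~\ref{claim-push-repres-lifts}, is the naturality identity $\psi\mu_\alpha=(\psi\mu)_{\varphi\alpha}$ on the relevant groupoid; this follows by an immediate induction on the concatenation rules defining $\mu_\alpha$ in Section~\ref{sect-Xi-labelings}, using multiplicativity of $\psi$ and the action-compatibility $\psi(\elact{x}{e})=\lact{\varphi(x)}{\psi(e)}$ of the crossed module morphism. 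Combined with $\varphi\alpha=\alpha'$, this turns the above into $\psi\beta''(e,B)=\beta'(e,B)\,(\psi\mu)_{\alpha'}(\gamma_{(e,B)})$, so that $\psi\beta''=\beta'$ holds exactly when $(\psi\mu)_{\alpha'}(\gamma_{(e,B)})=1_{E'}$ for all $(e,B)\in\EB(P)$, which is the second defining condition of $\ee$. Assembling the two equivalences shows $\mu\in\ee\iff(1,\mu)\cdot(\alpha,\beta)\in\lhs$, completing the proof. The only step requiring any care is the naturality identity $\psi\mu_\alpha=(\psi\mu)_{\varphi\alpha}$; everything else is a direct substitution, so I expect no genuine obstacle here.
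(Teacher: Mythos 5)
Your proof is correct and follows essentially the same route as the paper: the paper also notes that $(\delta,\varepsilon)=(1,\mu)\cdot(\alpha,\beta)$ is automatically a $\CM$-labeling, computes $\varphi\delta(r)=\alpha'(r)\,\varphi\CM(\mu(r))$ and $\psi\varepsilon(e,B)=\beta'(e,B)\,(\psi\mu)_{\alpha'}(\gamma_{(e,B)})$ (citing the computations from Claim~\ref{claim-push-repres-lifts}, including the identity $\psi\mu_\alpha=(\psi\mu)_{\varphi\alpha}$), and concludes by cancellation in the groups $H'$ and $E'$. Your small detour through $\CM'(\psi\mu(r))=1$ and $\Ker(\varphi\CM)=\psi^{-1}(\Ker(\CM'))$ is harmless, since the defining condition of $\ee$ is stated directly in terms of $\Ker(\varphi\CM)$.
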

\begin{proof}
Consider the $\CM$-labeling  $(\delta,\varepsilon)=(1,\mu)\cdot (\alpha,\beta)$ of $P$. As in the proof of Claim~\ref{claim-push-repres-lifts}, we get that for any $r \in \Reg(P)$ and $(e,B) \in \EB(P)$,
$$
\varphi\delta(r)=\alpha'(r) \varphi \CM(\mu(r)) \quad \text{and} \quad
\psi\varepsilon(e,B) =\beta'(e,B) \,(\psi\mu)_{\alpha'}(\gamma_{(e,B)}).
$$
Then $(\delta,\varepsilon) \in \lhs \Leftrightarrow (\varphi\delta,\psi\varepsilon)=(\alpha',\beta')  \Leftrightarrow \mu \in \ee$.
\end{proof}

\begin{claim}\label{claim-ee-action}
The set $\ee$ is a group (with opposite pointwise product) left acting transitively on $\lhs_{(\alpha_0,\beta_0)}$ and $\sss$ is the stabilizer of $(\alpha_0,\beta_0)$ under this action.
\end{claim}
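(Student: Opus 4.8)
The plan is to realize $\ee$ as the preimage of a point stabilizer under a group homomorphism, after which the group structure and the stabilizer assertion become formal. Recall that $\mathcal{G}^*_{P,\CM}=\{1\}\times\mathrm{Map}(\Reg(P),E)$ is a subgroup of $\mathcal{G}_P$, and that the gauge product of Section~\ref{sect-Xi-labelings} specializes, when the first factor is the trivial map, to $(1,\mu_1)\cdot(1,\mu_2)=(1,\mu_1\bullet\mu_2)$ with $(\mu_1\bullet\mu_2)(r)=\mu_2(r)\mu_1(r)$; thus $\mu\mapsto(1,\mu)$ identifies $(\mathrm{Map}(\Reg(P),E),\bullet)$ — the opposite pointwise product — with $\mathcal{G}^*_{P,\CM}$, and likewise for $\CM'$ we have $\mathcal{G}^*_{P,\CM'}=\{1\}\times\mathrm{Map}(\Reg(P),E')$. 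Since $\psi(\mu_2(r)\mu_1(r))=\psi\mu_2(r)\,\psi\mu_1(r)$, the assignment $\Psi\colon(1,\mu)\mapsto(1,\psi\mu)$ is a group homomorphism $\mathcal{G}^*_{P,\CM}\to\mathcal{G}^*_{P,\CM'}$.

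First I would record the key equivariance. Writing $\Theta(\alpha,\beta)=(\varphi\alpha,\psi\beta)$, which sends $\CM$-labelings to $\CM'$-labelings by the crossed module morphism axioms (using $\varphi\CM=\CM'\psi$), I claim that $\Theta\bigl((1,\mu)\cdot(\alpha,\beta)\bigr)=(1,\psi\mu)\cdot\Theta(\alpha,\beta)$, the right-hand action being the $\CM'$-gauge action. Comparing \eqref{act-col1} and \eqref{act-col2} for $\CM$ and $\CM'$ after applying $\varphi$ and $\psi$, this reduces to the multiplicativity of $\varphi,\psi$ together with the identity $\psi(\mu_\alpha(\gamma))=(\psi\mu)_{\varphi\alpha}(\gamma)$ already used in Claim~\ref{claim-push-repres-lifts} (itself a consequence of the $H$-equivariance of $\psi$ applied recursively to the rules defining $\mu_\alpha$). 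Consequently $\lhs=\Theta^{-1}\bigl((\alpha',\beta')\bigr)$, and for any $(\alpha,\beta)\in\lhs$,
$$
(1,\mu)\cdot(\alpha,\beta)\in\lhs \iff (1,\psi\mu)\cdot(\alpha',\beta')=(\alpha',\beta').
$$
By \eqref{act-col1} and \eqref{act-col2} for $\CM'$, the right-hand condition says exactly that $\mu(r)\in\Ker(\varphi\CM)=\Ker(\CM'\psi)$ for all $r$ and $(\psi\mu)_{\alpha'}(\gamma_{(e,B)})=1_{E'}$ for all $(e,B)$, i.e.\ $\mu\in\ee$ (in agreement with Claim~\ref{claim-ee-def}). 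Hence $\ee=\Psi^{-1}\bigl(\mathrm{Stab}_{\mathcal{G}^*_{P,\CM'}}(\alpha',\beta')\bigr)$ is the preimage of a subgroup under a homomorphism, so $\ee$ is a subgroup of $\mathcal{G}^*_{P,\CM}$; equivalently, $\ee$ is a group under the opposite pointwise product.

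Next I would verify the action and stabilizer. For $\mu\in\ee$ and $(\alpha,\beta)\in\lhs_{(\alpha_0,\beta_0)}$, the labeling $(1,\mu)\cdot(\alpha,\beta)$ lies in $\lhs$ (as $\ee$ preserves $\lhs$) and is pointed gauge equivalent to $(\alpha,\beta)$, hence to $(\alpha_0,\beta_0)$; so it lies in $\lhs_{(\alpha_0,\beta_0)}$, and restricting the $\mathcal{G}^*_{P,\CM}$-action gives a left action of $\ee$ on $\lhs_{(\alpha_0,\beta_0)}$. For transitivity, given $(\alpha,\beta)\in\lhs_{(\alpha_0,\beta_0)}$, pointed gauge equivalence provides $\mu$ with $(1,\mu)\cdot(\alpha_0,\beta_0)=(\alpha,\beta)$; applying $\Theta$ and the equivariance yields $(1,\psi\mu)\cdot(\alpha',\beta')=(\alpha',\beta')$, so $\mu\in\ee$, and the $\ee$-orbit of $(\alpha_0,\beta_0)$ is all of $\lhs_{(\alpha_0,\beta_0)}$. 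Finally, $\mu\in\ee$ stabilizes $(\alpha_0,\beta_0)$ iff $(1,\mu)\cdot(\alpha_0,\beta_0)=(\alpha_0,\beta_0)$, which by \eqref{act-col1}--\eqref{act-col2} for $\CM$ means $\CM(\mu(r))=1$ for all $r$ and $\mu_{\alpha_0}(\gamma_{(e,B)})=1_E$ for all $(e,B)$ — precisely the defining conditions of $\sss$ (and any such $\mu$ lies in $\ee$ since $\Ker(\CM)\subset\Ker(\varphi\CM)$ and $\mu_{\alpha_0}(\gamma)=1_E$ forces $(\psi\mu)_{\alpha'}(\gamma)=1_{E'}$). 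Thus $\sss$ is the stabilizer of $(\alpha_0,\beta_0)$. The only genuinely delicate point is the equivariance identity of the second paragraph, namely the compatibility $\psi(\mu_\alpha(\gamma))=(\psi\mu)_{\varphi\alpha}(\gamma)$ of the groupoid extension with $\psi$; once it is in hand, every remaining assertion is formal group theory.
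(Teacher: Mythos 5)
Your proof is correct, and its computational core coincides with the paper's: both rest on the gauge action formulas \eqref{act-col1}--\eqref{act-col2} and on the identity $\psi(\mu_\alpha(\gamma))=(\psi\mu)_{\varphi\alpha}(\gamma)$, which is precisely the content of Claim~\ref{claim-ee-def}; the paper invokes that claim directly, while you re-derive it as the equivariance $\Theta\bigl((1,\mu)\cdot(\alpha,\beta)\bigr)=(1,\psi\mu)\cdot\Theta(\alpha,\beta)$. The genuine difference is how the group structure of $\ee$ is obtained. The paper checks closure by hand --- for $\eta,\mu\in\ee$ it applies Claim~\ref{claim-ee-def} twice together with $(1,\eta)(1,\mu)=(1,\mu\eta)$ --- and leaves identity and inverses implicit, whereas you exhibit $\ee$ as $\Psi^{-1}\bigl(\mathrm{Stab}_{\mathcal{G}^*_{P,\CM'}}(\alpha',\beta')\bigr)$ for the homomorphism $\Psi\co(1,\mu)\mapsto(1,\psi\mu)$, so the subgroup property (hence all group axioms) is formal; this buys a cleaner and slightly more complete argument at the cost of setting up $\Psi$ and the equivariance. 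Your write-up also makes explicit a point the paper elides: its final step really identifies the stabilizer with $\ee\cap\sss$, and concluding that this equals $\sss$ requires the inclusion $\sss\subset\ee$ (needed for the subsequent orbit--stabilizer count $\card(\lhs_{(\alpha_0,\beta_0)})=\card(\ee)/\card(\sss)$), which you verify via $\Ker(\CM)\subset\Ker(\varphi\CM)$ and $\psi(\mu_{\alpha_0})=(\psi\mu)_{\varphi\alpha_0}=(\psi\mu)_{\alpha'}$. The transitivity and stabilizer computations are otherwise identical to the paper's.
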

\begin{proof}
Let $\eta,\mu \in \ee$. By Claim~\ref{claim-ee-def}, $(\alpha,\beta)=(1,\mu)\cdot (\alpha_0,\beta_0) \in \lhs$ and  $(1,\eta) \cdot (\alpha,\beta) \in \lhs$. Using the definition of the action of $\mathcal{G}_{P,\CM}^*\subset \mathcal{G}_{P,\CM}$ on $\CM$-labelings, we obtain:
$$
(1,\mu\eta)\cdot (\alpha_0,\beta_0)= \bigl((1,\eta) (1,\mu) \bigr) \cdot (\alpha_0,\beta_0) = (1,\eta) \cdot  (\alpha,\beta) \in \lhs.
$$
Thus $\mu\eta \in \ee$ by Claim~\ref{claim-ee-def}. We deduce from this that $\ee$ is a group.

Next, let $(\alpha,\beta)  \in \lhs_{(\alpha_0,\beta_0)}$. Since $(\alpha,\beta)$ is  pointed gauge  equivalent to $(\alpha_0,\beta_0)$, there is a map $\mu \co \Reg(P) \to E$ such that $(\alpha,\beta)=(1,\mu)\cdot (\alpha_0,\beta_0)$. Since $(\alpha_0,\beta_0)$ and $(\alpha,\beta)$ are both in $\lhs$, Claim~\ref{claim-ee-def} implies that $\mu \in \ee$. Thus $\ee$ acts transitively on~$\lhs_{(\alpha_0,\beta_0)}$.
Finally, let  $\mu \in \ee$. Set $(\alpha,\beta)=(1,\mu)\cdot (\alpha_0,\beta_0)$. For any $r \in \Reg(P)$ and $(e,B) \in \EB(P)$,
$$
\alpha(r)=\alpha_0(r) \CM(\mu(r)) \quad \text{and} \quad
\beta(e,B) =\beta_0(e,B) \,\mu_{\alpha_0}(\gamma_{(e,B)}).
$$
Then $(\alpha,\beta)=(\alpha_0,\beta_0) \Leftrightarrow \mu \in \sss$. Hence $\sss$ is the stabilizer of $(\alpha_0,\beta_0)$.
\end{proof}

Note that Claim~\ref{claim-ee-action} implies in particular that $\sss$ is a group (for the opposite pointwise product). Similarly, the set
$$
\sss'=\left\{\mu' \co \Reg(P) \to E' \; \left| \begin{array}{l} \Ima(\mu') \subset \Ker(\CM'), \\
  \mu'_{\alpha'}(\gamma_{(e,B)})=1_{E'} \text{ for all $(e,B) \in \EB(P)$}\end{array} \!\!\right \}\right.
$$
is a group (with opposite pointwise product).

\begin{claim}\label{claim-sss-prime}
The map $\ee \to \sss'$, $\mu \mapsto \psi \mu$, is a surjective group homomorphism with kernel $\mm$.
\end{claim}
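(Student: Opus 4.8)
The plan is to reduce the entire claim to the single structural identity $\varphi\CM=\CM'\psi$ coming from the commutativity of the defining square of the crossed module morphism $\phi$ (Section~\ref{sect-crossed-modules-maps}), together with the surjectivity of $\psi$. First I would observe that membership in $\ee$ depends on $\mu$ only through $\psi\mu$. Indeed, by $\varphi\CM=\CM'\psi$ a value $\mu(r)$ lies in $\Ker(\varphi\CM)$ if and only if $\psi(\mu(r))$ lies in $\Ker(\CM')$, so the image condition defining $\ee$ is equivalent to $\Ima(\psi\mu)\subset\Ker(\CM')$. Moreover the second condition defining $\ee$ is $(\psi\mu)_{\alpha'}(\gamma_{(e,B)})=1_{E'}$ for all $(e,B)\in\EB(P)$, and this is literally the second condition defining $\sss'$ evaluated at the map $\psi\mu$ (both groupoid extensions being taken with respect to the same base labeling $\alpha'$). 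Hence $\mu\in\ee$ if and only if $\psi\mu\in\sss'$; in other words $\ee$ is the full preimage of $\sss'$ under post-composition with $\psi$.

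Granting this, the three assertions become routine. The map $\mu\mapsto\psi\mu$ lands in $\sss'$ by the equivalence just noted, so it is well-defined. It is a homomorphism for the opposite pointwise products because $\psi\co E\to E'$ is a group homomorphism applied coordinatewise, hence intertwines the pointwise products and therefore their opposites. For the kernel, $\psi\mu$ is the trivial map exactly when $\mu(r)\in\Ker(\psi)$ for every region $r$, i.e.\ when $\mu\in\mm$; here one must still check the inclusion $\mm\subset\ee$, which is where a small verification is needed, but it is automatic: for $\mu\in\mm$ the map $\psi\mu$ is the constant map $1_{E'}$, whose groupoid extension is constant equal to $1_{E'}$ by the recursion defining $\mu_\alpha$ (starting from the value $1$), so $\psi\mu\in\sss'$ and thus $\mu\in\ee$. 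Therefore the kernel is precisely $\mm$.

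Finally, for surjectivity I would take $\mu'\in\sss'$ and use the surjectivity of $\psi$ to choose a lift $\mu(r)\in\psi^{-1}(\mu'(r))$ for each $r\in\Reg(P)$, obtaining a map $\mu\co\Reg(P)\to E$ with $\psi\mu=\mu'$. By the equivalence from the first step, $\mu\in\ee$ as soon as $\psi\mu=\mu'\in\sss'$, which holds by hypothesis. I do not expect a genuine obstacle here; the only point demanding care is the bookkeeping around the groupoid extension $\mu_\alpha$, namely ensuring that the conditions cutting out $\ee$ and $\sss'$ are phrased with respect to the same labeling $\alpha'$ so that $(\psi\mu)_{\alpha'}$ and $\mu'_{\alpha'}$ coincide on the nose, and that once $\psi\mu\in\sss'$ is imposed no further constraint on $\mu$ remains.
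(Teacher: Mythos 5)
Your proof is correct and follows essentially the same route as the paper's: both arguments rest entirely on the identity $\varphi\CM=\CM'\psi$ (for well-definedness and surjectivity via the surjectivity of $\psi$) and on the multiplicativity of $\psi$ (for the homomorphism property). Your repackaging of the first step as ``$\ee$ is the full preimage of $\sss'$ under post-composition with $\psi$'' and your explicit verification that $\mm\subset\ee$ (via the triviality of the groupoid extension of the constant map $1$) are minor refinements of the same argument, the latter being a point the paper's proof leaves implicit.
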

\begin{proof}
The map is well defined since for any $\mu \in \ee$,
$$
\CM'(\Ima(\psi\mu))=\CM'\psi(\Ima(\mu))=\varphi\CM(\Ima(\mu))=\{1_{H'}\}.
$$
The fact that it is a group  homomorphism follows from the multiplicativity of $\psi$. Its surjectivity follows from the surjectivity of $\psi$. Using that a map $\mu \co \Reg(P) \to E$ verifies $\psi\mu=1$ if and only if $\Ima(\mu) \subset \Ker(\psi)$, we obtain that the Kernel of the map is $\mm$ (identified with the set of maps $\Reg(P) \to E$ with image in $\Ker(\psi)$).
\end{proof}

\begin{claim}\label{claim-sss-pi}
The group $\sss$ is isomorphic to $\pi_1(\TOP_*(M,B\CM), g)$. Similarly, the group $\sss'$ is isomorphic to $\pi_1(\TOP_*(M,B\CM'),g')$.
\end{claim}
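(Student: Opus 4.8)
The plan is to identify $\pi_1(\TOP_*(M, B\CM), g)$ with the group of homotopy classes of pointed self-homotopies of the map $f_{\alpha_0,\beta_0}$ representing $g$, and then to match these combinatorially with $\sss$. By the exponential law for the compact-open topology, a based loop in $\TOP_*(M,B\CM)$ at $g$ is the same datum as a map $G \co M \times I \to B\CM$ with $G_0 = G_1 = f_{\alpha_0,\beta_0}$ and $G(m_0, t) = x$ for all $t$ (where $m_0$ is the basepoint of $M$ and $x$ the $0$-cell of $B\CM$), and two such loops are homotopic rel endpoints exactly when the corresponding maps $G$ are homotopic through maps satisfying these same constraints. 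I would work throughout with the CW-decomposition of $M$ dual to $P$: since $P$ has disk regions and a single $P$-ball containing $m_0$, this decomposition has a single $0$-cell (namely $m_0$), the arcs $\{\gamma_r\}_{r \in \Reg(P)}$ as $1$-cells, the disks $\{\delta_e\}_e$ as $2$-cells, and the $P$-ball neighborhoods of the vertices of $P$ as $3$-cells, exactly as in Section~\ref{sect-proof-lem-gauge-group-labelings}.

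The map $\sss \to \pi_1(\TOP_*(M, B\CM), g)$ would be built by the same cell-by-cell construction used in Claim~\ref{claim-gauge-equiv-homotopic}, specialized to a \emph{self}-homotopy. Given $\mu \in \sss$, I take $\lambda = 1$ (forced by the single $P$-ball and the basepoint condition), and for each region $r$ I choose a map $u_r \co \gamma_r \times I \to B\CM$ representing $\mu(r) \in \Ker(\CM) \subset E = \pi_2(B\CM, BH; x)$ and restricting to $f_{\alpha_0,\beta_0}$ on $\gamma_r \times \{0,1\}$; this is possible precisely because $\mu(r) \in \Ker(\CM)$, so that the two boundary loops agree. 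Assembling the $u_r$ over the $1$-skeleton and using the defining condition $\mu_{\alpha_0}(\gamma_{(e,B)}) = 1$ of $\sss$, the resulting partial homotopy extends over each $2$-cell $\delta_e$; since $B\CM$ is a $2$-type it then extends over the $3$-cells, producing a pointed self-homotopy $G_\mu$ of $f_{\alpha_0,\beta_0}$ and hence a class in $\pi_1$. That this class is independent of the extension choices, and that $\mu \mapsto [G_\mu]$ is a homomorphism for the opposite pointwise product on $\sss$, follows from matching the concatenation of self-homotopies with the product of the gauge group, as in Claim~\ref{claim-ee-action}.

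For bijectivity I would argue on both sides. Surjectivity follows by filtered (cellular) approximation: any based loop $G$ can be homotoped, rel endpoints and the basepoint condition, to a filtered map of $(M \times I)_*$ into $(B\CM)_*$ (using that $\pi_1(BH) \to \pi_1(B\CM)$ is surjective, as in Claim~\ref{claim-gauge-equiv-homotopic}), from which one reads off $\mu(r) = [G|_{\gamma_r \times I}] \in E$; the conditions \eqref{precol1}--\eqref{precol2} then force $\mu \in \sss$ and $[G_\mu] = [G]$. The crucial point, and the main obstacle, is injectivity, i.e.\ that distinct elements of $\sss$ give non-homotopic loops so that no quotient of $\sss$ is needed. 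A homotopy of loops is a ``homotopy of homotopies'' $M \times I \times I \to B\CM$, whose obstruction data lives on the $0$-cells (with coefficients $\pi_2(B\CM) = \Ker(\CM)$) and on higher cells (with coefficients $\pi_3(B\CM) = 0$); since the dual decomposition has a single $0$-cell, which is the basepoint, the pointed constraint trivializes this data, so the correspondence is injective on the nose. Concretely, in the test case $\CM = (A \to 1)$ this is the elementary fact that with a single $0$-cell the cellular coboundary $C^0 \to C^1$ vanishes, whence $\sss = Z^1(M;A) = H^1(M;A)$ agrees with $\pi_1(\TOP_*(M,K(A,2)),g)$. The cleanest packaging of all of this is to invoke the homotopy classification theorem of Brown--Higgins~\cite{BH1}, by which $\TOP_*(M, B\CM)$ is weakly modeled by the internal-hom crossed complex $\mathrm{CRS}(\Pi(P), \CM)$; its $\pi_1$ at the morphism $(\alpha_0,\beta_0)$ is computed directly to be the group of pointed self-homotopies modulo trivial $2$-fold homotopies, which is exactly $\sss$. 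The statement for $\sss'$ is then proved by the identical argument applied to $\CM'$, the labeling $(\alpha',\beta')$, and $g'$, using the same skeleton $P$.
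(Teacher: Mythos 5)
Your proposal is correct and follows essentially the same route as the paper: you build the pointed self-homotopy $G_\mu$ cell by cell exactly as in Claim~\ref{claim-gauge-equiv-homotopic}, deduce the homomorphism property from the gauge-group product, prove injectivity from the facts that $B\CM$ is a $2$-type and the dual decomposition has a unique ($=$ basepoint) $0$-cell, and prove surjectivity by filtered approximation of an arbitrary pointed self-homotopy. Your obstruction-theoretic elaboration of the injectivity step (and the optional Brown--Higgins crossed-complex packaging) merely spells out in detail what the paper's proof states in one line.
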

\begin{proof}
Using the notation of the proof of Claim~\ref{claim-pointed-labelings}, consider the pointed map $f=f_{\alpha_0,\beta_0}\co M \to B\CM$ whose pointed homotopy class is $g_{\alpha_0,\beta_0}=g$. As in the proof of Claim~\ref{claim-gauge-equiv-homotopic}, any $\mu \in \sss$ defines a pointed homotopy $G_\mu$ from  $f$ to itself and so a loop in $\TOP_*(M,B\CM)$ based at $f$ whose homotopy class $\delta_\mu$ depends only on $\mu$. The assignment $\mu \mapsto \delta_\mu$ is a group homomorphism (by definition of the product of $\tg_{P,\CM}$). The injectivity of this assignment follows from the facts that~$B\CM$ is a 2-type and the filtration of $M$ used in the construction of $G_\mu$ has a unique 0\ti cell.  The surjectivity of the map follows from the fact that any pointed homotopy from~$f$ to itself is homotopic to $G_\mu$ for some $\mu \in \sss$ (as in the proof of Claim~\ref{claim-gauge-equiv-homotopic}). Therefore $\sss$ is isomorphic to $\pi_1(\TOP_*(M,B\CM),f)$. Note that the
isomorphism type of $\pi_1(\TOP_*(M,B\CM),f)$ only depend on the path component of  $f$ in $\TOP_*(M,B\CM)$, which is $g$, and is denoted $\pi_1(\TOP_*(M,B\CM),g)$.

The proof of the second statement of the claim is similar (by exchanging $\CM$ with~$\CM'$ and $(\alpha_0,\beta_0)$ with $(\alpha',\beta')$).
\end{proof}

We can now compute:
\begin{gather*}
\lambda_g\;\overset{(i)}{=} \dfrac{\card(\lhs_{(\alpha_0,\beta_0)})}{\card(\mm)}
 \overset{(ii)}{=} \dfrac{\card(\ee)}{\card(\sss)\,\card(\mm)}
 \overset{(iii)}{=} \dfrac{\card(\sss')}{\card(\sss)}\\
 \overset{(iv)}{=} \dfrac{\card\bigl(\pi_1(\TOP_*(M,B\CM'),g')\bigr)}{\card\bigl(\pi_1(\TOP_*(M,B\CM),g)\bigr)}.
\end{gather*}
Here $(i)$ follows from \eqref{eq-lambda-g-1}, $(ii)$ from Claim~\ref{claim-ee-action}, $(iii)$ from Claim~\ref{claim-sss-prime}, and $(iv)$ from Claim~\ref{claim-sss-pi}. This together with \eqref{eq-final-push-lambda-g} concludes the proof of Theorem~\ref{thm-pushforward-3man-Xi}.

\section{3-dimensional HQFTs with target \texorpdfstring{$B\CM$}{BX}}\label{sect-Xi-HQFTs}
In this section, given a crossed module $\CM \co E \to H$, we define $3$-dimensional HQFTs with target the classifying space~$B\CM$. We adapt here the definition given in~\cite{Tu1,TVi1} by replacing  pointed homotopies (which are used for aspherical targets)  by  $\sigma$-homotopies where $\sigma$ is a graph.

Recall from Section~\ref{sect-crossed-modules-classifying-spaces} that  the space $B\CM$ has a canonical filtration
$$
(B\CM)_*=(\{x\}\subset BH \subset B\CM)
$$
where $x$ is the unique 0-cell of $B\CM$ (serving as a basepoint) and $BH$ is a subcomplex of $B\CM$ which is a classifying space of the group $H$ and whose associated boundary crossed module $\pi _{2}(B\CM,BH;x)\to \pi_{1}(BH,x)$ is $\CM\co E \to H$.

\subsection{$\sigma$-homotopies}\label{sect-sigma-homotopies}
Any graph $\sigma$ embedded in a topological space $X$ yields a filtration $X_\sigma=(\sigma_0 \subset \sigma \subset X)$ of $X$, where  $\sigma_0$ is the set of vertices of~$\sigma$.
Recall that a filtered map $u\co X_\sigma \to (B\CM)_{\ast}$ is a continuous map $u \co X \to B\CM$ such~that
$$
u(\sigma_0)=\{x\} \quad \text{and} \quad u(\sigma)\subset BH.
$$
By a \emph{$\sigma$-homotopy} between filtered maps $u,v\co X_\sigma \to (B\CM)_{\ast}$, we mean a homotopy from $u$ to $v$ which is filtered at each time, i.e., a continuous map $H\co X \times [0,1] \to B\CM$ such that $H(\cdot,0)=u$, $H(\cdot,1)=v$, and for all $t \in [0,1]$,
$$
H(\sigma_0,t)=\{x\} \quad \text{and} \quad H(\sigma,t)\subset BH.
$$
We denote by $[X,B\CM]_\sigma$ the set of $\sigma$-homotopy classes of filtered maps $X_\sigma \to (B\CM)_{\ast}$.  If $A$ is a subspace of $X$ containing $\sigma$, then the restriction to $A$  induces a canonical map $[X,B\CM]_\sigma \to [A,B\CM]_\sigma$.

Note that if $\sigma=\emptyset$, then  $\sigma$-homotopy classes of filtered maps $X_\sigma \to (B\CM)_{\ast}$ correspond to (standard) homotopy classes of maps $X \to B\CM$. Also, if $\sigma$ has no edges (that is, $\sigma=\sigma_0$ is a set of points of~$X$), then  $\sigma$-homotopy classes of filtered maps $X_\sigma \to (B\CM)_{\ast}$ correspond to pointed homotopy classes of pointed maps $(X,\sigma_0) \to (B\CM,x)$.

\subsection{Skeletons of surfaces and their duals}\label{sect-skel-surfaces-duals}
Let  $\Sigma$ be a closed oriented surface. A \emph{skeleton} of $\Sigma$ is an oriented graph $A \subset \Sigma$ such that all components of~$\Sigma \setminus A$ are open disks and all vertices of $A$ have valence $\geq 2$.

An oriented graph $\sigma$ in $\Sigma$ is \emph{dual} to a skeleton $A$ of $\Sigma$ if
\begin{enumerate}
\labela
\item the vertices of $\sigma$ lie in $\Sigma \setminus A$ and each connected component of $\Sigma \setminus A$ contains exactly one vertex of $\sigma$;
\item each edge $s$ of $\sigma$ intersects $A$ transversely at a single point in the interior of a (unique) edge $s^*$ of $A$ and the map $s \mapsto s^*$ induces a bijection between the edges of $\sigma$ and those of $A$;
\item for each edge $s$ of $\sigma$, the orientation of $s^*$ followed by that of $s$ yields the orientation of $\Sigma$.
\end{enumerate}
Note that the skeleton $A$ of $\Sigma$ gives a cellular decomposition of $\Sigma$ (with 0-cells the vertices of $A$, 1-cells the edges of $A$, and 2-cells corresponding to the connected components of $\Sigma \setminus A$) and that $\sigma$ is then the 1-skeleton of a cellular decomposition dual to the one given by $A$. In particular the number of connected components of~$\Sigma \setminus A$ is equal to the number of vertices of $\sigma$.

For example, the 1-skeleton of a triangulation of $\Sigma$ is a graph in $\Sigma$ dual to a skeleton of $\Sigma$.
Two dual graphs of a skeleton of $\Sigma$ are isotopic, and two skeletons of $\Sigma$ having isotopic duals are isotopic.

\subsection{Preliminaries on $\CM$-surfaces and $\CM$-manifolds}\label{sect-prelim-X-manifolds}
By a \emph{$\CM$-surface}, we mean a triple $(\Sigma, \sigma, f)$ consisting of a closed oriented surface~$\Sigma$, an oriented graph $\sigma$ in~$\Sigma$  which is dual to some skeleton of $\Sigma$, and  $f\in [\Sigma,B\CM]_\sigma$, see Section~\ref{sect-sigma-homotopies}.
(The graph $\sigma$ will play the role of basepoints in~\cite{Tu1,TVi1}.) The \emph{opposite} of a $\CM$-surface  $(\Sigma, \sigma, f)$ is the $\CM$-surface $(-\Sigma, \sigma, f)$ where $-\Sigma$ is $\Sigma$ with the opposite orientation.

A \emph{$\CM$-manifold} is a triple $(M,\sigma,g)$ where $M$ is a compact oriented $3$-dimensional manifold, $\sigma$ is an oriented graph in the boundary $\partial M$ of $M$  which is dual to a skeleton of $\partial M$, and $g\in [M,B\CM]_\sigma$.
The \emph{boundary} of such a $\CM$-manifold is the $\CM$-surface $\partial M=(\partial M,\sigma,\partial g)$, where $\partial g \in [\partial M,B\CM]_\sigma$ is the restriction of $g$ to~$\partial M$ (see Section~\ref{sect-sigma-homotopies}).
Here we use the `outward vector first' convention for the induced orientation of the boundary: at any point of $\partial M$, the given orientation of $M$ is determined by the tuple (a tangent vector directed outward, a basis in the tangent space of $\partial M$ positive with respect to the induced orientation). A $\CM$-manifold $(M,\sigma,g)$ is {\it closed} if $\partial M=\emptyset$. In this case $\sigma=\emptyset$, $g \in [M,B\CM]_\emptyset=[M,B\CM]$, and we denote $(M,\sigma,g)=(M,\emptyset,g)$ by $(M,g)$ as in Section~\ref{sect-Xi-skeletons}.

Any $\CM$-surface $(\Sigma,\sigma,f)$ determines the \emph{cylinder $\CM$-manifold}
$$
(\Sigma \times [0,1], \bar{\sigma}= (\sigma \times \{0\})  \sqcup (\sigma\times \{1\}), \bar{f})
$$
where $\Sigma \times [0,1]$ is endowed with the product orientation and $\bar{f}$ is the image of~$f$ under the map
$[\Sigma,B\CM]_\sigma \to [\Sigma \times [0,1],B\CM]_{\bar{\sigma}}$ induced by pre-composing with the projection $\Sigma \times [0,1] \to \Sigma$.

Disjoint union of $\CM$-surfaces ($\CM$-manifolds) are $\CM$-surfaces ($\CM$-manifolds) in the obvious way. A \emph{$\CM$-homeomorphism} $(\Sigma,\sigma,f)\to(\Sigma',\sigma', f')$ between $\CM$-surfaces is an orientation preserving diffeomorphism $\phi\co \Sigma \to \Sigma'$ such that $\phi(\sigma)=\sigma'$ (as oriented graphs) and $f= f' \circ \phi$ (in $[\Sigma,B\CM]_\sigma$). A \emph{$\CM$-homeomorphism} $(M,\sigma,g) \to (M',\sigma',g')$ between $\CM$-manifolds is an orientation preserving diffeomorphism $\psi\co M\to M'$ such that $\psi(\sigma)=\sigma'$ (as oriented graphs) and $g= g'\circ \psi$ (in $[M,B\CM]_\sigma$). In particular, observe that any $\CM$-homeomorphism $(M,\sigma,g) \to (M',\sigma',g')$ between $\CM$-manifolds restricts to a $\CM$-homeomorphism $(\partial M, \sigma,\partial g) \to (\partial M',\sigma',\partial g')$ between the boundary $\CM$-surfaces.

For brevity, we shall sometimes omit the dual graphs $\sigma$ and the maps to $B\CM$ from the notation for $\CM$-surfaces and $\CM$-manifolds.

\subsection{The category of $\CM$-cobordisms}\label{sect-CM-cobordisms}
A \emph{$\CM$-cobordism} from a $\CM$-surface $\Sigma_0$ to a $\CM$-surface $\Sigma_1$ is a pair $(M,h)$ where $M$ is a $\CM$-manifold and $h \co (-\Sigma_0) \sqcup \Sigma_1 \to \partial M$ is a $\CM$-homeomorphism. Here,  $-\Sigma_0$ is the opposite of $\Sigma_0$ (see section~\ref{sect-prelim-X-manifolds}). Two $\CM$-cobordisms $(M,h)$ and $(M',h')$ from $\Sigma_0$ to $\Sigma_1$ are \emph{equivalent} if there is a $\CM$\ti homeomorphism $G \co M \to M'$ such that $h'=G \circ h$.

We denote by $\mathrm{Cob}^{\CM}_3$ the category of $3$-dimensional $\CM$-cobordisms. Objects of $\mathrm{Cob}^{\CM}_3$ are $\CM$-surfaces. A morphism   $\Sigma_0 \to \Sigma_1$ in $\mathrm{Cob}^{\CM}_3$ is an equivalence class of $\CM$-cobordisms
from $\Sigma_0$ to $\Sigma_1$.
The identity morphism of an $\CM$-surface $\Sigma$ is represented by the cylinder $\CM$-manifold $ \Sigma \times [0,1]$  with
tautological identification of the boundary with $(-\Sigma) \sqcup\Sigma$.
Composition of morphisms in $\mathrm{Cob}^{\CM}_3$ is the gluing of $\CM$-cobordisms.  More precisely, the
composition of two morphisms represented by the $\CM$-cobordisms $(M_0=(M_0,\sigma_0,g_0), h_0) \co \Sigma_0 \to \Sigma_1$
and $(M_1=(M_1,\sigma_1,g_1), h_1) \co \Sigma_1 \to \Sigma_2$  is    represented by
the $\CM$-cobordism  $(M=(M,\sigma,g),h)$ defined as follows. The 3-dimensional manifold $M$ is obtained
by gluing   $M_0$ and $M_1$ along $h_1 h_0^{-1} \co h_0(\Sigma_1)   \to h_1(\Sigma_1)$. The graph $\sigma$ in $\partial M=h_0(\Sigma_0)  \cup  h_1(\Sigma_2)$ is
$$
\sigma=\bigl (\sigma_0 \cap h_0(\Sigma_0) \bigr) \cup \bigl (\sigma_1 \cap h_1(\Sigma_2) \bigr).
$$
Pick representatives $(M_0)_{\sigma_0} \to (B\CM)_*$ of $g_0$  and $(M_1)_{\sigma_1} \to (B\CM)_*$ of $g_1$ which agree on $h_0(\Sigma_1)   \simeq h_1(\Sigma_1)$. They  define thus a filtered map $M_\sigma \to (B\CM)_*$. The fact that~$B\CM$ is a 2-type ensures that the $\sigma$-homotopy class $g \in [M,B\CM]_\sigma$ of this filtered map is well defined.
Finally,
$$
h=h_0 \vert_{\Sigma_0} \sqcup h_1 \vert_{\Sigma_2} \co (-\Sigma_0) \sqcup\Sigma_2 \simeq  \partial M.
$$

The disjoint union operation on $\CM$-surfaces and $\CM$-manifolds turns the category $\mathrm{Cob}^{\CM}_3$ into a symmetric monoidal category whose unit object is the empty $\CM$-surface.

\subsection{Homotopy quantum field theories}\label{sect-def-HQFT-target-X}
Let $\Mod_\kk$ be the category of $\kk$-modules and
$\kk$-linear homomorphisms. It is a symmetric monoidal category with the standard tensor product and the unit object $\kk$.

A 3-dimensional \emph{homotopy quantum field theory (HQFT) with target $B\CM$} is a symmetric strong monoidal functor $Z \co \mathrm{Cob}^{\CM}_3 \to \Mod_\kk$. In particular,   $Z( \Sigma  \sqcup \Sigma') \simeq Z(
\Sigma ) \otimes Z( \Sigma') $ for any  $\CM$-surfaces $\Sigma,
\Sigma'$, and similarly    for  morphisms. Also, $Z(\emptyset)\simeq\kk$. We refer to \cite{ML1} for a detailed definition of a strong monoidal functor.

\subsection{Representations of mapping class groups of $\CM$-surfaces}
The category  $\mathrm{Cob}^{\CM}_3$ of $\CM$-cobordisms includes as
a subcategory  the category $\mathrm{Homeo}^\CM$ of $\CM$-surfaces and
their $\CM$-homeomorphisms considered up to isotopy (in the class of $\CM$-homeo\-mor\-phisms).
Indeed, a  $\CM$-homeomorphism of $\CM$-surfaces  $\phi \co \Sigma \to \Sigma'$
determines  a morphism  $  \Sigma \to \Sigma' $ in $\mathrm{Cob}^\CM_3$
represented by  $(C,h \co    (-\Sigma) \sqcup\Sigma'  \to \partial C)$, where $C$ is the cylinder $\CM$\ti manifold associated with $\Sigma'$ and
$$
h(x)=\left\{\begin{array}{ll} (\phi(x), 0) &\text{if $x\in \Sigma$,} \\ (x, 0) &\text{if $x\in \Sigma'$.}  \end{array} \right.
$$
Isotopic $\CM$-homeomorphisms   give rise   to the same morphism in $\mathrm{Cob}^\CM$.   The category $\mathrm{Homeo}^\CM$ inherits a structure of a symmetric monoidal category from that of $\mathrm{Cob}^\CM_3$.  Restricting a 3-dimensional HQFT $ Z\co \mathrm{Cob}^\CM\to \Mod_\kk$ to $\mathrm{Homeo}^\CM$, we obtain a symmetric monoidal functor  $ \mathrm{Homeo}^\CM \to \Mod_\kk$.
In particular,  $Z$ induces a $\kk$-linear representation of the \emph{mapping class group of a $\CM$-surface~$\Sigma$} defined as   the group of isotopy classes of $\CM$-homeomorphisms   $\Sigma\to \Sigma$.

\subsection{Remark}\label{sect-remark-sigma}
Consider two $\CM$-surfaces $(\Sigma, \sigma_0,f_0)$ and $(\Sigma,\sigma_1,f_1)$ having the same underlying surface $\Sigma$. The oriented graph $\bar{\sigma}= (\sigma_0 \times \{0\})  \sqcup (\sigma_1\times \{1\})$ in $\partial(\Sigma \times [0,1])$ is dual to a skeleton of $\partial(\Sigma \times [0,1])$. Then any $g \in [\Sigma \times [0,1],B\CM]_{\bar{\sigma}}$ restricting to~$f_i$ on $\Sigma \times \{i\}$ for $i \in \{0,1\}$  determines a $\CM$-manifold $(\Sigma \times [0,1],  \bar{\sigma},g)$
which, endowed with tautological identification of its boundary with $(-\Sigma) \sqcup\Sigma$,  represents a $\CM$\ti cobordism $C_g \co (\Sigma, \sigma_0,f_0) \to (\Sigma,\sigma_1,f_1)$. This $\CM$-cobordism is an isomorphism in the category $\mathrm{Cob}^{\CM}_3$. Consequently, the image of $C_g$ under a 3-dimensional HQFT $Z\co \mathrm{Cob}^{\CM}_3 \to \Mod_\kk$ induces a \kt linear isomorphism $Z(\Sigma, \sigma_0,f_0) \simeq Z(\Sigma,\sigma_1,f_1)$.

\subsection{The case of aspherical targets}\label{sect-case-aspherical}
Let $X$ be  an aspherical pointed connected CW-complex. Recall that $X$ is homotopy equivalent to $BG$ with $G=\pi_1(X)$. The trivial group homomorphism $1 \to G$ is a crossed module such that
$$
B(1\to G)=BG \quad \text{and} \quad (B(1\to G))_*=(\{x\} \subset BG \subset BG).
$$
Let $\Sigma$ be a closed oriented surface. Assume  that $\Sigma$ is pointed  (meaning that every  connected   component  of $\Sigma$    is endowed with a basepoint). Pick an oriented graph~$\sigma$ in~$\Sigma$  which is dual to a skeleton of $\Sigma$ and whose set of vertices is the set of basepoints of $\Sigma$. Note that
$
[\Sigma,B(1\to G)]_\sigma=[\Sigma,BG]_*,
$
where the right-hand side denotes the set of pointed homotopy classes of pointed maps $\Sigma \to BG$.
Then any homotopy equivalence $X \approx BG$  induces a bijection
$$
[\Sigma,B(1\to G)]_\sigma\simeq [\Sigma,X]_*.
$$
This implies that the category $\mathrm{Cob}_3^{G}$ defined in \cite{TVi1} (whose objects are pointed closed oriented surfaces with a pointed homotopy class of pointed maps to $X$) is equivalent to the category $\mathrm{Cob}^{(1\to G)}_3$ defined in Section~\ref{sect-def-HQFT-target-X}. Consequently, HQFTs with target $X$ in the sense of \cite{TVi1} correspond to HQFTs with target $B(1\to G)$ in the sense  of Section~\ref{sect-def-HQFT-target-X}.

\section{The state-sum HQFT with target \texorpdfstring{$B\CM$}{BX}}\label{sect-statesum-HQFT}
We fix throughout this section a crossed module $\CM \co E \to H$, a  spherical $\CM$-fusion category $\cc$ (over~$\kk$) such that $\dim(\cc_1^1)\in \kk$ is invertible, and a $\CM$-representative set $I=\amalg_{h\in H}\, I_h$ for $\cc$. We extend the numerical state-sum invariant $|\cdot |_\cc$ of closed $\CM$-manifolds (defined in Section~\ref{sect-state-sum-invariants-closed}) to a $3$-dimensional HQFT with target $B\CM$ (in the sense of Section~\ref{sect-Xi-HQFTs}).

\subsection{Colored $\CM$-surfaces}\label{sect-colored-Xi-surfaces}
Let $\Sigma=(\Sigma,\sigma,f)$ be a $\CM$-surface  (see Section~\ref{sect-prelim-X-manifolds}). The \emph{$H$-label} of an edge $s$ of $\sigma$ is the element of $H=\pi_1(BH,x)$ represented by the loop $u(s)$ in $BH$, where $u\co \Sigma_\sigma \to (B\CM)_*$ is any representative of $f \in [\Sigma,B\CM]_\sigma$.

Any skeleton $A$ of $\Sigma$ dual to $\sigma$ (see Section~\ref{sect-skel-surfaces-duals}) becomes a $\CM$-graph in $\Sigma$ as follows. The $H$-label of an edge $a$ of $A$ is the $H$-label of the edge of $\sigma$ dual to $a$. Let $\ell$ be a half-edge of $A$. The vertex of $A$ adjacent to $\ell$ belongs to a connected component of $\Sigma \setminus \sigma$ which is the interior of a 2-cell $D$ attached to $\sigma$ along a map $\rho\co\partial D \to \sigma$.  We orient $D$ so that the induced orientation of its interior is opposite to that of $\Sigma$, and we orient  $\partial D$ using the `outward vector first' convention.  Let $S=\rho^{-1}(\sigma_0) \subset \partial D$, where $\sigma_0$ is the set of vertices of $\sigma$.
Denote by $\iota \co D \to \Sigma$ the canonical map obtained by attaching $D$ on $\sigma$ along~$\rho$. By traversing (from its adjacent vertex) the inverse image of $\ell$ under $\iota$, one reaches a point in $\partial D \setminus S$ whose predecessor in $S \subset \partial D$ (with respect to the orientation of $\partial D$) is denoted  $p_\ell$. (For an example see Figure~\ref{fig-coloring-Xi-surfaces} where $x,y \in H$ are $H$-labels of edges of $\sigma$ and blue dots are elements of~$S$.)
\begin{figure}
\begin{center}
 \psfrag{G}[Br][Br]{$D=$}
 \psfrag{F}[Br][Br]{$\cong$}
 \psfrag{E}[Br][Br]{$\xrightarrow{\iota}$}
 \psfrag{i}[Bc][Bc]{\scalebox{.9}{$\iota$}}
 \psfrag{D}[Br][Br]{\scalebox{.9}{\color{myblue}{$\partial D$}}}
 \psfrag{v}[Bc][Bc]{\scalebox{.9}{\color{myblue}{$\ell$}}}
 \psfrag{z}[Bc][Bc]{\scalebox{.8}{\color{myblue}{$y$}}}
 \psfrag{c}[Bl][Bl]{\scalebox{.8}{\color{myblue}{$x^{-1}$}}}
 \psfrag{e}[Bl][Bl]{\scalebox{.8}{\color{myblue}{$x$}}}
 \psfrag{p}[Br][Br]{\scalebox{.9}{\color{myblue}{$p_\ell$}}}
 \psfrag{q}[Bc][Bc]{\scalebox{.8}{\color{myblue}{$p_\ell$}}}
 \psfrag{A}[Bc][Bc]{\scalebox{.9}{$A$}}
 \psfrag{v}[Bc][Bc]{\scalebox{.9}{$\ell$}}
 \psfrag{s}[Bc][Bc]{\scalebox{.9}{$\textcolor{red}{\sigma}$}}
 \psfrag{x}[Bc][Bc]{\scalebox{.8}{$\textcolor{red}{x}$}}
 \psfrag{y}[Bl][Bl]{\scalebox{.8}{$\textcolor{red}{y}$}}
 \rsdraw{.45}{.9}{Xi-graph-dual-sigma}\;.
\end{center}
\captionsetup{justification=centering}
\caption{}
\label{fig-coloring-Xi-surfaces}
\end{figure}
The $E$-label of $\ell$ is then the element of $E=\pi_2(B\CM,BH;x)$ represented by the map $u\circ \iota \co (D,\partial D,p_\ell) \to (B\CM,BH,x)$, where $u\co \Sigma_\sigma \to (B\CM)_*$ is any representative of $f \in [\Sigma,B\CM]_\sigma$. With these labels, $A$ becomes a $\CM$-graph in $\Sigma$. This follows from the fact that the
boundary crossed module $\partial \co \pi_2(B\CM,BH;x) \to \pi_1(BH,x)$ is $\CM \co E \to H$. In particular, \eqref{eq-xi-graph-1} follows from the definition of the boundary map~$\partial$ and \eqref{eq-xi-graph-2} from the definition of the action of $\pi_1(BH,x)$ on $\pi_2(B\CM,BH;x)$. For instance, in the above example, the $E$-label $e$ of $\ell$ does satisfy the expected relation $\CM(e)=xyx^{-1}$ since both sides of this relation are equal to $\partial([f\circ i])$.

A \emph{$\cc$-coloring} of a $\CM$-surface $\Sigma=(\Sigma,\sigma,f)$ is a map $d\co \{\text{edges of $\sigma$}\} \to \Ob(\cc)$ such that for each edge $s$ of $\sigma$, the object $d(s)$ is homogeneous of degree the $H$-label of~$s$.
A $\cc$-coloring $d$ of $\Sigma$  turns the $\CM$-graph in $\Sigma$ associated with a skeleton $A$ of~$\Sigma$ dual to $\sigma$ (as above) into a $\cc$-colored $\CM$-graph denoted $A_d$ by assigning to each edge~$a$ of $A$ the object $d(a^*)$, where $a^*$ is the edge of $\sigma$ dual to $a$. Section~\ref{sect-colored-Xi-graphs} yields then the \kt module $H(A,d)$. If $A$ and $B$ are two skeletons of $\Sigma$ dual to $\sigma$, then the canonical bijection between their half-edges (coming from the fact they are both dual to $\sigma$) induces a \kt linear isomorphism $\rho_{A,B}\co H(A_d) \to H(B_d)$. The family $(\{H(A_d)\}_A,\{\rho_{A,B}\}_{A,B})$,  where $A,B$ run over all skeletons of $\Sigma$ dual to $\sigma$, is a projective system of \kt modules and \kt linear isomorphisms. Consider the projective limit of this system:
$$
H(\Sigma,d)=\underleftarrow{\lim} \, H(A_d) \, .
$$
The module $H(\Sigma,d)$   depends only on $(\Sigma,d)=(\Sigma,\sigma,f,d)$ and comes equipped with  a family $\{H(\Sigma,d) \simeq H(A_d)\}_{A}$ of    \kt linear isomorphisms, called the \emph{cone isomorphisms}, which commute with the $\rho_{A,B}$.

Let $-\Sigma=(-\Sigma,\sigma,f)$ be the opposite of $\Sigma$ (see Section~\ref{sect-prelim-X-manifolds}). Any $\cc$-coloring $d$ of $\Sigma$ is a $\cc$-coloring  of $-\Sigma$ in the obvious way. For any skeleton $A$ of $\Sigma$ dual to~$\sigma$, the oriented graph $A^\opp$ (which is $A$ with the orientation of all edges reversed) is a skeleton of $-\Sigma$, the $\cc$-colored $\CM$-graph $A^\opp_d=(A^\opp)_d\subset -\Sigma$ is opposite to $A_d\subset \Sigma$, and the canonical isomorphism $H(A_d^\opp)\simeq H(A_d)^\star$ (see Section~\ref{sect-colored-Xi-graphs}) yields a canonical isomorphism
$$
H(-\Sigma,d) \simeq H(\Sigma,d)^\star.
$$

\subsection{Skeletons of 3-manifolds}\label{sect-skeletons-boundary}
By a \emph{skeleton} of an oriented compact 3-dimensional manifold~$M$, we mean an oriented stratified 2-polyhedron $P\subset M$ such that $P\cap \partial M= \partial P$ and
\begin{enumerate}
  \labeli
    \item  the oriented graph $\partial P$ is a skeleton of $\partial M$;
\item for every vertex $v$ of $\partial P$, there is a unique edge
$e_v$ of $P$   such that $v$  is an endpoint of $e_v$ and $e_v
\nsubseteq \partial M$; the edge $e_v$ is not a loop  and
$e_v\cap  \partial M =\{v\}$;

\item for every edge $a$ of $\partial P$, the only region~$r_a$ of~$P$ adjacent to~$a$ is   a closed 2-disk
meeting $ \partial M$ precisely along $ a $;

\item all connected components of $M\setminus P$ are
open or half-open 3-balls.
\end{enumerate}

Conditions (i)--(iii) imply that the
intersection of $P$ with a tubular neighborhood of~$\partial M$ in
$M$ is  homeomorphic to   $\partial P\times [0,1]$. Also, an edge of $P$ with both endpoints in $\partial M$ is an edge of $\partial P$.

Components of $M\setminus P$ are called \emph{$P$-balls}.
Note that  the boundary disks of the half-open
$P$-balls are precisely  the components of  $
\partial M \setminus \partial P$. A \emph{$P$-ball branch}  at a vertex or an edge of~$P$ is a germ  (at the vertex or edge) of an adjacent $P$-ball.

An edge or vertex of $P$ is \emph{interior} if it is not included in $\partial P$.
The \emph{link graph} of an interior vertex of $P$ is defined as in Section~\ref{sect-skeletons}.

\subsection{$\CM$-labelings}\label{sect-Xi-labelings-boundary}
We extend the notion of a $\CM$-labeling of a skeleton of an oriented closed 3-dimensional manifold given in Section~\ref{sect-Xi-labelings} to the case of oriented compact 3-dimensional manifolds.

Let $P$ be a skeleton of an oriented compact 3-dimensional manifold~$M$.
As in Section~\ref{sect-Xi-labelings}, for each oriented interior edge $e$ of $P$, the set $P_e$ of branches of~$P$ at~$e$ inherits a cyclic order from the orientations of~$e$ and~$M$, and any $P$-ball branch~$B$ at~$e$ turns the cyclic order on $P_e$ into a linear order (so that the  first element is the first branch $b_B$ of~$P$ at~$e$ encountered while traversing a small loop in~$M$ positively encircling~$e$ starting from $B$). We define the map $\varepsilon_e \co P_e \to \{+, -\}$ as in Section~\ref{sect-Xi-labelings}. Note that when the orientation  of $e$ is reversed, the cyclic order  on $P_e$ is reversed and $ \varepsilon_e$ is multiplied by $-$.

Recall that  $\Reg(P)$ denotes the (finite) set of regions of~$P$ (see Section~\ref{sect-stratified-polyhedron}).
Let $\EB(P)$ be the set of pairs $(e,B)$ where $e$ is an interior oriented edge of $P$ and $B$ is a $P$-ball branch at $e$.
A \emph{pre-$\CM$-labeling} of $P$ is a pair
$$
\bigl(\colr \co \Reg (P) \to H,\coleb \co \EB(P) \to E\bigr)
$$
of maps verifying the conditions \eqref{precol1}, \eqref{precol2}, \eqref{precol3} of Section~\ref{sect-Xi-labelings}.

A pre-$\CM$-labeling $(\colr,\coleb)$ of $P$ turns the set $P_e$ of branches of $P$ at an oriented interior edge~$e$ of $P$ into a $\CM$-cyclic set $P_e=(P_e,\alpha_e,\beta_e,\varepsilon_e)$ as follows: for any branch $b \in P_e$, set $\alpha_e(b)=\colr(b) \in H$ and $\beta_e(b)=\coleb(e,B)\in E$, where $B$ is the $P$-ball branch at $e$ such that $b=b_B$ (in the above notation).

Also, a pre-$\CM$-labeling $(\colr,\coleb)$ of $P$ turns the oriented graph $\partial P$ into a $\CM$-graph in~$\partial M$ as follows. The $H$-label of an edge $a$ of $\partial P$ is $\alpha(r_a) \in H$, where $r_a$ is the region of $P$ adjacent to $a$.
Let $\ell$ be a half-edge of $\partial P$. It is adjacent to a vertex~$v$ of~$\partial P$. The vertex $v$ is adjacent to a (unique) interior edge $e$ of $P$. Orient $e$ so that it originates at $v$. Let $B$ be the $P$-ball branch at $e$ so that $\ell$ is the
the first half-edge of $\partial P$ encountered while traversing a small loop negatively encircling $v$ (with respect to the orientation of $\partial M$)  from any point of $B\cap \partial M$. Then the  $E$-label of~$\ell$ is $\beta(e,B) \in E$.

As in Section~\ref{sect-Xi-labelings}, a pre-$\CM$-labeling $(\colr,\coleb)$ of $P$ turns the link graph $\Gamma_v$ of an interior vertex $v$ of $P$ into a $\CM$-graph in $\partial B_v \cong S^2$, where $B_v$ is a $P$-cone neighborhood  of~$v$ (see Section~\ref{sect-skeletons}). A \emph{$\CM$-labeling} of $P$ is a pre-$\CM$-labeling of $P$ such that, for any interior vertex $v$ of $P$, the $\CM$-graph $\Gamma_v$ in $B_v \cong S^2$ is 1-spherical (see Section~\ref{sect-Xi-graphs-spherical}).

\begin{lem}\label{lem-map-col-boundary}
Let $M$ be an oriented compact 3-dimensional manifold such that its boundary is endowed with a structure of a $\CM$-surface $(\partial M,\sigma,f)$. Let $P$ be a skeleton of $M$ such that $\partial P$ is dual to $\sigma$ (see Section~\ref{sect-skel-surfaces-duals}). Let $(\colr,\coleb)$ be a $\CM$\ti labeling of~$P$ compatible with $f$ in the sense that the two $\CM$-graph structures on $\partial P$ induced by~$(\colr,\coleb)$ (as above) and by $f$ (as in Section~\ref{sect-colored-Xi-surfaces}) coincide. Then this data determines a class $g_{\colr,\coleb,f} \in [M,B\CM]_\sigma$ such that $\partial g_{\colr,\coleb,f}=f$.
\end{lem}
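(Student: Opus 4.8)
The plan is to adapt the construction of Claim~\ref{claim-filtered-map-to-Xi-labelings} to the relative setting, building a filtered map $M_* \to (B\CM)_*$ out of the $\CM$-labeling $(\colr,\coleb)$ while ensuring that its restriction to $\partial M$ already realizes the prescribed $\sigma$-homotopy class $f$. First I would fix a representative $u\co (\partial M)_\sigma \to (B\CM)_*$ of $f$ and set up the same auxiliary geometric data as in Section~\ref{sect-proof-lem-gauge-group-labelings}, but relative to the boundary: pick centers in the (open and half-open) $P$-balls, choose arcs $\{\gamma_r\}$ dual to the regions, choose bounding disks $\{\delta_e\}$ for the interior edges, and choose the suspensions $\{S_r\}$. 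The key difference is that near $\partial M$ the chosen data must be compatible with the cellular structure on $\partial M$ dual to $\sigma$, so that the relative $2$-skeleton $D=S \cup (\cup_e \delta_e)$ meets $\partial M$ exactly in a regular neighborhood of $\sigma$. This gives a filtration $M_*=(C \subset S \subset D \subset M)$ whose restriction to $\partial M$ refines the filtration $(\partial M)_\sigma$.

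Next I would define the filtered map cell by cell, exactly as in the closed case, but using $u$ to prescribe the behaviour on the boundary. On each $\gamma_r$ I send the arc to a loop in $BH$ representing $\colr(r)\in H=\pi_1(BH,x)$; for a region $r$ meeting $\partial M$ I arrange this loop to agree with the loop $u(a^*)$ associated by $u$ to the dual edge, which is legitimate precisely because the compatibility hypothesis forces the $H$-labels to match. Condition~\eqref{precol1} then lets me extend over each disk $\delta_e$ by a map $(\delta_e,\partial\delta_e,c_e)\to(B\CM,BH,x)$ representing $\coleb(e,B_e)\in E=\pi_2(B\CM,BH;x)$, again choosing the boundary disks so as to reproduce the $E$-labels prescribed by $u$ on $\partial P$. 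The $1$-sphericality of the link graphs $\Gamma_v$ at the interior vertices of $P$ guarantees, exactly as in Claim~\ref{claim-filtered-map-to-Xi-labelings}, that the restriction of the map so far to the boundary $2$-sphere of each interior $3$-cell is null-homotopic, so the map extends over the interior $3$-cells; for the half-open $P$-balls the analogous extension holds because these cells are collared by $\partial M$ and the map is already defined on the collar. This produces a filtered map $f_{\colr,\coleb,f}\co M_\sigma \to (B\CM)_*$ whose $\sigma$-homotopy class $g_{\colr,\coleb,f}\in[M,B\CM]_\sigma$ I take as the desired element.

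The main things left to check are well-definedness and the boundary identity $\partial g_{\colr,\coleb,f}=f$. For well-definedness I would argue, as in the remark following Claim~\ref{claim-filtered-map-to-Xi-labelings}, that any two choices of arcs, disks, and suspensions are isotopic in $M$ relative to $\partial M$ (one may take the isotopy to fix the boundary data coming from $u$), and since $B\CM$ is a $2$-type these isotopic choices yield $\sigma$-homotopic filtered maps. The identity $\partial g_{\colr,\coleb,f}=f$ holds by construction: the restriction of $f_{\colr,\coleb,f}$ to $\partial M$ was built to send each $\gamma_r$ and each boundary cell to the loops and disks prescribed by the chosen representative $u$, so $(f_{\colr,\coleb,f})|_{\partial M}$ and $u$ agree on the relative $2$-skeleton of $\partial M$; since $\partial M$ is a surface (a $2$-complex) and $B\CM$ is a $2$-type, agreement on the $2$-skeleton forces $\sigma$-homotopy, giving $\partial g_{\colr,\coleb,f}=f$ in $[\partial M,B\CM]_\sigma$.

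The hard part will be the careful bookkeeping at the boundary, namely verifying that the compatibility hypothesis — equality of the two $\CM$-graph structures on $\partial P$ — exactly matches the $H$- and $E$-labels read off from $u$ against those prescribed by $(\colr,\coleb)$, so that the boundary data glue coherently with the interior construction. Concretely, one must match the convention of Section~\ref{sect-colored-Xi-surfaces} (the predecessor point $p_\ell$, the orientation conventions on the attaching disks $D$) with the edge/branch conventions of Section~\ref{sect-Xi-labelings-boundary} defining the $\CM$-graph structure on $\partial P$ from $(\colr,\coleb)$; once these conventions are aligned the extension goes through mechanically, but getting the orientations and the $\pi_2(B\CM,BH;x)$-vs-$E$ identifications to agree on the nose is the delicate step.
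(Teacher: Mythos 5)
Your overall strategy is the one the paper follows: build the relative dual CW-decomposition of $M$ determined by $P$ (centers of $P$-balls, with the half-open balls centered at the vertices of $\sigma$; suspensions for the interior regions; dual disks for interior edges; the cells of $\partial M$ dual to $\partial P$), map the boundary by a chosen representative $u$ of $f$, map the interior $1$- and $2$-cells according to $(\colr,\coleb)$ via \eqref{precol1}, extend over the $3$-cells, and obtain well-definedness from the isotopy of the auxiliary data together with \eqref{precol2}, \eqref{precol3} and the fact that $B\CM$ is a $2$-type. Up to that point your proposal matches the paper's proof.

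However, there is a genuine gap at the $3$-cell extension step near the boundary, and it is exactly the step where the compatibility hypothesis carries its real content. Your claim that the extension over the boundary-adjacent cells ``holds because these cells are collared by $\partial M$ and the map is already defined on the collar'' is not a valid argument. First, a structural point: the half-open $P$-balls are not $3$-cells of the dual decomposition (the $P$-balls correspond to its $0$-cells). In the paper's decomposition only the interior edges with both endpoints in $\Int(M)$ acquire dual disks, so the $3$-cell dual to an interior vertex $x$ that is joined to $\partial P$ by edges $e_v$ reaches down to $\partial M$; its boundary $2$-sphere contains $2$-cells of $\partial M\setminus\sigma$ mapped by $u$ as well as interior cells mapped according to the labels. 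Extending the map over such a cell requires the map on this $2$-sphere to be null-homotopic, which is an honest obstruction in $\pi_2(B\CM)$ that no collar argument kills. It vanishes because of two facts used jointly: the $1$-sphericality of the link graph of $x$, and the agreement of the $E$-labels (not merely the $H$-labels) that $u$ induces on $\partial P$ with those induced by $(\colr,\coleb)$ --- this is precisely how the paper invokes ``the $1$-sphericality of the $\CM$-graphs associated with the interior vertices of $P$ and the compatibility of $(\colr,\coleb)$ and $f$ on $\partial P$.'' In your write-up the compatibility is used only to match loops on $1$-cells (an $H$-label statement) and vaguely for ``boundary disks,'' while the $E$-label matching is deferred to unexecuted ``bookkeeping'' at the end; as a result the obstruction for the boundary-adjacent $3$-cells is never actually shown to vanish, and the construction of the filtered map $M_\sigma\to(B\CM)_*$ is incomplete at that point. (A minor further omission: independence of the choices of $P$-ball branches $B_e$ and edge orientations requires \eqref{precol2} and \eqref{precol3}, which your well-definedness paragraph does not invoke.)
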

\begin{proof}
Pick  a center in every $P$-ball so that the centers of half-open  $P$-balls are vertices of $\sigma$.
Let $\rr$ be the  set   of all regions of~$P$ contained in $\mathrm{Int}(M)$.
For each~$r\in\rr$, pick an oriented arc~$\gamma_r$ as in Section~\ref{sect-Xi-labelings} and let~$S_r$ be as in Section~\ref{sect-proof-lem-gauge-group-labelings}.
Let $\ee$ be the  set   of all (unoriented) interior edges of~$P$ with both endpoints in~$\mathrm{Int}(M)$.
For each $e\in\ee$, pick a $P$-ball branch~$B_e$ at~$e$, an orientation for~$e$, and a disk $\delta_e$ embedded in~$M$ as in Section~\ref{sect-proof-lem-gauge-group-labelings}.
Set $S=(\cup_{r\in \rr} S_r)\cup \sigma$.
Then the pair $(M, S)$ has a relative CW-decomposition with only 2-cells and 3-cells. The 2-cells are the disks~$\{\delta_e\}_{e\in \ee}$ together with the 2-cells of the cellular decomposition of $\partial M$ induced by $\sigma$ (i.e., the cellular decomposition dual to that induced by the skeleton~$\partial P$ of~$\partial M$, see Section~\ref{sect-skel-surfaces-duals}). The 3-cells are  ball neighborhoods  in $M$ of the interior vertices of~$P$. Define the map $\cup_{r\in\rr} S_r\to B\CM $ as in the proof of Claim~\ref{claim-filtered-map-to-Xi-labelings} and extend it to $S\cup\partial M$ using a representative of $f \in [\partial M,B\CM]_\sigma$. As in the proof of Claim~\ref{claim-filtered-map-to-Xi-labelings}, we further extend this map to the relative 2-skeleton $S\cup \partial M \cup  (\cup_{e\in\cup \ee} \delta_e)$ by using Condition~\eqref{precol1}, and then to all 3-cells  by using the  $1$-sphericality of  the $\CM$-graphs associated with the interior vertices of $P$ and the compatibility of $(\alpha,\beta)$ and $f$ on $\partial P$.
The conditions \eqref{precol2} and \eqref{precol3} ensure that the obtained filtered map $M_\sigma \to (B\CM)_*$ does not depend on the choices of the adjacent $P$-ball branches and orientations for the edges $e \in \ee$. Its $\sigma$-homotopy class $g_{\colr,\coleb,\partial g}$ satisfies $\partial g_{\colr,\coleb,f}=f$ and depends only on $P$, $(\colr,\coleb)$, and~$f$ since any two systems of arcs $\{\gamma_r\}_{r\in\rr}$, disks $\{ \delta_e\}_{e\in\ee}$, and suspensions $\{ S_r\}_{r\in\rr}$ as above are  isotopic  in $M$.
\end{proof}

\subsection{$\CM$-skeletons of $\CM$-manifolds}\label{sect-Xi-skeletons-Xi-manifolds-boundary}
A \emph{$\CM$-skeleton} of a $\CM$-manifold $(M,\sigma,g)$ is a skeleton $P$ of $M$ together with a $\CM$-labeling $(\colr,\coleb)$ such that $\partial P$ is dual to $\sigma$, the $\CM$-labeling $(\colr,\coleb)$ is compatible with $\partial g$ (as in Lemma~\ref{lem-map-col-boundary}), and $g$ is equal to the class $g_{\colr,\coleb,\partial g} \in [M,B\CM]_\sigma$ defined in Lemma~\ref{lem-map-col-boundary}.

The primary $\CM$-moves  $T_0^{\pm 1} -  T_3^{\pm 1}$ defined in Section~\ref{sect-Xi-moves} for $\CM$-skeletons of closed $\CM$-manifolds extend to $\CM$-skeletons of $\CM$-manifolds in the obvious way. All these moves proceed inside 3-balls  in the interior $\Int(M)$ of $M$ and do not modify the boundary of the skeletons. In particular, all vertices/edges/regions created or destroyed by the moves lie in $\Int(M)$.
The action of the moves on the $\CM$-labelings is determined by the
requirement that the labels of the big regions are preserved under
the moves. Under $T_0^{\pm 1}, T_1^{\pm 1}, T_2^{\pm 1},
T_3^{\pm 1} $, the labels of the regions and edges lying in $\Int(M)$ are transformed as in Section~\ref{sect-Xi-moves} and the labels of the regions and edges meeting $\partial M$ remain unchanged.
These moves as well as  label-preserving ambient isotopies of $\CM$-skeletons of $(M,\sigma,g)$ are called {\it primary $\CM$-moves}. The following lemma extends Lemma~\ref{lem-Xi-moves}.

\begin{lem}\label{lem-Xi-moves-boundary}
Any primary $\CM$-move transforms a $\CM$-skeleton of $\CM$-manifold $(M,\sigma,g)$ into a $\CM$-skeleton of $(M,\sigma,g)$.
Moreover, any two $\CM$-skeletons of $(M,\sigma,g)$ can be related by a finite sequence of primary $\CM$-moves.
\end{lem}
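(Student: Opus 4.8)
The plan is to adapt the proof of Lemma~\ref{lem-Xi-moves} given in Section~\ref{sect-proof-lem-Xi-moves} to the bounded setting, the one new feature being that every move must be supported in the interior $\Int(M)$ and must preserve both the boundary skeleton $\partial P$ (dual to $\sigma$) and the $\CM$-graph structure it carries from $f$. First I would dispose of the first assertion. Each primary $\CM$-move $T_s^{\pm 1}$ proceeds inside a closed $3$-ball contained in $\Int(M)$ (see Section~\ref{sect-Xi-skeletons-Xi-manifolds-boundary}), so it leaves $\partial P$ and its labels unchanged; hence $\partial P'$ is again dual to $\sigma$, the transformed $\CM$-labeling is still compatible with $\partial g=f$, and the conditions (i)--(iv) of Section~\ref{sect-skeletons-boundary} persist because they are local and concern a neighborhood of the collar $\partial P\times[0,1]$ that the move does not meet.

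It then remains to check that the $\sigma$-homotopy class $g_{\colr,\coleb,f}$ of Lemma~\ref{lem-map-col-boundary} is preserved. For this I would establish a relative analogue of Lemma~\ref{lem-gauge-group-labelings}. Let $\mathcal{G}_P^\partial\subset\mathcal{G}_P$ be the subgroup preserving the boundary $\CM$-labeling, that is, the transformations $(\lambda,\mu)$ with $\lambda$ trivial on all half-open $P$-balls and $\mu$ trivial on all boundary regions; it is generated by the $(\lambda_{B,h},\mu_{r,e})$ with $B$ an interior $P$-ball and $r$ an interior region. I claim the assignment $(\colr,\coleb)\mapsto g_{\colr,\coleb,f}$ induces an injection from the set of $\mathcal{G}_P^\partial$-equivalence classes of $\CM$-labelings compatible with $f$ into $\{g\in[M,B\CM]_\sigma \mid \partial g=f\}$, bijective when the interior regions are disks. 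This is proved exactly as Claims~\ref{claim-filtered-map-to-Xi-labelings} and~\ref{claim-gauge-equiv-homotopic}, using the relative CW-decomposition of $(M,S\cup\partial M)$ built in the proof of Lemma~\ref{lem-map-col-boundary} and replacing ordinary homotopies by $\sigma$-homotopies so that the fixed boundary data is respected at all times. Since each primary $\CM$-move induces a bijection on the relevant $\mathcal{G}_P^\partial$-classes commuting with this injection, exactly as in Section~\ref{sect-proof-lem-Xi-moves}, it preserves $g$, which completes the first assertion.

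For the second assertion I would mirror the three-step scheme of Section~\ref{sect-proof-lem-Xi-moves}. Given two $\CM$-skeletons of $(M,\sigma,g)$, apply interior $T_1$-moves (with added edges labeled $1\in E$) to turn all interior non-disk regions into disks, reducing to skeletons $Q,Q'$ with disk interior regions sharing the fixed boundary skeleton. Next I would invoke the relative version of the uniqueness-up-to-moves result for skeletons of compact $3$-manifolds: any two such $Q,Q'$, rel $\partial P$ and with disk interior regions, are related by a finite sequence of the interior moves $(T_0')^{\pm1}$, $T_2^{\pm1}$, $T_3^{\pm1}$. This is the bounded analogue of Claim~\ref{claim-disk-region-skeletons}, obtained from the theory of spines of compact $3$-manifolds (Matveev, and Section~11 of \cite{TVi5}) performed rel boundary, collapsing the collar $\partial P\times[0,1]$. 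Lifting each move to a $\CM$-move transports the $\CM$-labeling of $Q$ to one on $Q'$. Finally I would adapt Claim~\ref{claim-gauge-Xi-moves}: two $\CM$-labelings of $(M,\sigma,g)$ on the same underlying $Q'$ are $\mathcal{G}_{Q'}^\partial$-gauge equivalent by the relative Lemma~\ref{lem-gauge-group-labelings} analogue (they represent the same $g$), and each generator $(\lambda_{B,h},\mu_{r,e})$, with $B,r$ interior, is realized by an interior bubble move, a sequence of $\mathcal{L}^{\pm1}$ and $T^{m,n}$ sweeping an interior $P$-ball, and an inverse bubble move, exactly as in Claim~\ref{claim-gauge-Xi-moves} but carried out inside $\Int(M)$. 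Composing the three sequences relates the two original $\CM$-skeletons.

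The main obstacle will be the geometric input of the middle step: the relative form of Matveev's theorem relating two skeletons of a bounded $3$-manifold, rel a fixed boundary skeleton, by interior moves. One must verify that the collapses and $T^{1,2}/T^{2,1}$-moves of the spine calculus can be realized without touching the boundary collar and without disturbing the half-open $P$-balls whose boundary disks are the components of $\partial M\setminus\partial P$. The bookkeeping needed to keep all created and destroyed cells in $\Int(M)$, together with the verification that $\mathcal{G}_P^\partial$ is indeed generated by the interior transformations used above, is the delicate part; by contrast, the HQFT-specific labeling arguments are essentially verbatim copies of the closed case.
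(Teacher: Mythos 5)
Your proposal is correct and takes essentially the same route as the paper: the paper's proof also runs parallel to the closed case, with the only substantive change being the redefinition of the gauge group so that $\lambda$ is trivial on $P$-balls adjacent to $\partial M$ and $\mu$ is trivial on regions meeting $\partial M$ (your $\mathcal{G}_P^\partial$). The one point you flag as the ``main obstacle'' --- the relative form of the skeleton-moves theorem --- is not re-derived in the paper: it is quoted as a known result, namely that any two skeletons of a compact $3$-manifold whose boundaries are isotopic in $\partial M$ can be related by finitely many primary moves (\cite[Theorem 11.5]{TVi5}), so your middle step reduces to that citation rather than to a fresh rel-boundary adaptation of Matveev's spine calculus.
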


\begin{proof}
The proof is parallel to the proof of Lemma \ref{lem-Xi-moves} with the following changes. Given a skeleton $P$ of an oriented compact 3-dimensional manifold $M$, denote by $\Ball(P)$ the set of $P$-balls and by $\Reg(P)$ the set of regions of $P$.
The gauge group $\mathcal{G}_P$ associated with $P$ is the set of pairs $(\lambda,\mu) \in \mathrm{Map}(\Ball(P),H) \times \mathrm{Map}(\Reg(P),E)$ such that $\lambda(B)=1_H$ whenever $B$ is adjacent to $\partial M$ (i.e., $B$ is a half-open 3-ball) and  $\mu(r)=1_E$ whenever $r$ is adjacent to $\partial M$ (i.e., $r\cap \partial M \neq \emptyset$). The product in~$\mathcal{G}_P$ and the action of $\mathcal{G}_P$ on $\CM$-labelings of $P$ are defined as in Section~\ref{sect-Xi-labelings}. Also, we use that any two skeletons of $M$ whose boundaries are isotopic in $\partial M$ can be related by a finite sequence of primary moves (see \cite[Theorem 11.5]{TVi5}).
\end{proof}

\subsection{An invariant of $\CM$-manifolds}\label{sect-inv-Xi-man-boudary}
By an \emph{$I$-coloring} of a $\CM$-surface $\Sigma$, we mean a $\cc$-coloring of $\Sigma$ (see Section~\ref{sect-colored-Xi-surfaces}) taking values in $I$.

Let $(M,\sigma,g)$ be a $\CM$-manifold and $d$ be an $I$-coloring of the $\CM$-surface $\partial M=(\partial M, \sigma,\partial g)$.
We   define   a vector
$|M,\sigma,g,d|_\cc \in H(\partial M,d)$
where $H(\partial M,d)$ is the \kt module introduced in Section~\ref{sect-colored-Xi-surfaces}.

Pick a $\CM$-skeleton $P=(P,(\alpha,\beta))$ of $(M,\sigma,g)$, see Section~\ref{sect-Xi-skeletons-Xi-manifolds-boundary}.
An \emph{$I$-coloring} of~$P$ is a map $c \co \Reg(P) \to I$ such that $c(r)\in I_{\alpha(r)}$ for all region $r$ of $P$. Such an $I$-coloring $c$ is said to \emph{extend $d$} if $c(r_s)=d(s)$ for all edges $s$ of $\sigma$, where~$r_s$ is the region of $P$ adjacent to the edge of~$\partial P$ dual to~$s$.

Given an $I$-coloring $c$ of $P$ extending $d$, we    define  $\dim(c) \in \kk$ by~\eqref{dimcoloring}. We also associate with~$c$ a vector   $\vert c \vert \in H(\partial M,d) $   as follows. Let~$\ee$ be the set of all oriented interior edges of~$P$. For each $e \in \ee$, the coloring $c$ of $P$ turns the $\CM$-cyclic set $P_e$ of branches of $P$ at $e$ (see Section~\ref{sect-Xi-labelings-boundary}) into a $\CM$-cyclic $\cc$-set by assigning to each branch $b \in P_e$ the $c$-color of the region of~$P$ containing  $b$. Let $H_c(e)=H (P_e)$  be its multiplicity  module (see Section~\ref{sect-muliplicity-modules}). Each oriented edge~$e$ of~$P$ originates at a vertex of~$P$ called the \emph{tail} of~$e$. Let $\ee_0\subset \ee $ be the  set   of all oriented edges of~$P$ with tail in the interior of $M$ and $\ee_\partial=\ee \setminus \ee_0$.
Set
$$
H_c  = \bigotimes_{e\in \ee_0   }\, H_c(e) \quad \text{and} \quad H^\partial_c  = \bigotimes_{e\in \ee_\partial   }\, H_c(e),
$$
where  $\otimes$ is the unordered tensor product  of \kt modules.
Observe that the  tail  $ v(e)$ of any    $e\in   {\mathcal E}_\partial$  is a vertex of $\partial P$ and there is a canonical \kt linear isomorphism  $  H_c(e ) \simeq   H_{v(e)}((\partial P)_d)$. Here $(\partial P)_d$ is  the  $\CM$-graph in $\partial M$ induced by the $\CM$-labeling of $P$ (see Section~\ref{sect-Xi-labelings-boundary}) and is $\cc$-colored using $d$ (see Section~\ref{sect-colored-Xi-surfaces}). The formula $e \mapsto v(e)$ establishes a bijective correspondence between the edges $e\in {\mathcal E}_\partial$ and the  vertices~$v$ of~$\partial P$. This together with the cone isomorphism (see Section~\ref{sect-colored-Xi-surfaces}) gives a canonical \kt linear isomorphism
$$
H^\partial_c \simeq \bigotimes_{v }\, H_v((\partial P)_d) = H ((\partial P)_d) \simeq H(\partial M,d).
$$
As in Section~\ref{sec-computat}, an unoriented  interior edge~$e$ of  $P$ gives rise to two  opposite oriented edges $e_1,e_2 \in\ee$ and a vector $\ast_e \in H_c(e_1)  \otimes H_c(e_2) $ independent of the numeration of $e_1, e_2$. The unordered tensor product of  these vectors   over all  unoriented  interior edges  is a vector
\begin{equation*}
 \otimes_{e  }\, \ast_e \in \bigotimes_{e\in \ee   }\, H_c(e) .
\end{equation*}
We let $\ast_c \in H_c   \otimes  H(\partial M,d)$ be the image of $ \otimes_{e}\, \ast_e$  under the  isomorphism
\begin{equation*}\label{isoi}
\bigotimes_{e\in \ee}\, H_c(e) \simeq   H_c   \otimes   H_c^\partial  \simeq H_c  \otimes   H(\partial M,d).
\end{equation*}
As in Section~\ref{sec-computat}, an interior vertex $v$ of~$P$ determines a $1$-spherical $\cc$-colored $\CM$-graph $\Gamma_v^c$   and a vector
$$\inv_\cc (\Gamma_v^c) \in H(\Gamma_v^c)^\star \simeq \otimes_{e_v}\, H_c(e_v)^\star,$$ where~$e_v$ runs over all edges of~$P$ incident to $v$ and oriented away from $v$.  Let $V_c  \in H_c^\star$ be the image of the unordered tensor product $\otimes_v \,\inv_\cc (\Gamma^c_v)$ over all such~$v$   under the canonical \kt linear isomorphism
$$
\bigotimes_v  H(\Gamma_v^c)^\star \simeq \bigotimes_v \bigotimes_{e_v}   H_c(e_v)^\star \simeq H_c^\star .
$$
Finally set
$$
\vert c \vert=(V_c  \otimes  \id_{H(\partial M,d)})(\ast_c) \in \kk  \otimes  H(\partial M,d)\simeq H(\partial M,d).
$$

We  now define a vector in $H(\partial M,d)$ by
\begin{equation*}
|M,\sigma,g,d|_\cc=\dim (\cc^1_1)^{-\vert M\setminus P\vert} \sum_{c} \,\,  \dim (c) \,  \vert c \vert,
\end{equation*}
where   $\vert M\setminus P\vert$ is the number of connected components of $M\setminus P$ and $c$ runs over all $I$-colorings of $P$ extending $d$.

\begin{thm}\label{thm-state-3man-Xi-boundary}
The vector $|M,\sigma,g,d|_\cc\in H(\partial M,d)$ is a topological invariant of  $(M,\sigma,g,d)$ independent of the choice of~$P$.
\end{thm}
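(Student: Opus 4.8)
The plan is to establish well-definedness in two stages, exactly paralleling the proof of the closed case (Theorem~\ref{thm-state-3man-Xi}) in Section~\ref{sect-proof-thm-state-3man-Xi}, but now tracking the extra tensor factor $H(\partial M,d)$ that the invariant lands in. First I would reduce to invariance under the primary $\CM$-moves. By Lemma~\ref{lem-Xi-moves-boundary}, any two $\CM$-skeletons of $(M,\sigma,g)$ are related by a finite sequence of primary $\CM$-moves $T_0^{\pm1}$--$T_3^{\pm1}$ together with label-preserving isotopies; a $\CM$-homeomorphism of the underlying $\CM$-manifold carries one $\CM$-skeleton to another and visibly preserves the defining sum, so it suffices to prove that $|M,\sigma,g,d|_\cc$ is unchanged under each move. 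I would also first dispose of the independence from $I$: as in Section~\ref{sect-proof-thm-state-3man-Xi}, this follows from the naturality of $\inv_\cc$ (property (i) of Section~\ref{sect-inv-Xi-graphs}) and of the contraction vectors under $1$-isomorphisms, which allow one to pass between two $\CM$-representative sets without altering the resulting vector.

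The core of the argument is the \emph{local invariance} statement: for each move $P \mapsto T_s(P)$, the contribution of any $I$-coloring $c$ of $P$ extending $d$ equals the sum of contributions of all $I$-colorings of $T_s(P)$ extending $d$ that agree with $c$ on the big regions. Because all four moves proceed inside a closed $3$-ball contained in $\Int(M)$ and leave $\partial P$ (hence $\partial M$, $\sigma$, and the boundary data) untouched, the created and destroyed vertices, edges, and regions all lie in the interior. Consequently the computations are \emph{literally} the local computations already carried out in Section~\ref{sect-proof-thm-state-3man-Xi}: for $T_0$ one uses Lemma~\ref{lem-bubbleidentity} and \eqref{eq-Nxi} to extract the factor $\dim(i)\dim(\cc^1_1)$ absorbed by the change in Euler characteristic and in $|M\setminus P|$; for $T_1$ one uses the explicit form of the contraction vector $\ast_e$ and the factor $\dim(i)^{-1}$; for $T_2$ one uses $\otimes$-multiplicativity of $\inv_\cc$ and Lemma~\ref{lem-calc-diag}; and for $T_3$ one uses Lemma~\ref{lem-calc-diag} again. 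The key structural point is that each interior move affects only the factors $H_c(e)$ indexed by interior edges $e\in\ee_0$ and the interior vertex contributions $\inv_\cc(\Gamma_v^c)$ packaged into $V_c\in H_c^\star$; the boundary factor $H_c^\partial \simeq H(\partial M,d)$ and the edges in $\ee_\partial$ are inert. Thus the same algebraic identities that equate scalars in the closed case now equate the $H(\partial M,d)$-valued vectors $\vert c\vert$, tensored against $\id_{H(\partial M,d)}$.

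The main obstacle, and the one genuinely new ingredient beyond Section~\ref{sect-proof-thm-state-3man-Xi}, is the bookkeeping needed to confirm that the isomorphism $H(\partial M,d)\simeq\otimes_v H_v((\partial P)_d)$ through which $|M,\sigma,g,d|_\cc$ is valued is itself \emph{independent of the choice of $\CM$-skeleton}. Two different $\CM$-skeletons $P,P'$ of $(M,\sigma,g)$ generally induce different skeletons $\partial P,\partial P'$ of $\partial M$ dual to $\sigma$; I must check that the cone isomorphisms of Section~\ref{sect-colored-Xi-surfaces} intertwine the resulting vectors, i.e.\ that the identification of $H_c^\partial$ with $H(\partial M,d)$ is compatible with the transition isomorphisms $\rho_{\partial P,\partial P'}$ in the projective limit defining $H(\partial M,d)$. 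For the primary moves this is automatic, since no interior move alters $\partial P$ at all, so the boundary identification is manifestly the same before and after; invariance of the valued vector in $H(\partial M,d)$ then follows once the scalar-level local identities are upgraded to vector-level ones as above. I would spell out carefully that the unoriented interior edge contractions $\ast_e$ and the pairing against boundary cone data commute, using the definitions of Section~\ref{sect-muliplicity-modules} and the compatibility of the permutation maps~\eqref{eq-def-permutation} with the pairings, so that the final evaluation $\vert c\vert=(V_c\otimes\id_{H(\partial M,d)})(\ast_c)$ is manifestly natural in the boundary.

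Assembling these, local invariance under each $T_s^{\pm1}$ together with isotopy invariance and $I$-independence yields that the defining sum is unchanged across the moves of Lemma~\ref{lem-Xi-moves-boundary}, proving that $|M,\sigma,g,d|_\cc\in H(\partial M,d)$ depends only on $(M,\sigma,g,d)$.
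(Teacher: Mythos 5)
Your proposal is correct and follows essentially the same route as the paper: the paper's proof reduces topological invariance to independence of the choice of $\CM$-skeleton, which it verifies "exactly as in the proof of Theorem~\ref{thm-state-3man-Xi}" using Lemma~\ref{lem-Xi-moves-boundary}, precisely because the primary $\CM$-moves proceed in the interior and leave the boundary data (hence the inert factor $H(\partial M,d)$) untouched. Your additional bookkeeping about the cone isomorphisms and the projective-limit identification of $H(\partial M,d)$ is a faithful elaboration of details the paper leaves implicit, not a different argument.
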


\begin{proof}
The  topological invariance here means   that if $\psi \co (M,\sigma,g) \to (M',\sigma',g')$ is a
$\CM$-homeomorphism between $\CM$-manifolds, then
$|M',\sigma',g',d'|_\cc=H(\psi) (|M,\sigma,g,d|_\cc)$,  where $d'$ is the composition of $d$ with the bijection $\{\text{edges of $\sigma'$}\} \to \{\text{edges of $\sigma$}\}$ induced by $\psi$ and $H(\psi)\co H(\partial M,d)    \to   H(\partial M',d')$ is the   isomorphism induced by~$\psi$.
This invariance follows from the independence of~$P$, and the latter is    verified exactly   as in the proof of Theorem~\ref{thm-state-3man-Xi} using  Lemma~\ref{lem-Xi-moves-boundary}.
\end{proof}

The invariant $|M,\sigma,g,d|_\cc$ is   multiplicative: for any $\CM$-manifolds $(M_1,\sigma_1,g_1)$, $(M_2,\sigma_2,g_2)$
and any $I$-colorings $d_1$ of $\partial M_1$ and $d_2$ of $\partial M_2$, we have:
$$
|(M_1,\sigma_1,g_1)\sqcup(M_2,\sigma_2,g_2), d_1\sqcup d_2|_\cc=|M_1,\sigma_1,g_1,d_1|_\cc  \otimes  |M_2,\sigma_2,g_2,d_2|_\cc
$$
up to the canonical isomorphism
$$
H (\partial (M_1\sqcup M_2), d_1\sqcup d_2 ) \simeq   H(\partial M_1,d_1)   \otimes   H(\partial M_2,d_2).
$$

\subsection{Functoriality}
Let $\Sigma_0=(\Sigma_0,\sigma_0,f_0)$ and $\Sigma_1=(\Sigma_1,\sigma_1,f_1)$ be $\CM$-surfaces and let $\xi\co \Sigma_0 \to \Sigma_1$ be a morphism in $\mathrm{Cob}_3^\CM$ represented by a $\CM$-cobordism $(M,h)$ where $M=(M,\sigma,g)$ is a $\CM$-manifold $M$ and $h \co (-\Sigma_0) \sqcup \Sigma_1 \to \partial M$ is a $\CM$-homeomorphism (see Section~\ref{sect-CM-cobordisms}).
Any     $I$-colorings $d_0$ of $\Sigma_0$ and $d_1$ of  $\Sigma_1$ yields an $I$-coloring $d_0\cup d_1$ of $(-\Sigma_0) \sqcup \Sigma_1$ in the obvious way and so an $I$-coloring $d$ of the $\CM$-surface $\partial M = h((-\Sigma_0) \sqcup \Sigma_1 )$. Theorem~\ref{thm-state-3man-Xi-boundary} gives  a vector $|M, \sigma,g, d |_\cc \in  H(\partial M,d)$. Now let $\Upsilon$ be the composition of the isomorphisms
\begin{gather*}
H(\partial M,d) \simeq
H((-\Sigma_0) \sqcup \Sigma_1, d_0\cup d_1)
\simeq  H(-\Sigma_0,d_0)   \otimes    H(\Sigma_1, d_1 ) \\
\simeq  H(\Sigma_0,d_0)^\star    \otimes     H( \Sigma_1, d_1 ) \simeq \Hom_\kk\bigl (H(\Sigma_0,d_0 ),H( \Sigma_1, d_1 )  \bigr ),
\end{gather*}
where the first isomorphism  is induced by $h^{-1}$, the second and fourth are obvious, and the third is induced by the canonical isomorphism $H(-\Sigma_0,d_0 ) \simeq  H(\Sigma_0,d_0 )^\star$ discussed in   Section~\ref{sect-colored-Xi-surfaces}.
Set
\begin{equation}\label{cobosm}
|\xi,   d_0,    d_1| =  \frac{\dim (\cc_1^1)^{v(\Sigma_1)}}{\dim (d_1)} \, \,  \Upsilon(|M, \sigma,g, d |_\cc) \co  H(\Sigma_0,d_0 ) \to H( \Sigma_1, d_1 ),
\end{equation}
where $v(\Sigma_1)$ is the number of vertices of $\sigma_1$ and $\dim (d_1)$
is   the product over all edges of~$\sigma_1$ of the dimensions of their colors.   Theorem~\ref{thm-state-3man-Xi-boundary} implies  that  the homomorphism $|f,   d_0,    d_1| $ does not depend on the choice of the pair $(M,h)$ representing~$f$.
The normalization factor in  the definition of $|f,   d_0,    d_1| $ is justified by the next lemma.

\begin{lem}\label{lem-compo-Xi-cobordisms-statesum}
Let $\xi_0\co \Sigma_0 \to \Sigma $ and  $\xi_1\co \Sigma  \to \Sigma_1$ be morphisms in $\mathrm{Cob}_3^\CM$.
For any $I$-colorings $d_0$ of $\Sigma_0$ and $d_1$ of $\Sigma_1$,
$$
|\xi_1  \xi_0,   d_0,    d_1| =\sum_{d}
 \, |\xi_1,   d,   d_0|  \circ |\xi_0,   d_0,d | \co H(\Sigma_0,d_0 ) \to H( \Sigma_1, d_1 ),
$$
where $d$ runs over all $I$-colorings of $\Sigma$.
\end{lem}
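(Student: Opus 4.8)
The plan is to compute both sides of the asserted identity from a single, carefully chosen $\CM$-skeleton of the composite cobordism, exploiting the independence of the state sum on the skeleton (Theorem~\ref{thm-state-3man-Xi-boundary}). Represent $\xi_0$ and $\xi_1$ by $\CM$-cobordisms $(M_0,h_0)$ and $(M_1,h_1)$, so that $\xi_1\xi_0$ is represented by $M=M_0\cup_\Sigma M_1$ as in Section~\ref{sect-CM-cobordisms}. First I would fix a skeleton $A$ of $\Sigma$ dual to $\sigma$ (Section~\ref{sect-skel-surfaces-duals}) and choose $\CM$-skeletons $P_0$ of $M_0$ and $P_1$ of $M_1$ whose collars near $\Sigma$ are both the product $A\times[0,\varepsilon]$ with $\partial P_i|_\Sigma=A$; the collar condition of Section~\ref{sect-skeletons-boundary} guarantees such skeletons exist. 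Then $P=P_0\cup_\Sigma P_1$ is a skeleton of $M$, and the two $\CM$-labelings glue to one representing $g$ (compatibility along $\Sigma$ holds because both induce on $A$ the $\CM$-graph attached to $\partial g|_\Sigma$). Near $\Sigma$ the polyhedron $P$ equals $A\times[-\varepsilon,\varepsilon]$, so gluing creates \emph{no} new vertices: the new interior cells are the merged fins $\hat r_a=a\times[-\varepsilon,\varepsilon]$ (one per edge $a$ of $A$) and the interior edges $\widehat e_w=\{w\}\times[-\varepsilon,\varepsilon]$ (one per vertex $w$ of $A$), whose branches are exactly the fins at $w$.

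Second I would match the combinatorial data. The regions of $P$ are those of $P_0$ and $P_1$ with each pair of $\Sigma$-fins $r_a^{(0)},r_a^{(1)}$ identified into $\hat r_a$; this yields a bijection between $I$-colorings $c$ of $P$ extending $d_0\sqcup d_1$ and triples $(d,c_0,c_1)$, where $d$ is an $I$-coloring of $\Sigma$, $c_0$ extends $d_0\sqcup d$, $c_1$ extends $d\sqcup d_1$, and $c(\hat r_a)=d(a^*)$. Since each fin is a disk, comparing Euler characteristics in \eqref{dimcoloring} gives $\dim(c)=\dim(c_0)\dim(c_1)/\dim(d)$, with $\dim(d)=\prod_a\dim(d(a^*))$ the normalizing factor of \eqref{cobosm}. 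Counting $P$-balls, the half-open balls of $M_0$ and $M_1$ along $\Sigma$ (in bijection with the $v(\Sigma)$ vertices of $\sigma$ on each side) pair up into open balls of $M$, so $|M\setminus P|=|M_0\setminus P_0|+|M_1\setminus P_1|-v(\Sigma)$.

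Third, and this is the heart, I would factor $|c|$. Since no interior vertex is created, the product of vertex invariants $V_c$ splits as $V_{c_0}\otimes V_{c_1}$, and the family $\ast_c$ of contraction vectors splits into those carried by the interior edges of $P_0$, those of $P_1$, and the new ones carried by the edges $\widehat e_w$. The module $H_c(\widehat e_w)$ is canonically $H_w(A_d)$, the multiplicity module of the cyclic $\cc$-set at $w$, so its contraction vector is the one produced in Section~\ref{sect-muliplicity-modules}; their unordered tensor product over all $w$ is a vector $\ast_\Sigma\in H(\Sigma,d)\otimes H(-\Sigma,d)$ inverse to the canonical pairing $H(-\Sigma,d)\otimes H(\Sigma,d)\to\kk$ of Section~\ref{sect-colored-Xi-surfaces}. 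On the other hand, for $M_0$ the half-edge $\{w\}\times[0,\varepsilon]$ lies in $\ee_\partial$ (Section~\ref{sect-inv-Xi-man-boudary}) and its module is the $\Sigma$-part of $H(\partial M_0,d_0\sqcup d)\simeq H(\Sigma_0,d_0)^\star\otimes H(\Sigma,d)$, while for $M_1$ one gets the $H(-\Sigma,d)\simeq H(\Sigma,d)^\star$ factor. Contracting these two families of legs against $\ast_\Sigma$ is, by non-degeneracy of that pairing, precisely the evaluation $H(\Sigma,d)\otimes H(\Sigma,d)^\star\to\kk$ defining composition of linear maps. Under the isomorphism $\Upsilon$ of \eqref{cobosm} this identifies, for each fixed $d$, the contribution of all pairs $(c_0,c_1)$ with the composite $|\xi_1,d,d_1|\circ|\xi_0,d_0,d|$, up to the normalizations recorded above.

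Finally, assembling the two previous steps, the factors $\dim(\cc_1^1)^{v(\Sigma)}$ (from the ball count) and $1/\dim(d)$ (from the merged fins) are exactly those appearing in the normalization of $|\xi_0,d_0,d|$ in \eqref{cobosm}, so summing the coloring bijection over all $c$ reorganizes as a sum over $d$ of composites, yielding the claimed formula. The main obstacle is the third step: one must verify carefully that the cone isomorphisms of Section~\ref{sect-colored-Xi-surfaces}, the identification $H_c(\widehat e_w)\simeq H_w(A_d)$ (including the matching of cyclic orders and signs), and the contraction vectors of Section~\ref{sect-muliplicity-modules} are mutually compatible, so that gluing the boundary legs through $\ast_\Sigma$ reproduces exactly the evaluation underlying $\circ$ rather than its inverse or a transpose. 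Everything else, namely the bijection of colorings, the Euler-characteristic and ball counts, and the normalization bookkeeping, is routine.
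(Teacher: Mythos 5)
Your proposal is correct and follows essentially the same route as the paper: glue $\CM$-skeletons $P_0$ and $P_1$ chosen to agree along a skeleton $A$ of $\Sigma$ dual to $\sigma$, take $P=P_0\cup P_1$ as a $\CM$-skeleton of the composite cobordism, set up the coloring bijection, and match the ball count $\vert M\setminus P\vert=\vert M_0\setminus P_0\vert+\vert M_1\setminus P_1\vert-v(\Sigma)$ and the Euler characteristics of merged regions against the normalization factors in \eqref{cobosm}. The paper's own proof is in fact terser than yours, recording only these two counting facts and leaving your ``third step'' (the splitting of $V_c$ and of the contraction vectors, and the identification of the contraction along the new interior edges with the evaluation underlying composition of linear maps) implicit, so your extra care there is a faithful elaboration rather than a divergence.
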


\begin{proof}
Represent   $\xi_0$   and   $\xi_1$, respectively,   by pairs $(M_0, h_0)$ and $(M_1, h_1)$ as above. The   morphism $\xi_1   \xi_0\co \Sigma_0 \to \Sigma_1 $ is    represented by the pair  $(M,{h})$, where~$M$ is obtained by  gluing  $M_0$ to~$M_1$ along $h_1 h_0^{-1} \co  h_0(\Sigma)   \to h_1({\Sigma})$ and $h=h_0 \vert_{\Sigma_0} \sqcup h_1 \vert_{\Sigma_2}$.
To simplify  notation, we identify    ${h}_0(\Sigma)  = {h}_1({\Sigma}) \subset \Int (M)$
with~$\Sigma$ via $h_0$.  Pick  $\CM$-skeletons $P_0$ of $M_0$ and  $P_1$ of $M_1$ such that $\partial P_0\cap \Sigma=\partial P_1 \cap \Sigma$. Then $P=P_0 \cup P_1 \subset M$ is a $\CM$-skeleton of $M$. Since $v(\Sigma)$ is equal to the number of connected components of $\Sigma \setminus A$ where $A=\partial P_0\cap \Sigma$ (see Section~\ref{sect-skel-surfaces-duals}), we have:
$$
\vert M\setminus P \vert = \vert M_0\setminus P_0 \vert + \vert M_1\setminus P_1 \vert - v(\Sigma).
$$
The term $-v(\Sigma)$ explains the need for the factor  $\dim(\cc_1^1)^{v(\Sigma)}$ in the definition of $|\xi_1,   d,   d_0|$. Similarly, given  a region  $r_0$  of $P_0$ and a region $r_1$ of $P_1$ adjacent to the same edge $e$ of $A$, the union $r= r_0 \cup r_1 \cup e$  is a region of $P$ with Euler characteristic $\Euler (r)=\Euler(r_0)+\Euler (r_1) -1$. The   term $-1$ explains the need for the factor $\dim (d)^{-1}$ in the definition of $|\xi_1,   d,   d_0|$.
\end{proof}

\subsection{The  HQFT $\vert \cdot \vert_{\cc}$}\label{sect-state-sum-HQFT-def}
For a $\CM$-surface $\Sigma$, denote by  $\mathrm{Col} (\Sigma)$ the set of all $I$-colorings of~$\Sigma$
and consider the \kt module
$$
  \vert \Sigma\vert^\circ  = \bigoplus_{d\in \mathrm{Col}   (\Sigma)}   \,  H(\Sigma,d).
$$
For a morphism $\xi\co \Sigma_0 \to \Sigma_1$  in $\mathrm{Cob}_3^\CM$, consider the \kt linear homomorphism
$$
|\xi|^\circ=\sum_{\substack{d_0 \in \mathrm{Col}(\Sigma_0)\\ d_1 \in
\mathrm{Col}(\Sigma_1)}} |\xi,d_0,d_1|\co \vert \Sigma_0 \vert^\circ \to \vert  \Sigma_1\vert^\circ.
$$
Lemma~\ref{lem-compo-Xi-cobordisms-statesum} implies that for any   $\xi_0, \xi_1$ as in this lemma,
\begin{equation}\label{eq-func}
|\xi_1  \xi_0|^\circ=|\xi_1|^\circ \circ  |\xi_0|^\circ.
\end{equation}
In particular, this implies that
$p_\Sigma=|\id_\Sigma|^\circ\co \vert\Sigma\vert^\circ \to \vert\Sigma\vert^\circ$
is a projector onto a direct summand $\vert  \Sigma\vert=\mathrm{Im}(p_\Sigma)$ of $\vert  \Sigma\vert^\circ$.
We next associate  with each  morphism $ \xi\co \Sigma_0   \to  \Sigma_1$  in   $\mathrm{Cob}_3^\CM$
a  homomorphism $ \vert \xi \vert  \co  \vert \Sigma_0 \vert  \to \vert \Sigma_1 \vert  $. Formulas~\eqref{eq-func} and $\id_{\Sigma_1} \circ \xi=\xi$   imply that $|\xi|^\circ=p_{\Sigma_1} \, |\xi|^\circ$,
and so the image of   $|\xi|^\circ$ is contained in  $\vert \Sigma_1\vert$. Denote by $|\xi|\co \vert   \Sigma_0\vert  \to \vert   \Sigma_1\vert$ the restriction of $|\xi|^\circ\co \vert   \Sigma_0\vert^\circ  \to \vert   \Sigma_1\vert^\circ$ to $\vert   \Sigma_0\vert $ and $\vert   \Sigma_1\vert$.
It is clear that  the rule  $\Sigma\mapsto \vert \Sigma \vert$, $\xi \mapsto \vert \xi \vert  $ defines a functor $\vert\cdot\vert  \co \mathrm{Cob}_3^\CM \to \mathrm{Mod}_\kk$. We endow this functor with  monoidal constraints as follows. By definition, $\vert  \emptyset \vert  =\vert  \emptyset \vert^\circ  = \kk$ for the empty $\CM$-surface $\emptyset$.
Given two $\CM$-surfaces $\Sigma$ and $\Sigma'$, the canonical isomorphisms $H(\Sigma \sqcup \Sigma', d \sqcup d' )\simeq H(\Sigma, d) \otimes  H(\Sigma', d')$, where $d$ runs over $I$-colorings of $\Sigma$ and~$d'$ runs over $I$-colorings of $\Sigma'$, yield an isomorphism  $|\Sigma \sqcup \Sigma'|^\circ \simeq |\Sigma|^\circ \otimes |\Sigma'|^\circ$. The latter factorizes into an  isomorphism $|\Sigma \sqcup \Sigma'| \simeq |\Sigma| \otimes |\Sigma'|$ which, together with the canonical isomorphism  $\vert \Sigma \vert \otimes_\kk \vert \Sigma' \vert \simeq \vert\Sigma \vert \otimes \vert \Sigma' \vert$, defines the monoidal constraint $\vert \Sigma\vert \otimes_\kk \vert \Sigma'\vert \simeq \vert \Sigma\sqcup\Sigma'\vert$. These monoidal constraints turn  $\vert\cdot\vert $ into a symmetric strong monoidal functor. We denote this functor by $\vert\cdot\vert_{\cc, I} $ or, shorter, by $\vert\cdot\vert_\cc $.  Combining the  considerations above, we obtain the following theorem.

\begin{thm}\label{thm-state-sum-Xi-HQFT}
The functor $\vert\cdot\vert_\cc$ is a 3-dimensional HQFT with target $B\CM$.
\end{thm}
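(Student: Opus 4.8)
The plan is to verify the three defining conditions of a 3-dimensional HQFT with target $B\CM$ from Section~\ref{sect-def-HQFT-target-X}: that $\vert\cdot\vert_\cc$ is a functor $\mathrm{Cob}_3^\CM \to \Mod_\kk$, that its monoidal constraints are isomorphisms (strong monoidality), and that it is compatible with the symmetry. Since the construction in Section~\ref{sect-state-sum-HQFT-def} already produces the objects $\vert\Sigma\vert$, the morphisms $\vert\xi\vert$, and the monoidal constraints, the proof is essentially an assembly of those pieces: the genuinely topological input (invariance under primary $\CM$-moves and the gluing behaviour of the state sum) is already packaged into Theorem~\ref{thm-state-3man-Xi-boundary} and Lemma~\ref{lem-compo-Xi-cobordisms-statesum}, so what remains is categorical bookkeeping around the idempotents $p_\Sigma$.

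First I would establish functoriality by the idempotent-splitting argument. The composition identity \eqref{eq-func}, $\vert\xi_1\xi_0\vert^\circ = \vert\xi_1\vert^\circ\circ\vert\xi_0\vert^\circ$, is the key ingredient. Taking $\xi_0=\xi_1=\id_\Sigma$ shows $p_\Sigma=\vert\id_\Sigma\vert^\circ$ is idempotent, so $\vert\Sigma\vert=\Ima(p_\Sigma)$ is a direct summand on which $p_\Sigma$ acts as the identity; hence $\vert\id_\Sigma\vert=\id_{\vert\Sigma\vert}$. Taking $\xi_1=\id_{\Sigma_1}$ gives $\vert\xi\vert^\circ=p_{\Sigma_1}\,\vert\xi\vert^\circ$, so $\Ima(\vert\xi\vert^\circ)\subset\vert\Sigma_1\vert$ and the restriction $\vert\xi\vert\co\vert\Sigma_0\vert\to\vert\Sigma_1\vert$ is well defined. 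For composition, I would restrict \eqref{eq-func} to $\vert\Sigma_0\vert$ and use $\Ima(\vert\xi_0\vert^\circ)\subset\vert\Sigma\vert$ together with the fact that $\vert\xi_1\vert^\circ$ restricted to $\vert\Sigma\vert$ is $\vert\xi_1\vert$, yielding $\vert\xi_1\xi_0\vert=\vert\xi_1\vert\circ\vert\xi_0\vert$.

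Next I would treat the monoidal and symmetric structure. The constraints $\vert\Sigma\vert\otimes_\kk\vert\Sigma'\vert\simeq\vert\Sigma\sqcup\Sigma'\vert$ arise from the canonical isomorphisms $H(\Sigma\sqcup\Sigma',d\sqcup d')\simeq H(\Sigma,d)\otimes H(\Sigma',d')$ of Section~\ref{sect-colored-Xi-surfaces}; these are isomorphisms, which is precisely strong monoidality, and $\vert\emptyset\vert=\kk$ holds by construction. Naturality of the constraints reduces to the multiplicativity of the state-sum vector $\vert M,\sigma,g,d\vert_\cc$ recorded after Theorem~\ref{thm-state-3man-Xi-boundary}, combined with the fact that the normalization factor in \eqref{cobosm} is multiplicative under disjoint union (the vertex count $v(\Sigma_1)$ is additive and $\dim(d_1)$ is multiplicative). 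The pentagon, triangle, and hexagon axioms then follow from the corresponding coherence and symmetry of unordered tensor products in $\Mod_\kk$, together with the symmetric monoidal structure of $\mathrm{Cob}_3^\CM$ from Section~\ref{sect-CM-cobordisms}; the systematic use of the \emph{unordered} tensor product throughout makes these checks formal.

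The main obstacle I anticipate is not any of the coherence diagrams but the interaction of the projectors $p_\Sigma$ with the monoidal constraints: one must check that the $\circ$-level isomorphism $\vert\Sigma\sqcup\Sigma'\vert^\circ\simeq\vert\Sigma\vert^\circ\otimes\vert\Sigma'\vert^\circ$ intertwines $p_{\Sigma\sqcup\Sigma'}$ with $p_\Sigma\otimes p_{\Sigma'}$, so that $\vert\Sigma\sqcup\Sigma'\vert=\Ima(p_{\Sigma\sqcup\Sigma'})$ really corresponds to $\vert\Sigma\vert\otimes\vert\Sigma'\vert$ and not to a proper summand of $\vert\Sigma\vert^\circ\otimes\vert\Sigma'\vert^\circ$. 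This is where the constraint is allowed to ``factorize'' as claimed in Section~\ref{sect-state-sum-HQFT-def}. I would settle it by noting that $\id_{\Sigma\sqcup\Sigma'}$ is represented by the disjoint union of the cylinder $\CM$-cobordisms over $\Sigma$ and $\Sigma'$, so that $p_{\Sigma\sqcup\Sigma'}=\vert\id_\Sigma\sqcup\id_{\Sigma'}\vert^\circ$, and then applying the multiplicativity of $\vert\cdot\vert_\cc$ to this identity cobordism to identify it with $p_\Sigma\otimes p_{\Sigma'}$; passing to images gives $\vert\Sigma\sqcup\Sigma'\vert\simeq\vert\Sigma\vert\otimes\vert\Sigma'\vert$, completing the verification.
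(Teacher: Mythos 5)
Your proposal is correct and follows essentially the same route as the paper: functoriality via the idempotent-splitting argument built on \eqref{eq-func} (Lemma~\ref{lem-compo-Xi-cobordisms-statesum}), strong monoidality from the canonical isomorphisms $H(\Sigma\sqcup\Sigma',d\sqcup d')\simeq H(\Sigma,d)\otimes H(\Sigma',d')$, with Theorem~\ref{thm-state-3man-Xi-boundary} supplying the topological input. Your explicit check that the $\circ$-level constraint intertwines $p_{\Sigma\sqcup\Sigma'}$ with $p_\Sigma\otimes p_{\Sigma'}$ (via multiplicativity of the invariant applied to the disjoint union of cylinders) is precisely the content the paper compresses into the word ``factorizes,'' so you have filled in, rather than deviated from, its argument.
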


The HQFT $\vert \cdot \vert_\cc$ is called the \emph{state sum HQFT} derived from $\cc$.  Considered up to isomorphism, the HQFT $\vert \cdot \vert_\cc$ does not depend on the choice of the $\CM$-representative
set $I$ for $\cc$. The scalar invariant of closed $\CM$-manifolds produced by this HQFT is
precisely the invariant  of Section~\ref{sect-state-sum-invariants-closed}.

\subsection{The case of push-forwards}\label{sect-push-forward-surfaces}
Let $\phi=(\psi \co E \to E',\varphi \co H \to H')$ be a morphism from a crossed module $\CM \co E \to H$ to a crossed module $\CM' \co E' \to H'$ such that $\psi$ and $\varphi$ are surjective, $\Ker(\psi)\cap \Ker(\CM)=1$, and both $\Ker(\varphi)$ and $\Ker(\CM')$ are finite with invertible cardinal in $\kk$. Consider a spherical $\CM$-fusion category~$\cc$ (over $\kk$) such that $\dim(\cc_1^1)$ is invertible.  Then the push-forward $\phi_*(\cc)$ of $\cc$ is a $\CM'$\ti fusion category such that $\dim\bigr(\phi_*(\cc)^1_1\bigl)$ is invertible (see  Section~\ref{sect-push-3man-Xi}).
By Theorem~\ref{thm-state-sum-Xi-HQFT}, the category $\phi_*(\cc)$ defines the 3-dimensional HQFT  $\vert \cdot \vert_{\phi_*(\cc)}$ with target~$B\CM'$ and the category $\cc$ defines the 3-dimensional HQFT  $\vert \cdot \vert_\cc$ with target~$B\CM$. Recall that Theorem~\ref{thm-pushforward-3man-Xi} computes the values of the former from the latter for connected closed $\CM'$-manifolds. There is a similar computation for $\CM'$-surfaces. More precisely, let $B\phi\co (B\CM)_* \to (B\CM')_*$ be the filtered map induced by $\phi$. Then for any $\CM'$-surface~$(\Sigma,\sigma,f')$,
$$
\vert\Sigma,\sigma,f'\vert_{\phi_*(\cc)}=\bigoplus_{ \substack{f \in [\Sigma,B\CM]_{\sigma} \\  B\phi \circ f=f'}}
 \vert\Sigma,\sigma,f\vert_{ \cc }.
$$
The proof of this equality follows that of Theorem~\ref{thm-pushforward-3man-Xi}.

\subsection*{Acknowledgements}
This work was supported in part by the FNS-ANR grant OCHoTop (ANR-18-CE93-0002) and the Labex CEMPI  (ANR-11-LABX-0007-01).


\begin{thebibliography}{EGNO}

\bibitem[BW]{BW} Barrett, J., Westbury, B., \emph{Invariants of piecewise-linear 3-manifolds},  Trans. Amer. Math. Soc.  348  (1996),   3997--4022.

\bibitem[BT]{BT}
Brightwell, M., Turner, P., \emph{Representations of the homotopy surface category of a
simply connected space}, J. Knot Theory Ramifications 9 (2000), 855--864.

\bibitem[BH]{BH1}
Brown, R., Higgins, P.J., \emph{The classifying space of a crossed complex}, Math. Proc. Camb. Phil.
Soc. 110 (1991), 95--120.

\bibitem[BHS]{BHS} Brown, R., Higgins, P.J., Sivera, R., \emph{Nonabelian Algebraic Topology}, EMS Tracts in Math.\
15, European Math.\ Soc.\ Publ.\ House, Z{\"{u}}rich 2011.

\bibitem[ENO]{ENO} Etingof, P.,  Nikshych, D.,  Ostrik, V.,  \emph{On fusion categories}, Ann.
of Math. (2)  162  (2005),   581--642.

\bibitem[EGNO]{EGNO}
Etingof, P.,   Gelaki, S., Nikshych, D.,  Ostrik, V.,  Tensor categories,  Mathematical Surveys and Monographs, 205. American Mathematical Society, Providence, RI, 2015.

\bibitem[FP]{FP} Faria Martins, J., Porter T., \emph{On Yetter's invariant and an extension of the Dijkgraaf-Witten invariant to categorical groups.} Theory Appl. Categ. 18 (2007) 118--150.

\bibitem[JS]{JS}  Joyal, A.,  Street, R., \emph{The geometry of tensor calculus I}, Adv. in Math. 88 (1991), 55--112.

\bibitem[{ML}]{ML1}
 {MacLane}, S., \emph{Categories for the working mathematician},
Second edition,   Springer-Verlag, New York, 1998.

\bibitem[MLW]{MLW}
Mac Lane, S. and Whitehead, J.H.C., \emph{On the 3-type of a complex}, Proc. Nat. Acad. Sci. (1950)
41--48.

\bibitem[Mat]{Mat1}   Matveev, S. V., \emph{Algorithmic topology and classification of
3-manifolds.} Second edition. Algorithms and Computation in Math., 9. Springer, Berlin, 2007.

\bibitem[Po]{P} Porter, T., \emph{Formal homotopy quantum field theories, II: {S}implicial formal maps}, Cont. Math. 431, 375 - 404.

\bibitem[PT]{PT} Porter, T., Turaev, V., \emph{Formal homotopy quantum field theories, {I}: {F}ormal maps and crossed $\mathcal{C}$-algebras}, J. Homotopy Relat. Struct. 3 (2008), 113--159.

\bibitem[So]{So} S\"{o}zer, K., \emph{Two-dimensional extended homotopy field theories.} arXiv:1909.04187. To appear in Algebr. Geom. Topol.

\bibitem[ST]{ST} Staic, M., Turaev, V., \emph{Remarks on 2-dimensional {HQFT}s}, Algebr. Geom. Topol. 10 (2010), 1367--1393.

\bibitem[SV]{SV} S\"{o}zer, K., Virelizier, A., \emph{Hopf crossed module (co)algebras}, preprint arXiv:2305.15485 (2023).

\bibitem[Tu]{Tu1} Turaev, V., \emph{Homotopy Quantum Field Theory}, EMS Tracts in Math.\ 10, European Math.\ Soc.\ Publ.\ House, Z{\"{u}}rich 2010.

\bibitem[TVi1]{TVi1} Turaev, V., Virelizier, A., \emph{On 3-dimensional homotopy quantum field theory, {I}},  Internat. J. Math. 23 (2012),   no. 9, 1250094, 28 pp.

\bibitem[TVi2]{TVi3} Turaev, V., Virelizier, A., \emph{On 3-dimensional homotopy quantum field theory  {II}:  The surgery approach}, Internat. J. Math. 25 (2014),   no. 4, 1450027, 66 pp.

\bibitem[TVi3]{TVi4} Turaev, V., Virelizier, A., \emph{On 3-dimensional homotopy quantum field theory  {III}:  Comparison of two approaches}, Internat. J. Math. 31 (2020),   no. 10, 2050076, 57 pp.

\bibitem[TVi4]{TVi5} Turaev, V., Virelizier, A., \emph{Monoidal Categories and Topological Field Theory}, Progress in  Mathematics, 322. Birkh\"auser, Basel, 2017. xii+523 pp.

\bibitem[Ye]{Yet} Yetter D. N., \emph{TQFTs from homotopy 2-types}, J. Knot Theory Ramifications 2 (1993), 113--123.

\end{thebibliography}
\end{document}